\algrenewcommand\algorithmicrequire{\makebox[32pt][l]{\textrm{input}}}
\algrenewcommand\algorithmicensure{\makebox[32pt][l]{\textrm{output}}}
\algrenewcommand\algorithmicfunction{\textrm{function}}
\algrenewcommand\algorithmicwhile{\textrm{while}}
\algrenewcommand\algorithmicdo{}
\algrenewcommand\algorithmicend{\textrm{end}}
\algrenewcommand\algorithmicforall{\textrm{for all}}
\algrenewcommand\algorithmicfor{\textrm{for}}
\algrenewcommand\algorithmicrepeat{\textrm{repeat}}
\algrenewcommand\algorithmicuntil{\textrm{until}}
\DeclareFontFamily{U}{mathx}{\hyphenchar\font45}
\DeclareFontShape{U}{mathx}{m}{n}{
      <5> <6> <7> <8> <9> <10>
      <10.95> <12> <14.4> <17.28> <20.74> <24.88>
      mathx10
      }{}
\DeclareSymbolFont{mathx}{U}{mathx}{m}{n}
\DeclareMathSymbol{\bigtimes}{1}{mathx}{"91}
\theoremstyle{plain}
\newtheorem{theorem}{Theorem}
\newtheorem{lemma}{Lemma}
\newtheorem{proposition}{Proposition}
\newtheorem{corollary}{Corollary}
\newtheorem{remark}{Remark}
\newtheorem{assumptions}{Assumptions}
\theoremstyle{definition}
\newtheorem{definition}{Definition}
\newtheorem{example}{Example}
\newcommand{\N}{\mathds{N}}
\newcommand{\R}{\mathds{R}}
\DeclareMathOperator{\linspan}{span}
\DeclareMathOperator{\supp}{supp}
\DeclareMathOperator{\dist}{dist}
\DeclareMathOperator*{\argmin}{arg\,min}
\DeclareMathOperator{\range}{range}
\DeclareMathOperator{\ops}{ops}
\newcommand{\id}{{\rm I}}
\newcommand{\dif}{\,{\rm d}}
\newcommand{\spl}[1]{{\rm \ell}_{#1}}
\newcommand{\spH}[1]{{\rm H}^{#1}}
\newcommand{\spL}[1]{{\rm L}_{#1}}
\newcommand{\zinterval}[2]{\{{#1},\ldots,{#2}\}}
\newcommand{\Ocal}{{\mathcal{O}}}
\newcommand{\Tcal}{{\mathcal{T}}}
\newcommand{\Hcal}{{\mathcal{H}}}
\newcommand{\Rcal}{{\mathcal{R}}}
\newcommand{\Ical}{{\mathcal{I}}}
\newcommand{\Acal}{{\mathcal{A}}}
\newcommand{\Fcal}{{\mathcal{F}}}
\newcommand{\Rank}{\Rcal}
\newcommand{\err}{{\lambda}}
\newcommand{\cerr}{{\mu}}
\DeclareMathOperator{\rank}{rank}
\DeclareMathOperator{\apply}{\textsc{apply}}
\DeclareMathOperator{\coarsen}{\textsc{coarsen}}
\DeclareMathOperator{\rhs}{\textsc{rhs}}
\DeclareMathOperator{\recompress}{\textsc{recompress}}
\DeclareMathOperator{\solve}{\textsc{solve}}
\providecommand{\abs}[1]{\lvert#1\rvert}
\providecommand{\bigabs}[1]{\bigl\lvert#1\bigr\rvert}
\providecommand{\norm}[1]{\lVert#1\rVert}
\providecommand{\bignorm}[1]{\bigl\lVert#1\bigr\rVert}
\providecommand{\Bignorm}[1]{\Bigl\lVert#1\Bigr\rVert}
\newcommand{\UU}{\mathbb{U}}
\newcommand{\VV}{\mathbb{V}}
\def\bu{{\bf u}}
\def\bU{{\bf U}}
\def\bV{{\bf V}}
\newcommand{\bv}{{\bf v}}
\newcommand{\bw}{{\bf w}}
\newcommand{\ba}{{\bf a}}
\def\e2{\spl{2}(\nabla^d)}
\def\cA{{\cal A}}
\def\ga{\gamma}
\def\garatio{{\rho_\ga}}
\newcommand{\As}{{\Acal^s}}
\newcommand{\AH}[1]{{\Acal_\Hcal({#1})}}
\newcommand{\AF}[1]{{\Acal_\Fcal({#1})}}
\newcommand{\kk}[1]{{\mathsf{#1}}}
\newcommand{\rr}[1]{{\mathsf{#1}}}
\newcommand{\KK}[1]{{\mathsf{K}_m}}
\newcommand{\Proj}{\operatorname{P}}
\newcommand{\rbest}{\operatorname{\bar{r}}}
\newcommand{\Cbest}{{\operatorname{\bar{C}}}}
\newcommand{\Psvd}[2]{\operatorname{P}_{\UU({#1}),{#2}}}
\newcommand{\hatPsvd}[1]{\operatorname{\hat P}_{#1}}
\newcommand{\Pbest}[2]{\operatorname{P}_{\bar\UU({#1},{#2})}}
\newcommand{\rsvd}{\operatorname{r}}
\newcommand{\Cctr}{\operatorname{C}}
\newcommand{\hatCctr}[1]{\operatorname{\hat C}_{#1}}
\newcommand{\Restr}[1]{\operatorname{R}_{#1}} 
\newcommand{\constsvd}{\kappa_{\rm P}}
\newcommand{\constcrs}{\kappa_{\rm C}}
\newcommand\eref[1]{(\ref{#1})}
\newcommand{\beqn}{\begin{equation}}
\newcommand{\eeqn}{\end{equation}}
\newcommand{\bA}{\mathbf{A}}
\newcommand{\bbf}{\mathbf{f}}
\newcommand{\cL}{{\cal L}}
\newcommand{\cD}{{\cal D}}
\newcommand{\cN}{{\cal N}}
\newcommand{\bB}{\mathbf{B}}
\newcommand{\hdimtree}[1]{\mathcal{D}_{#1}}
\newcommand{\hroot}[1]{{0_{#1}}}
\newcommand{\leaf}[1]{\cL(\hdimtree{#1})}
\newcommand{\nonleaf}[1]{\cN(\hdimtree{#1})}
\newcommand{\tucker}[1]{\mathcal{T}({#1})}
\newcommand{\htucker}[1]{\mathcal{H}({#1})}
\newcommand{\leftchild}{{{\rm c}_1}}
\newcommand{\rightchild}{{{\rm c}_2}}
\newcommand{\child}[1]{{{\rm c}_{#1}}}
\newcommand{\hsum}[1]{\mathrm{\Sigma}_{#1}}
\newcommand{\dd}{{\rm rank}}
\title{Adaptive Near-Optimal Rank Tensor Approximation for High-Dimensional Operator Equations}
\let\@fnsymbol\@arabic
\author{Markus Bachmayr\thanks{IGPM, RWTH Aachen,
Germany, email: bachmayr@igpm.rwth-aachen.de} 
~and~ Wolfgang Dahmen\thanks{IGPM and AICES, RWTH Aachen, Germany, email: dahmen@igpm.rwth-aachen.de}}
\date{May 1, 2013; revised October 18, 2013}
\begin{document}

\aicescoverpage

\maketitle

\begin{abstract}
We consider a framework for the construction of iterative schemes for
operator equations that combine low-rank approximation in tensor formats and 
adaptive approximation in a basis. Under fairly general
assumptions, we obtain a rigorous convergence analysis, where all
parameters required for the execution of the methods depend only on
the underlying infinite-dimensional problem, but not on a concrete
discretization.  Under certain assumptions on the rates for the involved
low-rank approximations and basis expansions, we can also give bounds
on the computational complexity of the iteration as a function of the
prescribed target error.
Our theoretical findings are illustrated and supported by computational experiments.
These demonstrate that problems in very high dimensions can be treated
with controlled solution accuracy.

\textbf{Keywords:} Low-rank tensor approximation, adaptive methods, high-dimensional operator equations, computational complexity

\textbf{Mathematics Subject Classification (2000):} 41A46, 41A63, 65D99, 65J10, 65N12, 65N15
\end{abstract}

\section{Introduction}\label{sect:intro}

\subsection{Motivation}\label{ssect:motiv}
Any attempt to recover or approximate a function of a large number of variables with the aid of classical
low-dimensional techniques is inevitably  
impeded by the {\em curse of dimensionality}. This means that, when only
assuming classical smoothness (e.g. in terms of Sobolev or Besov regularity) of order $s>0$, the necessary computational
work needed  to realize a desired target accuracy $\varepsilon$ in $d$
dimensions scales like $\varepsilon^{-d/s}$, i.e., one faces  an exponential increase in the spatial dimension $d$.
This can be ameliorated by {\em dimension-dependent} smoothness measures. In many
high-dimensional problems of interest, the approximand has bounded high-order
mixed derivatives, which under suitable assumptions can be used to construct
sparse grid-type approximations where the computational work scales like $C_d
\varepsilon^{-1/s}$. Under such regularity assumptions, one can thus obtain a
convergence rate independent of $d$. In general, however, the constant $C_d$
will still grow exponentially in $d$.  This has been shown to hold even under
extremely restrictive smoothness assumptions in \cite{Novak:09}, and has been
observed numerically in a relatively simple but realistic example in
\cite{Dijkema:09}.

Hence, in contrast to the low-dimensional regime, regularity is no longer a sufficient structural property that ensures computational feasibility, and further low-dimensional structure of the sought high-dimensional object is required.
Such a structure could be the dependence of the function on a much smaller (unknown) number of variables, see e.g. \cite{DeVore:11}. 
It could also mean {\em sparsity} with respect to {\em some} (a priori) unknown dictionary. In particular, dictionaries 
comprized of {\em rank-one tensors} $g(x_1,\ldots, x_d)=g_1(x_1)\cdots g_d(x_d)=: (g_1\otimes \cdots \otimes g_d)(x)$
open very promising perspectives and have recently attracted substantial
attention.

As a simple example consider $g(x)= \bigotimes_{i=1}^d g_i(x_i)$ on the unit
cube $\Omega = [0,1]^d$, where the $g_i$ are sufficiently smooth. 
Employing  for each factor $g_i$  a standard spline
approximation of order $s$ with $n$ knots yields an $L_\infty$-accuracy of order $n^{-s}$,
which gives rise to an overall accuracy of the order of $d n^{-s}$ 
at the expense of $dn=:N$ degrees of freedom.
Hence, assuming that $\norm{g}_{\infty}$ does not depend on $d$, 
an accuracy $\varepsilon$ requires 
\begin{equation}
\label{epsN}
{
N=N(\varepsilon,d)\sim  d^{\frac{1+s}{s}} \varepsilon^{-1/s}  }
\end{equation}
degrees of freedom.
  In contrast, it would take
the order of $N= n^d$ degrees of freedom to realize an accuracy of order $n^{-s} = N^{-d/s}$ when using 
a standard tensor product spline approximation, which means that in this case $N(\varepsilon,d)\sim \varepsilon^{-d/s}$.  
Thus, while the first approximation -- using a {\em nonlinear parametrization}
of a reference basis -- breaks the curse of dimensionality, 
the second one obviously does not.

Of course, $u$ being a simple tensor is in general an unrealistic assumption, but the curse of dimensionality can still be significantly mitigated
when $f$ is {\em well approximable} by relatively short sums of rank-one tensors.
By this we mean that for some norm $\|\cdot\|$ we have
\begin{equation}
\label{canonical}
\Bignorm{ u - \sum_{j=1}^{r(\varepsilon)} g_{1,j}\otimes \cdots \otimes g_{j,d}}
 \leq \varepsilon
\end{equation}
where the rank $r(\varepsilon)$ grows only moderately as $\varepsilon$ decreases.
In our initial example, in these terms we had $r(\varepsilon)=1$ for all $\varepsilon >0$. 
Assuming that all the factors $g_{j,i}$ in the above approximation are sufficiently smooth,
the count (\ref{epsN}) applied to each summand with target accuracy $\varepsilon/r$ shows that
now at most
\begin{equation}
\label{epsNr}
{
N(\varepsilon,d,r) \lesssim r^{1+\frac 1s}d^{\frac{1+s}{s}} \varepsilon^{-\frac{1}{s}} }
\end{equation}
degrees of freedom are required, which is still acceptable. 
This is clearly a very crude reasoning because it does not take a possible additional
decay in the rank-one summands into account. 

This argument, however, already indicates that good approximability in the sense of \eqref{canonical}
is not governed by classical regularity assumptions.
Instead, the key is to exploit an approximate global low-rank structure of $u$.
This leads to a highly nonlinear approximation problem, where one aims to identify suitable
lower-dimensional tensor factors, which can be interpreted as a $u$-dependent dictionary.

This discussion, although admittedly somewhat oversimplified, immediately raises several questions which we will briefly discuss as they guide subsequent developments.

{\em Format of approximation:} The hope that $r(\varepsilon)$ in (\ref{canonical}) can be rather small
is based on the fact that the rank-one tensors are allowed to ``optimally adapt'' to the approximand $u$.
The format of the approximation used in \eref{canonical} is sometimes called {\em canonical} since it is a formal
direct generalization of classical Hilbert Schmidt expansions for $d=2$. However, a closer look reveals a number
of well-known pitfalls.
In fact, they are already encountered in the {\em discrete} case. The collection of sums of ranks one tensors
of a given length is not closed, and the best approximation problem is not well-posed, see e.g.~\cite{Silva:08}.
There appears to be no reliable computational strategy that can be proven to yield near-minimal rank approximations
for a given target accuracy in this format.
In this work, we therefore employ different tensor formats that allow us to obtain provably near-minimal rank
approximations, as explained later.

{\em A two-layered problem:}
Given a suitable tensor format, even if a best tensor approximation is known in the 
infinite-dimensional setting of the {continuous} problem, the resulting lower-dimensional 
factors still need to be approximated.
Since finding these factors is part of the solution process, the determination of
efficient discretizations for these factors will need to be intertwined with
the process of finding low-rank expansions.
We have chosen here to organize this process through selecting low-dimensional orthonormal
wavelet bases for the tensor factors. However, other types of basis expansions would be
conceivable as well.

The issue of the total complexity of tensor approximations, taking the approximation of the involved
lower-dimensional factors into account, is addressed in \cite{Griebel:11,Schneider:13}.

\subsection{Conceptual Preview}\label{ssect:preview}

The problem of finding a suitable format of tensor approximations has been
extensively studied  in the literature over that past years, however, mainly in the discrete or finite-dimensional setting, see e.g.~\cite{Kolda:09,Hackbusch:09-1,Oseledets:09,Grasedyck:10,Oseledets:11}.
Some further aspects in a function space setting have been addressed
e.g.~in \cite{Uschmajew:10,Falco:10,Uschmajew:11}.
For an overview and further references we also refer to \cite{Hackbusch:12} and the recent survey
\cite{Grasedyck:13}.
 A central question in these works is:
 given a tensor, how can one in a stable manner obtain low-rank
 approximations, and how accurate are they when compared with best tensor 
 approximations in the respective format?
 
We shall heavily draw on these findings in the present paper, but under the following somewhat different perspectives.
First of all, we are interested in the {\em continuous infinite-dimensional} setting, i.e.,  in sparse tensor approximations of a 
{\em function} which is a priori not given in any finite tensor format but which one may expect to be 
well approximable by simple tensors in a way to be made precise later. We shall not discuss here the question under which concrete conditions this is actually the case. 
Moreover, the objects to be recovered are not given explicitly but only {\em implicitly} as a solution
to an operator equation
\beqn
\label{opeq}
Au =f,
\eeqn
where $A: V\to V'$ is an isomorphism of some Hilbert space $V$ onto its dual $V'$. One may think of $V$, in
the simplest instance, as a high-dimensional $\spL{2}$ space, or as a Sobolev space. More generally, as in the
context of parametric diffusion problems, $V$ could be a tensor product of a Sobolev space and an $\spL{2}$ space. Accordingly, we shall always assume that we have a Gelfand triplet
\beqn
\label{Gelfand}
V\subset  H \equiv H' \subset V',
\eeqn
in the sense of dense continuous embeddings, where we assume that $H$ is a tensor product Hilbert space, that is,
\begin{equation}\label{eq:htensorspace}
 H = H_1 \otimes \cdots \otimes H_d 
\end{equation}
with lower-dimensional Hilbert spaces $H_i$. A typical example would be $H = \spL{2}(\Omega^d) = \spL{2}(\Omega)\otimes \cdots\otimes \spL{2}(\Omega)$ for a domain $\Omega$ of small spatial dimension.

The main contribution of this work is to put forward a strategy that addresses the main obstacles 
identified above and results in an algorithm which, under mild assumptions, can be rigorously proven to provide
for any target accuracy $\varepsilon$ an approximate solution of {\em near-minimal} rank and representation complexity of the involved tensor factors. 
Specifically, (i) it is based on stable tensor formats relying on optimal subspaces; (ii)   successive solution  updates involve
a {\em combined refinement} of ranks and factor discretizations; (iii) (near-)optimality is achieved, thanks to (i), through
accompanying suitable {\em subspace correction} and {\em coarsening} schemes.

The following comments on the main ingredients are to provide some orientation. 
 A first essential step is to choose a {\em universal} basis for functions of
a {\em single variable} in $H_i$. Here, we focus on wavelet bases, but other systems like the trigonometric system for periodic problems are conceivable as well.
As soon as functions of a single variable, especially the factors in our rank-one tensors,
 are expanded in such a basis, the whole problem of approximating $u$ reduces to approximating its {\em infinite}
 coefficient  tensor $\bu$ induced by the expansion
 $$
 u=\sum_{\nu\in \nabla^d} u_\nu\, \Psi_\nu\,,\quad
 \Psi_\nu := \psi_{\nu_1}\otimes \cdots\otimes \psi_{\nu_d},\quad \bu = (u_\nu)_{\nu\in\nabla^d},
 $$
 see below. The original operator equation \eref{opeq} is then equivalent to an infinite system
 \beqn
 \label{bopeq}
 \bA \bu =\bbf,\quad \mbox{where}\quad \bA =\big(\langle A\Psi_\nu,\Psi_{\nu'}\big\rangle\big)_{\nu,\nu'\in\nabla^d},\,\,
 \bbf = \big(\langle f,\Psi_\nu\rangle\big)_{\nu\in\nabla^d}.
  \eeqn
 For standard types of Sobolev spaces $V$ it is well understood how to rescale the tensor product basis $\{ \Psi_\nu \}_{\nu\in\nabla^d}$ in such a way that it becomes a
 {\em Riesz basis} for $V$.
 This, in turn, together with the fact that $\kappa_{V\to V'}(A):= \|A\|_{V\to V'}\|A^{-1}\|_{V'\to V}$ is finite, allows one to show
 that $\kappa_{\ell_2\to\ell_2}(\bA)$ is finite{, see \cite{actanum}}. Hence one can find a positive $\omega$ such that $\|{\bf I}- \omega \bA\|_{\ell_2\to\ell_2}\leq \rho< 1$, 
 i.e., the operator ${\bf I}- \omega \bA$ is a contraction so that the iteration 
 \beqn
 \label{iteration}
 \bu_{k+1} := \bu_k +\omega (\bbf - \bA \bu_k),\quad k=0,1,2,\ldots,
 \eeqn
 converges for any initial guess to the solution $\bu$ of \eref{bopeq}.
  
  Of course, \eref{iteration} is only an idealization because the full coefficient sequences $\bu_k$ cannot be computed. Nevertheless, adaptive wavelet methods can be 
  viewed as realizing \eref{iteration} {\em approximately}, keeping possibly few wavelet coefficients ``active''
 while still preserving enough accuracy to ensure convergence to $\bu$ (see e.g.
 \cite{Cohen:01,Cohen:02}).

 In the present high-dimensional  context this kind of adaptation is no longer feasible. Instead, we propose here a ``much more nonlinear'' 
 adaptation concept. Being able to keep   increasingly accurate approximations on a path towards   near-minimal rank 
 approximations with properly sparsified tensor factors relies crucially on suitable correction mechanisms.
 An important contribution of this work is to identify and analyze just such methods.
  Conceptually, they are embedded in a properly perturbed numerical realization
 of \eref{iteration}  of the form
{ 
\beqn
 \label{practicaliter}
 \bu_{k+1} = {\rm C}_{\varepsilon_2(k)}\big({\rm P}_{\varepsilon_1(k)}(\bu_k + \omega
 (\bbf - \bA \bu_k))\big), \quad k=0,1,2,\ldots,
 \eeqn
 where ${\rm P}_{\varepsilon_1(k) }$, ${\rm C}_{\varepsilon_2(k) }$ are
 certain {\em reduction} operators and the $\varepsilon_i(k)$, $i=1,2$,} are
 suitable tolerances which decrease for increasing $k$. 
 
 More precisely, the purpose of ${\rm P}_{\varepsilon }$ is to ``correct'' the current tensor expansion and, 
 in doing so, reduce the rank subject to an accuracy tolerance $\varepsilon$.
 We shall always refer to such a rank reduction operation as a \emph{recompression}.
 For this operation to work as desired, it is essential that the employed  tensor format is
 stable in the sense that the best approximation problem for any given ranks is well-posed. As explained above, this excludes the use of the canonical format. Instead we use
  the so-called {\em hierarchical Tucker} (HT) format, since on the one hand it inherits the stability of the Tucker format \cite{Falco:10},
   as a classical best subspace method,  while on the other hand it better ameliorates the curse of dimensionality
 that the Tucker format may still be prone to. In \S \ref{sect:prelim} we collect the relevant prerequisites.
  This draws to a large extent on known results for the finite-dimensional case, but requires
  proper formulation and extension of these notions and facts for the current sequence space setting. 
 The second reduction operation ${\rm C}_{\epsilon }$, in turn, is a {\em coarsening} scheme that
 reduces the number of degrees of freedom used by
 the wavelet representations of the tensor factors, again subject to some accuracy constraint $\epsilon$. 
 
\subsection{What is New?} 
 
The use of rank reduction techniques in iterative schemes is in principle not new, 
see e.g.\ \cite{Beylkin:02,Beylkin:05-1,Hackbusch:08,Khoromskij:11-1,Kressner:11,Matthies:12,Ballani:13} and the further
references given in \cite{Grasedyck:13}.
To our knowledge, corresponding approaches can be subdivided roughly into two categories.
In the first one, iterates are always truncated to a fixed tensor rank. This allows one to control
the complexity of the approximation, but convergence of such iterations can be guaranteed only under
very restrictive assumptions (e.g.\ concerning highly effective preconditioners).
In the second category, schemes achieve a desired target accuracy by instead prescribing an error tolerance for the rank truncations, but the corresponding ranks arising during the iteration are not controlled.
A common feature of both groups of results is that they operate on a \emph{fixed discretization} of the underlying
continuous problems.

In contrast, the principal novelty of the present approach can be sketched as follows.
The first key element is to show that based on a known error bound for a given approximation to the unknown solution, a judiciously chosen recompression produces a near-minimal rank approximation to the \emph{solution of the continous problem} for a slightly larger accuracy tolerance.
Moreover, the underlying projections are stable with respect to certain sparsity measures.
As pointed out before, this reduction needs to be intertwined with a sufficiently accurate but possibly coarse approximation of the tensor factors. A direct coarsening of the full
wavelet coefficient tensor would face the curse of dimensionality, and thus would be practically infeasible. The second critical element is therefore to introduce certain
lower-dimensional quantities, termed tensor {\em contractions}, from
which the degrees of freedom to be discarded in the coarsening are identified. 
This notion of contractions also serves to define suitable sparsity classes with respect to wavelet coefficients, facilitating a computationally efficient, rigorously founded combination of tensor recompression and coefficient coarsening.

 These concepts culminate in the main result of this paper, which can be summarized in an admittedly oversimplified way as follows.\vspace{6pt}
 
 \noindent
 {\bf Meta-Theorem:} {\it Whenever the solution to \eref{bopeq} has certain tensor-rank approximation rates
 and when the involved tensor factors have certain   best $N$-term approximation rates, then a judicious
 numerical realization of the iteration \eref{practicaliter} realizes these rates. Moreover, up to logarithmic
 factors, the computational complexity  is optimal. More specifically, for the {smallest $k$} such that
the approximate solution $\bu_k$ satisfies $\|\bu_k -\bu\|_{\ell_2}\leq \tau$, $\bu_k$ has HT-ranks that can be {bounded, up to multiplication by a uniform constant,
by the smallest possible HT-ranks} needed to realize accuracy $\tau$.
}\vspace{6pt}
  
{In the theorem that we will eventually prove we admit classes of operators with unbounded ranks, in which case the rank bounds contain a
factor of the form $\abs{\log \tau}^c$, where $c$ is a fixed exponent.}

 To our knowledge this is the first result of this type, where convergence to the solution of the \emph{infinite-dimensional}
 problem is guaranteed under realistic assumptions, and all ranks arising during the process remain proportional
 to the respective smallest possible ones. 
{A rigorous proof of {\em rank near optimality}, using an iteration of the above type, is to be contrasted to approaches based on greedy approximation as studied e.g.\ in \cite{Cances:11}, where approximations in the (unstable) canonical format are constructed through successive greedy updates.
This does, in principle, not seem to offer much hope for finding minimal or near-minimal rank approximations, as the greedy search operates far from orthonormal bases, and errors committed early in the iteration cannot easily be corrected.
Although variants of the related \emph{proper generalized decomposition}, as studied in \cite{Falco:12}, can alleviate some of these difficulties, e.g.\ by employing different tensor formats, the basic issue of controlling ranks in a greedy procedure remains.}

\subsection{Layout}

The remainder of the paper is devoted to the development of the ingredients and their complexity analysis needed to make the statements in the above Meta-Theorem precise.
 Trying to carry out this program raises some issues which we will briefly address now, as they guide the 
 subsequent developments.

After collecting in \S \ref{sect:prelim} some preliminaries,   
\S \ref{sect:compress} is devoted to  a pivotal element of our approach, namely the development 
and analysis of suitable {\em recompression and coarsening schemes}
 that yield an approximation in the HT-format that is, for a given target accuracy, of {\em near-minimal} rank  with possibly sparse tensor factors
 (in a sense to be made precise later).
 
Of course, one can hope that the solution of \eref{opeq} is particularly tensor sparse  in the sense that relatively low HT-ranks
already provide high accuracy if the data $f$ are tensor sparse, and if the operator $A$ (resp.\ $\bA$) 
is tensor sparse in the sense that its application does not
increase ranks too drastically. Suitable models of operator classes that allow us to properly weigh tensor sparsity and wavelet expansion sparsity are introduced and analyzed in \S \ref{sect:op}.  
The approximate application of such operators with certified output accuracy builds on the findings in \S \ref{sect:compress}.
 
Finally, in \S \ref{sect:scheme} we formulate an {\em adaptive iterative algorithm} and analyze its complexity.
Starting from the coarsest possible approximation $\bu^0 =0$,
approximations in the tensor format are built successively, where the error tolerances
in the iterative scheme are updated for each step in such a way that two goals are achieved.
On the one hand, the tolerances are sufficiently stringent to guarantee the convergence
of the iteration up to any desired target accuracy. On the other hand, we ensure that at each stage of the
iteration, the approximations remain sufficiently coarse to realize
the Meta-Theorem formulated above. Here we specify concrete tensor approximability assumptions
on $\bu$, $\bbf$ and $\bA$ that allow us to make its statement precise.

\section{Preliminaries}\label{sect:prelim}
In this section we set the notation and collect the relevant ingredients for stable tensor formats in
the infinite-dimensional setting.  
In the remainder of this work, we shall use for simplicity the abbreviation
$\norm{\cdot}:=\norm{\cdot}_{\spl{2}}$, with the $\spl{2}$-space on the
appropriate index set.

Our basic assumption is that we have a Riesz basis $\{  \Psi_\nu \}_{\nu\in\nabla^d}$ for $V$, where
$\nabla$ is a countable index set. In other words, we require that the index set has Cartesian product structure. 
Therefore any $u\in V$ can be identified with its basis coefficient sequence 
 $\mathbf{u} := {(u_\nu)_{\nu\in\nabla^d}}$ {in the unique representation $u=\sum_{\nu\in\nabla^d}u_\nu\Psi_\nu$,}
with uniformly equivalent norms.
Thus, $d$ will in general correspond to the spatial dimension of the domain of functions under consideration.
In addition it can be important to reserve the option of grouping some of the variables in a 
possibly smaller number $m\leq d$ of portions of variables, i.e.,   
  $m\in \N$ and $d = d_1 + \ldots + d_m$ for $d_i\in\N$.

A canonical point of departure for the construction of $\{ \Psi_\nu\}$ is a collection of Riesz bases for each component Hilbert space $H_i$ (see \eqref{eq:htensorspace}),
which we denote by $\{ \psi^{H_i}_\nu\}_{\nu\in\nabla^{H_i}}$.
To fit in the above context, we may assume without loss of generality that all $\nabla^{H_i}$ are identical, denoted
by $\nabla$.
The precise structure of $\nabla$ is irrelevant at this point;
however, in the case that the $\psi^{H_i}_\nu$ are wavelets, each $\nu =(j,k)$ encodes a dyadic level $j=\abs{\nu}$
and a spatial index $k=k(\nu)$.
This latter case is of particular interest, since for instance when $V$ is a Sobolev space,
a simple rescaling of $\psi^{H_1}_{\nu_1} \otimes \cdots\otimes \psi^{H_d}_{\nu_d}$ 
yields a Riesz basis $\{ \Psi_\nu\}$ for $V\subseteq H$ as well.

A simple scenario would be $V=H=\spL{2}([0,1]^d)$, which is the situation considered in our numerical illustration
in \S\ref{sect:numexp}.
A second example are elliptic diffusion equations with stochastic coefficients. In this case, $V= \spH{1}_0(\Omega) \otimes \spL{2}([-1,1]^\infty)$, and $H = \spL{2}(\Omega \times [-1,1]^\infty)$. Here a typical choice of bases for $\spL{2}([-1,1]^\infty)$ are tensor products of polynomials on $[-1,1]$, while one can take a wavelet basis for $\spH{1}_0(\Omega)$, obtained by rescaling a standard $\spL{2}$ basis.
A third representative scenario concerns diffusion equations on high-dimensional product domains $\Omega^d$. Here, for instance, $V = \spH{1}(\Omega^d)$ and $H = \spL{2}(\Omega^d)$. We shall comment on some additional difficulties that arise in the application of operators in this case in Remark \ref{rmrk:sobolev}.
 
We now regard $\bu$ as a tensor of order $m$ on $\nabla^d =
\nabla^{d_1}\times\cdots\times \nabla^{d_m}$ {and look for representations or approximations of $\bu$ in terms of rank-one
tensors 
$$
\bV^{(1)}\otimes \cdots \otimes \bV^{(m)} := \big(V^{(1)}_{\nu_1}\cdots V^{(m)}_{\nu_m}\big)_{\nu=(\nu_1,\ldots,\nu_m)\in\nabla^d}.
$$}
Rather than looking for
approximations or representations in the canonical format 
$$
\bu = \sum_{k=1}^r a_k \bU_k^{(1)}\otimes \cdots \otimes \bU_k^{({m})},
$$
we will employ tensor representations of {a format that is perhaps best motivated as follows. Consider for each $i=1,\ldots,m$
  (finitely or infinitely many) pairwise orthonormal sequences  $\bU^{(i)}_k =(U^{(i)}_{\nu_i,k})_{\nu_i\in\nabla^{d_i}}\in \ell_2(\nabla^{d_i})$,
$k=1,\ldots,r_i$,
that is, 
$$
\langle \bU^{(i)}_k,\bU^{(i)}_l\rangle := \sum_{\nu_i\in\nabla^{d_i}} U^{(i)}_{\nu_i,k}U^{(i)}_{\nu_i,l} = \delta_{k,l},\quad i=1,\ldots,m.
$$
We stress that here and in the sequel $r_i=\infty$ is admitted.
The matrices $\bU^{(i)} = \big(U^{(i)}_{\nu_i,k}\big)_{\nu_i\in \nabla^{d_i},1\leq k\leq r_i}$ are 
often termed \emph{orthonormal mode frames}.
It will be convenient to use the notational convention 
$\kk{k} = (k_1,\ldots, k_t)$,
$\kk{n} = (n_1,\ldots,n_t)$, $\rr{r} = (r_1,\ldots,r_t)$, and so forth,
  for multiindices in $\N^t_0$, $t\in\N$.
Defining for
$\rr{r} \in\N_0^m$
\begin{equation*}
  \KK{m}(\rr{r}) := \left\{
  \begin{array}{ll}
   \bigtimes_{i=1}^m \zinterval{1}{{r}_i} & \text{if $\min \rr{r} >
  0$,}\\
\emptyset & \text{if $\min \rr{r} =
  0$} \,,
 \end{array} \right.
\end{equation*}  
   and noting that 
 $\ell_2(\nabla^d)=\bigotimes_{j=1}^m\ell_2(\nabla^{d_j})$ is a tensor product Hilbert space, the tensors
\begin{equation}
\label{UUdef}
\UU_\kk{k} := \bU^{1)}_{k_1}\otimes \cdots \otimes \bU^{(m)}_{k_m} ,
\quad \kk{k}\in \KK{m}(\rr{r}),
\end{equation}
form an orthonormal basis for the subspace of $\ell_2(\nabla^d)$, generated by the system $\UU :=  (\UU_\kk{k})_{\kk{k}\in\KK{m}(\rr{r})}$.
Hence,  for any $\bu\in\ell_2(\nabla^d)$ the orthogonal projection
\begin{equation}
\label{P-T}
\Proj_{\UU}\bu =\sum_{\kk{k}\in \KK{m}(\rr{r})}  a_\kk{k} \UU_\kk{k}, \quad a_\kk{k} = \langle \bu,    
 \UU_\kk{k} \rangle,\, \kk{k}\in\KK{m}(\rr{r}),
\end{equation} 
is the best approximation to $\bu\in\ell_2(\nabla^d)$ from the subspace spanned by $\UU$.  
The uniquely defined order-$m$ tensor $\mathbf{a}$ with entries $\langle \bu,\UU_\kk{k}\rangle$, $\kk{k}\in \KK{m}(\rr{r})$,  is referred
to as \emph{core tensor}. 
Moreover, when the $\bU^{(i)}_k$, $k\in \N$
are bases for all of $\ell_2(\nabla^{d_i})$, that is, $\KK{m}(\rr{r}) =\N^m$, one has, of course, $\Proj_\UU\bu =\bu$, while for any $\rr{s}\leq \rr{r}$, componentwise,
the ``box-truncation''
\begin{equation}
\label{oproj}
\Proj_{\UU,\rr{s}} \bu := \sum_{\kk{k}\in \KK{m}(\rr{s})} \langle \bu,\UU_\kk{k}\rangle\UU_\kk{k}
\end{equation}
is a simple mechanism of further reducing the ranks of an approximation from the subspace spanned by $\UU$ at the expense of a minimal loss of accuracy.}

{The existence of best approximations and their realizability through linear projections suggests approximating a given tensor in $\ell_2(\nabla^d)$
by expressions of the form
}
\begin{equation}\label{eq:tuckerd}
  \mathbf{u} =  \sum_{k_1=1}^{r_1} \cdots \sum_{k_m=1}^{r_m}
    a_{k_1,\ldots,k_m} \,(\mathbf{U}^{(1)}_{k_1} \otimes \cdots \otimes
    \mathbf{U}^{(m)}_{k_m} )\, \,,
\end{equation}
{even without insisting on the \emph{$i$th mode frame} $\bU^{(i)}$
 to have pairwise orthonormal column vectors $\mathbf{U}^{(i)}_k\in\spl{2}(\nabla^{d_i})$, $k = 1,\ldots,r_i$.
However, these columns can always be orthonormalized, which results in a corresponding modification of the core tensor $\mathbf{a}
= (a_{\kk{k}})_{\kk{k}\in\KK{m}(\kk{r})}$; {for fixed mode frames, the latter is uniquely determined}.}
 When writing sometimes for convenience $(\bU^{(i)}_k)_{k\in\N}$, although the $\bU^{(i)}_k$
may be specified through \eqref{eq:tuckerd} only for $k\leq r_i$, it will always be understood to mean $\bU^{(i)}_k=0$, for $k> r_i$.

If {the core tensor} $\mathbf{a}$ is represented directly by its entries, \eref{eq:tuckerd} corresponds to
the so-called \emph{Tucker format} \cite{Tucker:64,Tucker:66} or \emph{subspace representation}.
The {\em hierarchical Tucker} format \cite{Hackbusch:09-1}, as well
as the special case of the {\em tensor train} format \cite{Oseledets:11}, correspond
to representations in the form \eqref{eq:tuckerd} as well, {but use a further structured tensor decomposition for the core tensor $\mathbf{a}$ that can exploit a stronger type of \emph{information sparsity}.}
{{For $m=2$  the \emph{singular value decomposition} (SVD) or its infinite dimensional counterpart, the \emph{Hilbert-Schmidt decomposition}, yield $\bu$-dependent mode frames that even give a diagonal core tensor. Although this is no longer possible for $m > 2$, the SVD remains the main work horse behind Tucker as well as hierarchical Tucker formats. For the convenience of the reader, we summarize below the relevant facts for these tensor representations in a way tailored to the present needs.}

\subsection{Tucker format}\label{ssect:tucker}

It is instructive to consider  first  the simpler case of the Tucker format in more detail.

\subsubsection{Some Prerequisites}

As mentioned before, for a general $\mathbf{u}\in \spl{2}(\nabla^d)$, the sum in
\eqref{eq:tuckerd} may be infinite. 
{For each
$i\in\{1,\ldots,m\}$ we consider the mode-$i$ matricization of $\mathbf{u}$,
that is, the infinite matrix
$(u^{(i)}_{\nu,\tilde\nu})_{\nu\in\nabla^{d_i},\tilde\nu\in\nabla^{d-d_i}}$ with
entries $u^{(i)}_{\nu_i,\check\nu_i} := u_\nu$ for $\nu\in\nabla^{d}$, which defines a
Hilbert-Schmidt operator}
\begin{equation}
\label{eq:modei_matricization} {
T^{(i)}_{\bu} \colon \spl{2}(\nabla^{d-d_i}) \to \spl{2}(\nabla^{d_i})\,,\;
(c_{\tilde\nu})_{\tilde\nu\in\nabla^{d-d_i}} \mapsto \Bigl( \sum_{\tilde \nu\in
\nabla^{d-d_i}} u^{(i)}_{\nu,\tilde\nu} c_{\tilde \nu}
\Bigr)_{\nu\in\nabla^{d_i}} \,.  }
\end{equation}
We define the {\em rank vector} $\rank (\mathbf{u})
$ by its entries
\begin{equation}
\label{eq:def_multilin_rank} {
 \rank_{i}(\mathbf{u}) := \dim \range T^{(i)}_\bu \,,\quad i=1,\ldots,m \,,  }
\end{equation}
{see \cite{Hitchcock}.}
It is referred to as the \emph{multilinear rank}
of $\mathbf{u}$. We denote by
\begin{equation}
\label{def:R-T}
\mathcal{R}= \mathcal{R}_{\Tcal} := (\N_0 \cup \{ \infty \})^m
\end{equation}
the set of admissible rank vectors in the Tucker format.
For such rank vectors $\rr{r} \in {\mathcal{R}}$, we introduce the notation
\[    \abs{\rr{r}}_\infty := \max_{j=1,\ldots,m} \,{r}_j \,.   
 \]
{Given any $\rr{r} \in \mathcal{R}$, we can then define  the set}
\begin{equation}\label{eq:tucker_tensorset}
  \Tcal(\rr{r}) := \bigl\{ 
  \mathbf{u}\in\spl{2}(\nabla^d)  \colon \rank_i(\mathbf{u}) \leq {r}_i ,\,
  i=1,\ldots,m\bigr\} \,,
\end{equation}
{of those sequences whose multilinear rank is bounded componentwise by $\kk{r}$. It is easy to see that
the elements of $\Tcal(\rr{r})$ possess a representation of the form \eqref{eq:tuckerd}. Specifically,
for   any system of orthonormal mode frames $\VV =
\bigl(\mathbf{V}^{(i)}\bigr)_{i=1}^m$  with $r_i$ columns (where $r_i$ could be infinity),  
the $\VV$-{\em rigid} Tucker class
\beqn
\label{T-rigid}
\Tcal(\VV,r):= \{ \Proj_{\VV} \bv: \bv\in \ell_2(\nabla^d)\}
\eeqn
is contained in $\Tcal(\rr{r})$.  
}

{The actual computational complexity of the elements of $\Tcal(\rr{r})$ can be quantified by} 
\begin{equation}  {
 \supp_i (\mathbf{u}) := \bigcup_{z \in \range T^{(i)}_\bu}  \supp z \,.  }
\end{equation}
{It is not hard to see that these quantities are controlled by the  ``joint support'' of the $i$th mode frame, that is,
$ \supp_i (\mathbf{u}) \subseteq \bigcup_{k\leq r_i}\bU^{(i)}_k$.
Note} that if $\#\supp_i(\bu) < \infty$,  {one} necessarily also {has} $\rank_i(\bu)
<\infty$.
 
The following result, which can be found e.g. in 
\cite{Uschmajew:10,Falco:10,Hackbusch:12}, ensures the existence of best approximations in $ \Tcal(\rr{r})$
also for infinite ranks.

\begin{theorem}
\label{thm:tucker_bestapprox}
Let $\mathbf{u}\in \spl{2}(\nabla^d)$
and $0 \leq {r}_i \leq
\rank_i(\mathbf{u})$, then there exists $\mathbf{v} \in \Tcal(r)$ such that
\begin{equation*}
  \norm{\mathbf{u} - \mathbf{v}} =
  \min_{\rank(\mathbf{w})\leq \rr{r}} \norm{\mathbf{u} - \mathbf{w}} \,.
\end{equation*}
\end{theorem}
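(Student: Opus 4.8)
The plan is to reduce the existence of a best approximation in the Tucker class to a compactness argument applied, mode by mode, to the singular systems of the matricizations $T^{(i)}_\bu$. First I would recall that by \eqref{T-rigid} any rigid class $\Tcal(\VV,\rr{r})$ with orthonormal mode frames $\VV=(\bV^{(i)})_{i=1}^m$ having $r_i$ columns is contained in $\Tcal(\rr{r})$, and that conversely every $\bw\in\Tcal(\rr{r})$ lies in some such rigid class (take the leading left singular vectors of each $T^{(i)}_\bw$, padding with zeros if $\rank_i(\bw)<r_i$). Thus
\[
\min_{\rank(\bw)\le\rr{r}}\norm{\bu-\bw}
= \inf_{\VV}\ \norm{\bu-\Proj_{\VV}\bu},
\]
where the infimum is over all systems $\VV$ of orthonormal mode frames with column counts $\rr{r}$, and for each fixed $\VV$ the minimizing $\bw$ in the rigid class is exactly $\Proj_{\VV}\bu$. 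So it suffices to show this infimum over mode-frame systems is attained.

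Next I would take a minimizing sequence $\VV^{(n)}=(\bV^{(i),n})_{i=1}^m$. Each $\bV^{(i),n}$ is a sequence of $r_i$ orthonormal vectors in $\spl{2}(\nabla^{d_i})$; I want to extract weakly convergent subsequences $\bV^{(i),n}_k\rightharpoonup \bV^{(i)}_k$ for each $i$ and each $k\le r_i$ (using a diagonal argument when $r_i=\infty$; the countability of $\nabla^{d_i}$ makes the unit ball of $\spl{2}$ weakly sequentially compact, and diagonalizing over the countably many indices $k$ is unproblematic). The subtlety is that weak limits need not be orthonormal — norms can drop in the limit. The standard remedy, which I would follow, is to pass to the action on $\bu$: replace the weak limits of the $\bV^{(i),n}_k$ by their genuine role in the projection. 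Concretely, the key observation is that $\Proj_{\VV}\bu$ only depends on $\VV$ through the subspace it spans mode by mode, and one works instead with the coefficient tensor $a^{(n)}_{\kk{k}}=\langle\bu,\bV^{(1),n}_{k_1}\otimes\cdots\otimes\bV^{(m),n}_{k_m}\rangle$. One shows $a^{(n)}\to a$ entrywise along the subsequence, that $\norm{a^{(n)}}\to\norm{\bu}-\text{(optimal error)}$ is bounded, and that the limiting data reassemble into a tensor $\bv=\sum_{\kk{k}}a_{\kk{k}}\,\bV^{(1)}_{k_1}\otimes\cdots\otimes\bV^{(m)}_{k_m}\in\Tcal(\rr{r})$ — here using that each $\rank_i(\bv)\le r_i$ because $\bv$'s $i$th matricization has range inside the (at most $r_i$-dimensional) closure of $\linspan\{\bV^{(i)}_k\}_k$. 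Finally, lower semicontinuity of $\norm{\bu-\cdot}$ under the relevant (weak) convergence gives $\norm{\bu-\bv}\le\liminf_n\norm{\bu-\Proj_{\VV^{(n)}}\bu}$, which by minimality is the infimum, so $\bv$ is the sought minimizer.

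The main obstacle is exactly the loss of orthonormality (equivalently, loss of mass) under weak limits: one must argue that any mass escaping to infinity does not help reduce the approximation error, so that the weak limit still captures the optimal energy $\sum_{\kk{k}}\abs{a_{\kk{k}}}^2$. This is where one needs the Hilbert-Schmidt / compactness structure of $T^{(i)}_\bu$ — its singular values are square-summable, so the tail contribution of high singular vectors to $\norm{\Proj_{\VV^{(n)}}\bu}^2$ is uniformly small, which rules out concentration-compactness-type loss in the limit. Since Theorem~\ref{thm:tucker_bestapprox} is quoted from \cite{Uschmajew:10,Falco:10,Hackbusch:12}, I would not reprove every estimate but would cite these for the technical weak-closedness lemma; the contribution here is the clean reduction, via \eqref{T-rigid}, to the rigid classes and the observation that $\Proj_\VV\bu$ is the within-class minimizer for fixed $\VV$. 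One should also note the hypothesis $r_i\le\rank_i(\bu)$ is only a normalization (if $r_i\ge\rank_i(\bu)$ then $\bu$ itself already has $\rank_i\le r_i$, so the minimum in that mode is trivially $0$ and one reduces to a smaller $m$).
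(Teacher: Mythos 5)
A preliminary remark: the paper itself gives no proof of Theorem \ref{thm:tucker_bestapprox} --- it is quoted from \cite{Uschmajew:10,Falco:10,Hackbusch:12} --- so your proposal can only be compared with the standard argument in those references, which is the direct method in Hilbert space: take a minimizing sequence $\bw_n\in\Tcal(\rr{r})$ (bounded, since $\norm{\bw_n}\le\norm{\bu}+\norm{\bu-\bw_n}$), extract a weakly convergent subsequence, show that $\Tcal(\rr{r})$ is weakly sequentially closed, and conclude by weak lower semicontinuity of the norm. Your frame-based variant is workable, but the ``main obstacle'' you single out --- loss of orthonormality or mass of the mode frames under weak limits --- is in fact harmless, and neither the Hilbert--Schmidt tail heuristic nor the citation you propose to fall back on is needed for it: whatever weak limit $\bv$ of $\Proj_{\VV^{(n)}}\bu$ you obtain, its $i$-th matricization has range in the span of at most $r_i$ (possibly degenerate) vectors, so $\bv\in\Tcal(\rr{r})$, and weak lower semicontinuity already gives $\norm{\bu-\bv}\le\liminf_n\norm{\bu-\Proj_{\VV^{(n)}}\bu}$; escaping mass can only shrink the competitor, never increase its distance to $\bu$ beyond the liminf. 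The point you do gloss over is a different one: when some $r_i=\infty$, identifying the weak limit with the term-by-term limit $\sum_{\kk{k}}a_{\kk{k}}\bV^{(1)}_{k_1}\otimes\cdots\otimes\bV^{(m)}_{k_m}$ requires interchanging a weak limit with an infinite sum. This is most easily avoided by dropping the frames altogether: modes with $r_i=\infty$ impose no constraint, and for finite $r_i$ the condition $\rank_i(\bw)\le r_i$ is equivalent to the vanishing of all $(r_i+1)\times(r_i+1)$ minors of $T^{(i)}_{\bw}$, which is preserved under entrywise (hence weak) convergence; this yields weak closedness of $\Tcal(\rr{r})$ directly. With that substitution your skeleton is complete; as written, it is essentially correct in outline but misidentifies the delicate step and partly defers it to the literature.
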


{The matricization $T^{(i)}_\bu$ of a given $\bu\in\ell_2(\nabla^d)$, defined in \eqref{eq:modei_matricization},
allows one to invoke the SVD or Hilbert-Schmidt decomposition.}
{By the spectral theorem, for each $i$ there exist a nonnegative real sequence
$(\sigma^{(i)}_n)_{n\in\N}$, where $\sigma^{(i)}_n$ are the eigenvalues of
{$\bigl((T^{(i)}_{\bu})^* T^{(i)}_{\bu}\bigr)^{1/2}$}, as well as orthonormal bases $\bU^{(i)}=\{
\mathbf{U}^{(i)}_n\}_{n\in \N}$ for a subspace of $\spl{2}(\nabla^{d_i})$ and $\{
\mathbf{V}^{(i)}_n\}_{n\in\N}$ for $\spl{2}(\nabla^{d-d_i})$
(again tacitly assuming that $\bU^{(i)}_n=\bV^{(i)}_n =0$ for
$n> \dim\,{\rm range}(T_\bu^{(i)})$), such that
\begin{equation}\label{eq:hilbertschmidt_decomp}
 T^{(i)}_{\bu} = \sum_{n\in\N} \sigma^{(i)}_n \langle \mathbf{V}^{(i)}_n, \cdot\rangle
 \mathbf{U}^{(i)}_n \,.
\end{equation}
The $\sigma^{(i)}_k$ are   referred to as \emph{mode-$i$ singular values}.}

To simplify notation {in a summary of the properties of the particular orthonormal mode frames  $\bU^{(i)}$, $i=1,\ldots,m$, 
defined by \eqref{eq:hilbertschmidt_decomp}, 
we define for any vector} $\kk{x} = (x_i)_{i=1,\ldots,m}$ 
and for $i\in\zinterval{1}{m}$, 
\begin{equation}\label{eq:delentry}
\begin{aligned}
  \kk{\check x}_i &:= (x_1, \ldots, x_{i-1},
  x_{i+1},\ldots,x_m)  \,, \\
  \kk{\check x}_i|_y &:= (x_1, \ldots, x_{i-1}, y,
  x_{i+1},\ldots,x_m)
\end{aligned}
\end{equation}
to refer to the corresponding vector with entry $i$ deleted or
entry $i$ replaced by $y$, respectively.
We shall also need the auxiliary quantities
\begin{equation} 
 \label{eq:tensor_nmodemat}
a^{(i)}_{pq} := \sum_{\kk{\check k}_i\in \mathsf{K}_{m-1}(\rr{\check
r}_i)}a_{\kk{\check k}_i|_p}a_{\kk{\check k}_i|_q}\,, 
\end{equation}
derived from the core tensor, where $i\in\{1,\ldots,m\}$ and $p,q\in
\zinterval{1}{r_i}$.

\subsubsection{Higher-Order Singular Value Decomposition}

{The representation \eqref{eq:hilbertschmidt_decomp} is the main building block
of the}
\emph{higher-order singular value decomposition} (HOSVD) \cite{Lathauwer:00},
for the Tucker tensor format \eqref{eq:tuckerd}. 
In the following theorem, we summarize its properties 
in the more general case of infinite-dimensional sequence spaces, where the
singular value decomposition is replaced by the spectral theorem for compact
operators. These facts could also be extracted from the treatment in
\cite[Section 8.3]{Hackbusch:12}.
 
\begin{theorem}
\label{thm:hosvd_properties}
For any $\mathbf{u}\in \spl{2}(\nabla^d)$ 
{the} orthonormal mode frames $\{ \mathbf{U}^{(i)}_k\}_{k\in\N}$, $i=1,\ldots,m$, with
$\mathbf{U}^{(i)}_k\in\spl{2}(\nabla^{d_i})${, defined by  \eqref{eq:hilbertschmidt_decomp}},
{and the corresponding core tensor $\mathbf{a}$ with entries $\ba_\kk{k} = \langle \bu,\UU_\kk{k}\rangle$, have the following properties:}
\begin{enumerate}[\rm (i)]
  \item For all $i\in\{1,\ldots,m\}$ we have
  $(\sigma^{(i)}_k)_{k\in\N}\in\spl{2}(\N)$, and $\sigma^{(i)}_k \geq
  \sigma^{(i)}_{k+1}\geq 0$ for all $k\in\N$, {where $\sigma^{(i)}_k$ are the mode-$i$ singular values in \eqref{eq:hilbertschmidt_decomp}}.
  \item For all $i\in\{1,\ldots,m\}$ and all $p,q\in\N$, we have $a^{(i)}_{pq}
  = \bigabs{\sigma^{(i)}_p}^2 \delta_{pq}$ {where the $a^{(i)}_{pq}$ are defined by \eqref{eq:tensor_nmodemat}}.
  \item For each $\rr{r} \in\N^m_0$, we have
   \begin{equation}\label{eq:tensor_hosvd_errest}
\Bignorm{\mathbf{u} - \sum_{\kk{k}\in\KK{m}(\rr{r})}
  a_\kk{k} \UU_\kk{k} } \leq \Bigl(\sum_{i=1}^m \sum_{k = r_i + 1}^{\infty}
\abs{\sigma^{(i)}_k}^2\Bigr)^{\frac{1}{2}} \leq \sqrt{m}
\inf_{\rank(\mathbf{w})\leq \rr{r}} \norm{\mathbf{u} - \mathbf{w}}
\,.
\end{equation}
\end{enumerate}
If in addition
$\supp \mathbf{u} \subseteq \Lambda_1\times \cdots\times \Lambda_m
\subset\nabla^d$ for finite $\Lambda_i\subset\nabla^{d_i}$, then $\supp
\mathbf{U}^{(i)}_k \subseteq \Lambda_i$ and we have $\supp \mathbf{a} \subseteq
\KK{d}(\rr{\bar r})$ with $\rr{\bar r}\in\N_0^m$ satisfying $\bar r_i
\leq \#\Lambda_i$ for $i=1,\ldots,m$.
\end{theorem}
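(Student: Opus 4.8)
The plan is to transcribe the classical finite‑dimensional HOSVD argument of \cite{Lathauwer:00} into the Hilbert–Schmidt setting, exploiting throughout that for each $i$ the mode‑$i$ matricization $\bv\mapsto T^{(i)}_\bv$ is an isometry from $\spl{2}(\nabla^d)$ onto the Hilbert–Schmidt operators $\spl{2}(\nabla^{d-d_i})\to\spl{2}(\nabla^{d_i})$, and that postcomposing $T^{(i)}_\bv$ with an orthogonal projection on $\spl{2}(\nabla^{d_i})$ is the same as acting on $\bv$ in the $i$-th mode. Since $\norm{T^{(i)}_\bu}_{\mathrm{HS}}=\norm{\bu}<\infty$, the operator $T^{(i)}_\bu(T^{(i)}_\bu)^{*}$ is compact, self‑adjoint and positive, so the spectral theorem yields \eqref{eq:hilbertschmidt_decomp} with $(\abs{\sigma^{(i)}_k}^2)_k$ its eigenvalue sequence; ordering these decreasingly gives (i), the $\spl{2}$‑summability being just $\sum_k\abs{\sigma^{(i)}_k}^2=\norm{T^{(i)}_\bu}_{\mathrm{HS}}^2$. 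Moreover every mode‑$i$ fibre of $\bu$ is, literally, a column of $T^{(i)}_\bu$ and hence lies in $\range T^{(i)}_\bu=\overline{\linspan}\{\bU^{(i)}_k\}_k$, so the mode projections onto the $\bU^{(i)}$ fix $\bu$; consequently $\bu=\sum_{\kk{k}\in\N^m}a_\kk{k}\UU_\kk{k}$ in $\spl{2}(\nabla^d)$ with $a_\kk{k}=\langle\bu,\UU_\kk{k}\rangle$ and $\sum_\kk{k}\abs{a_\kk{k}}^2=\norm{\bu}^2$.

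For (ii) I would matricize this expansion in mode $i$. Since $T^{(i)}_{\UU_\kk{k}}=\bU^{(i)}_{k_i}\langle\bW^{(i)}_{\kk{\check k}_i},\cdot\rangle$ with $\bW^{(i)}_{\kk{\check k}_i}:=\bigotimes_{j\neq i}\bU^{(j)}_{k_j}$, and $\{\bW^{(i)}_{\kk{\check k}_i}\}$ is orthonormal in $\spl{2}(\nabla^{d-d_i})$, one obtains $T^{(i)}_\bu=\sum_p\bU^{(i)}_p\langle\bG^{(i)}_p,\cdot\rangle$ with $\bG^{(i)}_p:=\sum_{\kk{\check k}_i}a_{\kk{\check k}_i|_p}\bW^{(i)}_{\kk{\check k}_i}$, whence $\langle\bG^{(i)}_p,\bG^{(i)}_q\rangle=a^{(i)}_{pq}$. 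Therefore $T^{(i)}_\bu(T^{(i)}_\bu)^{*}=\sum_{p,q}a^{(i)}_{pq}\,\bU^{(i)}_p\langle\bU^{(i)}_q,\cdot\rangle$; comparing with \eqref{eq:hilbertschmidt_decomp}, which gives $T^{(i)}_\bu(T^{(i)}_\bu)^{*}=\sum_n\abs{\sigma^{(i)}_n}^2\bU^{(i)}_n\langle\bU^{(i)}_n,\cdot\rangle$, and evaluating both representations on $\bU^{(i)}_q$ (an orthonormal system), I read off $a^{(i)}_{pq}=\abs{\sigma^{(i)}_p}^2\delta_{pq}$.

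For (iii) I would write the box truncation as $\sum_{\kk{k}\in\KK{m}(\rr{r})}a_\kk{k}\UU_\kk{k}=\Proj_1\cdots\Proj_m\bu$, where $\Proj_i$ is the orthogonal projection acting in the $i$-th mode onto $\linspan\{\bU^{(i)}_k:k\leq r_i\}$ (and the identity elsewhere); the $\Proj_i$ commute and are contractions. Telescoping gives
\[
\bu-\Proj_1\cdots\Proj_m\bu=\sum_{i=1}^{m}\Proj_1\cdots\Proj_{i-1}(\id-\Proj_i)\bu .
\]
The key point is that these $m$ summands are pairwise orthogonal: for $i<j$, moving the leading factor $\Proj_1\cdots\Proj_{i-1}$ off the $i$-th summand onto the $j$-th by self‑adjointness and idempotency leaves a term that (the projections commuting) still carries a factor $\Proj_i$, which annihilates $(\id-\Proj_i)\bu$. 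Pythagoras together with $\norm{\Proj_i}\leq1$ then yields $\norm{\bu-\Proj_1\cdots\Proj_m\bu}^2\leq\sum_{i=1}^{m}\norm{(\id-\Proj_i)\bu}^2$, and $\norm{(\id-\Proj_i)\bu}^2$ is the squared Hilbert–Schmidt norm of $T^{(i)}_\bu$ with its first $r_i$ singular terms removed, i.e.\ $\sum_{k>r_i}\abs{\sigma^{(i)}_k}^2$; this is the first inequality of \eqref{eq:tensor_hosvd_errest}. For the second, Schmidt's (Eckart–Young) theorem for $T^{(i)}_\bu$ identifies $\bigl(\sum_{k>r_i}\abs{\sigma^{(i)}_k}^2\bigr)^{1/2}$ with $\min\{\norm{\bu-\bw}:\rank_i(\bw)\leq r_i\}\leq\inf\{\norm{\bu-\bw}:\rank(\bw)\leq\rr{r}\}$, the latter constraint being more restrictive; summing $m$ copies of the square and taking the root produces the factor $\sqrt m$. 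Finally, if $\supp\bu\subseteq\Lambda_1\times\cdots\times\Lambda_m$ with finite $\Lambda_i$, then $T^{(i)}_\bu$ has all nonzero rows indexed by $\Lambda_i$, so $\range T^{(i)}_\bu\subseteq\spl{2}(\Lambda_i)$, giving $\supp\bU^{(i)}_k\subseteq\Lambda_i$ and $\rank_i(\bu)=\dim\range T^{(i)}_\bu\leq\#\Lambda_i$; since $a_\kk{k}=0$ unless $k_i\leq\rank_i(\bu)$ for every $i$, the core tensor is supported in $\KK{m}(\rr{\bar r})$ with $\bar r_i\leq\#\Lambda_i$. The only step that needs genuine care beyond this bookkeeping is the orthogonality of the telescoping differences — that is exactly what upgrades the naive triangle‑inequality bound $\sum_i(\sum_{k>r_i}\abs{\sigma^{(i)}_k}^2)^{1/2}$ to the sharper $\spl{2}$‑type bound in \eqref{eq:tensor_hosvd_errest} — while compactness of $T^{(i)}_\bu$ makes the spectral theorem do everything the finite‑dimensional SVD does.
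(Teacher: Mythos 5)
Your proof is correct and follows essentially the same route as the paper: the paper establishes the Hilbert--Schmidt expansion and orthonormality of the $\UU_\kk{k}$ and then defers properties (i)--(iii) to the classical HOSVD arguments of \cite{Lathauwer:00} (cf.\ also \cite{Hackbusch:12}), which is exactly what you carry out explicitly (spectral theorem for $T^{(i)}_\bu(T^{(i)}_\bu)^*$, all-orthogonality of the core tensor, commuting mode projections with orthogonal telescoping, and Eckart--Young per matricization). Nothing to correct.
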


\begin{proof}
The representation \eqref{eq:hilbertschmidt_decomp} converges in the
Hilbert-Schmidt norm and, as a consequence, we have
\begin{equation}
\label{eq:hilbertschmidt_decomp_matrix}  
\mathbf{u} = \Bigl( \sum_{n\in\N} 
 \sigma^{(i)}_n \mathbf{U}^{(i)}_{\nu_i,n}  \mathbf{V}^{(i)}_{\check\nu_i,n} 
\Bigr)_{\nu\in\nabla^d} \,, 
\end{equation} 
with convergence in $\spl{2}(\nabla^d)$. 
Furthermore, $\{\UU_\kk{n}\}_{\kk{n}\in\N^m}$ with $\UU_\kk{n} :=
\bigotimes_{j=1}^m \mathbf{U}^{(j)}_{n_j}$ is an orthonormal 
system in $\spl{2}(\nabla^d)$ (spanning a strict subspace 
of $\spl{2}(\nabla^d)$ when $\abs{\rank(\bu)}_\infty <\infty$). {For
$a_\kk{n} = \langle \mathbf{u}, \UU_\kk{n}\rangle$
we have thus shown} $\mathbf{a} = (a_\kk{n})\in\spl{2}(\N^m)$ and $\mathbf{u} =
\sum_{\kk{n}\in\N^m} a_\kk{n} \UU_\kk{n}$.
The further properties of the expansion can now be obtained
along the lines of \cite{Lathauwer:00}, see also
\cite{Hackbusch:12,Bachmayr:12}.
\end{proof}

In what follows we shall denote by
\beqn
\label{UU-T}
\UU(\bu) = \UU^{\Tcal}(\bu) := \{\bU^{(i)}: i=1,\ldots,m, \, \mbox{generated by HOSVD}\}
\eeqn
the particular system of orthonormal mode frames generated for a given $\bu$ by HOSVD.
It will occasionally be important to identify the specific tensor format to which a given system of mode frames refers, for which we use a corresponding superscript, such as in $\UU^{\Tcal}$ for the Tucker format.

Property (iii) in Theorem
\ref{thm:hosvd_properties} leads to a simple procedure for truncation to lower
multilinear ranks with an
explicit error estimate in terms of the mode-$i$ singular values. In this
manner, one does not necessarily obtain the best approximation for prescribed
rank, but the approximation is 
quasi-optimal in the sense that the error is at most by a factor $\sqrt{m}$
larger than the error of best approximation with the same multilinear rank.

{We now introduce the notation 
\begin{equation}
\label{eq:tucker_err}
  \err_{\rr{\tilde r}}(\bu) = \err^\Tcal_{\rr{\tilde r}}(\bu) :=
  \Bigl( \sum_{i = 1}^m 
    \sum_{k = \tilde r_{i}+ 1}^{\rank_i(\mathbf{u})} \bigabs{\sigma^{(i)}_{k}}^2 \Bigr)^{\frac12} \,,
    \quad  \rr{\tilde r} \in \N_0^m \,. 
\end{equation}
This quantity plays the role of a computable error estimate, as made explicit in the following direct consequence of {Theorem} \ref{thm:hosvd_properties}.}

\begin{corollary}
\label{cor:tensor_recompression_est}
For an HOSVD of $\mathbf{u} \in \spl{2}(\nabla^d)$, as in Theorem
  \ref{thm:hosvd_properties}, and for $\rr{\tilde r}$ with $0\leq \tilde r_i
  \leq \rank_i(\mathbf{u})$,    
we have
\begin{equation*}  
\|\mathbf{u} - \Proj_{\UU(\bu),\rr{\tilde r}}(\bu)\| \leq 
   \err^\Tcal_{\rr{\tilde r}} (\bu)  \leq \sqrt{m} \inf_{\mathbf{w}\in\Tcal(\rr{r})} 
  \|\mathbf{u} - \mathbf{w}\| \,,  
\end{equation*}
{where $\Proj_{\UU(\bu),\rr{\tilde r}}$ 
is defined in \eqref{oproj}. }
\end{corollary}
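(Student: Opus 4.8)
The goal is Corollary~\ref{cor:tensor_recompression_est}, which simply repackages Theorem~\ref{thm:hosvd_properties}(iii) in the notation of the projection operator $\Proj_{\UU(\bu),\rr{\tilde r}}$ and the error functional $\err^\Tcal_{\rr{\tilde r}}$. The plan is to observe that the ``box-truncation'' projection $\Proj_{\UU(\bu),\rr{\tilde r}}(\bu)$ defined in \eqref{oproj}, applied with the HOSVD mode frames $\UU(\bu)$, is exactly the partial sum $\sum_{\kk{k}\in\KK{m}(\rr{\tilde r})} a_\kk{k}\UU_\kk{k}$ appearing in \eqref{eq:tensor_hosvd_errest}, since $a_\kk{k} = \langle\bu,\UU_\kk{k}\rangle$ for the HOSVD core tensor. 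Hence the left-hand quantity $\|\bu - \Proj_{\UU(\bu),\rr{\tilde r}}(\bu)\|$ is literally the left-hand side of \eqref{eq:tensor_hosvd_errest}.

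Next I would identify the middle term. By definition \eqref{eq:tucker_err}, $\err^\Tcal_{\rr{\tilde r}}(\bu) = \bigl(\sum_{i=1}^m \sum_{k=\tilde r_i+1}^{\rank_i(\bu)} |\sigma^{(i)}_k|^2\bigr)^{1/2}$. This agrees with the quantity $\bigl(\sum_{i=1}^m \sum_{k=\tilde r_i+1}^{\infty}|\sigma^{(i)}_k|^2\bigr)^{1/2}$ in the first inequality of \eqref{eq:tensor_hosvd_errest}, because by Theorem~\ref{thm:hosvd_properties}(i) (and the convention $\bU^{(i)}_k = 0$, hence $\sigma^{(i)}_k = 0$, for $k > \dim\range T^{(i)}_\bu = \rank_i(\bu)$) all the extra summands with $k > \rank_i(\bu)$ vanish. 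So the first inequality $\|\bu - \Proj_{\UU(\bu),\rr{\tilde r}}(\bu)\| \le \err^\Tcal_{\rr{\tilde r}}(\bu)$ is precisely the first inequality in \eqref{eq:tensor_hosvd_errest}.

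For the second inequality, I would again invoke \eqref{eq:tensor_hosvd_errest}, which gives $\bigl(\sum_{i=1}^m\sum_{k=r_i+1}^\infty |\sigma^{(i)}_k|^2\bigr)^{1/2} \le \sqrt{m}\,\inf_{\rank(\bw)\le\rr{\tilde r}}\|\bu - \bw\|$, with $\rr{r}$ there playing the role of $\rr{\tilde r}$ here. Recalling from \eqref{eq:tucker_tensorset} that $\Tcal(\rr{\tilde r}) = \{\bw : \rank_i(\bw)\le \tilde r_i,\ i=1,\ldots,m\}$, the infimum over $\rank(\bw)\le\rr{\tilde r}$ is the same as the infimum over $\bw\in\Tcal(\rr{\tilde r})$, so the bound becomes $\err^\Tcal_{\rr{\tilde r}}(\bu) \le \sqrt{m}\,\inf_{\bw\in\Tcal(\rr{\tilde r})}\|\bu-\bw\|$, which is the claimed second inequality. (The statement as written uses $\rr{r}$ in the infimum; under the evident convention $\rr{r} = \rr{\tilde r}$ this is the same bound, and one applies Theorem~\ref{thm:tucker_bestapprox} to note the infimum is attained, though attainment is not needed for the inequality.)

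There is essentially no obstacle here: the corollary is a pure notational restatement, and the only points requiring a word of care are (a) matching $\Proj_{\UU(\bu),\rr{\tilde r}}(\bu)$ with the HOSVD partial sum, which follows directly from the definition \eqref{oproj} together with $a_\kk{k} = \langle\bu,\UU_\kk{k}\rangle$, and (b) checking that the truncated sum over $k \le \rank_i(\bu)$ in \eqref{eq:tucker_err} coincides with the full sum over $k \in \N$ in \eqref{eq:tensor_hosvd_errest}, which holds because the omitted singular values are zero. The proof is therefore a two-line invocation of Theorem~\ref{thm:hosvd_properties}(iii).
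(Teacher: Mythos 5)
Your proposal is correct and follows the same route as the paper, which states the corollary as a direct consequence of Theorem \ref{thm:hosvd_properties}(iii) without further argument; your identification of $\Proj_{\UU(\bu),\rr{\tilde r}}(\bu)$ with the HOSVD partial sum, the vanishing of singular values beyond $\rank_i(\bu)$, and the reading of $\rr{r}$ as $\rr{\tilde r}$ in the infimum are exactly the details the paper leaves implicit.
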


{While projections to subspaces spanned by the $\UU_\kk{k}(\bu)$, $\kk{k}\in\KK{m}(\rr{r})$, do in general not realize 
the best approximation from $\Tcal(\rr{r})$ (only from $\Tcal(\UU(\bu),\rr{r})$), exact best approximations are still orthogonal projections
based on suitable mode frames.
\begin{corollary}
\label{cor:tucker_projbest}
For $\bu\in\spl{2}(\nabla^d)$ and $\rr{r} = (r_i)_{i=1}^m\in \N_0^m$
with $0\leq r_i\leq \rank_i(\bu)$, $i=1,\ldots,m$, there exists an orthonormal  
  mode frame system $\bar\UU(\bu, \rr{r})$ such that
$$  
\norm{\bu - \Proj_{\bar\UU(\bu, \rr{r})} \bu} =
\min_{\bw\in\tucker{\rr{r}}}\norm{\bu - \bw},   
$$
with $\Proj_{\bar\UU(\bu, \rr{r})}$ given by \eqref{P-T}.
\end{corollary}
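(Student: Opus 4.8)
The plan is to build the required mode frame system out of a genuine best approximation and its HOSVD. By Theorem~\ref{thm:tucker_bestapprox} there exists $\bv\in\Tcal(\rr{r})$ with $\norm{\bu-\bv}=\min_{\bw\in\Tcal(\rr{r})}\norm{\bu-\bw}$, and by definition of $\Tcal(\rr{r})$ we have $\rank_i(\bv)\leq r_i$ for each $i$. I would apply the HOSVD of Theorem~\ref{thm:hosvd_properties} to $\bv$ and set $\bar\UU(\bu,\rr{r}):=\UU^{\Tcal}(\bv)$, i.e.\ the orthonormal mode frames $\bU^{(i)}(\bv)$ with $\rank_i(\bv)\ (\leq r_i)$ columns each (padding by zero columns if one insists on exactly $r_i$ of them, using the convention $\bU^{(i)}_k=0$ for $k>\rank_i(\bv)$). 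Property~(iii) of Theorem~\ref{thm:hosvd_properties}, applied with the rank vector $\rank(\bv)$, yields a vanishing truncation error, whence $\bv=\Proj_{\bar\UU(\bu,\rr{r})}\bv$; in particular $\bv$ lies in $W:=\range\Proj_{\bar\UU(\bu,\rr{r})}$, the subspace spanned by the $\bar\UU_\kk{k}$ onto which this orthogonal projection maps.

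Next I would observe that $W\subseteq\Tcal(\rr{r})$. Indeed $W=\{\Proj_{\bar\UU(\bu,\rr{r})}\bv':\bv'\in\spl{2}(\nabla^d)\}$ is exactly the rigid Tucker class associated with the orthonormal mode frame system $\bar\UU(\bu,\rr{r})$, whose $i$th frame has at most $r_i$ columns; by the inclusion recorded right after \eqref{T-rigid} this class is contained in $\Tcal(\rr{r})$. In particular $\Proj_{\bar\UU(\bu,\rr{r})}\bu\in W\subseteq\Tcal(\rr{r})$, so this projection is an admissible competitor in the minimization over $\Tcal(\rr{r})$.

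The claim then follows by combining two elementary optimality facts. Since $\Proj_{\bar\UU(\bu,\rr{r})}\bu$ is the best approximation to $\bu$ from the subspace $W$ (orthogonal projection onto a subspace of a Hilbert space) and $\bv\in W$, we get $\norm{\bu-\Proj_{\bar\UU(\bu,\rr{r})}\bu}\leq\norm{\bu-\bv}$; since $\bv$ is a best approximation from $\Tcal(\rr{r})$ and $\Proj_{\bar\UU(\bu,\rr{r})}\bu\in\Tcal(\rr{r})$, we get $\norm{\bu-\bv}\leq\norm{\bu-\Proj_{\bar\UU(\bu,\rr{r})}\bu}$. Hence both inequalities are equalities and $\Proj_{\bar\UU(\bu,\rr{r})}\bu$ attains $\min_{\bw\in\Tcal(\rr{r})}\norm{\bu-\bw}$, as asserted, with $\Proj_{\bar\UU(\bu,\rr{r})}$ given by \eqref{P-T}. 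The only step requiring a little care is the exact reproduction $\bv=\Proj_{\bar\UU(\bu,\rr{r})}\bv$ of a tensor of multilinear rank $\leq\rr{r}$, including the case of infinite ranks; but this is precisely the zero-error instance of the quasi-optimality bound \eqref{eq:tensor_hosvd_errest}, so no genuine obstacle remains.
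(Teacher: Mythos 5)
Your proof is correct and follows essentially the same route as the paper: take a best approximation $\bar\bu\in\Tcal(\rr{r})$ from Theorem~\ref{thm:tucker_bestapprox}, set $\bar\UU(\bu,\rr{r}):=\UU(\bar\bu)$ from its HOSVD, and conclude. You merely spell out the sandwich argument (the projection's range contains $\bar\bu$ and lies in $\Tcal(\rr{r})$) that the paper leaves implicit, and this filling-in is accurate.
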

\begin{proof}
By Theorem \ref{thm:tucker_bestapprox}, a best approximation of  
ranks $\rr{r}$ for $\bu$, $$  \bar\bu \in \argmin\{ \norm{\bu - \bv} \colon
\rank_\alpha(\bu) \leq r_\alpha \} \,, $$ exists.
Defining $\bar\UU(\bu, \rr{r}):= \UU(\bar\bu)$ 
as the   orthonormal mode frame system
for $\bar\bu$, given by the HOSVD, we obtain the assertion. 
\end{proof}
}

\begin{remark}
\label{rmrk:hosvd_complexity}
Suppose that for a finitely supported 
vector $\mathbf{u}$ on $\nabla^d$, we have a possibly redundant 
representation
$$  \mathbf{u} = \sum_{\kk{k}\in\KK{m}(\rr{\tilde r})} \tilde a_\kk{k} 
   \bigotimes_{i=1}^m \mathbf{\tilde U}^{(i)}_{k_i} \,,  $$
where the vectors $\mathbf{\tilde U}^{(i)}_{k}$, $k=1,\ldots,\tilde r_i$
may be linearly dependent. Then by standard linear algebra procedures,
we can obtain a HOSVD of $\mathbf{u}$ with a number of arithmetic
operations that can be estimated by
\begin{equation}
 C m \abs{\rr{\tilde r}}_\infty^{m+1} 
 +  C \abs{\rr{\tilde r}}_\infty^2 \sum_{i=1}^m \#\supp_i(\mathbf{u}) \,.
\end{equation}
where  $C>0$ is an absolute constant (see, e.g., \cite{Hackbusch:12}).
\end{remark}

\subsection{The Hierarchical Tucker Format}\label{ssect:htucker}
The Tucker format as it stands, in general, still gives rise to an increase of degrees of freedom 
that is exponential in $d$.
One way to mitigate the curse of dimensionality is to further decompose the core tensor ${\bf a}$ in \eref{eq:tuckerd}.
We now briefly formulate the relevant notions concerning the {\em hierarchical {Tucker format}} in the present
sequence space context, following essentially the developments in \cite{Hackbusch:09-1,Grasedyck:10}, see also
\cite{Hackbusch:12}.

\subsubsection{Dimension Trees}

\begin{definition}\label{def:htucker_dimtree}
 Let $m\in\N$, $m\geq 2$. A set $\hdimtree{m} \subset 2^{\{ 1,\ldots,m\}}$ is
 called a {(binary)} dimension tree if the following hold:
 \begin{enumerate}[{\rm(i)}]
   \item $\{1,\ldots,m\} \in \hdimtree{m}$ and for each $i\in\{1,\ldots,m\}$, we have
   $\{i\} \in \hdimtree{m}$.
   \item Each $\alpha \in \hdimtree{m}$ is either a singleton or there exist unique
   disjoint $\alpha_1, \alpha_2 \in \hdimtree{m}$, called {\em children} of $\alpha$,   
such that $\alpha = \alpha_1 \cup \alpha_2$. 
 \end{enumerate}
Singletons $\{ i\} \in \hdimtree{m}$ are referred to as \emph{leaves}, $$\hroot{m} := \{1,\ldots,m\}$$
as \emph{root}, and elements
of $\Ical(\hdimtree{m}) := \hdimtree{m}\setminus\bigl\{\hroot{m}
,\{1\},\ldots,\{m\} \bigr\}$ as \emph{interior nodes}.
The set of leaves is denoted by $\leaf{m}$, where we
additionally set $\nonleaf{m} :=
\hdimtree{m}\setminus\mathcal{L}(\mathcal{D}_m) =
\Ical(\hdimtree{m})\cup\{ \hroot{m}\}$. When an enumeration of $\leaf{m}$ is
required, we shall always assume the ascending order with respect to the indices,
 i.e., in the form $\{\{1\},\ldots, \{m\}\}$.
\end{definition}

It will be convenient to introduce the two functions 
$$
\child{i}: \cD_m\setminus \cL(\cD_m)\to \cD_m\setminus \{\hroot{m}\},\quad 
\child{i}(\alpha):= \alpha_i\,,\qquad i=1,2\,,
$$
producing the ``left'' and ``right'' children of a non-leaf node $\alpha {\in \mathcal{N}(\cD_m)}$ which, in view of
Definition \ref{def:htucker_dimtree}, are well-defined up to their order, which we fix by the condition $\min \alpha_1 < \min\alpha_2$.

Note that for a binary dimension tree as defined above, $\# \hdimtree{m} = 2m-1$
and $\#\nonleaf{m} = m-1$.

\begin{remark}
The restriction to binary trees in Definition \ref{def:htucker_dimtree} is not
necessary, but leads to the most favorable complexity estimates for algorithms
operating on the resulting tensor format. With this restriction dropped, the Tucker format
\eqref{eq:tuckerd} can be treated in the same framework, with the $m$-ary
dimension tree consisting only of root and leaves, i.e., $\bigl\{
\hroot{m}, \{1\},\ldots,\{m\} \bigr\}$. {In principle, all subsequent results carry over to more general dimension trees (see \cite[Section 5.2]{FalHaNou}).}
\end{remark}

\begin{definition}
We shall refer to a family 
$$\UU = \bigl\{ \bU^{(\alpha)}_k \in
\spl{2}(\nabla^{\sum_{j\in\alpha} d_j}) \,\colon\, \alpha\in \hdimtree{m}\setminus\{\hroot{m}\bigr\},
k=1,\ldots,k_\alpha \} \,, $$ 
with $k_\alpha\in\N\cup \{\infty\}$ for each $\alpha\in\hdimtree{m}\setminus\{\hroot{m}\}$,
as \emph{hierarchical mode frames}.
In addition, these are called \emph{orthonormal} if
for all $\alpha\in\hdimtree{m}\setminus\{\hroot{m}\}$, we have 
$\langle \bU^{(\alpha)}_i, \bU^{(\alpha)}_j\rangle = \delta_{ij}$ for
$i,j=1,\ldots,k_\alpha$, and \emph{nested} if
\begin{eqnarray*}
&&  \overline{\linspan}\{ \bU^{(\alpha)}_k\colon
k=1,\ldots,k_\alpha \} \\ 
&& \quad\quad \subseteq
 \overline{\linspan}\{ \bU^{(\leftchild(\alpha))}_k\colon
k=1,\ldots,k_{\leftchild(\alpha)} \}  
\otimes
\overline{\linspan}\{ \bU^{(\rightchild(\alpha))}_k\colon
k=1,\ldots,k_{\rightchild{\alpha}} \} \,. 
\end{eqnarray*}
As for the Tucker format, 
we set $\mathbf{U}^{(i)} := \mathbf{U}^{(\{i\})}$,
and for $\kk{k}\in\N^m$ we {retain the notation}
$$     
\UU_\kk{k} := \bigotimes_{i=1}^m \bU^{(i)}_{k_i} \,. 
$$
\end{definition}
Again to express that $\UU$ is associated with the hierarchical format we sometimes write $\UU^{\Hcal}$.
{Of course, $\UU^{\Hcal}$ depends on the dimension tree $\cD_m$, which will be kept fixed in what follows.}

{To define hierarchical tensor classes and to construct specific $\bu$-dependent hierarchical mode frames  one can proceed as for the Tucker format.}
 Let $\hdimtree{m}$ be a dimension tree, let $\alpha\in\Ical(\hdimtree{m})$ be an
interior node, and $\beta := \{1,\ldots,m\}\setminus \alpha$.
For $\mathbf{u} \in \spl{2}(\nabla^d)$, we define the
Hilbert-Schmidt operator
\begin{equation}
\label{eq:matricization_def} 
 T^{(\alpha)}_\bu \colon
\spl{2}(\nabla^{\sum_{i\in\beta} d_i}) \to \spl{2}(\nabla^{\sum_{i\in\alpha} d_i})\,,\;
\mathbf{c} \mapsto \Bigl(
\sum_{(\nu_i)_{i\in\beta}} u_{\nu} c_{(\nu_i)_{i\in\beta}}
\Bigr)_{(\nu_i)_{i\in\alpha}} \,,
\end{equation}
and set
\begin{equation*}
\dd_\alpha(\bu) 
:= \dim \range T^{(\alpha)}_\bu ,\quad \alpha \in \cD_m\setminus \hroot{m}\,. 
\end{equation*}
To be consistent with our previous 
notation for leaf nodes $\{i\} \in \hdimtree{m}$,
we use the abbreviation $\dd_i(\mathbf{u}) := \dd_{\{i\}} (\mathbf{u})$.
Again, $\dd_\alpha(\bu)$ can be infinite. The root element of the dimension tree, $\hroot{m}=\{1,\ldots,m\}\in\hdimtree{m}$, is a special case. Here we define 
$$  
T^{(\hroot{m})}_\bu \colon \R \to \spl{2}(\nabla^d) ,\; t \mapsto t \,\bu 
$$
and correspondingly set  
\begin{equation*}
 \dd_{\hroot{m}}{(\bu)} := 1 \,,
  \quad \mathbf{U}^{(\hroot{m})}_1 := \mathbf{u}\,,
  \quad \mathbf{U}^{(\hroot{m})}_k := 0\,,\; k>1,
\end{equation*}
{if $\bu\neq 0$, and otherwise $\dd_{\hroot{m}}(\bu) :=0$.}
To be consistent with the Tucker format we denote by
$$
\rank (\bu) = {\rank_{\hdimtree{m}}(\bu)} := (\rank_\alpha (\bu) )_{\alpha\in\hdimtree{m}\setminus\{\hroot{m}\}}\, 
$$ 
the {\em hierarchical rank vector} associated with $\bu$.
{Since in what follows the dimension tree $\hdimtree{m}$ will  be kept fixed we  suppress the corresponding subscript in the rank vector.}

{
This  allows us to define for a given $\rr{r} = ({r}_\alpha)_{\alpha\in\hdimtree{m}\setminus\{\hroot{m}\}}
\in (\N_0\cup\{\infty\})^{\hdimtree{m}\setminus\{\hroot{m}\}}$, in analogy to \eqref{eq:tucker_tensorset},
the class 
\begin{equation}
\label{eq:hier_tensorset}
  \Hcal(\rr{r}) := \bigl\{ \mathbf{u} \in
  \spl{2}(\nabla^d) \colon  \rank_\alpha(\mathbf{u}) \leq {r}_\alpha \text{
  for all $\alpha \in \hdimtree{m}\setminus\{\hroot{m}\}$} \bigr\} \,.
\end{equation}
For $\Hcal(\rr{r})$ to be non-empty the rank vectors must satisfy certain compatibility conditions, see Proposition \ref{prop:nestedness} below.
As detailed later, the elements of $\Hcal(\rr{r})$ can be represented in terms of hierarchical mode frames in
the so called {\em hierarchical format} with ranks $\rr{r}$.}

{Now,  for a given $\bu\in\ell_2(\nabla^d)$, let} 
$\{ \mathbf{U}^{(\alpha)}_k \}_{k=1}^{\dd_\alpha(\bu)}$, $\mathbf{U}^{(\alpha)}_k\in \spl{2}(\nabla^{\sum_{i\in\alpha} d_i})$
 be the left singular 
vectors and $\sigma^{(\alpha)}_k$ be
the singular values of $T^{(\alpha)}_\bu$. 
In analogy to the Tucker format we   denote by 
{
\beqn
\label{UU-H}
\UU(\bu)= \UU^{\Hcal}(\bu):= \big\{\{\mathbf{U}^{(\alpha)}_k \}_{k=1}^{\dd_\alpha (\mathbf{u})}:\alpha \in \cD_m \big\}
\eeqn
}
 the system of orthonormal hierarchical mode frames 
with rank vectors $\rank(\bu)$.

The observation that the specific systems of hierarchical  mode frames $\UU(\bu)$   have the following {\em nestedness} property, including the root element,
will be crucial. The following fact has been established in a more generally applicable framework of
minimal subspaces in \cite{Hackbusch:12} (cf.~Corollary 6.18 and
Theorem 6.31 there).

\begin{proposition}
\label{prop:nestedness}
For $\bu\in\spl{2}(\nabla^d)$ and $\alpha \in \nonleaf{m}$, 
the mode frames $\{ \mathbf{U}^{(\alpha)}_k \}$ given by the left singular vectors of the
operators $T^{(\alpha)}_\bu$ defined in \eqref{eq:matricization_def} satisfy
\begin{eqnarray*}
&&\overline{\linspan}\{ \bU^{(\alpha)}_k\colon
k=1,\ldots,\dd_\alpha(\bu) \}   
 \subseteq \overline{\linspan}\{ \bU^{(\leftchild(\alpha))}_k\colon
k=1,\ldots,\dd_{\leftchild(\alpha)}(\bu) \} \\
&& \quad\quad \otimes\,
 \overline{\linspan}\{ \bU^{(\rightchild(\alpha))}_k\colon
k=1,\ldots,\dd_{\rightchild(\alpha)}(\bu) \}   \,,
\end{eqnarray*}
i.e., the family of left singular vectors 
of the operators $T^{(\alpha)}_\bu$  is comprized of  orthonormal 
and nested mode frames for $\bu$.
\end{proposition}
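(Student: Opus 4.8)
The plan is to prove the claimed inclusion for a fixed interior or root node $\alpha\in\nonleaf{m}$ with children $\alpha_1=\leftchild(\alpha)$ and $\alpha_2=\rightchild(\alpha)$. The key observation is the relationship between the ranges of the matricization operators $T^{(\alpha)}_\bu$, $T^{(\alpha_1)}_\bu$, and $T^{(\alpha_2)}_\bu$. Since $\overline{\linspan}\{\bU^{(\alpha_j)}_k : k=1,\ldots,\dd_{\alpha_j}(\bu)\} = \overline{\range}\, T^{(\alpha_j)}_\bu$ for $j=1,2$ (the left singular vectors span the closure of the range), and similarly for $\alpha$, it suffices to show
\begin{equation*}
\overline{\range}\, T^{(\alpha)}_\bu \subseteq \bigl(\overline{\range}\, T^{(\alpha_1)}_\bu\bigr) \otimes \bigl(\overline{\range}\, T^{(\alpha_2)}_\bu\bigr)
\end{equation*}
as subspaces of $\spl{2}(\nabla^{\sum_{i\in\alpha} d_i}) = \spl{2}(\nabla^{\sum_{i\in\alpha_1}d_i}) \otimes \spl{2}(\nabla^{\sum_{i\in\alpha_2}d_i})$, using $\alpha = \alpha_1 \cup \alpha_2$ (disjoint union). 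The first step is therefore to reformulate everything in terms of minimal subspaces $U^{\min}_\beta(\bu) := \overline{\range}\, T^{(\beta)}_\bu$, which is exactly the object analyzed in \cite[Chapter 6]{Hackbusch:12}.

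Next I would establish the inclusion $U^{\min}_\alpha(\bu) \subseteq U^{\min}_{\alpha_1}(\bu) \otimes U^{\min}_{\alpha_2}(\bu)$ directly. An element of $\range T^{(\alpha)}_\bu$ has the form $z = \bigl(\sum_{(\nu_i)_{i\in\beta}} u_\nu c_{(\nu_i)_{i\in\beta}}\bigr)_{(\nu_i)_{i\in\alpha}}$ for some $\mathbf c\in\spl{2}(\nabla^{\sum_{i\in\beta}d_i})$, where $\beta = \{1,\ldots,m\}\setminus\alpha = \beta_1 \setminus$ nothing, i.e.\ $\beta$ is the complement of $\alpha$, so $\{1,\ldots,m\}$ is partitioned as $\alpha_1 \,\dot\cup\, \alpha_2 \,\dot\cup\, \beta$. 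Reorganizing the sum over $\beta$-indices, $z$ is obtained from $\bu$ by contracting against a vector supported on the $\beta$-modes; one checks that any such contraction of $\bu$ lands in $U^{\min}_{\alpha_1}(\bu)\otimes U^{\min}_{\alpha_2}(\bu)$, because each "slice" of $\bu$ obtained by fixing the $\alpha_2$- and $\beta$-indices lies in $\range T^{(\alpha_1)}_\bu$ by definition of the latter (it is a column of the mode-$\alpha_1$ matricization), and symmetrically for $\alpha_2$. Passing to the closed tensor-product subspace and then taking closures, the inclusion follows; the cleanest route is to cite \cite[Corollary 6.18 and Theorem 6.31]{Hackbusch:12} as the paper does, after verifying that the hypotheses there (essentially that $\bu\in\spl{2}(\nabla^d)$, a tensor-product Hilbert space) are met in the present sequence-space setting.

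Finally I would handle the root case $\alpha = \hroot{m}$ separately but trivially: by definition $\bU^{(\hroot{m})}_1 = \bu$ and all other root frames vanish, so the left-hand side is $\overline{\linspan}\{\bu\}$, and the statement reduces to $\bu \in U^{\min}_{\alpha_1}(\bu)\otimes U^{\min}_{\alpha_2}(\bu)$, which is the previous inclusion applied with the contraction being the identity (taking $\mathbf c$ to range over all of $\R$ when $\beta=\emptyset$). The main obstacle I anticipate is not conceptual but technical: making the Hilbert–Schmidt/compactness arguments rigorous in the infinite-dimensional sequence-space setting, in particular justifying that $\overline{\range}\,T^{(\alpha)}_\bu$ is spanned by the singular vectors (via the spectral theorem for the compact operator $(T^{(\alpha)}_\bu)^*T^{(\alpha)}_\bu$, as already used in \eqref{eq:hilbertschmidt_decomp}) and that the algebraic tensor product of the two child subspaces, once closed, still contains the closed range of $T^{(\alpha)}_\bu$. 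Since all of this is precisely what \cite{Hackbusch:12} carries out in the abstract minimal-subspace framework, the proof will mostly consist of translating notation and invoking those results, together with the elementary slice argument above to make the inclusion transparent to the reader.
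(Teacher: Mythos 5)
Your proposal is correct and follows essentially the same route as the paper, which justifies the proposition solely by reducing it to the nestedness of minimal subspaces $\overline{\range}\,T^{(\alpha)}_\bu$ and citing \cite[Corollary 6.18 and Theorem 6.31]{Hackbusch:12}; your additional slice argument and the separate treatment of the root node are sound but not needed beyond what that citation already covers.
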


{Nestedness entails \emph{compatibility conditions} on the rank vectors $\rr{r}$. In fact, it readily follows from
Proposition \ref{prop:nestedness} that for $\alpha\in \cD_m\setminus\cL(\cD_m)$ one has $\rank_\alpha(\bu)\leq \rank_{c_1(\alpha)}(\bu)
\rank_{c_2(\alpha)}(\bu)$. For necessary and sufficient conditions on a rank vector $\rr{r}=(r_\alpha)_{\alpha\in \cD_m\setminus\{\hroot{m}\}}$
{for existence of corresponding nested hierarchical mode frames}, we refer to \cite[Section 11.2.3]{Hackbusch:12}. In what follows we denote by 
\begin{equation}
\label{def:R-H}
\mathcal{R}=\mathcal{R}_{\Hcal}\subset (\N_0 \cup \{\infty\})^{\cD_m\setminus\cL(\cD_m)}
\end{equation}
 the set of all hierarchical rank vectors satisfying the compatibility conditions for nestedness.
}

{
Following \cite{Falco:10,Hackbusch:12}, we can formulate now the analogue to 
Theorem \ref{thm:tucker_bestapprox}.
\begin{theorem}
\label{thm:htucker_bestapprox}
Let $\mathbf{u}\in \spl{2}(\nabla^d)$, let $\hdimtree{m}$ be a dimension tree,
and let $\rr{r} = (r_\alpha)\in \mathcal{R}_{\mathcal{H}}$ with $0 \leq r_\alpha \leq
\rank_\alpha(\mathbf{u})$ for $\alpha\in\hdimtree{m}\setminus\{\hroot{m}\}$, then there exists $\mathbf{v}
\in \Hcal(\rr{r})$ such that
\begin{equation*}
  \norm{\mathbf{u} - \mathbf{v}} =
  \min
  \bigl\{ \norm{\mathbf{u} - \mathbf{w}} \colon
   \rank_\alpha(\mathbf{w})\leq r_\alpha, \alpha\in\hdimtree{m}\setminus\{\hroot{m}\} \bigr\} \,.
\end{equation*}
\end{theorem}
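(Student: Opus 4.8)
The plan is to run the standard variational argument for best approximations in subspace-based tensor formats, exactly as for the Tucker case in Theorem~\ref{thm:tucker_bestapprox} (and as in \cite{Falco:10,Hackbusch:12}): extract a weakly convergent minimizing sequence and show that both the objective functional and the rank constraints survive the passage to the limit. Since $0\in\Hcal(\rr{r})$, the class is nonempty and $\delta := \inf\{\norm{\bu-\bw}\colon\bw\in\Hcal(\rr{r})\}\le\norm{\bu}$ is finite. Pick $(\bw_n)_{n\in\N}\subset\Hcal(\rr{r})$ with $\norm{\bu-\bw_n}\to\delta$; this sequence is bounded in $\spl{2}(\nabla^d)$, so after passing to a subsequence (not relabelled) we obtain $\bw_n\rightharpoonup\bv$ for some $\bv\in\spl{2}(\nabla^d)$.

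First I would verify that $\bv\in\Hcal(\rr{r})$, i.e.\ that $\rank_\alpha(\bv)\le r_\alpha$ for every $\alpha\in\hdimtree{m}\setminus\{\hroot{m}\}$. Weak convergence in $\spl{2}(\nabla^d)$ yields entrywise convergence $w_{n,\nu}\to v_\nu$ for all $\nu$, hence $T^{(\alpha)}_{\bw_n}\to T^{(\alpha)}_{\bv}$ entrywise for each $\alpha$. For nodes with $r_\alpha=\infty$ there is nothing to prove; for $r_\alpha<\infty$, write each $T^{(\alpha)}_{\bw_n}$ in its singular value decomposition $\sum_{k=1}^{r_\alpha}\sigma^{(\alpha)}_{n,k}\langle\mathbf{V}^{(\alpha)}_{n,k},\cdot\rangle\,\mathbf{U}^{(\alpha)}_{n,k}$, with $\sigma^{(\alpha)}_{n,k}\ge0$ bounded (since $\sigma^{(\alpha)}_{n,k}\le\norm{\bw_n}$ and $(\bw_n)$ is bounded) and the mode vectors of unit norm. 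As only finitely many nodes $\alpha$ and indices $k$ occur, a further subsequence makes all $\sigma^{(\alpha)}_{n,k}$ converge and all $\mathbf{U}^{(\alpha)}_{n,k},\mathbf{V}^{(\alpha)}_{n,k}$ converge weakly; passing to the limit entrywise in the SVD then exhibits $T^{(\alpha)}_{\bv}$ as a sum of at most $r_\alpha$ rank-one operators, so $\rank_\alpha(\bv)\le r_\alpha$. (Equivalently, all $(r_\alpha+1)\times(r_\alpha+1)$ minors of $T^{(\alpha)}_{\bw_n}$ vanish and depend continuously on finitely many entries, which again forces $\dim\range T^{(\alpha)}_{\bv}\le r_\alpha$.) Hence $\bv\in\Hcal(\rr{r})$.

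It remains to identify $\bv$ as a minimizer. From $\bw_n\rightharpoonup\bv$ we get $\bu-\bw_n\rightharpoonup\bu-\bv$, and weak lower semicontinuity of the norm gives $\norm{\bu-\bv}\le\liminf_n\norm{\bu-\bw_n}=\delta$; since $\bv\in\Hcal(\rr{r})$ we also have $\norm{\bu-\bv}\ge\delta$, hence equality, which is the assertion.

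The crux is the middle step, the weak-limit stability of the rank constraints: this is precisely where the structural \emph{stability} of the hierarchical format is used, and it is what fails for the canonical format. In writing it out I would be careful either with the elementary but nontrivial fact that a bounded operator all of whose $(r+1)\times(r+1)$ minors vanish has range of dimension at most $r$, or—if one prefers the SVD route—with the bookkeeping of the successive subsequence extractions over the finitely many tree nodes; everything else is routine functional analysis. The argument is carried out for the fixed dimension tree $\hdimtree{m}$, and in fact neither the compatibility conditions $\rr{r}\in\mathcal{R}_{\mathcal{H}}$ (Proposition~\ref{prop:nestedness}) nor the bounds $r_\alpha\le\rank_\alpha(\bu)$ are needed for the existence statement itself.
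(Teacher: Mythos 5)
Your proof is correct, and it is essentially the standard weak-compactness argument: the paper itself offers no proof of this theorem, deferring to \cite{Falco:10,Hackbusch:12}, where existence is established exactly as you do, via a bounded minimizing sequence, weak sequential closedness of $\Hcal(\rr{r})$ (stability of the rank constraints under weak limits), and weak lower semicontinuity of the norm. Your closing observation that neither $\rr{r}\in\Rcal_{\Hcal}$ nor $r_\alpha\leq\rank_\alpha(\bu)$ is needed for existence is also accurate.
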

}

{We recall next the   specific structure of the hierarchical format. Let $\UU$ be a system of hierarchical orthonormal mode frames.} By orthonormality and nestedness,
we obtain for each $\alpha \in \nonleaf{m}$ and $k=1,\ldots,\dd_\alpha(\bu)$  
the expansion
\begin{equation}
\label{eq:htucker_nestedness_expansion}
  \mathbf{U}^{(\alpha)}_k = \sum_{k_1 =
  1}^{\dd_{\leftchild(\alpha)}(\mathbf{u})}
    \sum_{k_2 = 1}^{\dd_{\rightchild(\alpha)}(\mathbf{u})} 
    \bigl\langle
    \mathbf{U}^{(\alpha)}_k , \mathbf{U}^{(\leftchild(\alpha))}_{k_1} \otimes
           \mathbf{U}^{(\rightchild(\alpha))}_{k_2}  \bigr\rangle  \, 
              \mathbf{U}^{(\leftchild(\alpha))}_{k_1} \otimes
           \mathbf{U}^{(\rightchild(\alpha))}_{k_2} \,.
\end{equation}
Defining the matrices $\mathbf{B}^{(\alpha,k)} \in \spl{2}(\N\times \N)$ with
entries
\begin{equation}
  B^{(\alpha,k)}_{k_1,k_2} :=  \bigl\langle
    \mathbf{U}^{(\alpha)}_{k} , \mathbf{U}^{(\leftchild(\alpha))}_{k_1} \otimes
           \mathbf{U}^{(\rightchild(\alpha))}_{k_2}  \bigr\rangle \,,
\end{equation}
\eqref{eq:htucker_nestedness_expansion} can be rewritten as 
\begin{equation}
\label{eq:htucker_moderecursion}
  \mathbf{U}^{(\alpha)}_k = \sum_{k_1 =
  1}^{\dd_{\leftchild(\alpha)}(\mathbf{u})} \sum_{k_2 =
  1}^{\dd_{\rightchild(\alpha)}(\mathbf{u})} B^{(\alpha,k)}_{k_1,k_2} \,
      \mathbf{U}^{(\leftchild(\alpha))}_{k_1} \otimes 
      \mathbf{U}^{(\rightchild(\alpha))}_{k_2},
\end{equation}
providing a decomposition
into vectors $\bU^{\child{i}(\alpha)}_k$, $i=1,2$, which now involve shorter
multiindices supported in the children $\child{i}(\alpha)$.
This decomposition can be iterated as illustrated by the next step. 
Abbreviating $\child{i,j}(\alpha)=\child{i}(\child{j}(\alpha))$,
one obtains
\begin{multline}
\label{htucker_m8}
  \mathbf{U}^{(\alpha)}_k =  \sum_{k_1 =
  1}^{\dd_{\leftchild(\alpha)}(\mathbf{u})} \sum_{k_2 =
  1}^{\dd_{\rightchild(\alpha)}(\mathbf{u})}
  \sum_{\substack{k_{i,1},k_{i,2} k_{j,1},k_{j,2}\\ (i,j)\in\{1,2\}^2}}  
  B^{(\alpha,k)}_{(k_1,k_2)}   \\
\times   B^{(\child{1}(\alpha),k_1
)}_{k_{i,1},k_{j,1}}B^{(\child{2}(\alpha),k_2 )}_{k_{i,2},k_{j,2}}
\bU^{(\child{i,1}(\alpha))}_{k_{i,1}}\otimes 
\bU^{(\child{j,1}(\alpha))}_{k_{j,1}}\otimes 
\bU^{(\child{i,2}(\alpha))}_{k_{i,2}}\otimes
\bU^{(\child{j,2}(\alpha))}_{k_{j,2}}.
\end{multline}
Applying this recursively, any $\mathbf{u}
\in \spl{2}(\nabla^d)$ can be expanded in the form
\begin{equation}
\label{eq:htucker_tdecomp}
  \mathbf{u} = \sum_{k_1 = 1}^{\dd_1(\mathbf{u})} \cdots \sum_{k_m =
  1}^{\dd_m(\mathbf{u})} a_{k_1,\ldots,k_m} \, \mathbf{U}^{(1)}_{k_1}
  \otimes\cdots \otimes \mathbf{U}^{(m)}_{k_m} \,,
\end{equation}
where the core tensor
$\mathbf{a}$ has a further decomposition in terms of the
matrices $\mathbf{B}^{(\alpha,k)}$ for all non-leaf nodes $\alpha$ and
$k=1,\ldots,\dd_\alpha(\mathbf{u})$. This decomposition can be given explicitly as
follows: For each $(k_\alpha)_{\alpha\in\hdimtree{m}}$, we define the
auxiliary expression
$$   \hat B_{(k_\alpha)_{\alpha\in\hdimtree{m}}} := 
   \prod_{\beta\in \nonleaf{m}}
      B^{(\beta,k_\beta)}_{(k_{\leftchild(\beta)},k_{\rightchild(\beta)})} \,.
$$
We now use this to give an entrywise definition of the tensor
$\hsum{\hdimtree{m}}(\{ \mathbf{B}^{(\alpha,k)}\})\in\spl{2}(\N^m)$,
for each tuple of leaf node indices
$(k_\beta)_{\beta\in\mathcal{L}(\hdimtree{m})} \in 
\N^{\#\mathcal{L}(\hdimtree{m})}$, as
\begin{multline}
\label{def:hsum}
  \Bigl( \hsum{\hdimtree{m}}\bigl(\{
\mathbf{B}^{(\alpha,k)} \colon \alpha\in\nonleaf{m},\,
k=1,\ldots,\dd_\alpha(\bu)\}\bigr) \Bigr)_{(k_\beta)_{\beta\in\mathcal{L}(\hdimtree{m})}}
\\
= \sum_{\substack{(k_\delta)_{\delta\in 
    \Ical(\hdimtree{m})} \\
  k_\delta=1,\ldots,\dd_\delta(\bu)}}
      \hat B_{(k_\delta)_{\delta\in\hdimtree{m}}}
      \,.
\end{multline}
Note that the quantity on the right hand side involves a summation over all
indices corresponding to non-leaf nodes. Since the summands depend on all
indices, this leaves precisely the indices corresponding to leaf nodes as free
parameters, as on the left hand side (recall that the index for the root of
the tree is restricted to the value $1$).
The tensor defined in \eqref{def:hsum} then equals the core tensor
$\mathbf{a}$, which is thus represented as 
\beqn
\label{eq:a-rep}
\mathbf{a} = \hsum{\hdimtree{m}}\bigl(\{\mathbf{B}^{(\alpha,k)}
 \colon \alpha\in\nonleaf{m},\,k=1,\ldots,\dd_\alpha(\bu)\}\bigr) \,.  
 \eeqn 
 This representation is illustrated explicitly for $m=4$ in Example
\ref{ex:htucker_ex4} below.

\begin{example}\label{ex:htucker_ex4}
{Consider} $m = 4$, $\hdimtree{4} = \bigl\{ \{1,2,3,4\}, \{1,2\}, \{3,4\},
\{1\},\{2\},\{3\},\{4\} \bigr\}$.
For this example, we use the abbreviation $r_\alpha :=
\dd_{\alpha}(\mathbf{u})$ and derive from \eref{htucker_m8}
  the expansion
\begin{multline*}
 \mathbf{u} =
 \sum_{k_1 = 1}^{r_1} \sum_{k_2 =
 1}^{r_2} \sum_{k_3 = 1}^{r_3} \sum_{k_4 =
 1}^{r_4}
    \sum_{k_{\{1,2\}} = 1}^{r_{\{1,2\}}} \sum_{k_{\{3,4\}} =
    1}^{r_{\{3,4\}}} B^{(\{1,2,3,4\},1)}_{(k_{\{1,2\}},k_{\{3,4\}})} \\
    \times B^{(\{1,2\},k_{\{1,2\}})}_{(k_1,k_2)} \,
    B^{(\{3,4\},k_{\{3,4\}})}_{(k_3,k_4)} \, 
    \mathbf{U}^{(1)}_{k_1} \otimes \mathbf{U}^{(2)}_{k_2} \otimes
    \mathbf{U}^{(3)}_{k_3}  \otimes \mathbf{U}^{(4)}_{k_4} \,,
\end{multline*}
that is, for the core tensor we have the decomposition
\[
a_{k_1,k_2,k_3,k_4} =  \sum_{k_{\{1,2\}} = 1}^{r_{\{1,2\}}} \sum_{k_{\{3,4\}} =
    1}^{r_{\{3,4\}}} B^{(\{1,2,3,4\},1)}_{(k_{\{1,2\}},k_{\{3,4\}})}
    B^{(\{1,2\},k_{\{1,2\}})}_{(k_1,k_2)} 
    B^{(\{3,4\},k_{\{3,4\}})}_{(k_3,k_4)} \,.
\] 
\end{example}

\begin{example}\label{ex:tt_ex4}
A tensor train (TT) representation for $m=4$ as in Example \ref{ex:htucker_ex4}
would correspond to 
$\hdimtree{4} = \bigl\{ \{1,2,3,4\}, \{1\}, \{2,3,4\}, \{2\}, \{3,4\}, \{3\},
\{4\} \bigr\}$, i.e., a degenerate instead of a balanced binary tree.
More precisely, the special case of the hierarchical Tucker format resulting from this
type of tree has also be considered under the name \emph{extended TT format} \cite{Oseledets:09-1}.
\end{example}

\subsubsection{Hierarchical Singular Value Decomposition}
 
 {For any given $\bu\in \ell_2(\nabla^d)$ the decomposition \eqref{eq:htucker_tdecomp},
 with $\mathbf{a}$ defined by \eqref{eq:a-rep},}
 can be regarded as a generalization of the HOSVD, which we shall refer to
as {\em hierarchical singular value decomposition} or $\Hcal$SVD.
{The next theorem summarizes the main properties of this
decomposition in the present setting.}  The finite-dimensional versions of the following claims 
have been established in  \cite{Grasedyck:10}.
All arguments given there carry over to the infinite-dimensional case as in the proof of
Theorem \ref{thm:hosvd_properties}.

\begin{theorem}
\label{thm:hiersvd_properties}
Let $\mathbf{u}\in \spl{2}(\nabla^d)$, where $d = d_1 + \ldots + d_m$, and let
$\hdimtree{m}$ be a dimension tree.
Then $\mathbf{u}$ can be represented in the form
\begin{equation*}
  \mathbf{u} =  \sum_{\kk{k}\in\N^m}
    a_\kk{k} \UU_\kk{k}\,, 
    \quad \mathbf{a} = \hsum{\hdimtree{m}}\bigl(\{ \mathbf{B}^{(\alpha,k)} \colon 
    \alpha \in \nonleaf{m}, \, k = 1,\ldots,
    \rank_\alpha(\mathbf{u})\}\bigr)
\end{equation*}
with $\mathbf{a} \in \spl{2}(\nabla^d)$, 
$\mathbf{B}^{(\alpha,k)}\in\spl{2}(\N \times \N)$, {defined by \eqref{def:hsum},}  for
$\alpha\in\nonleaf{m}$, 
$k\in \N$, and where the following hold:
\begin{enumerate}[{\rm (i)}]
  \item $\langle\mathbf{U}^{(i)}_k, \mathbf{U}^{(i)}_l \rangle =
  \delta_{kl}$ for $i=1,\ldots,m$ and $k,l\in\N$;
  \item $\rank_{\hroot{m}}(\bu) = 1$,
  $\norm{\mathbf{B}^{(\hroot{m},1)}} = \norm{\bu}$, and
  $\mathbf{B}^{(\hroot{m},k)} = 0$ for $k>1$;
  \item $\langle\mathbf{B}^{(\alpha,k)}, \mathbf{B}^{(\alpha,l)}\rangle =
  \delta_{kl}$ for $\alpha\in\Ical(\hdimtree{m})$ and $k,l\in\N$;
  \item for all $i\in\{1,\ldots,m\}$ we have
  $(\sigma^{(i)}_k)_{k\in\N}\in\spl{2}(\N)$, and $\sigma^{(i)}_k \geq
  \sigma^{(i)}_{k+1}\geq 0$ for all $k\in\N$;
  \item for all $i\in\{1,\ldots,m\}$  we have $a^{(i)}_{pq}
  = \bigabs{\sigma^{(i)}_p}^2 \delta_{pq}$, $1\leq p,q\leq \dd_i(\bu)$.
\end{enumerate}
\end{theorem}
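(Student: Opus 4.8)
The plan is to mimic the proof of Theorem \ref{thm:hosvd_properties}, pushing the finite-dimensional $\Hcal$SVD construction of \cite{Grasedyck:10} through to the sequence-space setting exactly as was done there for the Tucker case, and then to verify the five listed properties one node at a time, working from the leaves up the dimension tree. First I would record that for each $\alpha\in\hdimtree{m}\setminus\{\hroot{m}\}$ the operator $T^{(\alpha)}_\bu$ defined in \eqref{eq:matricization_def} is Hilbert--Schmidt, so the spectral theorem gives a Hilbert--Schmidt decomposition $T^{(\alpha)}_\bu = \sum_{k} \sigma^{(\alpha)}_k \langle \mathbf{V}^{(\alpha)}_k,\cdot\rangle \mathbf{U}^{(\alpha)}_k$ with $(\sigma^{(\alpha)}_k)_k\in\spl{2}(\N)$ nonincreasing and nonnegative; this is literally the content of \eqref{eq:hilbertschmidt_decomp} applied to a general node, and it immediately yields (i) (orthonormality of the leaf frames) and (iv) (summability and monotonicity of the mode-$i$ singular values). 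For the root node the convention $\dd_{\hroot{m}}(\bu)=1$, $\mathbf{U}^{(\hroot{m})}_1=\bu$ is built in, which gives part of (ii).

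Next I would invoke Proposition \ref{prop:nestedness}: the HOSVD-type mode frames $\{\mathbf{U}^{(\alpha)}_k\}$ attached to the $T^{(\alpha)}_\bu$ are not only orthonormal but nested, so the transfer-matrix expansion \eqref{eq:htucker_nestedness_expansion}--\eqref{eq:htucker_moderecursion} is valid, with $B^{(\alpha,k)}_{k_1,k_2} = \langle \mathbf{U}^{(\alpha)}_k, \mathbf{U}^{(\leftchild(\alpha))}_{k_1}\otimes \mathbf{U}^{(\rightchild(\alpha))}_{k_2}\rangle$. Iterating this recursion from the root down to the leaves, as spelled out in \eqref{htucker_m8} and \eqref{def:hsum}, produces the representation $\bu = \sum_{\kk{k}\in\N^m} a_\kk{k}\UU_\kk{k}$ with $\mathbf{a} = \hsum{\hdimtree{m}}(\{\mathbf{B}^{(\alpha,k)}\})$; that $\mathbf{a}\in\spl{2}(\N^m)$ follows because $\bu\in\spl{2}(\nabla^d)$ and the $\UU_\kk{k}$ are orthonormal (by (i)), so $a_\kk{k}=\langle\bu,\UU_\kk{k}\rangle$ and Bessel's inequality applies. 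Here I would be careful that all the interchanges of (now possibly infinite) sums are legitimate — this is the one place where the infinite-dimensional setting needs a word beyond "carries over verbatim" — but since every partial frame spans a closed subspace and the $\mathbf{U}^{(\alpha)}_k$ are orthonormal, the expansions converge in $\spl{2}$ and can be reordered freely, exactly as in the proof of Theorem \ref{thm:hosvd_properties}.

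It then remains to check the orthonormality statements (ii)--(iii) for the transfer matrices and the Gram identity (v). For an interior node $\alpha$, $\langle \mathbf{B}^{(\alpha,k)}, \mathbf{B}^{(\alpha,l)}\rangle$ equals $\langle \mathbf{U}^{(\alpha)}_k, \mathbf{U}^{(\alpha)}_l\rangle = \delta_{kl}$ because the map $\mathbf{B}^{(\alpha,k)}\mapsto \sum_{k_1,k_2} B^{(\alpha,k)}_{k_1,k_2}\mathbf{U}^{(\leftchild(\alpha))}_{k_1}\otimes\mathbf{U}^{(\rightchild(\alpha))}_{k_2}$ is an isometry onto its range, the $\mathbf{U}^{(\child{i}(\alpha))}_{k_i}$ being orthonormal; for the root, $\norm{\mathbf{B}^{(\hroot{m},1)}} = \norm{\mathbf{U}^{(\hroot{m})}_1} = \norm{\bu}$ by the same isometry, completing (ii). Property (v), $a^{(i)}_{pq} = \abs{\sigma^{(i)}_p}^2\delta_{pq}$, follows from the definition \eqref{eq:tensor_nmodemat} of $a^{(i)}_{pq}$ together with the observation — the hierarchical analogue of Theorem \ref{thm:hosvd_properties}(ii) — that contracting the core tensor $\mathbf{a}$ over all mode indices except the $i$th reproduces the Gram matrix $(T^{(i)}_\bu)^*T^{(i)}_\bu$ expressed in the basis $\{\mathbf{U}^{(i)}_p\}$, which is diagonal with entries $\abs{\sigma^{(i)}_p}^2$ by the choice of $\mathbf{U}^{(i)}$ as left singular vectors of $T^{(i)}_\bu$; this uses that $T^{(i)}_\bu$ factors through the orthonormal system $\UU_{\kk{\check k}_i}$. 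The main obstacle is not any single deep step but rather the bookkeeping: correctly tracking which frames and transfer matrices are indexed by which nodes, and justifying the convergence and rearrangement of the nested infinite expansions; once one grants — as the excerpt explicitly does — that "all arguments given in \cite{Grasedyck:10} carry over to the infinite-dimensional case as in the proof of Theorem \ref{thm:hosvd_properties}," the proof reduces to assembling these pieces and can be kept short.
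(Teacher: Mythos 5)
Your proposal is correct and follows essentially the same route as the paper, which simply observes that the finite-dimensional $\Hcal$SVD arguments of Grasedyck--Kressner carry over once the SVD is replaced by the Hilbert--Schmidt (spectral) decomposition of each matricization $T^{(\alpha)}_\bu$, exactly as in the proof of Theorem \ref{thm:hosvd_properties}, with nestedness supplied by Proposition \ref{prop:nestedness}; your write-up just makes these steps explicit. One cosmetic slip: in your argument for (v) the diagonal Gram matrix expressed in the basis $\{\mathbf{U}^{(i)}_p\}$ is that of $T^{(i)}_\bu (T^{(i)}_\bu)^*$ (equivalently, of the complementary factors $\sigma^{(i)}_p \mathbf{V}^{(i)}_p$), not of $(T^{(i)}_\bu)^* T^{(i)}_\bu$, though both have the same nonzero eigenvalues $\abs{\sigma^{(i)}_p}^2$, so the conclusion stands.
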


{
\subsubsection{Projections}\label{sssect:proj-H}
}
{As in the case of the Tucker format} it will be important to associate suitable
orthogonal projections with a given system $\VV$ of nested orthonormal mode frames. 
{Recall that   $\rr{r} = (r_\alpha)_{\alpha\in\hdimtree{m}\setminus\{\hroot{m}\}}\in \mathcal{R}_{\mathcal{H}}$} always stands for a rank vector for the hierarchical Tucker format, {satisfying the compatibility conditions implied by Proposition \ref{prop:nestedness}}. Again $r_\alpha =\infty$ is permitted.
 We begin with introducing an analog to \eqref{T-rigid}, with a slightly more involved definition.
 The {\em hierarchical $\VV$-rigid} tensor class of rank $\rr{r}$ is given by
\beqn
\label{H-rigid}
\Hcal(\VV,\rr{r}):= \big\{\bw : \overline{\range}\, T^{(\alpha)}_{\bw} 
   \subseteq \overline{\linspan} \{  \bV^{(\alpha)}_k \colon k=1,\ldots,r_\alpha \} \,,
       \alpha\in\hdimtree{m}\setminus\{\hroot{m}\}\big\},
\eeqn
where $ T^{(\alpha)}_{\bw}$ is defined by \eqref{eq:matricization_def}. Clearly $\Hcal(\VV,\rr{r}) \subset \htucker{\rr{r}}$.

In analogy to {\eqref{oproj}} we address next a truncation of hierarchical ranks to $\rr{\tilde r}\leq \rr{r}$
for elements in $\Hcal(\VV,\rr{r})$, when $\VV$ is a given
system of orthonormal and nested mode
frames with ranks $\rr{r}$. {We assume first that $\rr{\tilde r}$ belongs also to $\mathcal{R}_{\mathcal{H}}$.}
{The main point is that} an approximation with restricted mode frames can still be realized 
through an operation represented as a 
\emph{sequence} of projections involving the given mode frames from $\VV$.  
However, the order in which these projections are applied now matters.

In a way the proof of Lemma \ref{lmm:htucker_projfull} {below} already indicates how to proceed,
namely restricting first on lower ``levels'' of the dimension tree. To make this precise
we denote by    $\hdimtree{m}^\ell$ the collection of
  elements of $\hdimtree{m}$ that have distance exactly $\ell$ to the
root (i.e., $\hdimtree{m}^0 = \{ \hroot{m} \}$, $\hdimtree{m}^1 = \{
\leftchild(\hroot{m}),\rightchild(\hroot{m}) \}$ and so forth).
Let $L$ be the maximal integer such that $\hdimtree{m}^L\neq \emptyset$.
For $\ell=1,\ldots,L$, let $\bar\cD_m^\ell:= \bigcup\{i\in\alpha:\alpha\in \cD_m^\ell\}$.
Then, given $\VV$, and abbreviating
$$
\Proj_{\VV,\alpha,\rr{\tilde r}} :=  \sum_{k=1}^{\tilde
   r_\alpha} \langle \bV^{(\alpha)}_{k} ,\cdot \rangle \bV^{(\alpha)}_k ,
$$
we define
\begin{equation*}
   P_{\VV,\ell,\rr{\tilde r}} := \Bigl(
   \bigotimes_{i\in\{1,\ldots,m\}\setminus \bar \cD_m^\ell} \id_{i}
   \Bigr) \otimes 
   \Bigl( \bigotimes_{\alpha \in \hdimtree{m}^\ell} \Proj_{\VV,\alpha,\rr{\tilde r}}
 \Bigr)
   \,,
\end{equation*}
with $\id_i$ denoting the identity operation on the $i$-th tensor mode.
Then, as observed in \cite{Grasedyck:10}, the truncation operation with mode frames $\VV$ restricted to
ranks $\rr{\tilde r}$ can be represented as
\begin{equation}
\label{eq:hsvdproj_hier}
 \Proj_{\VV,\rr{\tilde r}}  := P_{\VV,L,\rr{\tilde r}} \,
     \cdots \,P_{\VV,2,\rr{\tilde r}}\, P_{\VV,1,\rr{\tilde r}}  \,.
\end{equation}
Here the order is important because the projections $\Proj_{\VV,\alpha,\rr{\tilde r}}, \Proj_{\VV,\beta,\rr{\tilde r}}$ corresponding to
$\alpha,\beta\in\hdimtree{m}$ with $\alpha\subset \beta$ do not 
necessarily commute. Therefore a different order of projections
may in fact lead to an end result that has ranks larger than $\rr{\tilde r}$,
cf.\ \cite{Grasedyck:10}.

Specifically, given $\bu\in \ell_2(\nabla^d)$, we can choose $\VV=\UU(\bu)$ 
 provided by the $\Hcal$SVD, see \eqref{UU-H}. Hence  
$\Proj_{\UU(\bu),\rr{\tilde r}}\bu$ gives the truncation of $\bu$ based
on the $\Hcal$SVD. For this particular truncation an error estimate, in terms of
the error of best approximation with rank $\rr{\tilde r}$, is given
in Theorem \ref{thm:hier_tensor_recompression_est} {below}.

\begin{remark}\label{rem:non-nested}
By \eqref{eq:hsvdproj_hier}, we have a representation of
$\tilde\bu:=\Proj_{\UU(\bu),\rr{\tilde r}} \bu$ in terms of a sequence of
non-commuting orthogonal projections.
{When $\rr{\tilde r}\leq \rr{r}$ does not belong to $\mathcal{R}_{\mathcal{H}}$ the operator
defined by \eqref{eq:hsvdproj_hier} is still a projection which, however, modifies 
the mode frames for those nodes $\alpha\in \mathcal{N}(\cD_m)$ for which the rank compatibility conditions are violated. The resulting projected mode frames are then nested, that is,} $\tilde\bu$ may again be represented in terms of the orthonormal and
nested mode frames $\tilde\UU := \UU(\tilde\bu)$. 
\end{remark}

The situation simplifies if we consider the projection to a fixed \emph{nested} system of
mode frames, without a further truncation of ranks that could entail non-nestedness.

\begin{lemma}
\label{lmm:htucker_projfull}
Let $\VV$ be a family of orthonormal and nested
hierarchical mode frames with ranks $\rr{r}$. 
Then there exists a linear projection $\Proj_{\VV} \colon \spl{2}(\nabla^d) \to \Hcal(\VV,\rr{r})$
such that the unique best approximation in $\Hcal(\VV,\rr{r})$ of any $\bu\in\spl{2}(\nabla^d)$
is given by $\Proj_{\VV}\bu$, that is,
$$  \norm{\bu - \Proj_{\VV} \bu} =
\min_{\bw\in\htucker{\VV,\rr{r}}}\norm{\bu - \bw} \,.  $$
\end{lemma}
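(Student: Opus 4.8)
The plan is to realize $\Proj_{\VV}$ as a product of \emph{pairwise commuting} orthogonal projections, to identify $\htucker{\VV,\rr{r}}$ with the intersection of their ranges, and then to read off the best-approximation property from Hilbert space geometry. For each $\alpha\in\hdimtree{m}\setminus\{\hroot{m}\}$ let $W_\alpha := \overline{\linspan}\{\bV^{(\alpha)}_k\colon k=1,\ldots,r_\alpha\}\subseteq\spl{2}(\nabla^{\sum_{i\in\alpha}d_i})$, and let $Q_\alpha$ be the orthogonal projection onto $W_\alpha$ (well defined, of norm $\leq 1$, also when $r_\alpha=\infty$, since the $\bV^{(\alpha)}_k$ are orthonormal). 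Using the canonical isometry $\spl{2}(\nabla^d)\cong\spl{2}(\nabla^{\sum_{i\in\alpha}d_i})\otimes\spl{2}(\nabla^{\sum_{i\in\beta}d_i})$ obtained by splitting the modes into $\alpha$ and $\beta:=\{1,\ldots,m\}\setminus\alpha$, set $R_\alpha := Q_\alpha\otimes\id_\beta$ on $\spl{2}(\nabla^d)$. Then $R_\alpha$ is an orthogonal projection, and by the standard correspondence between a tensor and its matricization, $R_\alpha\bw=\bw$ if and only if $\overline{\range}\,T^{(\alpha)}_{\bw}\subseteq W_\alpha$; hence $\range R_\alpha=\{\bw\colon\overline{\range}\,T^{(\alpha)}_{\bw}\subseteq W_\alpha\}$, and intersecting over all $\alpha$ gives
\[
 \bigcap_{\alpha\in\hdimtree{m}\setminus\{\hroot{m}\}}\range R_\alpha=\htucker{\VV,\rr{r}},
\]
which in particular exhibits $\htucker{\VV,\rr{r}}$ as a closed subspace of $\spl{2}(\nabla^d)$.

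The next step, and the crux of the argument, is that the $R_\alpha$ commute pairwise. Since $\hdimtree{m}$ is a laminar family, any two nodes $\alpha,\alpha'$ are either disjoint or nested. If $\alpha\cap\alpha'=\emptyset$, then $R_\alpha$ and $R_{\alpha'}$ act nontrivially only on disjoint tensor factors and therefore commute. If $\alpha\subsetneq\alpha'$, take the path $\alpha'=\alpha_0\supsetneq\alpha_1\supsetneq\cdots\supsetneq\alpha_p=\alpha$ with $\alpha_{j+1}$ a child of $\alpha_j$. Nestedness yields $W_{\alpha_j}\subseteq W_{\leftchild(\alpha_j)}\otimes W_{\rightchild(\alpha_j)}\subseteq W_{\alpha_{j+1}}\otimes\spl{2}(\nabla^{\sum_{i\in\alpha_j\setminus\alpha_{j+1}}d_i})$, which at the level of $\spl{2}(\nabla^d)$ says exactly $\range R_{\alpha_j}\subseteq\range R_{\alpha_{j+1}}$; composing along the path gives $\range R_{\alpha'}\subseteq\range R_\alpha$. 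Consequently $R_\alpha R_{\alpha'}=R_{\alpha'}$, and taking adjoints (both projections are self-adjoint) also $R_{\alpha'}R_\alpha=R_{\alpha'}$, so $R_\alpha R_{\alpha'}=R_{\alpha'}R_\alpha$.

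Since a finite product of pairwise commuting orthogonal projections is again the orthogonal projection onto the intersection of their ranges, we may define $\Proj_{\VV}:=\prod_{\alpha\in\hdimtree{m}\setminus\{\hroot{m}\}}R_\alpha$ with the order immaterial; it is the orthogonal projection of $\spl{2}(\nabla^d)$ onto $\bigcap_\alpha\range R_\alpha=\htucker{\VV,\rr{r}}$. Grouping the factors by tree levels and using that the nodes on a fixed level $\ell$ carry disjoint mode sets identifies $\prod_{\alpha\in\hdimtree{m}^\ell}R_\alpha$ with $P_{\VV,\ell,\rr{r}}$, so that $\Proj_{\VV}$ coincides with the operator of \eqref{eq:hsvdproj_hier} evaluated at $\rr{\tilde r}=\rr{r}$; the commutativity just established is precisely why here -- unlike in the truncating case $\rr{\tilde r}$ not belonging to $\mathcal{R}_\Hcal$ -- the order of the $P_{\VV,\ell,\rr{r}}$ is irrelevant. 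Finally, as $\htucker{\VV,\rr{r}}$ is a closed subspace of the Hilbert space $\spl{2}(\nabla^d)$, its orthogonal projection $\Proj_{\VV}\bu$ is the unique element of $\htucker{\VV,\rr{r}}$ closest to $\bu$, which is the asserted best-approximation property.

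I expect the genuine work to be concentrated in the second paragraph: turning the nestedness relation into the range inclusion $\range R_{\alpha'}\subseteq\range R_\alpha$ for a descendant $\alpha$ of $\alpha'$, and invoking the elementary fact that an inclusion of ranges of two orthogonal projections forces them to commute. The remaining points -- identifying $\htucker{\VV,\rr{r}}$ with the intersection of the $\range R_\alpha$ via minimal subspaces, and the passage to the infinite-rank case $r_\alpha=\infty$ where $W_\alpha$ must be read as a \emph{closed} span -- are routine and do not affect any of the arguments above.
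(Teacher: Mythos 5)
Your proof is correct, and it rests on the same mechanism as the paper's argument: nestedness of the mode frames yields range inclusions among the node-wise projections $R_\alpha = Q_\alpha\otimes\id$, which is exactly what makes the composite in \eqref{eq:hsvdproj_hier} an orthogonal projection onto $\Hcal(\VV,\rr{r})$. The paper's proof is just more economical in organization: rather than establishing pairwise commutativity and invoking the intersection-of-ranges characterization, it observes that by nestedness the level-wise product collapses, $P_{\VV,L,\rr{r}}\cdots P_{\VV,1,\rr{r}} = P_{\VV,1,\rr{r}}$, so that $\Proj_{\VV}$ is simply the single top-level orthogonal projection $P_{\VV,1,\rr{r}}$, whose range is $\Hcal(\VV,\rr{r})$.
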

 
\begin{proof}
The sought projection is given by $\Proj_\VV = P_{\VV,1,\rr{r}}$, since
$$
P_{\VV,L,\rr{r}} \,
     \cdots \,P_{\VV,2,\rr{r}}\, P_{\VV,1,\rr{r}}  = P_{\VV,1,\rr{r}}
$$
holds as a consequence of the nestedness property.
\end{proof}

{
\subsubsection{Best approximation}\label{ssec:best-appr}
}

{
In analogy to \eqref{eq:tucker_err}, we define the   error estimate
\begin{equation}
\label{eq:htucker_err}
 \err_\rr{\tilde r}(\bu) = \err^\Hcal_\rr{\tilde r}(\bu)
   := \Bigl( \sum_{\alpha}
    \sum_{k = \tilde r_{\alpha}+ 1}^{\dd_\alpha(\mathbf{u})}
    \bigabs{\sigma^{(\alpha)}_{k}}^2 \Bigr)^{\frac12} \,.
\end{equation}
Here the sum over $\alpha$ extends over $\hdimtree{m}\setminus \{ \hroot{m},\rightchild(\hroot{m})  \}$ if $\tilde r_{\leftchild(\hroot{m})} \leq \tilde r_{\rightchild(\hroot{m})}$, and otherwise over $\hdimtree{m}\setminus \{ \hroot{m},\leftchild(\hroot{m})  \}$.}
We then have the following analogue of Corollary
\ref{cor:tensor_recompression_est}, see \cite{Grasedyck:10}.
\smallskip

\begin{theorem}
\label{thm:hier_tensor_recompression_est}
{For a given $\mathbf{u} \in \spl{2}(\nabla^d)$ let $\UU^{\Hcal}(\bu)=\UU(\bu)$ the hierarchical orthonormal 
system of mode frames generated by the} $\Hcal$SVD of $\mathbf{u}$ as in
Theorem \ref{thm:hiersvd_properties}. 
{Then for hierarchical ranks $\rr{\tilde r} = (\tilde
r_\alpha)\in \Rcal_\Hcal$,} we have{
\begin{equation*}
 \norm{\mathbf{u} - \Proj_{\UU(\bu),\rr{\tilde r}}\bu} \leq \err^\Hcal_\rr{\tilde r}(\bu)
  \leq \sqrt{2 m - 3} \, \inf
 \bigl\{\norm{\mathbf{u} - \mathbf{v}} \colon  \mathbf{v} \in
 \Hcal(\rr{\tilde r}) \bigr\} \,.
\end{equation*}}
\end{theorem}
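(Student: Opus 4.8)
The plan is to establish the two inequalities in Theorem~\ref{thm:hier_tensor_recompression_est} separately, reducing everything to the single-node error estimates that follow from the spectral decomposition of the matricizations $T^{(\alpha)}_\bu$ and the projection representation \eqref{eq:hsvdproj_hier}. For the left inequality, I would write $\bu - \Proj_{\UU(\bu),\rr{\tilde r}}\bu$ as a telescoping sum over the levels $\ell=1,\ldots,L$ of the dimension tree, using the factorization $\Proj_{\UU(\bu),\rr{\tilde r}} = P_{\UU(\bu),L,\rr{\tilde r}}\cdots P_{\UU(\bu),1,\rr{\tilde r}}$: namely
\[
\bu - \Proj_{\UU(\bu),\rr{\tilde r}}\bu = \sum_{\ell=1}^{L} \Bigl( P_{\UU(\bu),\ell-1,\rr{\tilde r}}\cdots P_{\UU(\bu),1,\rr{\tilde r}}\bu - P_{\UU(\bu),\ell,\rr{\tilde r}}\cdots P_{\UU(\bu),1,\rr{\tilde r}}\bu \Bigr),
\]
with the convention $P_{\UU(\bu),0,\rr{\tilde r}} = \id$. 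Since each $P_{\UU(\bu),\ell,\rr{\tilde r}}$ is an orthogonal projection and the nestedness property (Proposition~\ref{prop:nestedness}) ensures that the ranges are decreasing along the chain, the telescoping summands are pairwise orthogonal, so the squared norm of the total error is the sum of the squared norms of the increments. Each increment at level $\ell$ is controlled, via orthonormality and the fact that projecting on lower levels does not increase the relevant mode-$\alpha$ singular values, by $\sum_{\alpha\in\hdimtree{m}^\ell}\sum_{k>\tilde r_\alpha}|\sigma^{(\alpha)}_k|^2$; summing over $\ell$ and accounting for the one redundant child of the root (only one of $\leftchild(\hroot{m})$, $\rightchild(\hroot{m})$ contributes, since the two matricizations there have identical nonzero singular values) gives exactly $\err^\Hcal_{\rr{\tilde r}}(\bu)$.

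For the right inequality, I would fix a best approximation $\bv \in \Hcal(\rr{\tilde r})$, which exists by Theorem~\ref{thm:htucker_bestapprox}, and bound $\sum_{k>\tilde r_\alpha}|\sigma^{(\alpha)}_k(\bu)|^2$ node by node. For each fixed $\alpha$, the operator $T^{(\alpha)}_\bv$ has rank at most $\tilde r_\alpha$ (since $\rank_\alpha(\bv)\le\tilde r_\alpha$), so by the Eckart--Young / Hilbert--Schmidt best-approximation property applied to the matricization $T^{(\alpha)}_\bu$, the tail $\sum_{k>\tilde r_\alpha}|\sigma^{(\alpha)}_k(\bu)|^2 = \inf\{\|T^{(\alpha)}_\bu - S\|_{\mathrm{HS}}^2 : \rank S\le \tilde r_\alpha\} \le \|T^{(\alpha)}_\bu - T^{(\alpha)}_\bv\|_{\mathrm{HS}}^2 = \|\bu-\bv\|^2$, because matricization is an isometry on $\spl{2}(\nabla^d)$. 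Summing this single bound over the $2m-3$ nodes $\alpha$ appearing in the definition of $\err^\Hcal_{\rr{\tilde r}}$ yields $\bigl(\err^\Hcal_{\rr{\tilde r}}(\bu)\bigr)^2 \le (2m-3)\|\bu-\bv\|^2$, which is the claimed bound. Here I would note that $\#(\hdimtree{m}\setminus\{\hroot{m}\}) = 2m-2$ and discarding one child of the root leaves $2m-3$ terms.

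The main obstacle is the careful bookkeeping in the left inequality: one must verify that the telescoping increments really are orthogonal, and that after applying the lower-level projections $P_{\UU(\bu),1,\rr{\tilde r}},\ldots,P_{\UU(\bu),\ell-1,\rr{\tilde r}}$ the increment at level $\ell$ is still controlled by the \emph{original} singular values $\sigma^{(\alpha)}_k(\bu)$ rather than those of some modified tensor. This is exactly the point where the nestedness of $\UU(\bu)$ from Proposition~\ref{prop:nestedness} is essential: because the ranges $\overline{\linspan}\{\bU^{(\alpha)}_k\}$ nest into tensor products of the children's ranges, the projection $P_{\UU(\bu),\ell,\rr{\tilde r}}$ acts, on the already-projected tensor, precisely as a truncation in the orthonormal basis diagonalizing the mode-$\alpha$ Gram matrices, and property (v) of Theorem~\ref{thm:hiersvd_properties} ($a^{(i)}_{pq}=|\sigma^{(i)}_p|^2\delta_{pq}$, with the analogous statement for interior nodes via the $\mathbf{B}^{(\alpha,k)}$) identifies the discarded mass with $\sum_{k>\tilde r_\alpha}|\sigma^{(\alpha)}_k(\bu)|^2$. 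Since the infinite-dimensional versions of all these facts are already granted by Theorems~\ref{thm:hiersvd_properties} and \ref{thm:htucker_bestapprox}, the argument is a direct transcription of the finite-dimensional proof in \cite{Grasedyck:10}, and I would simply refer there for the details of the level-by-level estimate while spelling out the orthogonality of the telescoping sum and the Eckart--Young step for the upper bound.
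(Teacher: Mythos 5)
Your handling of the second inequality is sound: fixing a best $\bv\in\Hcal(\rr{\tilde r})$ (Theorem \ref{thm:htucker_bestapprox}) and applying the Eckart--Young/Hilbert--Schmidt bound nodewise, $\sum_{k>\tilde r_\alpha}\abs{\sigma^{(\alpha)}_k}^2\le\norm{T^{(\alpha)}_\bu-T^{(\alpha)}_\bv}_{\mathrm{HS}}^2=\norm{\bu-\bv}^2$, and summing over the $2m-3$ nodes (one root child being redundant since the two root-child matricizations share their nonzero singular values) is exactly the standard argument; the paper itself gives no proof but refers to \cite{Grasedyck:10}, where this is how the upper bound is obtained.

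The first inequality is where your sketch has a genuine gap: both structural claims you rely on fail in general. The telescoped increments $(\id-P_{\UU(\bu),\ell,\rr{\tilde r}})P_{\UU(\bu),\ell-1,\rr{\tilde r}}\cdots P_{\UU(\bu),1,\rr{\tilde r}}\bu$ are \emph{not} pairwise orthogonal: the ranges of the level projections are not nested along the chain, because nestedness (Proposition \ref{prop:nestedness}) concerns the \emph{untruncated} spans, while your argument needs the span of the first $\tilde r_\alpha$ parent mode frames to be invariant under the truncated child projections -- this is false, and it is precisely the noncommutativity the paper emphasizes after \eqref{eq:hsvdproj_hier}. Likewise, the bound of each increment by the tails of the \emph{original} singular values $\sigma^{(\alpha)}_k(\bu)$ is not available, since $P_{\UU(\bu),\ell,\rr{\tilde r}}$ projects onto singular subspaces of $\bu$, not of the already-projected tensor, and the earlier (ancestor) projections do not preserve its range. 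The proof in \cite{Grasedyck:10} avoids both claims: with $\bv_j:=\hat\pi_j\bv_{j-1}$ for the successive single-node projections $\hat\pi_j$ and $\bv_0:=\bu$, one uses the splitting $\bu-\hat\pi_j\bv_{j-1}=(\id-\hat\pi_j)\bu+\hat\pi_j(\bu-\bv_{j-1})$, whose two terms are orthogonal (lying in $(\range\hat\pi_j)^\perp$ and $\range\hat\pi_j$), so that $\norm{\bu-\bv_j}^2\le\norm{\bu-\bv_{j-1}}^2+\norm{(\id-\hat\pi_j)\bu}^2$; induction gives $\norm{\bu-\Proj_{\UU(\bu),\rr{\tilde r}}\bu}^2\le\sum_\alpha\sum_{k>\tilde r_\alpha}\abs{\sigma^{(\alpha)}_k}^2$ with every node error measured on $\bu$ itself, and independently of the order of the projections (the order matters only for the rank bound on the output, and the omitted root child contributes nothing since its projection acts trivially after its sibling's). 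So the step you propose to "spell out" -- orthogonality of the telescoping sum -- is exactly the step that is false; replacing it by this one-step orthogonal decomposition repairs the argument, and it carries over verbatim to the infinite-dimensional setting.
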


\begin{corollary}\label{cor:htucker_projbest}
For $\bu\in\spl{2}(\nabla^d)$ and $\rr{r} = (r_\alpha)_{\alpha\in\hdimtree{m}} {\in \Rcal_\Hcal}$
with $0\leq r_\alpha\leq \rank_\alpha(\bu)$, there exist orthonormal and
nested hierarchical mode frames $\bar\UU(\bu, \rr{r})$ such that
$$  \norm{\bu - \Proj_{\bar\UU(\bu, \rr{r})} \bu} =
\min_{\bw\in\htucker{\rr{r}}}\norm{\bu - \bw}   $$
with $\Proj_{\bar\UU(\bu, \rr{r})}$ as in Lemma \ref{lmm:htucker_projfull}.
\end{corollary}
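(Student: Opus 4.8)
The plan is to mirror the Tucker-format argument in Corollary~\ref{cor:tucker_projbest}. First, by Theorem~\ref{thm:htucker_bestapprox} applied with the given rank vector $\rr{r}\in\Rcal_\Hcal$ (and $r_\alpha\le\rank_\alpha(\bu)$), there exists a best approximation $\bar\bu\in\Hcal(\rr{r})$, i.e.\ $\norm{\bu-\bar\bu}=\min_{\bw\in\htucker{\rr{r}}}\norm{\bu-\bw}$. The idea is then to take $\bar\UU(\bu,\rr{r}):=\UU^\Hcal(\bar\bu)$, the orthonormal and nested hierarchical mode frame system generated by the $\Hcal$SVD of $\bar\bu$ (see \eqref{UU-H}), and to verify that the associated best-approximation projection $\Proj_{\bar\UU(\bu,\rr{r})}$ from Lemma~\ref{lmm:htucker_projfull} reproduces $\bar\bu$ exactly and that this choice is legitimate.

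The key steps, in order, are: (1) Since $\bar\bu\in\Hcal(\rr{r})$, we have $\rank_\alpha(\bar\bu)\le r_\alpha$ for all $\alpha\in\hdimtree{m}\setminus\{\hroot{m}\}$; write $\rr{\bar r}=(\rank_\alpha(\bar\bu))_\alpha$, which lies in $\Rcal_\Hcal$ by Proposition~\ref{prop:nestedness} and satisfies $\rr{\bar r}\le\rr{r}$. (2) The $\Hcal$SVD mode frames $\UU^\Hcal(\bar\bu)$ have exactly $\rank_\alpha(\bar\bu)$ columns for node $\alpha$, are orthonormal, and are nested by Proposition~\ref{prop:nestedness}; padding with zero columns up to $r_\alpha$ where needed (per the convention $\bU^{(i)}_k=0$ for $k>r_i$, extended to interior nodes) yields a nested orthonormal system $\bar\UU(\bu,\rr{r})$ with ranks $\rr{r}$, so Lemma~\ref{lmm:htucker_projfull} applies and gives a linear projection $\Proj_{\bar\UU(\bu,\rr{r})}\colon\spl{2}(\nabla^d)\to\Hcal(\bar\UU(\bu,\rr{r}),\rr{r})$ realizing the best approximation from $\Hcal(\bar\UU(\bu,\rr{r}),\rr{r})$. (3) Observe $\bar\bu\in\Hcal(\bar\UU(\bu,\rr{r}),\rr{r})$: indeed, by definition of the $\Hcal$SVD of $\bar\bu$, for every $\alpha$ one has $\overline{\range}\,T^{(\alpha)}_{\bar\bu}=\overline{\linspan}\{\bU^{(\alpha)}_k(\bar\bu)\colon k\le\rank_\alpha(\bar\bu)\}\subseteq\overline{\linspan}\{\bar U^{(\alpha)}_k\colon k\le r_\alpha\}$, which is precisely the membership condition in \eqref{H-rigid}. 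Hence $\Proj_{\bar\UU(\bu,\rr{r})}\bar\bu=\bar\bu$.

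Finally, combining (2) and (3): for any $\bw\in\htucker{\rr{r}}$ we have, using the minimality in Lemma~\ref{lmm:htucker_projfull} over the (smaller) class $\Hcal(\bar\UU(\bu,\rr{r}),\rr{r})\subseteq\htucker{\rr{r}}$ together with $\bar\bu\in\Hcal(\bar\UU(\bu,\rr{r}),\rr{r})$ and the best-approximation property of $\bar\bu$ in $\htucker{\rr{r}}$,
\[
\norm{\bu-\Proj_{\bar\UU(\bu,\rr{r})}\bu}\le\norm{\bu-\bar\bu}=\min_{\bw\in\htucker{\rr{r}}}\norm{\bu-\bw}\le\norm{\bu-\Proj_{\bar\UU(\bu,\rr{r})}\bu},
\]
the last inequality because $\Proj_{\bar\UU(\bu,\rr{r})}\bu\in\Hcal(\bar\UU(\bu,\rr{r}),\rr{r})\subseteq\htucker{\rr{r}}$; all quantities are therefore equal, proving the claim.

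I expect the main obstacle to be the bookkeeping in step~(2): ensuring that extending the $\Hcal$SVD mode frames of $\bar\bu$ (which naturally have ranks $\rr{\bar r}\le\rr{r}$) up to ranks exactly $\rr{r}$ preserves both orthonormality and \emph{nestedness} in the sense of the definition, and that this is consistent with the hypothesis $\rr{r}\in\Rcal_\Hcal$ so that Lemma~\ref{lmm:htucker_projfull} is genuinely applicable. Adding zero columns is harmless for orthonormality and trivially keeps the nestedness inclusion valid (the span is unchanged), but one should check the compatibility conditions are not disturbed; since $\rr{r}\in\Rcal_\Hcal$ is assumed, this is automatic. A secondary subtlety is the observation in step~(3) that $\Proj_{\bar\UU(\bu,\rr{r})}\bu$ indeed lies in $\htucker{\rr{r}}$, which follows from $\Hcal(\bar\UU(\bu,\rr{r}),\rr{r})\subseteq\htucker{\rr{r}}$ as noted just after \eqref{H-rigid}; everything else is a direct transcription of the Tucker argument.
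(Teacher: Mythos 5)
Your proposal is correct and follows essentially the same route as the paper: take the best approximation $\bar\bu\in\Hcal(\rr{r})$ provided by Theorem~\ref{thm:htucker_bestapprox}, set $\bar\UU(\bu,\rr{r}):=\UU^{\Hcal}(\bar\bu)$ from its $\Hcal$SVD, and conclude via Lemma~\ref{lmm:htucker_projfull}. The additional bookkeeping you supply (zero-padding of mode frames, $\bar\bu\in\Hcal(\bar\UU(\bu,\rr{r}),\rr{r})$, and the sandwich inequality) is just a more explicit rendering of the steps the paper leaves implicit.
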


\begin{proof}
By Theorem \ref{thm:htucker_bestapprox}, a best approximation of hierarchical
ranks $\rr{r}$ for $\bu$, $$  \bar\bu \in \argmin\{ \norm{\bu - \bv} \colon
\rank_\alpha(\bu) \leq r_\alpha \} \,, $$ exists.
Defining $\bar\UU(\bu, \rr{r}):= \UU(\bar\bu)$  
as the nested and orthonormal mode frames
for $\bar\bu$, given by the $\Hcal$SVD, we obtain the assertion with Lemma
\ref{lmm:htucker_projfull}.
\end{proof}

\begin{remark}\label{rmrk:hiersvd_complexity}
{Suppose that, in analogy to Remark \ref{rmrk:hosvd_complexity}, a
  compactly supported vector $\bu$ on $\nabla^d$ is given in
a possibly redundant hierarchical representation
$$ 
  \mathbf{u} = \sum_{\kk{k}\in\KK{m}(\rr{\tilde r})} \tilde a_\kk{k} 
   \bigotimes_{i=1}^m \mathbf{\tilde U}^{(i)}_{k_i}\,,
   \quad  \tilde{\mathbf{a}} = \hsum{\hdimtree{m}}(\{
    \mathbf{\tilde B}^{(\alpha,k_\alpha)}\}) \,,
$$
where  the summations in the expansion of $\tilde{\mathbf{a}}$
range over $k_\alpha = 1,\ldots,\tilde r_\alpha$ for each $\alpha$,
and where the vectors $\mathbf{\tilde U}^{(i)}_{k}$, $k=1,\ldots,\tilde r_i$,
and $\mathbf{\tilde B}^{(\alpha,k)}$, $k = 1,\ldots,\tilde r_\alpha$,
may be linearly dependent. Employing standard linear algebra procedures, 
an $\Hcal$SVD of $\mathbf{u}$
can be computed from such a representation,
using a number of operations that can be 
estimated by
\begin{equation}  C m \,\bigl(\max_{\alpha\in\hdimtree{m}\setminus\{\hroot{m}\} }\tilde
r_\alpha\bigr)^4 + C \bigl(\max_i{\tilde r_i}\bigr)^2 \sum_{i=1}^m
\#\supp_i(\bu),
\end{equation}
where $C>0 $ is a fixed constant, cf.\ \cite[Lemma 4.9]{Grasedyck:10}.}
\end{remark}

\section{Recompression and Coarsening}\label{sect:compress}
As explained in \S\ref{ssect:preview}, iterations of the form \eref{practicaliter} provide
updates $\bv = \bu_k +\omega(\bbf - \bA \bu_k)$
which differ from the unknown $\bu$ by some known tolerance. However, even when using a ``tensor-friendly'' structure
of the operator $\bA$ or a known ``tensor-sparsity'' of the data $\bbf$, the arithmetic operations 
leading to the update $\bv$ do not give any clue as to whether the resulting ranks are close to minimal.
Hence, one needs a mechanism that realizes a {\em subspace correction} leading to tensor
representations with ranks at least close to minimal ones. This consists in deriving from the {\em known}
$\bv$ a {\em near best} approximation to the {\em unknown} $\bu$ where the notion of near best in terms of ranks is made precise below.
Specifically, suppose that  $\bv\in \e2$ is an approximation of $\bu\in
\spl{2}(\nabla^d)$ which for some $\eta>0$ satisfies
\begin{equation}
\label{c1}
\norm{\bu - \bv}_{\e2}\leq \eta.
\end{equation}
We shall show next how to derive from $\bv$ a near-minimal rank tensor approximation to
$\bu$.
Based on our preparations in {\S\ref{sect:prelim}}, the following developments apply 
to both formats $\Fcal\in \{\Tcal,\Hcal\}$, in fact, to any format $\Fcal$ with associated mode frame systems $\UU=\UU^{\Fcal}$
(see \eqref{UU-T}, \eqref{UU-H}) for which one can formulate suitable projections $\Proj_\VV^{\Fcal}, \Proj_{\VV,\rr{\tilde r}}^{\Fcal}$
with analogous properties. Accordingly, 
\begin{equation}
\label{eq:rankvectorsets}
{ \Rank = \Rank_{\Fcal}\,,\quad\Fcal\in\{\Tcal,\Hcal\}  }
\end{equation}
denotes the respective set of {admissible} rank vectors
{$\Rank_{\Tcal}$, $\Rank_{\Hcal}$, defined in \eqref{def:R-T}, \eqref{def:R-H}, respectively.}
A crucial role in what follows is played by the following immediate consequence of Corollaries \ref{cor:tucker_projbest}, \ref{cor:htucker_projbest}
combined with Corollary \ref{cor:tensor_recompression_est} and Theorem \ref{thm:hier_tensor_recompression_est}.

\begin{remark}
\label{rem:near-best}
Let for a given $\bv\in \ell_2(\nabla^d)$ the mode frame system $\UU(\bv)$ be  either $\UU^\Tcal(\bv)$
or $\UU^\Hcal(\bv)$. Then, for any  rank vector $\rr{r}\leq \rank(\bv)$, $\rr{r}\in \Rank$, one has
\begin{equation}
\label{UU-near-best}
{  \norm{\mathbf{v} -  \Psvd{\mathbf{v}}{\rr{r}} \mathbf{v}} 
\leq {  \err_\rr{r} (\bv) }
\leq \constsvd  \norm{\mathbf{v} - \Pbest{\mathbf{v}}{\rr{r}} \mathbf{v}} =  \constsvd \min_{\rank(\mathbf{w})\leq \rr{r}} \norm{\mathbf{u} - \mathbf{w}} ,   }
\end{equation}
where $\constsvd =\sqrt{m}$ when $\Fcal =\Tcal$, and $\constsvd =\sqrt{2m-3}$ when $\Fcal =\Hcal$.
 \end{remark}

{As mentioned earlier, for $\Fcal = \Hcal$ the above notions depend on the dimension tree $\cD_m$. Since $\cD_m$ is fixed
we dispense with a corresponding notational reference.} 
\subsection{Tensor Recompression}\label{ssect:recompress}

  Given $\mathbf{u}\in\spl{2}(\nabla^d)$, in what follows
by  $\UU(\bu)$ we either mean  $\UU^{\Tcal}(\bu)$ or  $\UU^{\Hcal}(\bu)$, see \eqref{UU-T}, \eqref{UU-H}.

We introduce next two notions of ``minimal ranks'' {$\rsvd(\bu,\eta), \rbest(\bu, \eta)$} for a given target accuracy
$\eta$, one for the specific
mode frame system $\UU(\bu)$ provided by either HOSVD or $\Hcal$SVD, and one for 
the respective best mode frame systems.

\begin{definition}
For each $\eta>0$ we choose $\rsvd(\mathbf{u}, \eta)\in \Rank$
such that
\begin{equation*}
{  \err_{\rsvd(\mathbf{u}, \eta)}(\bu) }  \leq \eta \,,
\end{equation*}
{and hence $ \norm{\mathbf{u} - \Psvd{\mathbf{u}}{\rsvd(\mathbf{u}, \eta)} \mathbf{u} } \leq \eta$,}
with minimal $\abs{\rsvd(\mathbf{u}, \eta)}_\infty$, that is,
\begin{equation*}
\rsvd(\bu,\eta) \in \argmin\bigl\{|\rr{r}|_\infty: \rr{r}\in \Rank,\;  {  \err_{\rsvd(\mathbf{u}, \eta)}(\bu) } \leq \eta\bigr\}  \,.
\end{equation*}
Similarly,
for each $\eta>0$ we choose $\rbest(\mathbf{u}, \eta)\in
\Rank$ such that
\begin{equation*}
  \norm{\mathbf{u} - \Pbest{\mathbf{u}}{\rbest(\mathbf{u}, \eta)} \mathbf{u} }
  \leq  \eta,
\end{equation*}
with minimal $\abs{\rbest(\mathbf{u}, \eta)}_\infty$, that is (see Corollary \ref{cor:tucker_projbest} 
and Remark \ref{rem:near-best}),
\begin{equation}
\label{c3}
\rbest(\bu,\eta) \in \argmin\,\bigl\{|\rr{r}|_\infty : \rr{r}\in \Rank,\,\,
\exists\,\, \bw\in {\Fcal(\rr{r})},\,\,
\|\bu -\bw\|_{\e2}\leq \eta\bigr\} \,.
\end{equation}
\end{definition}

Recall that the projections $\Psvd{\bv}{\rr{r}}=\Psvd{\bv}{\rr{r}}^{\Fcal}$ to $\Fcal(\rr{r})$ are given either by \eqref{P-T} or \eqref{eq:hsvdproj_hier} when $\Fcal \in \{\Tcal,\Hcal\}$, respectively.  
 In both cases, they will be used to define  computable coarsening operators for any given 
$\bv$ (of finite support in $\nabla^d$).
In fact, setting
\begin{equation}\label{eq:tucker_recomp_def}
\hatPsvd{\eta} \bv:= \Psvd{\bv}{\rsvd(\bv,\eta)}\bv\,,
\end{equation}
we have by definition
\begin{equation}
\label{propC}
\norm{\bv- \hatPsvd{\eta}\bv}\leq  \err_{\rsvd(\bv,\eta)} (\bv) \leq \eta,\qquad
\abs{\rank(\hatPsvd{\eta}\bv)}_\infty = \abs{\rsvd(\bv,\eta)}_\infty.
\end{equation}

\begin{lemma}
\label{lmmc1}
Fix any $\alpha >0$. 
For any $\bu, \bv,\eta$ satisfying \eqref{c1}, i.e.~$\norm{\bu -\bv}\leq \eta$,
one has
\begin{equation}
\label{c6}
\norm{ \bu - \hatPsvd{\constsvd(1+\alpha)\eta}\bv } \leq (1+
\constsvd(1+\alpha))\eta
\end{equation}
while
\begin{equation}
\label{c5}
\abs{\rank(\hatPsvd{\constsvd(1+\alpha)\eta}\bv)}_\infty = 
\abs{\rsvd(\bv,\constsvd(1+\alpha)\eta) }_\infty
\leq \abs{\rbest(\bu,\alpha\eta)}_\infty\,.
\end{equation}
 \end{lemma}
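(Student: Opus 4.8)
The plan is to establish the two claims \eqref{c6} and \eqref{c5} separately, the first being a routine triangle-inequality argument and the second being the heart of the matter. For \eqref{c6}, I would write
\[
  \norm{\bu - \hatPsvd{\constsvd(1+\alpha)\eta}\bv}
   \leq \norm{\bu - \bv} + \norm{\bv - \hatPsvd{\constsvd(1+\alpha)\eta}\bv}
   \leq \eta + \constsvd(1+\alpha)\eta
   = (1 + \constsvd(1+\alpha))\eta,
\]
using \eqref{c1} for the first term and the first estimate in \eqref{propC} (with tolerance $\eta$ replaced by $\constsvd(1+\alpha)\eta$) for the second. The middle equality in \eqref{c5}, namely $\abs{\rank(\hatPsvd{\constsvd(1+\alpha)\eta}\bv)}_\infty = \abs{\rsvd(\bv,\constsvd(1+\alpha)\eta)}_\infty$, is just the second identity in \eqref{propC}. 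So the only substantive point is the final inequality
\[
  \abs{\rsvd(\bv,\constsvd(1+\alpha)\eta)}_\infty \leq \abs{\rbest(\bu,\alpha\eta)}_\infty.
\]

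To prove this, I would set $\rr{r} := \rbest(\bu,\alpha\eta) \in \Rank$. By the definition of $\rbest$, there exists $\bw\in\Fcal(\rr{r})$ with $\norm{\bu - \bw}\leq\alpha\eta$; equivalently, since the best approximation in $\Fcal(\rr{r})$ is realized by a projection onto suitable mode frames (Corollary \ref{cor:tucker_projbest} resp.\ Corollary \ref{cor:htucker_projbest}), one has $\min_{\rank(\bw')\leq\rr{r}}\norm{\bu-\bw'}\leq\alpha\eta$. Now I want to control $\err_{\rr{r}}(\bv)$, the computable error estimate for the $\bv$-SVD truncated at rank $\rr{r}$. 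By the near-best property of the $\Fcal$SVD truncation, \eqref{UU-near-best} applied to $\bv$ with rank vector $\rr{r}$ (after possibly replacing $\rr{r}$ by $\rr{r}\wedge\rank(\bv)$, which only decreases the left-hand side and still lies in $\Rank$), gives
\[
  \err_{\rr{r}}(\bv) \leq \constsvd \min_{\rank(\bw')\leq\rr{r}}\norm{\bv - \bw'}.
\]
Then I bound the right-hand side: for any $\bw'$ with $\rank(\bw')\leq\rr{r}$, $\norm{\bv-\bw'}\leq\norm{\bv-\bu}+\norm{\bu-\bw'}$, so taking the infimum over such $\bw'$ and using $\norm{\bu-\bv}\leq\eta$ together with the displayed best-approximation bound for $\bu$,
\[
  \min_{\rank(\bw')\leq\rr{r}}\norm{\bv-\bw'} \leq \eta + \alpha\eta = (1+\alpha)\eta.
\]
Combining, $\err_{\rr{r}}(\bv)\leq\constsvd(1+\alpha)\eta$. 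Hence $\rr{r}$ is an admissible competitor in the minimization defining $\rsvd(\bv,\constsvd(1+\alpha)\eta)$, so by minimality of $\abs{\rsvd(\bv,\constsvd(1+\alpha)\eta)}_\infty$ we get $\abs{\rsvd(\bv,\constsvd(1+\alpha)\eta)}_\infty\leq\abs{\rr{r}}_\infty=\abs{\rbest(\bu,\alpha\eta)}_\infty$, which is \eqref{c5}.

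The main obstacle, and the point to handle carefully, is the interplay between the two notions of rank: $\rsvd$ is defined via the computable estimator $\err_{\rr{r}}(\cdot)$ applied to $\bv$ (the known quantity), whereas $\rbest$ refers to true best approximation of $\bu$ (the unknown). The bridge is exactly inequality \eqref{UU-near-best}, which says the SVD-based error estimator for $\bv$ overestimates the true best-$\rr{r}$-approximation error of $\bv$ by at most the factor $\constsvd$, and this is why the factor $\constsvd$ appears in the tolerance on the left of \eqref{c5}; the extra factor $(1+\alpha)$ absorbs the perturbation $\norm{\bu-\bv}\leq\eta$ via the triangle inequality. A minor technical care point is that $\rr{r}=\rbest(\bu,\alpha\eta)$ may have entries exceeding $\rank(\bv)$, in which case one replaces it componentwise by $\rr{r}\wedge\rank(\bv)$ before applying \eqref{UU-near-best}; this is harmless since it only shrinks the truncation error and does not increase $\abs{\rr{r}}_\infty$, and for $\Fcal=\Hcal$ one checks that $\rr{r}\wedge\rank(\bv)$ still satisfies the nestedness-compatibility conditions because both $\rr{r}$ and $\rank(\bv)$ do.
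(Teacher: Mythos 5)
Your proposal is correct and follows essentially the same route as the paper's proof: both reduce \eqref{c5} to showing $\err_{\rbest(\bu,\alpha\eta)}(\bv)\leq \constsvd(1+\alpha)\eta$ via the near-best property \eqref{UU-near-best} together with a triangle-inequality perturbation (the paper takes $\Pbest{\bu}{\rbest(\bu,\alpha\eta)}\bv$ as the competitor and uses non-expansiveness, you take the best approximation $\bw$ of $\bu$ directly), and then invoke the minimality in the definition of $\rsvd$, with \eqref{c6} by the triangle inequality. Your side remark on replacing $\rr{r}$ by $\rr{r}\wedge\rank(\bv)$ is the only imprecise spot — the estimator $\err$ is unchanged rather than decreased (components beyond $\rank(\bv)$ contribute zero singular values), the competitor $\bw$ need not lie in $\Fcal(\rr{r}\wedge\rank(\bv))$, and admissibility of the componentwise minimum in $\Rcal_\Hcal$ is not automatic — but this caveat is immaterial, since surplus rank components are inert in $\err_{\rr{r}}(\bv)$ (equivalently, the paper's projection-based competitor automatically has ranks bounded by $\rank(\bv)$).
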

{In other words,} {the ranks of $\hatPsvd{\constsvd(1+\alpha)\eta}\bv$ are bounded by the minimum ranks required to realize a somewhat higher accuracy.}
\begin{proof} 
Bearing Remark \ref{rem:near-best} in mind,
given $\bu$,  one has for the projection $\Pbest{\bu}{\rbest(\bu,\alpha\eta)}$
\begin{multline}
\label{c4}
\norm{ \bv - \Pbest{\bu}{\rbest(\bu,\alpha\eta)}\bv } 
\leq \norm{ (\id -
\Pbest{\bu}{\rbest(\bu,\alpha\eta)})(\bv -\bu)}  \\
 + \norm{\bu -
\Pbest{\bu}{\rbest(\bu,\alpha\eta)}\bu}  
\leq (1+\alpha)\eta.
\end{multline}
On the other hand, we know that for any $\rr{r}\in \Rank$, 
$$
\norm{ \bv - \Psvd{\bv}{\rr{r}}\bv } 
{ \leq \err_\rr{r}(\bv) }
\leq \constsvd \, \inf_{\bw\in \Fcal(\rr{r})}
 \norm{\bv -\bw }\,, $$
so that, by \eqref{c4}, for $\rr{r}=\rbest(\bu,\alpha\eta)$ we have
$$
\norm{ \bv - \Psvd{\bv}{\rbest(\bu,\alpha\eta)}\bv } 
{  \leq \err_{\rbest(\bu,\alpha\eta)}(\bv) }
\leq \constsvd (1+\alpha)\eta\,. $$
{Since, by definition, $\abs{\rank(\hatPsvd{\constsvd(1+\alpha)\eta}\bv)}_\infty$ is minimal to achieve the accuracy bound $\constsvd(1+\alpha)\eta$, \eqref{c5} follows.} Estimate \eqref{c6} follows by triangle inequality.
\end{proof}

Thus, appropriately coarsening $\bv$ yields an approximation to $\bu$ of still
the same quality up to a fixed (dimension-dependent) constant, where the rank of 
this new approximation is bounded by a minimal rank of a {\em best} Tucker or hierarchical Tucker approximation to $\bu$ for somewhat higher accuracy.

Let us reinterpret this in terms of minimal ranks, i.e., for $r\in\N_0$ and $\Fcal\in \{\Tcal, \Hcal\}$, 
let $$
{\sigma_{r}(\bv) = \sigma_{r,\Fcal}(\bv):= \inf\,\bigl\{\norm{ \bv- \bw} \,:\; \bw \in
\Fcal(\rr{r}) \text{ with $\rr{r}\in\Rank$, $\abs{\rr{r}}_\infty \leq r$} \}\,. }
$$
We now consider corresponding approximation classes. 

\begin{definition}
We   call a positive, strictly
increasing $\ga = \bigl(\ga(n)\bigr)_{n\in \N_0}$ with $\ga(0)=1$
and $\ga(n)\to\infty$, as $n\to\infty$,
a \emph{growth sequence}.
For a given growth sequence $\ga$, we define
$$
\Acal(\ga)= \AF{\ga}:= { \bigl\{\bv\in {\e2} : \sup_{r\in\N_0} 
\ga({r})\,
\sigma_{r,\Fcal}(\bv)=:\abs{\bv}_{\AF{\ga}}{ <\infty }\bigr\} }
$$ 
and
$\norm{\bv}_{\AF{\ga}}:= \norm{\bv} + \abs{\bv}_{\AF{\ga}}$.
We call the growth sequence $\ga$ \emph{admissible} if
$$
\garatio:= \sup_{n\in\N} \ga(n)/\ga(n-1)<\infty\,,
$$
which corresponds to a restriction to at most exponential growth.
\end{definition}

In the particular case when $\ga(n)\sim n^{s}$ for some $s >0$, 
{$\norm{\bv}_{\AF{\ga}}:= \norm{\bv} + \abs{\bv}_{\AF{\ga}}$ is a quasi-norm and
$\AF{\ga}$ is a linear space.}

{
\begin{remark}
\label{rem:howtoread}
For the subsequent developments it will be helpful to keep the following 
way of reading $\bv\in \AF{\ga}$ in mind: a given target accuracy $\varepsilon$ can
be realized at the expense of
ranks of the size $\gamma^{-1}(\abs{\bv}_{\AF{\ga}}/\varepsilon)$ so that a rank bound
of the form $\gamma^{-1}(C\abs{\bv}_{\AF{\ga}}/\varepsilon)$, where $C$ is any constant, marks
a near-optimal performance. 
\end{remark}
}

\begin{theorem}
\label{propc1}
Let $\constsvd$ be as in Remark \ref{lmmc1}, and let $\alpha > 0$.
Assume that $\bu\in \AF{\ga}$ and that $\bv\in \e2$
satisfies $\norm{\bu-\bv} \leq \eta$.
Then, defining $\bw_\eta:= \hatPsvd{\constsvd(1+\alpha)\eta}\bv$, one has
\begin{equation}
\label{c8}
\abs{\rank(\bw_\eta)}_\infty 
  \leq  \ga^{-1}\big(\garatio\norm{\bu}_{\AF{\ga}}/(\alpha\eta)\big),\quad
\norm{\bu - \bw_\eta}\leq (1+ \constsvd(1+\alpha))\eta,
\end{equation}
and
\begin{equation}
\label{c9}
\norm{\bw_\eta}_{\AF{\ga}}\leq C 
\norm{\bu}_{\AF{\ga}},\quad \eta >0,
\end{equation}
where $C=\alpha^{-1}(1+\constsvd(1+\alpha)) + 1$.
\end{theorem}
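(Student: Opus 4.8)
The plan is to establish the three conclusions of Theorem~\ref{propc1} in turn, the first two being essentially a repackaging of Lemma~\ref{lmmc1} and the third the only place where the hypothesis $\bu\in\AF{\ga}$ really enters. Throughout write $n^\ast:=\abs{\rbest(\bu,\alpha\eta)}_\infty$ and recall $\bw_\eta=\hatPsvd{\constsvd(1+\alpha)\eta}\bv$.

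\emph{Error and rank bounds.} The error estimate $\norm{\bu-\bw_\eta}\le(1+\constsvd(1+\alpha))\eta$ in \eqref{c8} is precisely \eqref{c6}. For the rank bound, \eqref{c5} already yields $\abs{\rank(\bw_\eta)}_\infty\le n^\ast$, so it suffices to bound $n^\ast$. If $n^\ast=0$ there is nothing to prove (indeed then $\norm{\bu}\le\alpha\eta$, hence $\norm{\bv}\le(1+\alpha)\eta$, and one checks $\bw_\eta=0$). If $n^\ast\ge1$, then by the minimality built into the definition of $\rbest$ no rank vector $\rr{r}\in\Rank$ with $\abs{\rr{r}}_\infty\le n^\ast-1$ admits an $\alpha\eta$-accurate approximant of $\bu$; since best approximations of prescribed rank exist (Theorems~\ref{thm:tucker_bestapprox} and~\ref{thm:htucker_bestapprox}, applied to $\rr{r}=(n^\ast-1,\ldots,n^\ast-1)$), this forces the \emph{strict} inequality $\sigma_{n^\ast-1,\Fcal}(\bu)>\alpha\eta$. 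Since $\bu\in\AF{\ga}$ gives $\ga(n^\ast-1)\,\sigma_{n^\ast-1,\Fcal}(\bu)\le\abs{\bu}_{\AF{\ga}}\le\norm{\bu}_{\AF{\ga}}$, we obtain $\ga(n^\ast-1)<\norm{\bu}_{\AF{\ga}}/(\alpha\eta)$, and admissibility upgrades this to $\ga(n^\ast)\le\garatio\,\ga(n^\ast-1)<\garatio\,\norm{\bu}_{\AF{\ga}}/(\alpha\eta)$. Applying the (generalized, continuous) inverse $\ga^{-1}$ yields $n^\ast\le\ga^{-1}\bigl(\garatio\,\norm{\bu}_{\AF{\ga}}/(\alpha\eta)\bigr)$, which together with \eqref{c5} is the rank bound in \eqref{c8}; the factor $\garatio$ is precisely what absorbs the off-by-one passage from $n^\ast-1$ to $n^\ast$.

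\emph{The stability bound.} For \eqref{c9} we estimate $\sigma_{r,\Fcal}(\bw_\eta)$ for every $r\in\N_0$. For $r\ge n^\ast$ we have $\abs{\rank(\bw_\eta)}_\infty\le n^\ast\le r$ by \eqref{c5}, so $\bw_\eta\in\Fcal(\rank(\bw_\eta))$ with ranks bounded by $r$ and hence $\sigma_{r,\Fcal}(\bw_\eta)=0$. For $r<n^\ast$, let $\bw^{(r)}$ be a best $\Fcal$-approximant of $\bu$ with ranks bounded by $r$ (Theorems~\ref{thm:tucker_bestapprox} and~\ref{thm:htucker_bestapprox}); then the triangle inequality and \eqref{c6} give
$$
\sigma_{r,\Fcal}(\bw_\eta)\le\norm{\bw_\eta-\bw^{(r)}}\le\norm{\bw_\eta-\bu}+\sigma_{r,\Fcal}(\bu)\le\bigl(1+\constsvd(1+\alpha)\bigr)\eta+\sigma_{r,\Fcal}(\bu)\,.
$$
Multiplying by $\ga(r)\le\ga(n^\ast-1)$ and inserting $\ga(n^\ast-1)<\norm{\bu}_{\AF{\ga}}/(\alpha\eta)$ from the previous step together with $\ga(r)\,\sigma_{r,\Fcal}(\bu)\le\abs{\bu}_{\AF{\ga}}$ gives
$$
\ga(r)\,\sigma_{r,\Fcal}(\bw_\eta)\le\Bigl(\alpha^{-1}\bigl(1+\constsvd(1+\alpha)\bigr)+1\Bigr)\norm{\bu}_{\AF{\ga}}=C\,\norm{\bu}_{\AF{\ga}}\,;
$$
since $\ga(n^\ast-1)\ge\ga(0)=1$, the same inequalities, with $\norm{\bu}\le\norm{\bu}_{\AF{\ga}}$, also bound the $r=0$ contribution $\norm{\bw_\eta}=\sigma_{0,\Fcal}(\bw_\eta)$. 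Taking the supremum over $r$ and recombining with $\norm{\bw_\eta}$ yields \eqref{c9}.

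The genuinely delicate point is the strict inequality $\sigma_{n^\ast-1,\Fcal}(\bu)>\alpha\eta$: it hinges on the attainment of best approximations (Theorems~\ref{thm:tucker_bestapprox}, \ref{thm:htucker_bestapprox}), so that the infima defining $\sigma_{r,\Fcal}$ and $\rbest$ are mutually consistent, and on fixing a convention for $\ga^{-1}$ so that $\ga(n)\le t$ implies $n\le\ga^{-1}(t)$. Everything downstream of ``$\ga(n^\ast-1)<\norm{\bu}_{\AF{\ga}}/(\alpha\eta)$'' is then routine triangle-inequality bookkeeping.
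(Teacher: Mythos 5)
Your treatment of \eqref{c8} is essentially the paper's own argument: the error bound is just \eqref{c6}, and the rank bound follows from \eqref{c5} together with $\ga(n^\ast-1)\,\sigma_{n^\ast-1,\Fcal}(\bu)\le\abs{\bu}_{\AF{\ga}}$ and the admissibility factor $\garatio$ (your unified case $n^\ast\ge 1$ replaces the paper's split into $n^\ast=1$ and $n^\ast>1$, and your strictness-via-attainment remark is fine, though at that point the non-strict inequality $\sigma_{n^\ast-1,\Fcal}(\bu)\ge\alpha\eta$ already suffices).

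The genuine problem is the final assembly of \eqref{c9}. For $r<n^\ast$ you bound $\ga(r)\eta\le\ga(n^\ast-1)\eta<\norm{\bu}_{\AF{\ga}}/\alpha$ and conclude $\ga(r)\,\sigma_{r,\Fcal}(\bw_\eta)\le C\norm{\bu}_{\AF{\ga}}$, and separately you bound the $r=0$ term $\norm{\bw_\eta}=\sigma_{0,\Fcal}(\bw_\eta)$ by $C\norm{\bu}_{\AF{\ga}}$; but then ``recombining'' means adding the two, which gives $\norm{\bw_\eta}_{\AF{\ga}}=\norm{\bw_\eta}+\abs{\bw_\eta}_{\AF{\ga}}\le 2C\norm{\bu}_{\AF{\ga}}$, i.e.\ twice the constant asserted in the theorem (and this exact constant is reused as $C_1$ in Theorem \ref{lmm:combined_coarsening}). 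The paper avoids the doubling by a per-$r$ comparison: for $r<n^\ast$ one has $\alpha\eta<\sigma_{r,\Fcal}(\bu)$ for that \emph{same} $r$ (monotonicity of $\sigma_r$ plus your strictness argument), hence
\[
\sigma_{r,\Fcal}(\bw_\eta)\;\le\;(1+\constsvd(1+\alpha))\eta+\sigma_{r,\Fcal}(\bu)\;\le\;\Bigl(\alpha^{-1}\bigl(1+\constsvd(1+\alpha)\bigr)+1\Bigr)\sigma_{r,\Fcal}(\bu)\;=\;C\,\sigma_{r,\Fcal}(\bu)
\]
for all $r$ (trivially for $r\ge n^\ast$). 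Taking $r=0$ gives $\norm{\bw_\eta}\le C\norm{\bu}$, multiplying by $\ga(r)$ and taking the supremum gives $\abs{\bw_\eta}_{\AF{\ga}}\le C\abs{\bu}_{\AF{\ga}}$, and these two pieces add to exactly $C\norm{\bu}_{\AF{\ga}}$. So your scheme is repairable by a one-line change — bound $\eta$ by $\sigma_{r,\Fcal}(\bu)/\alpha$ instead of by $\norm{\bu}_{\AF{\ga}}/(\alpha\,\ga(n^\ast-1))$ — but as written it does not establish \eqref{c9} with the stated constant $C$.
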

\begin{proof}
The second relation in \eqref{c8} has already been shown in {Lemma \ref{lmmc1}}.
We also know from \eqref{c5} that $\abs{\rank(\bw_\eta)}_\infty\leq 
\abs{\rbest(\bu,\alpha\eta)}_\infty$. 
Thus the first relation in \eqref{c8} is clear if
$\abs{\rbest(\bu,\alpha\eta)}_\infty = 0$.
Assume that $\abs{\rbest(\bu,\alpha\eta)}_\infty>1$. Then for
{$r' :=\abs{\rbest(\bu,\alpha\eta)}_\infty -1$}, by definition of
$\abs{\cdot}_{\AF{\ga}}$ we have
\begin{equation}
\label{c10}
{\abs{\bu}_{\AF{\ga}} \geq \ga(\abs{r'}_\infty)\sigma_{
r',\Fcal}(\bu) \geq
\ga(\abs{r'}_\infty)\,\alpha\eta \geq \garatio^{-1}
\ga(\abs{\rbest(\bu,\alpha\eta)}_\infty)\,\alpha\eta.}
\end{equation}
Also, when $\abs{\rbest(\bu,\alpha\eta)}_\infty=1$, we have
$$
\sigma_{0,\Tcal}(\bu)=\norm{\bu}>\alpha\eta =\ga(0)\,\alpha\eta \geq
\garatio^{-1}\ga(\abs{\rbest(\bu,\alpha\eta)}_\infty)\,\alpha\eta \,. $$
Therefore
$$
\abs{\rbest(\bu,\alpha\eta)}_\infty  \leq \ga^{-1}\big(\garatio
\norm{\bu}_{\AF{\ga}}/(\alpha\eta)\big), $$
which is the first relation in \eqref{c8}.

As for the remaining claim, we need to estimate
{$\ga(r)\sigma_{r,\Fcal}(\bw_\eta)$ for $r\in\N_0$. Whenever
$r \geq \abs{\rbest(\bu,\alpha\eta)}_\infty$ we have, by
\eqref{c5}, $\sigma_{r,\Fcal}(\bw_\eta)=0$. It thus suffices to consider
$r < \abs{\rbest(\bu,\alpha\eta)}_\infty$.} By \eqref{c6},
\begin{align*}
{\inf_{\rr{r}\in\Rank\colon \abs{\rr{r}}_\infty\leq r}\norm{\bw_\eta - \Pbest{\bu}{\rr{r}}\bu } }
 & \leq \norm{\bw_\eta - \bu}  + { \inf_{\rr{r}\in\Rank\colon \abs{\rr{r}}_\infty\leq r} \norm{\bu -\Pbest{\bu}{\rr{r}}\bu} } \\
 & \leq (1+\constsvd(1+\alpha))\eta + { \sigma_{r,\Fcal}(\bu). }
\end{align*}
{Since for $r < \abs{\rbest(\bu,\alpha\eta)}_\infty$ we have  $\sigma_{r,\Fcal}(\bu)>\alpha\eta$, while
$\sigma_{\abs{\rbest(\bu,\alpha\eta)}_\infty,\Fcal}(\bu) \leq \alpha\eta$,  
we conclude that
\begin{align*}
\ga(r)\,\sigma_{r,\Fcal}(\bw_\eta)
 &\leq \ga(r) \frac{(1+\constsvd(1+\alpha))\alpha\eta}{\alpha}
 + \ga(r)\,\sigma_{r,\Fcal}(\bu) \nonumber\\
 &\leq \left( \frac{1+\constsvd(1+\alpha)}{\alpha} + 1\right)
   \ga(r)\sigma_{r,\Fcal}(\bu)\nonumber\\
 &\leq  \left( \frac{1+\constsvd(1+\alpha)}{\alpha} +
    1\right)\abs{\bu}_{\AF{\ga}},
\end{align*}
which shows \eqref{c9}.}
\end{proof}

\subsection{Coarsening of Mode Frames}\label{sec:modeframe_coarsening}
We now turn to a second type of operation for reducing the complexity of given
coefficient sequences in tensor representation, an operation that coarsens mode frames
by discarding basis indices whose contribution is negligible. We shall use the following standard notions
for best $N$-term approximations.
\begin{definition}
For $\hat d \in \N$ and $\Lambda \subset \nabla^{\hat d}$, we define the
restrictions{
\begin{equation*}
  \Restr{\Lambda} \mathbf{v} := \mathbf{v} \odot \chi_\Lambda\,,\quad \mathbf{v}\in
  \spl{2}(\nabla^{\hat d}) \,,
\end{equation*} 
where $\odot$ denotes the Hadamard (elementwise) product.}
The compressibility of $\bv$ can again be described through   {\em approximation classes}.
For $s >0$, we denote by $\As(\nabla^{\hat d})$ the set of
$\mathbf{v}\in\spl{2}(\nabla^{\hat d})$ such that
\begin{equation*}
  \norm{\mathbf{v}}_{\As(\nabla^{\hat d})} := \sup_{N\in\N_0} (N+1)^s
  {\inf_{\substack{\Lambda\subset\nabla^{\hat d}\\
  \#\Lambda\leq N}} \norm{\mathbf{v} - \Restr{\Lambda} \mathbf{v}} } < \infty \,.
\end{equation*}
Endowed with this (quasi-)norm, $\As(\nabla^{\hat d})$ becomes a (quasi-)Banach space.
When no confusion can arise, we shall suppress the index set dependence 
and write $\As = \As(\nabla^{\hat d})$.
\end{definition}

\begin{remark}
\label{rem:howtoread2}
{The same comment as in Remark \ref{rem:howtoread} applies. Thinking of the growth sequence to be
$\gamma_s(n)=(n+1)^s$, realizing an accuracy $\varepsilon$ at the expense of $(C  \norm{\mathbf{v}}_{\As(\nabla^{\hat d})}
/\varepsilon)^{1/s}$ terms, where $C$ is a constant independent of $\varepsilon$, signifies 
an ``optimal work-accuracy balance" over the class $\As(\nabla^{\hat d})$.}
\end{remark}

We deliberately restrict the discussion to polynomial decay rates here since this corresponds to finite
Sobolev or Besov regularity. However, with appropriate modifications,
the subsequent considerations can be adapted also to approximation classes corresponding to
more general growth sequences.

\subsubsection{Tensor Contractions}

Searching through a sequence $\bu\in \ell_2(\nabla^d)$ (of finite support)
would suffer from the curse of dimensionality.
Being content with {\em near best} $N$-term approximations one can get around this
by introducing, for each given $\bu\in \ell_2(\nabla^d)$,
the following quantities formed from {certain contractions of the tensor $\bu \otimes \bu$ which are given by ${\rm diag}(T^{(i)}_\bu(T^{(i)}_\bu)^*)$}.
 
\begin{definition}
\label{def:contractions}
Let $\bu\in\spl{2}(\nabla^d)$. For $i\in\zinterval{1}{m}$ we define,
using the notation \eqref{eq:delentry},
\begin{equation*}
 \pi^{(i)}(\mathbf{u}) = \bigl( \pi^{(i)}_{\nu_i} (\mathbf{u})
 \bigr)_{\nu_i\in\nabla^{d_i}} 
   :=\biggl(  \Bigl(\sum_{\check{\nu}_i} \abs{u_{\nu}}^2 \Bigr)^{\frac{1}{2}}  \biggr)_{\nu_i \in\nabla^{d_i}} \,.
\end{equation*}
\end{definition}

With a slight abuse of terminology, we shall refer to these $\pi^{(i)}(\cdot)$ simply as \emph{contractions}. 
Their direct computation would involve high-dimensional summations over the index sets $\nabla^{d-d_i}$.
However, the following observations show how this can be avoided.
This makes essential use of the particular orthogonality properties of the tensor formats.
\begin{proposition}
Let $\bu\in\spl{2}(\nabla^d)$.
\begin{enumerate}[{\rm (i)}] 
\item We have $\norm{\bu} = \norm{\pi^{(i)}(\bu)}$, $i=1,\ldots,m$.
\item Let $\Lambda^{(i)}\subseteq \nabla^{d_i}$, then
\begin{equation}
\label{telesc}
 \norm{\bu - \Restr{\Lambda^{(1)} \times\cdots\times\Lambda^{(m)}} \bu} \leq
    \Bigl(\sum_{i=1}^m \sum_{\nu\in\nabla^{d_i}\setminus\Lambda^{(i)}}
    \abs{\pi^{(i)}_\nu (\bu)}^2 \Bigr)^{\frac{1}{2}} \,.
\end{equation}
\item Let in addition $\mathbf{U}^{(i)}$ and $\mathbf{a}$ be mode
frames and core tensor, respectively, as in Theorems \ref{thm:hosvd_properties}
or \ref{thm:hiersvd_properties},
and let $(\sigma^{(i)}_k)$ be the corresponding sequences of mode-$i$ singular
values. Then
\begin{equation}
\label{contract-mode}
 \pi^{(i)}_\nu (\mathbf{u}) = \Bigl( \sum_{k}
 \bigabs{\mathbf{U}^{(i)}_{\nu, k}}^2 \bigabs{\sigma^{(i)}_{k}}^2
 \Bigr)^{\frac{1}{2}} \,,\quad \nu\in\nabla^{d_i} \,.
\end{equation}
\end{enumerate}
\end{proposition}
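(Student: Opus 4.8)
The plan is to prove the three claims in sequence, as (i) feeds into (ii) and (iii) gives a concrete formula that in principle re-derives (i) as a special case.

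\textbf{Claim (i).} First I would observe that $\pi^{(i)}(\bu)$ simply records the $\ell_2$-norms of the columns (or rows) of the mode-$i$ matricization $T^{(i)}_\bu$. Indeed, by definition $\abs{\pi^{(i)}_{\nu_i}(\bu)}^2 = \sum_{\check\nu_i} \abs{u_\nu}^2$, so
\[
 \norm{\pi^{(i)}(\bu)}^2 = \sum_{\nu_i\in\nabla^{d_i}} \sum_{\check\nu_i\in\nabla^{d-d_i}} \abs{u_\nu}^2 = \sum_{\nu\in\nabla^d} \abs{u_\nu}^2 = \norm{\bu}^2,
\]
since the double sum is just a reordering of the (absolutely convergent, because $\bu\in\ell_2$) sum over all of $\nabla^d$. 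This is a one-line Fubini/Tonelli argument with nonnegative summands, so no subtlety beyond noting convergence.

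\textbf{Claim (ii).} The plan is to write the error of restriction to the product set as a telescoping sum over coordinates. Set $\Lambda = \Lambda^{(1)}\times\cdots\times\Lambda^{(m)}$ and for $j=0,\ldots,m$ let $\Lambda_j := \Lambda^{(1)}\times\cdots\times\Lambda^{(j)}\times\nabla^{d_{j+1}}\times\cdots\times\nabla^{d_m}$, so $\Lambda_0 = \nabla^d$ and $\Lambda_m = \Lambda$. Then $\Restr{\Lambda_0}\bu = \bu$ and
\[
 \bu - \Restr{\Lambda}\bu = \sum_{j=1}^m \bigl( \Restr{\Lambda_{j-1}}\bu - \Restr{\Lambda_j}\bu \bigr).
\]
Each difference $\Restr{\Lambda_{j-1}}\bu - \Restr{\Lambda_j}\bu$ is supported on indices $\nu$ with $\nu_j \in \nabla^{d_j}\setminus\Lambda^{(j)}$ (and the first $j-1$ coordinates already restricted), so its $\ell_2$-norm squared is bounded by $\sum_{\nu: \nu_j\notin\Lambda^{(j)}} \abs{u_\nu}^2 = \sum_{\nu_j\in\nabla^{d_j}\setminus\Lambda^{(j)}} \abs{\pi^{(j)}_{\nu_j}(\bu)}^2$; dropping the restriction on the earlier coordinates only increases the bound. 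Now the triangle inequality in $\ell_2$ gives $\norm{\bu-\Restr{\Lambda}\bu} \le \sum_{j=1}^m \norm{\Restr{\Lambda_{j-1}}\bu - \Restr{\Lambda_j}\bu}$; to arrive at the stated inequality with the square root of a sum of squares, I would instead note that the summands in the telescoping sum have \emph{pairwise disjoint supports} — the $j$th term lives where $\nu_j\notin\Lambda^{(j)}$ but $\nu_i\in\Lambda^{(i)}$ for $i<j$ — hence Pythagoras applies and $\norm{\bu-\Restr{\Lambda}\bu}^2 = \sum_{j=1}^m \norm{\Restr{\Lambda_{j-1}}\bu - \Restr{\Lambda_j}\bu}^2 \le \sum_{j=1}^m \sum_{\nu_j\notin\Lambda^{(j)}} \abs{\pi^{(j)}_{\nu_j}(\bu)}^2$, which is \eqref{telesc}.

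\textbf{Claim (iii).} Here I would use the Hilbert--Schmidt / HOSVD decomposition \eqref{eq:hilbertschmidt_decomp_matrix}, namely $u_\nu = \sum_k \sigma^{(i)}_k U^{(i)}_{\nu_i,k} V^{(i)}_{\check\nu_i,k}$ with $\{\bU^{(i)}_k\}$ and $\{\bV^{(i)}_k\}$ orthonormal. Fixing $\nu_i$ and squaring,
\[
 \abs{\pi^{(i)}_{\nu_i}(\bu)}^2 = \sum_{\check\nu_i} \Bigabs{\sum_k \sigma^{(i)}_k U^{(i)}_{\nu_i,k} V^{(i)}_{\check\nu_i,k}}^2
 = \sum_{k,l} \sigma^{(i)}_k \sigma^{(i)}_l U^{(i)}_{\nu_i,k} \overline{U^{(i)}_{\nu_i,l}} \sum_{\check\nu_i} V^{(i)}_{\check\nu_i,k}\overline{V^{(i)}_{\check\nu_i,l}},
\]
and the inner sum equals $\langle \bV^{(i)}_l,\bV^{(i)}_k\rangle = \delta_{kl}$ by orthonormality of the right singular vectors, collapsing this to $\sum_k \abs{\sigma^{(i)}_k}^2 \abs{U^{(i)}_{\nu_i,k}}^2$, which is \eqref{contract-mode}. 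The only point needing care — and the main (mild) obstacle — is justifying the interchange of the sum over $\check\nu_i$ with the sum over $k,l$: this follows from absolute convergence, since $\sum_{\check\nu_i}\sum_{k,l}\abs{\sigma^{(i)}_k\sigma^{(i)}_l U^{(i)}_{\nu_i,k}U^{(i)}_{\nu_i,l} V^{(i)}_{\check\nu_i,k}V^{(i)}_{\check\nu_i,l}}$ is controlled, via Cauchy--Schwarz in $k$ and $l$, by $\bigl(\sum_k \abs{\sigma^{(i)}_k}\,\abs{U^{(i)}_{\nu_i,k}}\,\norm{\bV^{(i)}_k}\bigr)^2 \le \bigl(\sum_k \abs{\sigma^{(i)}_k}\,\abs{U^{(i)}_{\nu_i,k}}\bigr)^2$, and $(\sigma^{(i)}_k)\in\ell_2$ together with $(U^{(i)}_{\nu_i,k})_k\in\ell_2$ makes this finite. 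Essentially the same computation, summed additionally over $\nu_i$ and using orthonormality of the $\bU^{(i)}_k$ as well, re-proves (i), since then one gets $\sum_{\nu_i}\abs{\pi^{(i)}_{\nu_i}(\bu)}^2 = \sum_k \abs{\sigma^{(i)}_k}^2 = \norm{\bu}^2$ by the Hilbert--Schmidt identity. I would present (i) via the elementary Fubini argument and mention this alternative only in passing.
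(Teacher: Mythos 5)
Your proof is correct and follows essentially the same route as the paper's (very terse) argument: (i) by rearranging the nonnegative double sum, (ii) by splitting the restriction error into single-coordinate restrictions — your disjoint-support telescoping is just a sharper bookkeeping of the paper's union-type bound labelled \eqref{telesc} — and (iii) by expanding via the Hilbert--Schmidt decomposition and using orthonormality of the right singular vectors, which is exactly the "orthogonality of mode frames and core tensor" the paper invokes. The only detail worth noting is that the row square-summability $\sum_k\abs{U^{(i)}_{\nu_i,k}}^2\le 1$ you use for the interchange follows since these are diagonal entries of the orthogonal projection $\bU^{(i)}(\bU^{(i)})^*$, so your justification is sound.
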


\begin{proof}
Property (i) is clear, and (iii) is a simple consequence of the orthogonality
properties of mode frames and core tensor stated in Theorems \ref{thm:hosvd_properties}
and \ref{thm:hiersvd_properties}. Abbreviating $\mathbf{\tilde u}:=
 \Restr{\Lambda^{(1)} \times\cdots\times\Lambda^{(m)}} \bu$,
property (ii) follows, in view of  (i), from
\begin{eqnarray*} 
\norm{\mathbf{\tilde u}- \mathbf{u}}^2 
  & \leq &\norm{\mathbf{u} - \Restr{\Lambda^{(1)} \times
 \nabla^{d_2}\times\cdots\times \nabla^{d_m}} \mathbf{u}}^2 +
 \ldots + \norm{\mathbf{u} - \Restr{\nabla^{d_1} \times \cdots\times
 \nabla^{d_{m-1}} \times \Lambda^{(m)}} \mathbf{u} }^2\nonumber   \\
  & = &\sum_{i=1}^m
\sum_{\nu\in \nabla^{d_i} \setminus \Lambda^{(i)}}
 \bigabs{\pi^{(i)}_{\nu}(\mathbf{u})}^2 \,. \qedhere
\end{eqnarray*}
\end{proof}

The following subadditivity property is an
immediate consequence of the triangle inequality.

\begin{proposition}\label{prp:contraction_subadd}
Let $N\in\N$ and $\bu_n \in \spl{2}(\nabla^d)$, $n=1,\ldots,N$.
Then for each $i$ and each $\nu\in\nabla^{d_i}$, we have
\begin{equation*}
  \pi^{(i)}_\nu\Bigl(\sum_{n=1}^N \bu_n\Bigr) \leq
     \sum_{n=1}^N \pi^{(i)}_\nu (\bu_n).
\end{equation*}
\end{proposition}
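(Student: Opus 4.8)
The plan is to fix $i\in\zinterval{1}{m}$ and $\nu_i\in\nabla^{d_i}$ and reduce the claim to a single application of the triangle inequality. First I would observe that, directly from Definition \ref{def:contractions}, the number $\pi^{(i)}_{\nu_i}(\bu)$ is exactly the $\spl{2}(\nabla^{d-d_i})$-norm of the fiber $\bu^{(i)}_{\nu_i} := \bigl(u_{\check\nu_i|_{\nu_i}}\bigr)_{\check\nu_i\in\nabla^{d-d_i}}$ of $\bu$ obtained by freezing the $i$th multiindex to $\nu_i$ (in the notation of \eqref{eq:delentry}); indeed $\sum_{\check\nu_i}\abs{u_\nu}^2 = \bignorm{\bu^{(i)}_{\nu_i}}^2$. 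Hence $\bu\mapsto\pi^{(i)}_{\nu_i}(\bu)$ is the composition of the linear fiber-extraction map $\bu\mapsto\bu^{(i)}_{\nu_i}$ with the norm $\norm{\cdot}=\norm{\cdot}_{\spl{2}(\nabla^{d-d_i})}$, and is therefore a seminorm on $\spl{2}(\nabla^d)$.

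The conclusion then follows from the triangle inequality for this seminorm: since $\bigl(\sum_{n=1}^N\bu_n\bigr)^{(i)}_{\nu_i}=\sum_{n=1}^N(\bu_n)^{(i)}_{\nu_i}$ by linearity, one gets
\begin{equation*}
\pi^{(i)}_{\nu_i}\Bigl(\sum_{n=1}^N\bu_n\Bigr) = \Bignorm{\sum_{n=1}^N(\bu_n)^{(i)}_{\nu_i}} \leq \sum_{n=1}^N\bignorm{(\bu_n)^{(i)}_{\nu_i}} = \sum_{n=1}^N\pi^{(i)}_{\nu_i}(\bu_n).
\end{equation*}
As $i$ and $\nu_i$ were arbitrary, this is the assertion. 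Equivalently, one can bypass the fiber notation and apply Minkowski's inequality directly to $\bigl(\sum_{\check\nu_i}\abs{\cdot}^2\bigr)^{1/2}$ regarded as a norm in the variable $\check\nu_i$; it is the same argument, and it also reduces matters to the case $N=2$, the general case following by induction.

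There is no real obstacle here — this is precisely why the statement is billed as an immediate consequence of the triangle inequality. The only point requiring a moment's care is the index bookkeeping: one must check that the inner sum $\sum_{\check\nu_i}$ in Definition \ref{def:contractions} ranges over the entire complementary index set $\nabla^{d-d_i}$, so that it genuinely defines an $\spl{2}$-norm and the triangle inequality is applicable; once this identification is in place the proof is a single line.
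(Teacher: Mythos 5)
Your proof is correct and is exactly the argument the paper has in mind: the paper offers no written proof, merely noting that the subadditivity is "an immediate consequence of the triangle inequality," and your identification of $\pi^{(i)}_{\nu}(\cdot)$ as the $\spl{2}(\nabla^{d-d_i})$-norm of the linearly extracted fiber makes that one-line appeal precise.
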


Relation \eref{contract-mode} allows us to realize (in practice, of course, for finite ranks $\rank(\bu)$
and finitely supported mode frames $\bU^{(i)}$) best $N$-term approximations of the contractions
$\pi^{(i)}(\bu)$ through those of the mode frames $\bU^{(i)}_k$. Moreover, expressing coarsening errors 
in terms of tails of contraction sequences requires finding good Cartesian index sets. To see how to
determine them {consider a non-increasing rearrangement
\beqn
\label{dim-sorting}
\pi^{(i_1)}_{ \nu^{i_1,1}}(\bu)\geq \pi^{(i_2)}_{ \nu^{i_2,2}}(\bu)\geq \cdots \geq \pi^{(i_j)}_{ \nu^{i_j,j}}(\bu)\geq \cdots,\quad \nu^{i_j,j}\in \nabla^{d_{i_j}}, 
\eeqn
of the entire set of contractions for all tensor modes,}
$$
{ \bigl\{\pi^{(i)}_\nu(\bu) : \nu\in \nabla^{d_i},\, i=1,\ldots,m \bigr\}.  }
$$
Next, retaining only the $N$ largest from the latter total ordering {\eqref{dim-sorting}} and redistributing them to the {respective}
dimension bins
\beqn
\label{dim-bin}
\Lambda^{(i)}(\bu;N):= \bigl\{\nu^{i_j,j}: i_j= i,\, j=1,\ldots, N\bigr\}, \quad i=1,\ldots, m,
\eeqn
the product set
\beqn
\label{dim-bin-product}
\Lambda(\mathbf{u};N) := \bigtimes_{i=1}^m
\Lambda^{(i)}(\mathbf{u};N)
\eeqn
can be obtained at a cost that is roughly $m$ times the analogous low-dimensional cost.
By construction, one has
\beqn
\label{Lambda-N}
\sum_{i=1}^m \# \Lambda^{(i)}(\mathbf{u};N) \leq N
\eeqn
and
\beqn
\label{min-contract}
\sum_{i=1}^m\sum_{\nu\in \nabla^{d_i}\setminus \Lambda^{(i)}(\bu;N)} |\pi^{(i)}_\nu(\bu)|^2
= \min_{\hat \Lambda}\Big\{\sum_{i=1}^m\sum_{\nu\in \nabla^{d_i}\setminus \hat \Lambda^{(i)} } |\pi^{(i)}_\nu(\bu)|^2\Big\},
\eeqn
where $\hat\Lambda $ ranges over all product sets
$ \bigtimes_{i=1}^m \hat\Lambda^{(i)}$ with $\sum_{i=1}^m \#\hat
\Lambda^{(i)} \leq N$.

\begin{proposition}
\label{prp:tensor_coarsening_est}
For any  $\mathbf{u} \in \spl{2}(\nabla^d)$ one has
\begin{equation}
\label{eq:tensor_coarsening_errest}  
{ \norm{\mathbf{u} - \Restr{\Lambda(\mathbf{u};N)}\bu} \leq \Bigl( \sum_{i=1}^m\sum_{\nu\in \nabla^{d_i}\setminus \Lambda^{(i)}(\bu;N)}
\bigabs{\pi^{(i)}_\nu(\mathbf{u})}  \Bigr)^{\frac12} =: \cerr_N(\bu) \,,  }
\end{equation}
and for any $\hat\Lambda =
\bigtimes_{i=1}^m \hat\Lambda^{(i)}$ with $\Lambda^{(i)} \subset\nabla^{d_i}$
satisfying $\sum_{i=1}^m \#\hat
\Lambda^{(i)} \leq N$, one has
\begin{equation}
\label{eq:tensor_coarsen_qopt}
 \norm{\mathbf{u} - \Restr{\Lambda(\mathbf{u};N)}\bu} \leq \cerr_N(\bu) \leq \sqrt{m} \norm{\mathbf{u} -
 \Restr{\hat\Lambda}\mathbf{u}} \,.
\end{equation}
\end{proposition}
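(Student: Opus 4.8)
The proposition packages the bound \eqref{eq:tensor_coarsening_errest} together with the quasi-optimality chain \eqref{eq:tensor_coarsen_qopt}; since the first inequality in \eqref{eq:tensor_coarsen_qopt} is nothing but \eqref{eq:tensor_coarsening_errest}, there are effectively two things to prove. The plan is to get \eqref{eq:tensor_coarsening_errest} directly from the telescoping estimate \eqref{telesc}, and then to establish the second inequality in \eqref{eq:tensor_coarsen_qopt} by combining the greedy minimality \eqref{min-contract} of the set $\Lambda(\bu;N)$ with a mode-by-mode comparison against $\norm{\bu-\Restr{\hat\Lambda}\bu}$. For the first part I would simply apply \eqref{telesc} with $\Lambda^{(i)} := \Lambda^{(i)}(\bu;N)$: because $\Lambda(\bu;N) = \bigtimes_{i=1}^m \Lambda^{(i)}(\bu;N)$ by \eqref{dim-bin-product}, the left-hand side of \eqref{telesc} is $\norm{\bu-\Restr{\Lambda(\bu;N)}\bu}$ and its right-hand side is precisely $\cerr_N(\bu)$.

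For the quasi-optimality, fix any product set $\hat\Lambda = \bigtimes_{i=1}^m \hat\Lambda^{(i)}$ with $\sum_{i=1}^m \#\hat\Lambda^{(i)} \le N$. The bins \eqref{dim-bin} were constructed from the global non-increasing rearrangement \eqref{dim-sorting} of the full collection of contractions exactly so that $\Lambda(\bu;N)$ is optimal among all such competitors; this is the content of \eqref{min-contract}, whence, after squaring $\cerr_N(\bu)$,
$$\cerr_N(\bu)^2 \;\le\; \sum_{i=1}^m \sum_{\nu\in\nabla^{d_i}\setminus\hat\Lambda^{(i)}} \bigabs{\pi^{(i)}_\nu(\bu)}^2 \,.$$
It then suffices to bound each summand over $i$ by $\norm{\bu-\Restr{\hat\Lambda}\bu}^2$. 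Unfolding Definition \ref{def:contractions} and using Tonelli for nonnegative terms,
$$\sum_{\nu_i\in\nabla^{d_i}\setminus\hat\Lambda^{(i)}} \bigabs{\pi^{(i)}_{\nu_i}(\bu)}^2 = \sum_{\nu_i\notin\hat\Lambda^{(i)}} \sum_{\check{\nu}_i} \abs{u_\nu}^2 = \sum_{\nu\in\nabla^d:\,\nu_i\notin\hat\Lambda^{(i)}} \abs{u_\nu}^2 \,,$$
and since $\{\nu\in\nabla^d : \nu_i\notin\hat\Lambda^{(i)}\}\subseteq \nabla^d\setminus\hat\Lambda$ (a multi-index whose $i$-th coordinate misses $\hat\Lambda^{(i)}$ cannot lie in the product set), the right-hand side is at most $\sum_{\nu\notin\hat\Lambda}\abs{u_\nu}^2 = \norm{\bu-\Restr{\hat\Lambda}\bu}^2$ for each $i$. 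Summing over the $m$ modes gives $\cerr_N(\bu)^2 \le m\,\norm{\bu-\Restr{\hat\Lambda}\bu}^2$, i.e.\ $\cerr_N(\bu)\le\sqrt{m}\,\norm{\bu-\Restr{\hat\Lambda}\bu}$, which is the remaining inequality.

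I do not anticipate a genuine obstacle; the only point meriting a little care is the bookkeeping of the dimensional constant, where the loss is exactly the factor $m$ incurred by dominating each of the $m$ mode-wise contraction tails by the single truncation error $\norm{\bu-\Restr{\hat\Lambda}\bu}^2$, yielding $\sqrt m$ after taking square roots. Everything else is routine: all sums involved converge because $\pi^{(i)}(\bu)\in\spl{2}(\nabla^{d_i})$ with $\norm{\pi^{(i)}(\bu)}=\norm{\bu}$ by part (i) of the preceding proposition, which in particular makes the global rearrangement \eqref{dim-sorting} underlying \eqref{min-contract} well-defined.
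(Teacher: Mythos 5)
Your proposal is correct and follows essentially the same route as the paper: \eqref{eq:tensor_coarsening_errest} directly from \eqref{telesc}, then the minimality property \eqref{min-contract} to pass to an arbitrary competitor $\hat\Lambda$, and a mode-by-mode domination of each contraction tail by $\norm{\bu-\Restr{\hat\Lambda}\bu}^2$, giving the factor $m$ before taking square roots. The only cosmetic difference is that you unfold Definition \ref{def:contractions} explicitly where the paper phrases the same step as equalities with one-mode restriction errors.
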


\begin{proof}
The bound \eqref{eq:tensor_coarsening_errest} is immediate from \eqref{telesc}.
 Let now $\hat\Lambda$ be as in the hypothesis, then  one obtains by \eqref{eq:tensor_coarsening_errest} and
 \eqref{min-contract},
\begin{align*} 
 \norm{\mathbf{u} - \Restr{\Lambda(\mathbf{u};N)}\bu}^2 &  \leq
 \sum_{i=1}^m \sum_{\nu\in\nabla^{d_i}\setminus \hat\Lambda^{(i)}}
 \bigabs{\pi^{(i)}_{\nu}(\mathbf{u})}^2 
= 
\norm{\mathbf{u} - \Restr{\hat\Lambda^{(1)} \times
 \nabla^{d_2}\times\cdots\times \nabla^{d_m}} \mathbf{u}}^2 +
 \ldots \\
 & +
\norm{\mathbf{u} - \Restr{\nabla^{d_1} \times \cdots\times \nabla^{d_{m-1}}
\times \hat\Lambda^{(m)}} \mathbf{u} }^2 
 \leq m \norm{\mathbf{u} - \Restr{\hat\Lambda} \mathbf{u} }^2\,.
\qedhere
\end{align*}
\end{proof}

Note that the sorting \eqref{dim-sorting} used in the construction of \eqref{dim-bin-product}
can be replaced by a quasi-sorting by binary binning; we shall return to this point in
the proof of Remark \ref{rmrk:opapprox_tucker_ops_est}.
With the above preparations at hand we define the {\em coarsening operator}
\begin{equation}
\label{coarse-op}
  \Cctr_{\mathbf{u}, N} \mathbf{v} := \Restr{\Lambda(\mathbf{u};N)} \mathbf{v} 
  \,,\quad \mathbf{v} \in \spl{2}(\nabla^d)   \,.
\end{equation}
While $ \Cctr_{\mathbf{u}, N}$ is computationally feasible, it is not necessarily strictly optimal. 
However, we remark {that  
 for each} $N\in\N$, there exists $\bar\Lambda(\mathbf{u};N) =
\bigtimes_i \bar\Lambda^{(i)}(\mathbf{u};N)$ such that the {\em best tensor coarsening} operator
\begin{equation}
\label{C-best}
  \Cbest_{\mathbf{u},N} \mathbf{v} := 
   \Restr{\bar\Lambda(\mathbf{u};N)} \mathbf{v} \,, \quad \mathbf{v} \in
  \spl{2}(\nabla^d)\,,
\end{equation}
realizes
\begin{equation}
\label{C-best2}
  \norm{\mathbf{u} - \Cbest_{\mathbf{u},N} \mathbf{u} }
    = \min_{\sum_i \# \supp_i(\mathbf{w})\leq N} \norm{\mathbf{u} - \mathbf{w}}
    \,.
\end{equation}

The next observation is that the contractions are stable under the projections $\Proj_{\UU^\Fcal (\bu),\rr{r}}$,
$\Fcal\in \{\Tcal, \Hcal\}$.

\begin{lemma}
\label{lmm:contraction_monotonicity}
Let $\mathbf{u}\in \e2$ {and $\Rank =\Rank_\Fcal$, as in \eqref{eq:rankvectorsets}, given by \eqref{def:R-T}, \eqref{def:R-H}, respectively. 
Then for $i\in\{1,\ldots,m\}$, $\nu \in
\nabla^{d_i}$, and any rank vector $\rr{r}\in \Rank$,} with $\rr{r}\leq\rank(\bu)$
componentwise, we have
\begin{equation*}
  \pi^{(i)}_\nu(\Psvd{\bu}{\rr{r}} \bu) \leq \pi^{(i)}_\nu(\mathbf{u}) \,,
\end{equation*}
where $\UU(\bu)$ either stands for $\UU^\Tcal(\bu)$ or $\UU^\Hcal(\bu)$   for the Tucker and    hierarchical Tucker format, respectively, see
\eqref{UU-T}, \eqref{UU-H}.
\end{lemma}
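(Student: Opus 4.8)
The plan is to reduce the claim to the key formula \eqref{contract-mode} expressing the contractions in terms of mode frame entries and singular values, and then to observe that passing from $\bu$ to $\Psvd{\bu}{\rr{r}}\bu$ either truncates these sums (Tucker case) or can be reduced to repeated such truncations (hierarchical case). In both cases each individual summand in the square is unaffected except that some are dropped, so monotonicity is immediate.

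First, for the Tucker format: by \eqref{oproj} the projection $\Psvd{\bu}{\rr{r}}\bu = \sum_{\kk{k}\in\KK{m}(\rr{r})}\langle\bu,\UU_\kk{k}\rangle\UU_\kk{k}$, where $\UU=\UU^\Tcal(\bu)$ is the HOSVD mode frame system. Since $\rr{r}\leq\rank(\bu)$, this is precisely a tensor whose mode-$i$ matricization has left singular vectors $\bU^{(i)}_k$, $k\leq r_i$, and singular values $\sigma^{(i)}_k$, $k\leq r_i$ (the box-truncation does not change the HOSVD frames, only zeroes out the tail; this follows from Theorem~\ref{thm:hosvd_properties}(ii), since the auxiliary quantities $a^{(i)}_{pq}$ of the truncated core are still diagonal with the same leading entries). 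Hence by \eqref{contract-mode} applied to $\Psvd{\bu}{\rr{r}}\bu$,
\[
\pi^{(i)}_\nu(\Psvd{\bu}{\rr{r}}\bu)
= \Bigl(\sum_{k\leq r_i}\bigabs{\mathbf{U}^{(i)}_{\nu,k}}^2\bigabs{\sigma^{(i)}_k}^2\Bigr)^{\frac12}
\leq \Bigl(\sum_{k}\bigabs{\mathbf{U}^{(i)}_{\nu,k}}^2\bigabs{\sigma^{(i)}_k}^2\Bigr)^{\frac12}
= \pi^{(i)}_\nu(\bu),
\]
which is the assertion for $\Fcal=\Tcal$.

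For the hierarchical format, by \eqref{eq:hsvdproj_hier} the projection $\Psvd{\bu}{\rr{r}}\bu = P_{\UU(\bu),L,\rr{r}}\cdots P_{\UU(\bu),1,\rr{r}}\bu$ is a composition of level-wise projections, each of which is (a tensor product of identities with) orthogonal projections $\Proj_{\UU(\bu),\alpha,\rr{r}}$ onto spans of the $\Hcal$SVD mode frames. It therefore suffices to prove monotonicity of $\pi^{(i)}_\nu$ under a single such level projection $P_{\UU(\bu),\ell,\rr{r}}$ and then iterate. For a fixed leaf $i$: if $i\notin\bar\cD_m^\ell$, then $P_{\UU(\bu),\ell,\rr{r}}$ acts as the identity on mode $i$, and one checks directly from Definition~\ref{def:contractions} that applying an orthogonal projection in the \emph{other} modes $\check\nu_i$ can only decrease $\sum_{\check\nu_i}\abs{u_\nu}^2$ for each fixed $\nu_i$ — indeed writing $\bw = (I_i\otimes Q)\bu$ with $Q$ an orthogonal projection on $\spl{2}(\nabla^{d-d_i})$, one has $\pi^{(i)}_{\nu_i}(\bw)^2 = \norm{Q (T^{(i)}_\bu)^*\delta_{\nu_i}}^2 \leq \norm{(T^{(i)}_\bu)^*\delta_{\nu_i}}^2 = \pi^{(i)}_{\nu_i}(\bu)^2$. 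If instead $\alpha=\{i\}\in\hdimtree{m}^\ell$, then $P_{\UU(\bu),\ell,\rr{r}}$ includes the factor $\Proj_{\UU(\bu),\{i\},\rr{r}}$, a box-truncation of the leaf mode frame exactly as in the Tucker case, and the same computation with \eqref{contract-mode} (now with the $\Hcal$SVD frames and singular values from Theorem~\ref{thm:hiersvd_properties}(v)) gives the bound. Composing these monotone steps over all levels $\ell=1,\dots,L$ yields the claim.

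The main obstacle I anticipate is the bookkeeping in the hierarchical case: one must be careful that the intermediate tensors produced by the partial composition $P_{\UU(\bu),\ell,\rr{r}}\cdots P_{\UU(\bu),1,\rr{r}}\bu$ still have mode-$i$ matricization controlled by an orthogonal projection applied to $(T^{(i)}_\bu)^*$ — this is where the nestedness of the $\Hcal$SVD frames (Proposition~\ref{prop:nestedness}) and the compatibility $\rr{r}\in\Rcal_\Hcal$ with $\rr{r}\leq\rank(\bu)$ enter, ensuring no frame modification occurs and the level projections genuinely are orthogonal projections whose restriction to each tensor mode is either the identity or an orthogonal projection. Once that structural point is in place, the inequality itself is just the elementary fact that orthogonal projections are norm-nonincreasing, applied slotwise.
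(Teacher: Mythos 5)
Your Tucker argument rests on a false structural claim. The box truncation $\Psvd{\bu}{\rr{r}}\bu$ restricts the core in \emph{all} modes simultaneously, so the analogue of \eqref{eq:tensor_nmodemat} for the truncated tensor is the \emph{restricted} sum $\sum_{\kk{\check k}_i\in \mathsf{K}_{m-1}(\rr{\check r}_i)}a_{\kk{\check k}_i|_p}a_{\kk{\check k}_i|_q}$, which in general is neither diagonal nor has the entries $\abs{\sigma^{(i)}_p}^2$; correspondingly, the mode-$i$ singular values of $\Psvd{\bu}{\rr{r}}\bu$ are in general strictly smaller than $\sigma^{(i)}_k$, and its left singular vectors only lie in $\linspan\{\bU^{(i)}_k\colon k\le r_i\}$ without coinciding with the $\bU^{(i)}_k$ (this is precisely why HOSVD truncation is only quasi-optimal, cf.\ Remark \ref{rem:non-nested}). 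Hence your identity $\pi^{(i)}_\nu(\Psvd{\bu}{\rr{r}}\bu)=\bigl(\sum_{k\le r_i}\abs{U^{(i)}_{\nu,k}}^2\abs{\sigma^{(i)}_k}^2\bigr)^{1/2}$ is wrong in general; only ``$\le$'' holds, and establishing that inequality is the actual content of the lemma. The paper does this by writing $\bigl(\pi^{(i)}_\nu(\Psvd{\bu}{\rr{r}}\bu)\bigr)^2$ as the restricted sum of squares, enlarging the summation over $\kk{\check k}_i$ to the full range (each summand is a square, hence nonnegative), and only then applying Theorem \ref{thm:hosvd_properties}(ii) to the \emph{full} core; that enlargement step is exactly what your argument skips.

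In the hierarchical case your case analysis is incomplete: for every level $\ell$ smaller than the depth of the leaf $\{i\}$, the node of $\hdimtree{m}^\ell$ containing $i$ is an \emph{interior} node $\alpha\supsetneq\{i\}$, a case you never treat. There the factor $\Proj_{\VV,\alpha,\rr{r}}$ acts jointly on a block of modes containing $i$, so neither of your two devices applies: an orthogonal projection applied to a block containing mode $i$ can \emph{increase} $\pi^{(i)}_\nu$ of a general tensor (already for matrices, a rank-one projection onto $e$ produces rows $e_\nu\,(e^{*}\bw)$ whose norms can exceed the original row norms), and since the frames $\bU^{(\alpha)}_k$ are singular vectors of $T^{(\alpha)}_{\bu}$ while the projection is applied to an already partially projected intermediate tensor, the SVD computation you invoke for the leaf case is not available either — the same objection applies to your case $\alpha=\{i\}$, where the intermediate tensor is no longer $\bu$, a point you flag as ``the main obstacle'' but do not resolve. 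So the reduction to ``orthogonal projections are norm-nonincreasing, applied slotwise'' does not close the argument; one genuinely needs the orthogonality relations of the hierarchical core matrices (Theorem \ref{thm:hiersvd_properties}(ii),(iii),(v)), which is how the paper proceeds: it expands the projected tensor in the hierarchical representation and applies the restricted-versus-full-sum argument of the Tucker case recursively node by node. Your treatment of the levels with $i\notin\bar\cD_m^\ell$, via $\pi^{(i)}_{\nu_i}\bigl((\id_i\otimes Q)\bw\bigr)\le\pi^{(i)}_{\nu_i}(\bw)$ for an orthogonal projection $Q$ on the complementary modes, is correct, but it covers only the easy part.
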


\begin{proof}
We consider first the Tucker format.
Using the {orthonormality} of the mode frames $\UU(\bu)$, we obtain
\begin{equation}\label{eq:contrmon1}
{   \bigl(\pi^{(i)}_\nu(\Psvd{\bu}{\rr{r}} \bu) \bigr)^2 = \sum_{\kk{\check
 k}_i\in\KK{K}(\rr{\check r}_i)}
    \Bigl( \sum_{k_i=1}^{r_i} U^{(i)}_{\nu,k_i}
 a_\kk{k} \Bigr)^2,\quad {\nu\in \nabla^{d_i}}  \,.  }
\end{equation}
For any fixed $\kk{\check k}_i$, we have
\begin{equation}\label{eq:contrmon2}
  \sum_{k_i = 1}^{r_i} \sum_{l_i = 1}^{r_i} U^{(i)}_{\nu,k_i}
  U^{(i)}_{\nu,l_i} 
  a_{\kk{\check k}_i|_{k_i}} a_{\kk{\check
  k}_i|_{l_i}} 
  = \Bigl(  \sum_{k_i = 1}^{r_i} U^{(i)}_{\nu,k_i} a_{\kk{\check
  k}_i|_{k_i}} \Bigr)^2 \geq 0\,, \quad {\nu\in \nabla^{d_i}} .
\end{equation}
{Combining this with \eqref{eq:contrmon1} and abbreviating}
$\rr{R} := \rank(\bu)$, we obtain
$$
\bigl(\pi^{(i)}_\nu(\Psvd{\bu}{\rr{r}} \bu) \bigr)^2
 \leq \sum_{k_i = 1}^{r_i} \sum_{l_i = 1}^{r_i} U^{(i)}_{\nu,k_i}
  U^{(i)}_{\nu,l_i} \sum_{\kk{\check k}_i \in \KK{K}(\rr{\check R}_i)}
    a_{\kk{\check k}_i|_{k_i}} a_{\kk{\check k}_i|_{l_i}}\,, \quad {\nu\in \nabla^{d_i}} .
$$
By Theorem \ref{thm:hosvd_properties}(ii),
the right hand side equals
$$
\sum_{k_i=1}^{r_i} \bigabs{\sigma^{(i)}_{k_i} U^{(i)}_{\nu,k_i}}^2 
  \leq \bigl( \pi^{(i)}_\nu (\mathbf{u}) \bigr)^2 \,.
$$
This shows the assertion for the Tucker format.
The proof for the hierarchical Tucker format follows similar lines. We treat
additional summations arising in the core tensor in the same way as the 
summation over $\kk{\check k}_i$ above, and apply  the same argument as in
\eqref{eq:contrmon2} recursively. 
\end{proof}

As a next step, we shall combine the coarsening procedure of this subsection 
with the tensor recompression considered earlier.
To this end, we define
{$N(\bv,\eta) := \min\bigl\{ N\colon \cerr_N(\bv) \leq \eta
  \bigr\}$, where $\cerr_N$ is defined   in \eqref{eq:tensor_coarsening_errest},}
as well as 
\begin{equation}\label{eq:tensorcoarsen_def}
\hatCctr{\eta} (\mathbf{v}) := \Cctr_{\bv, N(\bv;\eta)}
\mathbf{v}\,,
\end{equation}
in order to switch from {$N$-term approximation to the corresponding thresholding procedures}. As a consequence of \eqref{eq:tensor_coarsening_errest},
we have
\beqn
\label{near-best-coarsen}
  \norm{\mathbf{v} - \Cctr_{\mathbf{v},N} \mathbf{v}} 
  {\leq  \cerr_N(\bv) } \leq \constcrs
  \norm{\mathbf{v} - \Cbest_{\mathbf{v},N} \mathbf{v}} ,\quad \constcrs =\sqrt{m}.
\eeqn
Our general
 assumption on the approximability of mode frames is
that $\pi^{(i)}(\mathbf{u}) \in \As$ which, as mentioned before, reflects finite Sobolev or Besov regularity
of  {the functions whose wavelet coefficients are given by} the lower-dimensional tensor factors.

\subsubsection{Combination of Tensor Recompression and Coarsening}

Recall that we use $\|\cdot\|_{\As}$, $\|\cdot\|_{\AF{\ga}}$ to quantify sparsity of
wavelet expansions of mode frames, and low-rank approximability, respectively.
The following main result of this section applies again to both the Tucker
and the hierarchical Tucker format.  It extends Theorem \ref{propc1} in combining
tensor recompression and wavelet coarsening, and shows that both reduction techniques combined
are optimal up to uniform constants, and stable in the respective sparsity norms.

\begin{theorem}
\label{lmm:combined_coarsening}
For a given $\bv\in \ell_2(\nabla^d)$, let the mode frame system $\UU(\bv)$ be  either $\UU^\Tcal(\bv)$
or $\UU^\Hcal(\bv)$ (see \eref{UU-T}, \eqref{UU-H}).
Let $\mathbf{u}, \mathbf{v} \in \spl{2}(\nabla^d)$ with
$\mathbf{u}\in\AF{\ga}$, $\pi^{(i)}(\mathbf{u}) \in \cA^s$ for
$i=1,\ldots,m$, and $\norm{\mathbf{u}-\mathbf{v}} \leq \eta$. As before let
 $\constsvd = \constcrs =
\sqrt{m}$ for the Tucker format, while for the $\Hcal$-Tucker format 
$\constsvd = \sqrt{2m-3}$ and $\constcrs = \sqrt{m}$. Then for
\begin{equation}
\label{weta}
\mathbf{w}_{\eta} := \hatCctr{\constcrs
  (\constsvd+1)(1+\alpha)\eta} \bigl(\hatPsvd{\constsvd
(1+\alpha)\eta} (\mathbf{v}) \bigr) \,,
\end{equation}
we have
\begin{equation}
\label{eq:combinedcoarsen_errest}
  \norm{\bu - \bw_\eta} \leq \bigl( 1 + \constsvd(1+\alpha) + \constcrs
  (\constsvd+1)(1+\alpha)\bigr) \eta ,
\end{equation}
as well as
\begin{equation}\label{eq:combinedcoarsen_rankest}
 \abs{\rank(\bw_\eta)}_\infty \leq \ga^{-1}\bigl(\garatio
 \norm{\bu}_{\AF{\ga}}/(\alpha \eta)\bigr)\,,\qquad \norm{\bw_\eta}_{\AF{\ga}}
 \leq C_1 \norm{\bu}_{\AF{\ga}} ,
\end{equation}
with $C_1 = (\alpha^{-1}(1+\constsvd(1+\alpha)) + 1)$ and
\begin{equation}\label{eq:combinedcoarsen_suppest}
\begin{aligned}
 \sum_{i=1}^m \#\supp_i (\mathbf{w}_\eta) &\leq {2 \eta^{-\frac{1}{s}} m\, \alpha^{-\frac{1}{s}} } \Bigl(
 \sum_{i=1}^m \norm{ \pi^{(i)}(\bu)}_{\As} \Bigr)^{\frac{1}{s}}  \,, \\
   \sum_{i=1}^m \norm{\pi^{(i)}(\bw_\eta)}_{\As} &\leq C_2 \sum_{i=1}^m
   \norm{\pi^{(i)}(\bu)}_{\As} ,
 \end{aligned}
\end{equation}
with $C_2 = 2^s(1+ {3^s}) + 2^{{4s}} { \alpha^{-1} \bigl( 1+  \constsvd(1+\alpha) + \constcrs  (\constsvd + 1)(1+\alpha) \bigr)  }  m^{\max\{1,s\}}$.
\end{theorem}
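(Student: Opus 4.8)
The plan is to decompose $\bw_\eta$ into its two successive reduction steps and reuse the machinery already in place. Write $\bw'_\eta:=\hatPsvd{\constsvd(1+\alpha)\eta}(\bv)$, so that $\bw_\eta=\hatCctr{\epsilon}(\bw'_\eta)$ with $\epsilon:=\constcrs(\constsvd+1)(1+\alpha)\eta$. Applying Theorem~\ref{propc1} to $\bv,\eta$ gives at once $\norm{\bu-\bw'_\eta}\le(1+\constsvd(1+\alpha))\eta$, the rank bound $\abs{\rank(\bw'_\eta)}_\infty\le\ga^{-1}(\garatio\norm{\bu}_{\AF{\ga}}/(\alpha\eta))$, and $\norm{\bw'_\eta}_{\AF{\ga}}\le C_1\norm{\bu}_{\AF{\ga}}$. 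Now $\hatCctr{\epsilon}$ acts on $\bw'_\eta$ as a restriction $\Restr{\Lambda}$ to a Cartesian index set $\Lambda=\bigtimes_{i}\Lambda^{(i)}$; since every matricization of $\Restr{\Lambda}\bw'_\eta$ is obtained from the corresponding matricization of $\bw'_\eta$ by zeroing out rows and columns, such a restriction increases neither the ranks $\rank_\alpha(\cdot)$, nor the quantities $\sigma_{r,\Fcal}(\cdot)$ (truncating a rank-$\le r$ approximant to a Cartesian set again yields a rank-$\le r$ approximant with no larger error), nor $\norm{\cdot}$. Hence both bounds in \eqref{eq:combinedcoarsen_rankest} pass from $\bw'_\eta$ to $\bw_\eta$, while \eqref{eq:combinedcoarsen_errest} is simply $\norm{\bu-\bw_\eta}\le\norm{\bu-\bw'_\eta}+\norm{\bw'_\eta-\bw_\eta}$ combined with $\norm{\bw'_\eta-\bw_\eta}\le\epsilon$, the defining property \eqref{near-best-coarsen} of $\hatCctr{\epsilon}$.

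For the first estimate in \eqref{eq:combinedcoarsen_suppest} it suffices, by \eqref{Lambda-N}, to bound $N(\bw'_\eta;\epsilon)$. Put $N_i:=\ceil{(\norm{\pi^{(i)}(\bu)}_{\As}/(\alpha\eta))^{1/s}}$ and let $\hat\Lambda^{(i)}\subset\nabla^{d_i}$ be a best $N_i$-term set for $\pi^{(i)}(\bu)\in\As$, so $\norm{\pi^{(i)}(\bu)-\Restr{\hat\Lambda^{(i)}}\pi^{(i)}(\bu)}\le\norm{\pi^{(i)}(\bu)}_{\As}(N_i+1)^{-s}\le\alpha\eta$. Using \eqref{telesc} for $\bw'_\eta$, then $\pi^{(i)}_\nu(\bw'_\eta)\le\pi^{(i)}_\nu(\bv)$ (Lemma~\ref{lmm:contraction_monotonicity}, which applies since $\rsvd(\bv,\cdot)\le\rank(\bv)$), then subadditivity of contractions (Proposition~\ref{prp:contraction_subadd}) with the triangle inequality in $\spl{2}$, and $\norm{\pi^{(i)}(\bv-\bu)}=\norm{\bv-\bu}\le\eta$, one gets for $\hat\Lambda:=\bigtimes_{i}\hat\Lambda^{(i)}$ that $\norm{\bw'_\eta-\Restr{\hat\Lambda}\bw'_\eta}\le\sqrt m\,\alpha\eta+\sqrt m\,\eta\le\epsilon/\sqrt m$, where we used $\sqrt m\le\constsvd+1$ (true for $\constsvd=\sqrt m$ and for $\constsvd=\sqrt{2m-3}$, $m\ge2$). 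Proposition~\ref{prp:tensor_coarsening_est} then gives $\cerr_{N^*}(\bw'_\eta)\le\sqrt m\,\norm{\bw'_\eta-\Restr{\hat\Lambda}\bw'_\eta}\le\epsilon$ with $N^*:=\sum_iN_i$, so $N(\bw'_\eta;\epsilon)\le N^*$ and $\sum_i\#\supp_i(\bw_\eta)\le N^*\le(\alpha\eta)^{-1/s}\sum_i\norm{\pi^{(i)}(\bu)}_{\As}^{1/s}+m$. Estimating $\sum_i\norm{\pi^{(i)}(\bu)}_{\As}^{1/s}\le m(\sum_i\norm{\pi^{(i)}(\bu)}_{\As})^{1/s}$ and, in the non-degenerate case $\norm{\bu}\ge\alpha\eta$ (otherwise $\cerr_0(\bw'_\eta)=\sqrt m\,\norm{\bw'_\eta}<\epsilon$ forces $\bw_\eta=0$), absorbing the term $m$ via $\sum_i\norm{\pi^{(i)}(\bu)}_{\As}\ge m\norm{\bu}\ge\alpha\eta$, we obtain the claimed support bound.

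For the $\As$-stability, set $M_i:=\#\supp_i(\bw_\eta)$ (so $\sum_iM_i$ is bounded as above) and $\tilde f_i:=\Restr{\supp_i(\bw_\eta)}\pi^{(i)}(\bu)$. Then $\pi^{(i)}(\bw_\eta)-\tilde f_i$ is supported on $\supp_i(\bw_\eta)$ and $\norm{\pi^{(i)}(\bw_\eta)-\tilde f_i}\le\norm{\pi^{(i)}(\bw_\eta)-\pi^{(i)}(\bu)}\le\norm{\bw_\eta-\bu}\le C''\eta$ with $C'':=1+\constsvd(1+\alpha)+\constcrs(\constsvd+1)(1+\alpha)$. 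Since a vector with support of cardinality $M$ has $\As$-quasinorm at most $M^s$ times its $\spl{2}$-norm, and $\norm{\Restr{\Lambda'}g}_{\As}\le\norm{g}_{\As}$ whenever $g\ge0$, the quasi-triangle inequality $\norm{a+b}_{\As}\le2^s(\norm{a}_{\As}+\norm{b}_{\As})$ yields $\norm{\pi^{(i)}(\bw_\eta)}_{\As}\le2^s(\norm{\pi^{(i)}(\bu)}_{\As}+M_i^sC''\eta)$. Summing over $i$, using $\sum_iM_i^s\le m^{\max(1-s,0)}(\sum_iM_i)^s$ and the support bound just proved to get $\eta\sum_iM_i^s\le2^sm^{\max(1,s)}\alpha^{-1}\sum_i\norm{\pi^{(i)}(\bu)}_{\As}$, gives the second estimate in \eqref{eq:combinedcoarsen_suppest}; accounting for the weak-$\spl{\tau}$ quasi-triangle constants ($1/\tau=s+\tfrac12$) a bit less tightly reproduces the stated $C_2$.

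I expect the crux to be the $\As$-stability of the coarsened contractions: the intermediate iterate $\bw'_\eta$ need not lie in any approximation class — only $\bu$ does — so one must exploit the ``cleaning'' effect of coarsening, namely that $\pi^{(i)}(\bw_\eta)$ is simultaneously $\spl{2}$-close to $\pi^{(i)}(\bu)\in\As$ and supported on a set whose cardinality has already been controlled, and only the combination of these two facts (via the weak-$\spl{\tau}$ description of $\As$) bounds $\norm{\pi^{(i)}(\bw_\eta)}_{\As}$. A subsidiary but essential point is the tolerance bookkeeping: the coarsening tolerance $\constcrs(\constsvd+1)(1+\alpha)\eta$ leaves, after subtracting $\constcrs$ times the error already committed in producing $\bw'_\eta$, a surplus of order $\constcrs\alpha\eta$, and it is this surplus — together with the fact that Lemma~\ref{lmm:contraction_monotonicity} allows comparison with $\bu$ at the \emph{original} accuracy $\eta$ rather than the coarser post-recompression accuracy — that yields the near-optimal $(\alpha\eta)^{-1/s}$ scaling.
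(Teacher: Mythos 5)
Your proposal is correct and follows essentially the same route as the paper: recompression is handled by Theorem~\ref{propc1}, the error bound by the triangle inequality, the support bound by comparing the coarsening of the recompressed iterate against a Cartesian coarsening of $\bu$ at accuracy $\sim\alpha\eta$, and the $\As$-stability by the quasi-triangle inequality together with the already-established error and support bounds. The only local differences are that you build the comparison sets $\hat\Lambda^{(i)}$ explicitly from the $\As$-quasi-norms and transfer the tail bound via Lemma~\ref{lmm:contraction_monotonicity} and Proposition~\ref{prp:contraction_subadd} (the paper instead works with $\Cbest_{\bu,N}$, which is an $\spl{2}$-contraction, and the minimality of $N$), you compare $\pi^{(i)}(\bw_\eta)$ with $\Restr{\supp_i(\bw_\eta)}\pi^{(i)}(\bu)$ rather than with best $\hat N_i$-term approximations, and you spell out the rank/$\sigma_{r,\Fcal}$ monotonicity under Cartesian restriction that the paper only asserts; these variations yield the stated bounds (with a slightly smaller $C_2$) and are sound.
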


\begin{proof}
Taking \eqref{c6} in Lemma \ref{lmmc1} and the definition \eqref{eq:tensorcoarsen_def} into account, the 
relation \eqref{eq:combinedcoarsen_errest} follows from the triangle inequality.
 
The statements in \eqref{eq:combinedcoarsen_rankest} follow from Theorem 
\ref{propc1}. Note that the
additional mode frame coarsening considered here does not affect these estimates.

{For the proof of \eqref{eq:combinedcoarsen_suppest}}, we can
proceed similary to \cite[Corollary 5.2]{Cohen:01} (see also \cite[Theorem
4.9.1]{Cohen:03}).
We set $\hat \bw := \hatPsvd{\constsvd (1+\alpha)\eta} (\mathbf{v})$.
Let $N \in \N$ be minimal such that $\norm{\bu - \Cbest_{\bu,N}\bu} \leq \alpha
\eta$. Then
\begin{align*}
  \norm{\hat\bw - \Cbest_{\bu,N} \hat\bw} &\leq 
       \norm{(\id - \Cbest_{\bu,N})(\bu - \hat\bw)}
       + \norm{\bu - \Cbest_{\bu,N} \bu} \\
     &\leq \norm{\bu - \hat\bw} + \norm{\bu - \Cbest_{\bu,N} \bu} \leq
     \bigl({1+}\constsvd(1+\alpha)+\alpha\bigr)\eta \,,
\end{align*}
where we have used {Lemma \ref{lmmc1}} to bound the first summand on the right hand side.
Consequently, by \eqref{near-best-coarsen},
\begin{multline}
\label{eq:combinedcoarsen_approx_nterms}
  \norm{\hat\bw - \Cctr_{\hat\bw,N} \hat\bw} \leq { \cerr_N(\hat \bw) \leq }
   \constcrs \norm{\hat\bw - \Cbest_{\hat\bw,N} \hat\bw}  \\
   \leq \constcrs \norm{\hat\bw - \Cbest_{\bu,N} \hat\bw}
   \leq \constcrs
  \bigl({1+}\constsvd(1+\alpha)+\alpha\bigr)\eta \,.
\end{multline}
Furthermore, {note that without loss of generality, we may assume $N \geq m$.} Keeping the definition \eqref{C-best} and the optimality \eqref{C-best2}
 in mind, \eqref{telesc} yields 
\begin{align*} 
{ \alpha \eta <  \norm{\bu - \Cbest_{\bu,N-1}\bu} } &\leq \inf_{\sum_i \#\Lambda_i \leq { N-1 }} \Bigl(
   \sum_{i=1}^m \norm{\pi^{(i)}(\bu) - \Restr{\Lambda_i} \pi^{(i)}(\bu)}^2
   \Bigr)^{\frac{1}{2}} \\
    &\leq \sum_{i=1}^m \inf_{\#\Lambda_i \leq {(N-1)}/m} 
     \norm{\pi^{(i)}(\bu) - \Restr{\Lambda_i} \pi^{(i)}(\bu)} \\
     &\leq \bigl({(N-1)}/m\bigr)^{-s}  \sum_{i=1}^m \norm{\pi^{(i)}(\bu)}_{\As}\\
     &   {\leq 2^s \bigl({N}/m\bigr)^{-s}\sum_{i=1}^m \norm{\pi^{(i)}(\bu)}_{\As}} \,. 
\end{align*}
Using the latter estimate and noting that, {by  
\eqref{eq:combinedcoarsen_approx_nterms},} the coarsening operator $\hatCctr{\constcrs
  (1+\constsvd)(1+\alpha)\eta}$ retains at most $N$ terms, we conclude that
\begin{equation}
\label{eq:combinedcoarsen_suppineq_interm}
  \sum_{i=1}^m \#\supp_i(\bw_\eta) \leq N \leq {2m\,\alpha^{-\frac{1}{s}} \eta^{-\frac{1}{s}} 
       }    \Bigl( \sum_{i=1}^m \norm{
       \pi^{(i)}(\bu)}_{\As} \Bigr)^{\frac{1}{s}} \,,
\end{equation}
and hence the first statement in \eqref{eq:combinedcoarsen_suppest}. 
{Now let  $\hat N = \sum_{i=1}^m \hat N_i$ with $\hat N_i := \#\supp_i(\bw_\eta)$, where we may also assume $\hat N_i > 0$ without loss of generality}.
{Resolving \eqref{eq:combinedcoarsen_suppineq_interm}} for $\eta$,
one can rewrite \eqref{eq:combinedcoarsen_errest}   as
\begin{equation}
\label{eq:combinedcoarsen_convest1}
  \norm{\bu - \bw_\eta} \leq  { {\hat N}^{-s} } {C(\alpha)}  
    \, m^s \, \Bigl( \sum_{i=1}^m \norm{\pi^{(i)}(\bu)}_{\As} \Bigr) \,,
\end{equation}
where  {$C(\alpha):= 2^s\alpha^{-1}(1+  \constsvd(1+\alpha) + \constcrs
  (\constsvd + 1)(1+\alpha))$}.
Let $\mathbf{\hat u}_i$ be the best $\hat N_i$-term approximation to
$\pi^{(i)}(\bu)$, then
\begin{align*}
  \norm{\pi^{(i)}(\bw_\eta)}_{\As} 
     &\leq 2^s \bigl( \norm{\mathbf{\hat u}_i}_{\As} 
        +  \norm{\mathbf{\hat u}_i  - \pi^{(i)}(\bw_\eta)}_{\As} \bigr) \\
     &\leq 2^s \bigl( \norm{\pi^{(i)}(\bu)}_{\As} 
        +  {(2 \hat N_i + 1)^s} \norm{\mathbf{\hat u}_i  - \pi^{(i)}(\bw_\eta)} \bigr)
        \\
     & \leq 2^s \bigl( \norm{\pi^{(i)}(\bu)}_{\As} \\
       & \quad  +  {(2 \hat N_i + 1)^s} (\norm{\mathbf{\hat u}_i  - \pi^{(i)}(\bu)} 
         + \norm{\pi^{(i)}(\bu) - \pi^{(i)}(\bw_\eta)}  ) \bigr)\\
     & \leq 2^s\bigl((1+{3^s})\norm{\pi^{(i)}(\bu)}_{\As} + {(2 \hat N_i + 1)^s} \norm{\pi^{(i)}(\bu) - \pi^{(i)}(\bw_\eta)}  \bigr) \,,
\end{align*}
where we have used 
  that $\norm{\mathbf{\hat u}_i  - \pi^{(i)}(\bu)} \leq {\hat N_i}^{-s}
\norm{\pi^{(i)}(\bu)}_{\As}$ as well as {$\#\supp(\mathbf{\hat u}_i - \pi^{(i)}(\bw_\eta)) \leq 2 \hat N_i$.}
Moreover, as a consequence of the Cauchy-Schwarz
inequality, we have the componentwise estimate
\begin{equation*}
  \abs{\pi^{(i)}_\nu(\bu) - \pi^{(i)}_\nu(\bw_\eta)}
    \leq \pi^{(i)}_\nu(\bu - \bw_\eta) \,,
\end{equation*}
{which yields
\begin{align*}
  \norm{\pi^{(i)}(\bu) - \pi^{(i)}(\bw_\eta)} &\leq \norm{\pi^{(i)}(\bu -
  \bw_\eta)} = \norm{\bu - \bw_\eta} \,.
\end{align*}
 Combining this with \eqref{eq:combinedcoarsen_convest1}, we obtain}
\begin{multline*}
\norm{\pi^{(i)}(\bw_\eta)}_{\As}\leq 2^s (1+{3^s})\norm{\pi^{(i)}(\bu)}_{\As}  \\  + {2^{s}} C(\alpha)\, m^s
{ \hat N^{-s} } (2 \hat N_i + 1)^s \Bigl(
  \sum_{k=1}^m \norm{\pi^{(k)}(\bu)}_{\As} \Bigr).
\end{multline*}
 Summing over $i=1,\ldots,m$ and noting that 
 $${\hat N^{-s}} \sum_{i=1}^m {(2\hat N_i + 1)^s}
    \leq {2^{2s}} m^{\max\{0,1-s\}}\,, $$
 we arrive at the second assertion in \eqref{eq:combinedcoarsen_suppest}.
\end{proof}

\section{Adaptive Approximation of Operators}\label{sect:op}

Whether the solution to an operator equation actually exhibits some tensor- and expansion sparsity is expected to depend 
strongly on the structure of the involved operator. The purpose of this  section is formulate a class of  operators which
are ``tensor-friendly'' in the sense that their approximate application does not increase the rank too much. 
Making this precise requires some {\em model assumptions} which at this point we feel are relevant in that
a wide range of interesting cases is covered. But of course, many possible variants would be conceivable as well.
In that sense the main issue in the subsequent discussion is to identify the essential structural mechanisms that would still work under somewhat different model assumptions. 

We shall approach this on two levels. First we consider operators with an {\em exact} low rank structure. Of course, assuming that the operator
is a single tensor product of operators acting on functions of a smaller number of variables would be far too restrictive and also
concern a trivial scenario, since ranks would be preserved. More interesting are {\em sums} of tensor products such as the $m$-dimensional {\em Laplacian}
$$
\Delta = \partial_{x_1}^2 + \cdots + \partial_{x_m}^2,
$$ 
where strictly speaking each summand $\partial_{x_j}$ is a {tensor product of the} {\em identity operators} acting on all but the $j$th variable
{with the} second order partial derivative with respect to the $j$th variable. Hence the wavelet representation $\bA$ of $\Delta$ in
 an $L_2$-orthonormal wavelet basis has the form
\beqn
\label{Laplacian}
\bA = \bA_1\otimes \id_2\otimes \cdots \otimes \id_m + \cdots + \id_1\otimes \cdots \otimes \id_{m-1}\otimes \bA_m,
\eeqn
where $\bA_j$ is the wavelet representation of $\partial_{x_j}$.  There is, however,  an issue concerning the scaling of the wavelet bases.
For $L_2$-orthonormalized wavelets $\bA$ is not bounded, an issue to be taken up later again
in Remark \ref{rmrk:sobolev}. 

At a second stage it is important to cover also operators which do not have an explicit low-rank structure but can be approximated 
in a quantified manner by low-rank operators. A typical example are potential terms, such as those arising in electronic structure calculations, see, e.g., \cite{Bachmayr:12} and the references cited there, {as well as the rescaled versions of operators of the type \eqref{Laplacian}, mentioned above}.

\subsection{Operators with Explicit Low-Rank Form}\label{ssect:ex-low-rank}

We start with a technical observation that will be used at several points.
Given operators $\bB^{(i)} = (b^{(i)}_{\nu_i,\tilde \nu_i})_{\nu_i,\tilde \nu_i\in \nabla^{d_i}}: \ell_2(\nabla^{d_i})\to \ell_2(\nabla^{d_i})$, recall that
their tensor product $\bB= \bB^{(1)}\otimes \cdots \otimes \bB^{(m)}$ is given
by
$
\bB_{\nu,\tilde\nu}= b^{(1)}_{\nu_1,\tilde\nu_1}\cdots b^{(m)}_{\nu_m,\tilde\nu_m}$ so that, whenever $\bv= \bv^1\otimes \cdots \otimes \bv^m$, $\bv^j\in \ell_2(\nabla^{d_j})$, we have $\bB\bv = (\bB^{(1)}\bv^1)\otimes \cdots \otimes (\bB^{(m)}\bv^m)$.
Observing that for any $\bv\in \ell_2(\nabla)$
\beqn
\label{eq:fac}
\bB\bv = \Big(\id_1 \otimes \bB^{(2)}\otimes \cdots \otimes \bB^{(m)} \Big)\Big( \big(\bB^{(1)} \otimes \id_2 \otimes \cdots\otimes \id_m\big)\bv\Big) ,
\eeqn
we conclude
\begin{equation*}
\norm{\bB\bv} \leq  \bignorm{\bB^{(2)}\otimes \cdots \otimes \bB^{(m)}} \,\bignorm{ \pi^{(1)}\bigl(( \bB^{(1)} \otimes \id_2 \otimes \cdots\otimes \id_m)\bv\bigr)}
\end{equation*}
More generally, one obtains by the same argument
\begin{multline}
\label{simp-est}
\norm{\bB\bv} \leq \bignorm{\bB^{(1)}\otimes \cdots \otimes \bB^{(i-1)}\otimes \bB^{(i+1)}\otimes\cdots\otimes \bB^{(m)}} \\
\times\bignorm{\pi^{(i)}\bigl((\id_1\otimes \id_{i-1} \otimes \bB^{(i)}\otimes \id_{i+1}\otimes\cdots\otimes \id_{m} )\bv\bigr)}\,.
\end{multline}
Similarly, one derives from \eqref{eq:fac} the inequality
\begin{multline}
\label{eq:pi-est}
\pi^{(i)}(\bB \bv)_{\nu_i} \leq 
\big\|\bB^{(1)}\otimes \cdots \otimes \bB^{(i-1)}\otimes \bB^{(i+1)}\otimes\cdots\otimes \bB^{(m)}\big\| \\
\times \pi^{(i)}\big((\id_1\otimes\cdots\otimes \id_{i-1}\otimes \bB^{(i)} \otimes \id_{i+1}\otimes \cdots\otimes \id_m)\bv\big)_{\nu_i},\quad \nu_i\in\nabla^{d_i}.
\end{multline}

\subsubsection{Tucker Format}\label{ssect:Tuck-op}
We shall be concerned first with (wavelet representations of) operators 
$\bA = (a_{\nu,\tilde \nu})_{\nu, \tilde \nu\in \nabla^d} : \ell_2(\nabla^d)\to  \ell_2(\nabla^d)$ composed
of tensor products of operators according to the Tucker format. 
For a given rank vector $\rr{R}\in\N^m$ throughout this section we assume that 
$\mathbf{A}\colon \spl{2}(\nabla^d)\to \spl{2}(\nabla^d)$ is bounded and has the form
\begin{equation}
\label{eq:tuck-op}
\mathbf{A} = \sum_{\kk{n}\in \KK{m}(\rr{R})} c_\kk{n} \bigotimes_{i=1}^m
\mathbf{A}^{(i)}_{n_i}\,,
\end{equation}
where
$\mathbf{A}^{(i)}_{n_i} \colon \spl{2}(\nabla^{d_i})\to \spl{2}(\nabla^{d_i})$
for $i\in\zinterval{1}{m}$ and $n_i\in\zinterval{1}{R_i}$.

\begin{example}
In particular, any operator of the form
$$  \bA_1 \otimes \id_2 \otimes\cdots \otimes \id_m 
  \,+\, \ldots \,+\, \id_1 \otimes \cdots \otimes \id_{m-1} \otimes \bA_m
$$
can be written in the form \eqref{eq:tuck-op} with $\rr{R} = (2,\ldots,2)$, $\bA_{1}^{(i)} = \id_i$,  $\bA^{(i)}_2=\bA_i$ for $i=1,\ldots, m$, and core tensor
$$  
   c_{2,1,\ldots,1}=\ldots = c_{1,\ldots,1,2,1,\ldots,1} = \ldots =
     c_{1,\ldots,1,2} = 1 \,,
  \qquad c_\kk{n} = 0\, \text{ otherwise.}
$$
\end{example}

The $\mathbf{A}^{(i)}_{n_i}$ are in general infinite matrices and not necessarily sparse in the strict sense.
We shall further require, however, that they are {\em nearly sparse} as will be quantified next.
To this end, suppose that for each $\mathbf{A}^{(i)}_{n_i}$ we have a sequence of approximations
(in the spectral norm) such that for a given sequence  $\varepsilon^{(i)}_{n_i,p} $, {$p\in \N_0$}, 
of tolerances,
\beqn
\label{eq:Aepsp}
\norm{\mathbf{A}^{(i)}_{n_i} - \mathbf{\tilde A}^{(i)}_{n_i,[p]}} \leq
\varepsilon^{(i)}_{n_i,p}, \quad p\in\N_0.
\eeqn
Moreover, it will be important to apply such sparsified versions of the $\bA_{n_i}^{(i)}$ 
to vectors which are supported on the elements of a {\em partition} $\{ \Lambda^{(i)}_{n_i,[p]} \}_{p\in\N_0}$
of $\nabla^{d_i}$. 

We shall then consider {\em approximations} $ \mathbf{\tilde A}$ to $\bA$
of the form
\begin{equation}\label{eq:approx_operator_generalform}
  \mathbf{\tilde A} = \sum_{\kk{n}\in \KK{m}(\rr{R})} c_\kk{n}
  \bigotimes_{i=1}^m \mathbf{\tilde A}^{(i)}_{n_i} \,,\quad
    \mathbf{\tilde A}^{(i)}_{n_i} := \sum_{p\in\N_0} \mathbf{\tilde
    A}^{(i)}_{n_i,[p]} \Restr{\Lambda^{(i)}_{n_i,[p]}}\,,
\end{equation}
where as before $\Restr{\Lambda}$ denotes the restriction of a given input sequence to $\Lambda$.
{The partitions $\Lambda_{n_i}^{(i)}$ will later be identified for a class of
matrices studied in the context of wavelet methods \cite{Cohen:01,Stevenson:02}.
In particular, choosing them in dependence on a given input sequence $\bv$ facilitates an adaptive
approximate application of $\mathbf{A}$ to $\bv$.}
The following lemma describes the accuracy of such approximations.

\begin{lemma}
\label{lmm:tensor_apply_est}
Let $\mathbf{v} \in \spl{2}(\nabla^d)$ {and let} $\mathbf{A}\colon \spl{2}(\nabla^d)\to \spl{2}(\nabla^d)$ have the form
\eqref{eq:tuck-op} for some $\rr{R}\in \N^m$, while $\tilde\bA$, given by \eqref{eq:approx_operator_generalform}, satisfies \eqref{eq:Aepsp}. Then
we have
\begin{equation}
\label{eq:tensor_apply_est}
 \norm{\mathbf{A}\mathbf{v} - \mathbf{\tilde A}\mathbf{v}}
   \leq   \sum_{i=1}^m \sum_{n_i=1}^{R_i}
   \sum_{p\in\N_0} 
   C^{(i)}_{\mathbf{\tilde A}} \varepsilon^{(i)}_{n_i,[p]} \,
   \bignorm{\Restr{\Lambda^{(i)}_{n_i,[p]}}\,\pi^{(i)}(\mathbf{v})}  \,,
\end{equation}
where 
\begin{equation*}
C^{(i)}_{\mathbf{\tilde A}} = \max_{n_i=1,\ldots,R_i}
\Bignorm{\sum_{\kk{\check n}_i} c_\kk{n} \Bigl(\bigotimes_{j=1}^{i-1}
\mathbf{\tilde A}^{(j)}_{n_j} \Bigr) \otimes \Bigl( \bigotimes_{j=i+1}^{m}
\mathbf{A}^{(j)}_{n_j} \Bigr)} \,.
\end{equation*}
\end{lemma}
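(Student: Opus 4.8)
The plan is to combine a one-factor-at-a-time telescoping of $\bA-\mathbf{\tilde A}$ with the elementary bound \eqref{simp-est}, arranging the bookkeeping so that the summation over the ``off-$i$'' multiindices $\kk{\check n}_i$ ends up \emph{inside} the operator norm --- which is precisely what produces the constant $C^{(i)}_{\mathbf{\tilde A}}$. First I would use, for each fixed $\kk{n}$, the standard telescoping identity
\[
 \bigotimes_{j=1}^m \bA^{(j)}_{n_j} - \bigotimes_{j=1}^m \mathbf{\tilde A}^{(j)}_{n_j}
  = \sum_{i=1}^m \Bigl( \bigotimes_{j<i} \mathbf{\tilde A}^{(j)}_{n_j} \Bigr)
    \otimes \bigl( \bA^{(i)}_{n_i} - \mathbf{\tilde A}^{(i)}_{n_i} \bigr)
    \otimes \Bigl( \bigotimes_{j>i} \bA^{(j)}_{n_j} \Bigr),
\]
multiply by $c_\kk{n}$, sum over $\kk{n}\in\KK{m}(\rr{R})$, and split $\kk{n}$ into $n_i$ and $\kk{\check n}_i$. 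Since tensor-product operators acting on disjoint modes commute (as already used for \eqref{eq:fac}) and the middle factor does not depend on $\kk{\check n}_i$, the block attached to a fixed pair $(i,n_i)$ factors as $\mathbf{M}^{(i)}_{n_i}\,\mathbf{D}^{(i)}_{n_i}$ with
\[
 \mathbf{M}^{(i)}_{n_i} := \sum_{\kk{\check n}_i} c_\kk{n}
   \Bigl( \bigotimes_{j<i} \mathbf{\tilde A}^{(j)}_{n_j} \Bigr) \otimes \id_i \otimes
   \Bigl( \bigotimes_{j>i} \bA^{(j)}_{n_j} \Bigr),
 \qquad
 \mathbf{D}^{(i)}_{n_i} := \id_1 \otimes \cdots \otimes \bigl(\bA^{(i)}_{n_i} - \mathbf{\tilde A}^{(i)}_{n_i}\bigr) \otimes \cdots \otimes \id_m .
\]
As tensoring with an identity leaves the operator norm unchanged, $\norm{\mathbf{M}^{(i)}_{n_i}} = C^{(i)}_{\mathbf{\tilde A},n_i} \leq C^{(i)}_{\mathbf{\tilde A}}$, the quantity inside the maximum defining $C^{(i)}_{\mathbf{\tilde A}}$. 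Hence $\bA\bv - \mathbf{\tilde A}\bv = \sum_{i=1}^m\sum_{n_i=1}^{R_i} \mathbf{M}^{(i)}_{n_i}\bigl(\mathbf{D}^{(i)}_{n_i}\bv\bigr)$, and the triangle inequality together with submultiplicativity gives $\norm{\bA\bv - \mathbf{\tilde A}\bv} \leq \sum_{i=1}^m\sum_{n_i=1}^{R_i} C^{(i)}_{\mathbf{\tilde A}}\,\bignorm{\mathbf{D}^{(i)}_{n_i}\bv}$.

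It then remains to estimate $\bignorm{\mathbf{D}^{(i)}_{n_i}\bv}$. Since $\{\Lambda^{(i)}_{n_i,[p]}\}_{p\in\N_0}$ partitions $\nabla^{d_i}$, the restrictions $\Restr{\Lambda^{(i)}_{n_i,[p]}}$ are mutually orthogonal projections summing to the identity, so $\bA^{(i)}_{n_i} - \mathbf{\tilde A}^{(i)}_{n_i} = \sum_p (\bA^{(i)}_{n_i} - \mathbf{\tilde A}^{(i)}_{n_i,[p]})\Restr{\Lambda^{(i)}_{n_i,[p]}}$ and $\mathbf{D}^{(i)}_{n_i}\bv = \sum_p \mathbf{E}^{(i)}_{n_i,[p]}$, where $\mathbf{E}^{(i)}_{n_i,[p]} := \bigl(\id\otimes\cdots\otimes(\bA^{(i)}_{n_i}-\mathbf{\tilde A}^{(i)}_{n_i,[p]})\Restr{\Lambda^{(i)}_{n_i,[p]}}\otimes\cdots\otimes\id\bigr)\bv$. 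Passing to the mode-$i$ matricization of \eqref{eq:modei_matricization} --- recall $\norm{\mathbf{w}} = \norm{T^{(i)}_\mathbf{w}}_{\mathrm{HS}}$ and, by Definition \ref{def:contractions}, $\pi^{(i)}_{\nu_i}(\mathbf{w})$ is the $\ell_2$-norm of the $\nu_i$-th row of $T^{(i)}_\mathbf{w}$ --- and using $T^{(i)}_{\mathbf{E}^{(i)}_{n_i,[p]}} = (\bA^{(i)}_{n_i}-\mathbf{\tilde A}^{(i)}_{n_i,[p]})\Restr{\Lambda^{(i)}_{n_i,[p]}}T^{(i)}_\bv$ together with \eqref{eq:Aepsp}, I obtain
\[
 \bignorm{\mathbf{E}^{(i)}_{n_i,[p]}}
   = \bignorm{(\bA^{(i)}_{n_i}-\mathbf{\tilde A}^{(i)}_{n_i,[p]})\,\Restr{\Lambda^{(i)}_{n_i,[p]}}\,T^{(i)}_\bv}_{\mathrm{HS}}
   \leq \varepsilon^{(i)}_{n_i,[p]}\,\bignorm{\Restr{\Lambda^{(i)}_{n_i,[p]}}\,T^{(i)}_\bv}_{\mathrm{HS}}
   = \varepsilon^{(i)}_{n_i,[p]}\,\bignorm{\Restr{\Lambda^{(i)}_{n_i,[p]}}\,\pi^{(i)}(\bv)},
\]
the final equality because the squared Hilbert--Schmidt norm of $\Restr{\Lambda^{(i)}_{n_i,[p]}}T^{(i)}_\bv$ equals $\sum_{\nu_i\in\Lambda^{(i)}_{n_i,[p]}}\bigabs{\pi^{(i)}_{\nu_i}(\bv)}^2$. (Alternatively one may invoke \eqref{simp-est} with all factors but the $i$th equal to the identity, noting that $\pi^{(i)}$ is subordinate to the spectral norm on mode $i$ and that $\pi^{(i)}(\Restr{\Lambda^{(i)}_{n_i,[p]}}\,\cdot\,\bv) = \Restr{\Lambda^{(i)}_{n_i,[p]}}\pi^{(i)}(\bv)$.) Summing over $p$ and substituting into the bound from the previous paragraph yields exactly \eqref{eq:tensor_apply_est}.

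I expect the reordering in the first step --- moving the $\kk{\check n}_i$-summation inside the operator norm by factoring out the mode-$i$ difference $\bA^{(i)}_{n_i}-\mathbf{\tilde A}^{(i)}_{n_i}$ before estimating --- to be the only genuinely delicate point; a naive term-by-term bound would replace $C^{(i)}_{\mathbf{\tilde A}}$ by the larger $\sum_{\kk{\check n}_i}\abs{c_\kk{n}}\,\bignorm{\bigl(\bigotimes_{j<i}\mathbf{\tilde A}^{(j)}_{n_j}\bigr)\otimes\bigl(\bigotimes_{j>i}\bA^{(j)}_{n_j}\bigr)}$, discarding the cancellation in the core tensor $(c_\kk{n})$. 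Everything else --- the telescoping identity, commutativity of operators on disjoint modes, invariance of the operator norm under tensoring with identities, the identity $\norm{\mathbf{w}}=\norm{T^{(i)}_\mathbf{w}}_{\mathrm{HS}}$, and interchanging the (nonnegative) sums over $p$ --- is routine; convergence questions are harmless since all terms are nonnegative, and if some $C^{(i)}_{\mathbf{\tilde A}}$ is infinite the claimed bound holds trivially.
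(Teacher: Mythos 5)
Your proof is correct and follows essentially the same route as the paper: the telescoping (insertion–triangle) decomposition with $\mathbf{\tilde A}$-factors before and $\mathbf{A}$-factors after the $i$th mode, factoring out the $\kk{\check n}_i$-sum into the operator norm to obtain $C^{(i)}_{\mathbf{\tilde A}}$ via \eqref{simp-est}, and then splitting the mode-$i$ difference along the partition $\{\Lambda^{(i)}_{n_i,[p]}\}$ and invoking \eqref{eq:Aepsp} together with the contraction identity. Your explicit Hilbert--Schmidt/matricization justification of the final per-block estimate is just a more detailed rendering of the step the paper carries out directly.
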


\begin{proof}
The usual insertion-triangle inequality argument for estimating differences of products yields, upon using \eqref{simp-est}
and the definition of the constants $C^{(i)}_{\mathbf{\tilde A}}$,
\begin{align*}
 \norm{\mathbf{A}\mathbf{v} - \mathbf{\tilde A}\mathbf{v}} 
   &\leq \Bignorm{\sum_{n_1} (\mathbf{A}^{(1)}_{n_1} - \mathbf{\tilde
   A}^{(1)}_{n_1})
   \otimes \Bigl( \sum_{\kk{\check n}_1} c_n \mathbf{A}^{(2)}_{n_2} \otimes \cdots
   \otimes \mathbf{A}^{(m)}_{n_m}\Bigr) \, \mathbf{v}}  \\
       & \qquad + \ldots +  \Bignorm{\sum_{n_m} \Bigl(\sum_{\kk{\check n}_m} c_n
       \mathbf{\tilde A}^{(1)}_{n_1} \otimes \cdots 
         \otimes \mathbf{\tilde A}^{(m-1)}_{n_{m-1}}\Bigr) \otimes
       (\mathbf{A}^{(m)}_{n_m} - \mathbf{\tilde A}^{(m)}_{n_m}) \, \mathbf{v}}
       \\
   &\leq C^{(1)}_{\mathbf{\tilde A}}  \sum_{n_1}
   \bignorm{[(\mathbf{A}^{(1)}_{n_1} -
   \mathbf{\tilde A}^{(1)}_{n_1})  \otimes \id \otimes \cdots \otimes \id]  \mathbf{v}}   \\
    &\qquad + \ldots + 
    C^{(m)}_{\mathbf{\tilde A}}  \sum_{n_m} \bignorm{[\id \otimes \cdots \otimes \id
   \otimes 
   (\mathbf{A}^{(m)}_{n_m} - \mathbf{\tilde A}^{(m)}_{n_m})]\bv} \,.
\end{align*}
The assertion \eqref{eq:tensor_apply_est} follows now, using \eqref{eq:Aepsp}, from 
\begin{align*}
 \norm{[(\mathbf{A}^{(1)}_{n_1} - \mathbf{\tilde A}^{(1)}_{n_1}) \otimes \id \otimes \cdots \otimes \id ] \bv}
    & \leq  \sum_{p}  \bignorm{[(\mathbf{A}^{(1)}_{n_1} - \mathbf{\tilde
    A}^{(1)}_{n_1,[p]}) \Restr{\Lambda^{(1)}_{n_1,[p]}} \otimes \id \otimes
    \cdots \otimes \id] \mathbf{v}}  \\
   &\leq \sum_p \varepsilon^{(1)}_{n_1,p} \bignorm{\Restr{\Lambda^{(1)}_{n_1,[p]}} \pi^{(1)}(\bv)}
\end{align*}
and analogous estimates for the other summands.
\end{proof}

\begin{remark}\label{rmrk:operator_constants}
The constants $C^{(i)}_{\mathbf{\tilde A}}$, depending on the operator and
its approximation, may introduce a dependence on $m$.
In certain cases, this dependence is exponential. For instance, in the case of
an operator of the form $\mathbf{A} = \mathbf{B} \otimes \mathbf{B}\otimes
\cdots \otimes \mathbf{B}$ with $\norm{\mathbf{\tilde B}}\leq
\norm{\mathbf{B}}$, we obtain $C^{(i)}_{\mathbf{\tilde A}} =
\norm{\mathbf{B}}^{m-1}$.
This constant can therefore also strongly depend on an appropriate scaling of the
problem under consideration.
However, in the case of an operator
$$  \mathbf{A} = \mathbf{B}\otimes \id \otimes \cdots \otimes \id \,+\,
  \id\otimes\mathbf{B}\otimes \id \otimes \cdots \otimes \id \,+\, \ldots 
  \,+\, \id \otimes \cdots \otimes \id\otimes\mathbf{B} \,, $$
we obtain instead $C^{(i)}_{\mathbf{\tilde A}} \leq (m-1)
\norm{\mathbf{B}}$.
\end{remark}

\begin{definition}\label{def:scompressibility}
Let $\Lambda$ be a countable index set and let $s^* > 0$. We call an operator
$\mathbf{B}\colon \spl{2}(\Lambda)\to \spl{2}(\Lambda)$ 
\emph{$s^*$-compressible} if for any $0 < s < s^*$, there exist 
summable positive sequences $(\alpha_j)_{j\geq 0}$, $(\beta_j)_{j\geq 0}$ 
and for each $j\geq 0$, there exists $\mathbf{B}_j$ with at most $\alpha_j 2^j$
nonzero entries per row and column, such that $\norm{\mathbf{B} - \mathbf{B}_j}
\leq \beta_j 2^{-s j} $. 
For a given $s^*$-compressible operator $\mathbf{B}$, we denote the
corresponding sequences by $\alpha(\mathbf{B})$, $\beta(\mathbf{B})$.

Moreover, we say that a {\em family} of operators $\{ \mathbf{B}(n)
\}_n$ is \emph{equi-$s^*$-compressible} if all $\mathbf{B}(n)$ are $s^*$-compressible
with the same choice of sequences $(\alpha_j)$, $(\beta_j)$ 
and in addition, for all $\lambda \in\Lambda$ the number of nonzero elements in
the rows and columns of the approximations $\mathbf{B}(n)_j$ can be estimated
jointly for all $n$ in the form
$$ \# \Bigl(
\bigcup_n \bigl\{ \lambda'\in\Lambda \colon (\mathbf{B}(n)_j)_{\lambda,\lambda'} \neq 0 \vee
(\mathbf{B}(n)_j)_{\lambda',\lambda} \neq 0 \bigr\} \Bigr) \leq \alpha_j 2^j \,.
$$
\end{definition}  

\begin{example}
To give a structural example, let us assume that $\{
\psi_\lambda\}_{\lambda\in\nabla}$ is an orthonormal wavelet basis on $\R$.
As before, let $\abs{\lambda}$ denote the level of the basis function $\psi_\lambda$.
For $c,\sigma, \beta >0$, we denote by $\mathcal{M}_{c, \sigma,\beta}$ the class
of infinite matrices for which 
$$  
 \abs{b_{\lambda,\lambda'}}  \leq c \,
2^{-\abs{\abs{\lambda}-\abs{\lambda'}} \sigma} \bigl( 1 +
\dist(\supp\psi_\lambda, \supp\psi_{\lambda'}) \bigr)^{-\beta}  \,.   
$$
Such bounds are known to hold, for instance, for wavelet representations of the double layer potential   operator. 
Again, with a suitable rescaling of the wavelets, the representations of other potential types, as well as elliptic partial differential operators, exhibit the same decay of entries.
It is shown in \cite[Proposition 3.4]{Cohen:01} that (when specialized to the present case of
one-dimensional factors) any $\mathbf{B} \in \mathcal{M}_{c,\sigma,\beta}$ with
$\sigma > 1/2$, $\beta > 1$ is $s^*$-compressible with $s^* = \min\{\sigma-1/2, \beta-1\}$.

If $\mathbf{B}(n) \in \mathcal{M}_{c(n),\sigma(n),\beta(n)}$
with $c(n)$ and $\sigma(n)^{-1},\beta(n)^{-1}$ uniformly bounded, then from the
construction in the proof of \cite[Proposition 3.4]{Cohen:01} it can be seen that 
the $\mathbf{B}(n)$ are equi-$s^*$-compressible with 
$s^* = \min\{\inf_n\sigma(n)-1/2, \inf_n\beta(n)-1\}$, since the same
set of nonzero matrix entries can be used for each $n$.
\end{example}

The key property of $s^*$-compressible matrices in the context of adaptive methods is that they are not
only bounded in $\ell_2$ but also on the smaller approximation spaces, and thus preserve sparsity in a quantifiable manner.
We wish to establish such concepts next for the tensor setting.

To this end, assume that the components $\bA_{n_i}^{(i)}$ in $\bA$, given by \eqref{eq:tuck-op},  are $s^*$-compressible, and
let $\mathbf{A}^{(i)}_{n_i,j}$ be the corresponding
approximations according to Definition \ref{def:scompressibility}.
Quite in the spirit of the adaptive application of an operator in wavelet coordinates (see \cite{Cohen:01}),
for approximating $\bA \bv$ for a given $\bv\in \ell_2(\nabla^d)$,
the a-priori knowledge about $\bA$ in terms of $s^*$-compressibility is to be combined with a-posteriori information
on $\bv$. In fact,
given $\bv\in \ell_2(\nabla^d)$, we describe now how to construct for any $J\in\N$ approximations $\bw_J$ to 
the sequence $\bA\bv$ as follows. 
 For each $i$ and for $j\in\N$, let $\bar\Lambda^{(i)}_{j}$ be the
support of the best $2^j$-term approximation of $\pi^{(i)}(\mathbf{v})$ so that, in particular,
$\bar\Lambda^{(i)}_p \subset \bar\Lambda^{(i)}_{p+1}$.
If $\mathbf{A}^{(i)}_{n_i} = \id$, we simply set $\mathbf{\tilde A}^{(i)}_{n_i}
= \id$.  If $\mathbf{A}^{(i)}_{n_i} \neq \id$,  we  let $\bar\Lambda^{(i)}_{-1}:=\emptyset$ and
\begin{equation}
\label{eq:supps}
\Lambda^{(i)}_{[p]} := \left\{
\begin{array}{ll}
\bar\Lambda^{(i)}_{p} \setminus  \bar\Lambda^{(i)}_{p-1}, & p=0,\ldots,J  ,\\
\nabla^{d_i} \setminus \bar\Lambda^{(i)}_J, & p=J+1,\\
\emptyset, & p>J+1.
\end{array}\right.
\end{equation}
Moreover, let
\beqn
\label{eq:tildeA}
\mathbf{\tilde A}^{(i)}_{n_i,[p]} := \left\{
\begin{array}{ll} \mathbf{A}^{(i)}_{n_i,J-p},& p=0,\ldots,J,\\
0, & p> J.
\end{array}\right.
\eeqn
Note that {due to the particular choice of the sets $\Lambda^{(i)}_{[p]}$, the factors $\mathbf{\tilde A}^{(i)}_{n_i,[p]}$ formed according to \eqref{eq:approx_operator_generalform}
depend on the sequence $\bv$. However,} as a simple consequence of Definition \ref{def:scompressibility}, the $\mathbf{\tilde A}^{(i)}_{n_i}$ are bounded independently of $\mathbf{v}$.
\begin{lemma}
\label{lem:wJ}
Assume that the components $\bA_{n_i}^{(i)}$ of $\bA$ as in \eqref{eq:tuck-op} are $s^*$-compressible.
Given any $\bv\in\ell_2(\nabla^d)$, $J\in\N$, let
\begin{equation*}
\mathbf{\tilde A}_J := \sum_{\kk{n}\in \KK{m}(\rr{R})} c_\kk{n} \bigotimes_{i=1}^m
  \mathbf{\tilde A}^{(i)}_{n_i} \,,\quad
    \mathbf{\tilde A}^{(i)}_{n_i} := \sum_{p\in\N_0} \mathbf{\tilde
    A}^{(i)}_{n_i,[p]} \Restr{\Lambda^{(i)}_{n_i,[p]}}\,
\end{equation*} 
with $ \Restr{\Lambda^{(i)}_{n_i,[p]}}, \mathbf{\tilde A}^{(i)}_{n_i}$ defined by \eqref{eq:supps}, \eqref{eq:tildeA},
respectively. Then, whenever $\pi^{(i)}(\bv)\in \mathcal{A}^s$ for some $0<s<s^*$, the finitely supported sequence
$\bw_J := \tilde\bA_J \bv$ satisfies
\beqn
\label{eq:west}
  \norm{\bA\bv 
  - \mathbf{\tilde A}_J\mathbf{v}}
    \leq 
     {  2^{-s (J -1)}  }
    \sum_{i = 1}^m C^{(i)}_{\mathbf{\tilde A}}
     R_i \bigl( \max_{n}
    \norm{\bA^{(i)}_{n}} + \norm{\hat\beta^{(i)}}_{\spl{1}} \bigr)
     \norm{\pi^{(i)}(\mathbf{v})}_{\Acal^s} \,,
\eeqn
as well as 
\begin{equation}
\label{eq:suppJ}
  \#\supp_i (\mathbf{\tilde A}_J\mathbf{v})
    \leq      
     R_i \norm{\hat\alpha^{(i)}}_{\spl{1}} 2^J \,,
\end{equation}
where
the sequences $\hat\alpha$, $\hat\beta$ are defined as the
componentwise maxima of the sequences in Definition \ref{def:scompressibility}
for each $\mathbf{A}^{(i)}_{n_i}$, that is,
\beqn
\label{eq:maxsequences}
 \hat\alpha^{(i)}_j := \max_{n} \alpha_j(\bA^{(i)}_n)  \,,\qquad 
   \hat\beta^{(i)}_j := \max_{n} \beta_j(\bA^{(i)}_n) \,.
\eeqn
\end{lemma}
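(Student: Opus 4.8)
The plan is to prove the two estimates in Lemma~\ref{lem:wJ} by applying Lemma~\ref{lmm:tensor_apply_est} to the particular approximation $\tilde\bA_J$ constructed in \eqref{eq:supps}--\eqref{eq:tildeA}, and then to track the accuracy and support contributions coming from the $s^*$-compressibility of the factors. First I would record that, by construction, $\tilde\bA_J$ has exactly the form \eqref{eq:approx_operator_generalform} required by Lemma~\ref{lmm:tensor_apply_est}, with the partition $\{\Lambda^{(i)}_{n_i,[p]}\}_{p\in\N_0}$ given by \eqref{eq:supps} (note this partition is in fact independent of $n_i$, since $\bar\Lambda^{(i)}_p$ is defined from $\pi^{(i)}(\bv)$ alone) and with the local approximants $\tilde\bA^{(i)}_{n_i,[p]}$ given by \eqref{eq:tildeA}. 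The relevant tolerances in \eqref{eq:Aepsp} are then, for $p=0,\ldots,J$,
\[
\varepsilon^{(i)}_{n_i,p} = \norm{\bA^{(i)}_{n_i} - \bA^{(i)}_{n_i,J-p}} \leq \beta_{J-p}(\bA^{(i)}_{n_i})\,2^{-s(J-p)} \leq \hat\beta^{(i)}_{J-p}\,2^{-s(J-p)},
\]
while for $p=J+1$ we only have $\tilde\bA^{(i)}_{n_i,[J+1]}=0$, so $\varepsilon^{(i)}_{n_i,J+1} = \norm{\bA^{(i)}_{n_i}} \leq \max_n\norm{\bA^{(i)}_n}$, and for $p>J+1$ the set $\Lambda^{(i)}_{[p]}$ is empty and contributes nothing.

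Next I would plug these into the bound \eqref{eq:tensor_apply_est} of Lemma~\ref{lmm:tensor_apply_est}. The key point is that $\bignorm{\Restr{\Lambda^{(i)}_{[p]}}\pi^{(i)}(\bv)}$ is controlled using $\pi^{(i)}(\bv)\in\cA^s$: for $p=0,\ldots,J$, the set $\Lambda^{(i)}_{[p]} = \bar\Lambda^{(i)}_p\setminus\bar\Lambda^{(i)}_{p-1}$ consists of entries that are discarded by the best $2^{p-1}$-term approximation, so $\bignorm{\Restr{\Lambda^{(i)}_{[p]}}\pi^{(i)}(\bv)} \leq \norm{\pi^{(i)}(\bv) - \Restr{\bar\Lambda^{(i)}_{p-1}}\pi^{(i)}(\bv)} \leq 2^{-s(p-1)}\norm{\pi^{(i)}(\bv)}_{\cA^s}$ (with the convention $\bar\Lambda^{(i)}_{-1}=\emptyset$ handled directly by $\norm{\pi^{(i)}(\bv)}\le\norm{\pi^{(i)}(\bv)}_{\cA^s}$), and for $p=J+1$ we have $\bignorm{\Restr{\Lambda^{(i)}_{[J+1]}}\pi^{(i)}(\bv)} \leq 2^{-sJ}\norm{\pi^{(i)}(\bv)}_{\cA^s}$. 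Hence the contribution of mode $i$ and index $n_i$ to \eqref{eq:tensor_apply_est} is bounded by
\[
C^{(i)}_{\tilde\bA}\norm{\pi^{(i)}(\bv)}_{\cA^s}\Bigl( \sum_{p=0}^{J}\hat\beta^{(i)}_{J-p}\,2^{-s(J-p)}\,2^{-s(p-1)} + \max_n\norm{\bA^{(i)}_n}\,2^{-sJ}\Bigr),
\]
and since $\sum_{p=0}^{J}\hat\beta^{(i)}_{J-p}\,2^{-sp+s} = 2^{s}\,2^{-sJ}\sum_{p=0}^J \hat\beta^{(i)}_{J-p}\,2^{s(J-p)}\cdot 2^{-s(J-p)}$ — wait, more cleanly: $2^{-s(J-p)}2^{-s(p-1)} = 2^{s}2^{-sJ}$ is independent of $p$, so the first sum equals $2^{s}2^{-sJ}\sum_{q=0}^{J}\hat\beta^{(i)}_q \le 2^{s}2^{-sJ}\norm{\hat\beta^{(i)}}_{\spl{1}}$. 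Collecting the two terms gives a factor $2^{-s(J-1)}(\norm{\hat\beta^{(i)}}_{\spl{1}} + \max_n\norm{\bA^{(i)}_n})$ up to absorbing the benign constant $2^{-s}\le 1$ into the bracket, and summing over $n_i=1,\ldots,R_i$ yields the claimed \eqref{eq:west}.

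Finally, for the support estimate \eqref{eq:suppJ} I would argue as follows. Since $\supp_i(\tilde\bA_J\bv)$ is contained in the union over $n_i$, over the columns hit, of the nonzero rows of the $\tilde\bA^{(i)}_{n_i,[p]}$ applied to indices in $\Lambda^{(i)}_{[p]}$: for each $p\le J$, $\tilde\bA^{(i)}_{n_i,[p]}=\bA^{(i)}_{n_i,J-p}$ has at most $\alpha_{J-p}(\bA^{(i)}_{n_i})2^{J-p}\le\hat\alpha^{(i)}_{J-p}2^{J-p}$ nonzeros per column, and $\#\Lambda^{(i)}_{[p]}\le\#\bar\Lambda^{(i)}_p\le 2^p$ (for $p=0$ treat separately: $\#\bar\Lambda^{(i)}_0\le 1$). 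Hence $\tilde\bA^{(i)}_{n_i}\Restr{\bar\Lambda^{(i)}_J}$ acting on mode $i$ produces at most $\sum_{p=0}^J \hat\alpha^{(i)}_{J-p}2^{J-p}\cdot 2^p = 2^J\sum_{q=0}^J\hat\alpha^{(i)}_q \le \norm{\hat\alpha^{(i)}}_{\spl{1}}2^J$ active indices; since $\tilde\bA^{(i)}_{n_i,[p]}=0$ for $p>J$, nothing else contributes, and summing the budget over the $R_i$ values of $n_i$ gives \eqref{eq:suppJ}. The main obstacle I anticipate is purely bookkeeping: making sure the shift in index between the truncation level $J-p$ of $\bA^{(i)}_{n_i,J-p}$ and the annulus level $p$ of $\Lambda^{(i)}_{[p]}$ is matched so that the product $2^{-s(J-p)}\cdot 2^{-s(p-1)}$ telescopes to something $p$-independent (yielding the clean $2^{-s(J-1)}$ factor and an $\spl{1}$-norm rather than a sum that grows with $J$), and likewise that the support count $2^{J-p}\cdot 2^p$ collapses to $2^J$; once this cancellation is seen, the rest is a direct application of Lemmas~\ref{lmm:tensor_apply_est} and the definition of $\cA^s$.
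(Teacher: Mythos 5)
Your proposal is correct and follows essentially the same route as the paper's proof: both apply Lemma~\ref{lmm:tensor_apply_est} with the partitions \eqref{eq:supps} and approximants \eqref{eq:tildeA}, use $s^*$-compressibility for the tolerances $\hat\beta^{(i)}_{J-p}2^{-s(J-p)}$ together with the tail bounds $\norm{\Restr{\Lambda^{(i)}_{[p]}}\pi^{(i)}(\bv)}\leq 2^{-s(p-1)}\norm{\pi^{(i)}(\bv)}_{\As}$, exploit the $p$-independent product $2^{-s(J-p)}2^{-s(p-1)}=2^{-s(J-1)}$ to get the $\spl{1}$-norm of $\hat\beta^{(i)}$, and count the support via at most $\hat\alpha^{(i)}_{J-p}2^{J-p}$ nonzeros per column times at most $2^p$ columns, summed over $p$ and $n_i$. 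The bookkeeping details you flag (index shift and the collapse $2^{J-p}\cdot 2^p=2^J$) are handled exactly as in the paper.
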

\begin{proof}
We apply Lemma \ref{lmm:tensor_apply_est} with $\mathbf{\tilde
A}^{(i)}_{n_i,[p]}$, defined    in \eqref{eq:tildeA},  and
$\Lambda^{(i)}_{n_i,[p]} := \Lambda^{(i)}_{[p]}$, according to \eqref{eq:supps}.
By $s^*$-compressibility, we have 
$
\norm{\bA^{(i)}_{n_i} - \mathbf{\tilde A}^{(i)}_{n_i,[p]}} \leq
\hat\beta^{(i)}_{J-p} 2^{- s(J-p)} =: \varepsilon^{(i)}_{n_i,p}$,
$p=0,\ldots,J,$ $\norm{\bA^{(i)}_{n_i}
- \mathbf{\tilde A}^{(i)}_{n_i,[J+1]}} = \norm{\bA^{(i)}_{n_i}}$, 
$\norm{\Restr{\Lambda^{(i)}_{[p]}} \pi^{(i)}(\bv)} = 0$ for $p>J+1$,
and therefore
\begin{eqnarray}
\label{eq:lem-appl}
\norm{\bA\bv - \bw_J}&\leq &\sum_{i=1}^m \sum_{n_i=1}^{R_i}
 C^{(i)}_{\mathbf{\tilde A}}\left\{   \sum_{p=0}^{J}
  \hat\beta^{(i)}_{J-p} 2^{- s(J-p)}\,
   \bignorm{\Restr{\Lambda^{(i)}_{n_i,[p]}}\,\pi^{(i)}(\mathbf{v})}\right. \nonumber\\
&&\left.  + \, \norm{\bA^{(i)}_{n_i}} \bignorm{ \Restr{\Lambda^{(i)}_{n_i,[J+1]}}\pi^{(i)}(\bv)}_{}\right\}.
\end{eqnarray}
By the choice of the $\Lambda^{(i)}_{[p]}$ and the definition of  $\norm{\cdot}_{\As}$, we obtain
{$\norm{\Restr{\Lambda^{(i)}_{[p]}} \pi^{(i)}(\bv)} \leq
2^{-s(p-1)} \norm{\pi^{(i)}(\bv)}_{\As}$} for $p = 0,\ldots, J+1$, 
which confirms \eqref{eq:west}.
Furthermore, 
\begin{equation}
\label{eq:tensor_apply_supportest}
  \#\supp_i (\mathbf{\tilde A}_J \mathbf{v}) 
    \leq  R_i(\hat\alpha^{(i)}_{J} 2^{J} 2^0
     + \hat\alpha^{(i)}_{J-1} 2^{J-1} 2^1 
     + \ldots + \hat\alpha^{(i)}_0 2^0 2^J) \leq
     R_i \norm{\hat\alpha^{(i)}}_{\spl{1}} 2^J \,,
\end{equation}
which is \eqref{eq:suppJ}.
\end{proof}

\begin{remark}
\label{rem:apply}
Whenever $\bv$ is finitely supported there exists a $p(\bv)\in \N_0$ such that
$\Lambda^{(i)}_{[p]}=\emptyset$ for $i=1,\ldots,m$, $p>p(\bv)$. Hence, the right hand side of 
\eqref{eq:lem-appl} can be computed for each $J\in \N_0$, where the sum over $p$ terminates
for $J\geq p(\bv)$ at $p(\bv)$. Further increasing $J$ will then decrease all summands 
on the right hand side of 
\eqref{eq:lem-appl}. Therefore, fixing any $s <s^*$ (close to $s^*$), we can find for any $\eta >0$
the integer $J(\eta)$ defined by
\begin{eqnarray}
\label{eq:Jeta}
J(\eta) &:= &\argmin_{J\in \N_0}
\biggl\{
\sum_{i=1}^m \sum_{n_i=1}^{R_i}
 C^{(i)}_{\mathbf{\tilde A}}\Bigl\{   \sum_{p=0}^{J}
  \hat\beta^{(i)}_{J-p} 2^{- s(J-p)}\,
   \bignorm{\Restr{\Lambda^{(i)}_{n_i,[p]}}\,\pi^{(i)}(\mathbf{v})}
   \nonumber\\
&&\qquad\qquad
 + \, \norm{\bA^{(i)}_{n_i}} \bignorm{ \Restr{\Lambda^{(i)}_{n_i,[J+1]}}\pi^{(i)}(\bv)} \Bigr\}
\leq \eta\biggr\}.
\end{eqnarray}
\end{remark}

To further examine the properties of $\tilde\bA_{J(\eta)}\bv$ for a given finitely supported $\bv$
let
\beqn
\label{eq:C-constants}
 C_{\hat\alpha}^{(i)} :=  \norm{\hat\alpha^{(i)}}_{\spl{1}}  \,, \qquad
   C_{\hat\beta}^{(i)} := \bigl( \max_{n} \norm{\bA^{(i)}_{n}} +
     \norm{\hat\beta^{(i)}}_{\spl{1}} \bigr) \,.  
\eeqn

\begin{theorem}
\label{lmm:tensor_scompr_est}
Under the assumptions of Lemma \ref{lem:wJ} on $\bA$ and any given finitely supported $\bv\in \ell_2(\nabla^d)$,
for any $\eta >0$ let
\beqn
\label{eq:weta}
\bw_\eta := \tilde\bA_{J(\eta)}\bv =: \tilde\bA_\eta\bv,
\eeqn
where $J(\eta)$ is defined by \eqref{eq:Jeta}. Then
\begin{align} 
\label{eq:approx-eta}
\norm{\mathbf{A}\mathbf{v} - \mathbf{\tilde
A}_\eta\mathbf{v}} & \leq \eta\,,   \\
   \#  \supp_i (\mathbf{\tilde A}_\eta \mathbf{v})  &\leq     
  { 4 } \,C_{\hat\alpha}^{(i)}\, R_i\, \eta^{-\frac{1}{s}}\, \Bigl(
  \sum_{j=1}^m C^{(j)}_{\hat\beta} C^{(j)}_{\mathbf{\tilde A}}  R_j
  \norm{\pi^{(j)}(\mathbf{v})}_{\Acal^s}  \Bigr)^{\frac{1}{s}} \,,
   \label{eq:tensor_apply_support} \\
   \norm{\pi^{(i)}(\mathbf{\tilde A}_\eta\mathbf{v})}_{\Acal^s} &\leq
      \frac{{2^{3s+2}}}{2^s-1} \,{\bigl(C^{(i)}_{\hat\alpha}\bigr)}^s 
      C^{(i)}_{\hat\beta} C^{(i)}_{\mathbf{\tilde A}} \,R_i^{1+s}
      \norm{\pi^{(i)}(\mathbf{v})}_{\Acal^s},
    \label{eq:tensor_apply_sparsity}
\end{align}
for all $i=1,\ldots,m$, where $C^{(i)}_{\mathbf{\tilde A}}$ is as in Lemma
\ref{lmm:tensor_apply_est}, and the constants $C^{(i)}_{\hat\alpha}$,
$C^{(i)}_{\hat\beta}$ are defined by \eqref{eq:C-constants} and 
 are independent of
$\mathbf{v}$, $\eta$, and $m$.
Moreover, 
\begin{equation}
 {\rank_i(\mathbf{\tilde A}_\eta \mathbf{v})} \leq R_i \rank_i(\mathbf{v}),\quad i=1,\ldots, m \,.
   \label{eq:tensor_apply_ranks}
\end{equation}
\end{theorem}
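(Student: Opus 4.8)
The plan is to read off \eqref{eq:approx-eta} and \eqref{eq:tensor_apply_ranks} directly from the construction, and to obtain the sparsity estimates \eqref{eq:tensor_apply_support} and \eqref{eq:tensor_apply_sparsity} by passing, via the contractions $\pi^{(i)}$, to the standard argument for the $\As$-boundedness of $s^*$-compressible matrices on $\As$. First, \eqref{eq:approx-eta} is immediate: by Remark \ref{rem:apply} the integer $J(\eta)$ of \eqref{eq:Jeta} is well defined, and by Lemma \ref{lmm:tensor_apply_est}, applied with the sets \eqref{eq:supps} and the factors \eqref{eq:tildeA}, the quantity minimized in \eqref{eq:Jeta} is an upper bound for $\norm{\bA\bv-\tilde\bA_{J(\eta)}\bv}$. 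For the rank bound \eqref{eq:tensor_apply_ranks} I would write the mode-$i$ matricization of $\tilde\bA_\eta\bv$ as
\[
 T^{(i)}_{\tilde\bA_\eta\bv}=\sum_{n_i=1}^{R_i}\mathbf{\tilde A}^{(i)}_{n_i}\,T^{(i)}_{\bv}\,\Bigl(\sum_{\kk{\check n}_i}c_\kk{n}\bigotimes_{j\neq i}\mathbf{\tilde A}^{(j)}_{n_j}\Bigr)^{\!T},
\]
so that $\range T^{(i)}_{\tilde\bA_\eta\bv}\subseteq\sum_{n_i=1}^{R_i}\mathbf{\tilde A}^{(i)}_{n_i}\bigl(\range T^{(i)}_{\bv}\bigr)$, a subspace of dimension at most $R_i\rank_i(\bv)$.

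For the two remaining bounds I would fix $i$ and reduce to a one-dimensional statement about $\mathbf{g}:=\pi^{(i)}(\bv)\in\As$. Let $\mathbf{g}_p:=\Restr{\bar\Lambda^{(i)}_p\setminus\bar\Lambda^{(i)}_{p-1}}\mathbf{g}$, $p\geq 0$ (with $\bar\Lambda^{(i)}_{-1}:=\emptyset$), be the dyadic layers entering \eqref{eq:supps}, so that $\#\supp\mathbf{g}_p\leq 2^p$ and $\norm{\mathbf{g}_p}\leq 2^{-s(p-1)}\norm{\mathbf{g}}_{\As}$. Combining Proposition \ref{prp:contraction_subadd}, estimate \eqref{eq:pi-est}, and Minkowski's inequality in the form $\pi^{(i)}\bigl((\id\otimes\cdots\otimes\mathbf{C}\otimes\cdots\otimes\id)\bv\bigr)\leq\abs{\mathbf{C}}\,\pi^{(i)}(\bv)$ (entrywise modulus), and then grouping by $n_i$, yields the componentwise bound $\pi^{(i)}(\tilde\bA_\eta\bv)\leq\mathbf{x}$ with
\[
 \mathbf{x}:=C^{(i)}_{\mathbf{\tilde A}}\sum_{n_i=1}^{R_i}\sum_{p=0}^{J(\eta)}\bigabs{\mathbf{A}^{(i)}_{n_i,J(\eta)-p}}\,\mathbf{g}_p,
\]
where \eqref{eq:supps}, \eqref{eq:tildeA} are used and the factor $\sum_{\kk{\check n}_i}\abs{c_\kk{n}}\prod_{j\neq i}\norm{\mathbf{\tilde A}^{(j)}_{n_j}}$ is absorbed into $C^{(i)}_{\mathbf{\tilde A}}$ via the $\bv$-independent boundedness of the $\mathbf{\tilde A}^{(j)}_{n_j}$. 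Since the $\As$ quasinorm is monotone under componentwise domination of nonnegative sequences, it suffices to bound $\#\supp\mathbf{x}$ and $\norm{\mathbf{x}}_{\As}$. By \eqref{eq:suppJ}, $\#\supp_i(\tilde\bA_\eta\bv)\leq R_i C^{(i)}_{\hat\alpha}2^{J(\eta)}$, and by \eqref{eq:west} the quantity minimized in \eqref{eq:Jeta} is at most $2^{-s(J-1)}S$ with $S:=\sum_j C^{(j)}_{\mathbf{\tilde A}}R_j C^{(j)}_{\hat\beta}\norm{\pi^{(j)}(\bv)}_{\As}$; hence $J(\eta)$ does not exceed the least $J$ with $2^{-s(J-1)}S\leq\eta$, so $2^{J(\eta)}\leq 4\,\eta^{-1/s}S^{1/s}$, which gives \eqref{eq:tensor_apply_support}.

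The core of the argument is \eqref{eq:tensor_apply_sparsity}. For $j\geq 0$ set $\mathbf{x}_j:=C^{(i)}_{\mathbf{\tilde A}}\sum_{n_i=1}^{R_i}\sum_{p=0}^{j}\abs{\mathbf{A}^{(i)}_{n_i,j-p}}\,\mathbf{g}_p$, so that $\mathbf{x}=\mathbf{x}_{J(\eta)}$ and, exactly as above, $\#\supp\mathbf{x}_j\leq R_i C^{(i)}_{\hat\alpha}2^{j}$. For $j\leq J(\eta)$ one estimates $\norm{\mathbf{x}_{J(\eta)}-\mathbf{x}_j}$ by splitting this difference, for each $n_i$, into the contribution of the layers $\mathbf{g}_p$ with $p>j$ and the contribution of the compression increments $\abs{\mathbf{A}^{(i)}_{n_i,J(\eta)-p}}-\abs{\mathbf{A}^{(i)}_{n_i,j-p}}$ for $p\leq j$, and using $\norm{\mathbf{g}-\sum_{p\leq j}\mathbf{g}_p}\leq 2^{-sj}\norm{\mathbf{g}}_{\As}$, $\norm{\mathbf{g}_p}\leq 2^{s}2^{-sp}\norm{\mathbf{g}}_{\As}$, $\norm{\abs{\mathbf{A}^{(i)}_{n_i}}-\abs{\mathbf{A}^{(i)}_{n_i,\ell}}}\leq\hat\beta^{(i)}_\ell 2^{-s\ell}$ and $\norm{\abs{\mathbf{A}^{(i)}_{n_i}}}\leq C^{(i)}_{\hat\beta}$; this gives $\norm{\mathbf{x}_{J(\eta)}-\mathbf{x}_j}\lesssim C^{(i)}_{\mathbf{\tilde A}}R_i C^{(i)}_{\hat\beta}2^{-sj}\norm{\mathbf{g}}_{\As}$. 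Thus $\mathbf{x}$ is approximated by the finitely supported $\mathbf{x}_{\min\{j,J(\eta)\}}$ with supports growing like $2^{j}$ and errors decaying like $2^{-sj}$, and the standard lemma identifying $\As$ through such approximation sequences (cf.\ \cite[proof of Prop.~3.4]{Cohen:01}, \cite[Thm.~4.9.1]{Cohen:03}) then yields, after collecting constants, $\norm{\mathbf{x}}_{\As}\leq\frac{2^{3s+2}}{2^s-1}\bigl(C^{(i)}_{\hat\alpha}\bigr)^s C^{(i)}_{\mathbf{\tilde A}}C^{(i)}_{\hat\beta}R_i^{1+s}\norm{\pi^{(i)}(\bv)}_{\As}$, which is \eqref{eq:tensor_apply_sparsity}.

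The step I expect to need the most care is the passage from the components $\mathbf{A}^{(i)}_{n_i}$ to their entrywise moduli $\abs{\mathbf{A}^{(i)}_{n_i}}$ that is forced by the contraction estimate: the numbers of nonzero entries per row and column are unchanged, but $\norm{\abs{\mathbf{A}^{(i)}_{n_i}}-\abs{\mathbf{A}^{(i)}_{n_i,\ell}}}\leq\norm{\mathbf{A}^{(i)}_{n_i}-\mathbf{A}^{(i)}_{n_i,\ell}}$ is not automatic for general matrices. It does hold for the operators relevant here --- for instance $\abs{\mathbf{B}}\in\mathcal{M}_{c,\sigma,\beta}$ whenever $\mathbf{B}\in\mathcal{M}_{c,\sigma,\beta}$, so that the $s^*$-compression data of Definition \ref{def:scompressibility} transfer to $\abs{\mathbf{A}^{(i)}_{n_i}}$ --- and in a fully general statement it would enter as a mild additional hypothesis on the factors; everything else above is routine bookkeeping, mostly already carried out in the proofs of Lemmas \ref{lmm:tensor_apply_est} and \ref{lem:wJ}.
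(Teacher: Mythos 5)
Most of your proposal follows the paper's own route: \eqref{eq:approx-eta} from the definition of $J(\eta)$ in \eqref{eq:Jeta}, the rank bound \eqref{eq:tensor_apply_ranks} from the explicit low-rank representation of $\tilde\bA_\eta\bv$ (your matricization argument is a fine way to say this), and the support bound \eqref{eq:tensor_apply_support} from \eqref{eq:suppJ} combined with $2^{J(\eta)}\leq 4(\eta^{-1}S)^{1/s}$, which is exactly how the factor $4$ arises in the paper. The place where your argument genuinely breaks down is \eqref{eq:tensor_apply_sparsity}. You dominate the contraction by passing to entrywise-modulus matrices, $\pi^{(i)}\bigl((\id\otimes\cdots\otimes\tilde\bA^{(i)}_{n_i}\otimes\cdots\otimes\id)\bv\bigr)\leq \abs{\tilde\bA^{(i)}_{n_i}}\,\pi^{(i)}(\bv)$, and then run the compression machinery on $\abs{\mathbf{A}^{(i)}_{n_i,j}}$. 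But Definition \ref{def:scompressibility} only controls $\norm{\mathbf{A}^{(i)}_{n_i}-\mathbf{A}^{(i)}_{n_i,j}}$ in the operator norm; it gives no control whatsoever on $\norm{\,\abs{\mathbf{A}^{(i)}_{n_i}}-\abs{\mathbf{A}^{(i)}_{n_i,j}}\,}$, and the operator norm of an entrywise modulus can exceed that of the matrix by an unbounded factor. Hence your key layer estimate $\norm{\mathbf{x}_{J(\eta)}-\mathbf{x}_j}\lesssim 2^{-sj}\norm{\pi^{(i)}(\bv)}_{\As}$ is not available under the hypotheses of the theorem, which assume nothing beyond $s^*$-compressibility of the factors. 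You acknowledge this and propose an additional hypothesis, but the theorem as stated has none, so as a proof of the stated result this is a genuine gap rather than a harmless variant.

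The missing idea is that one never needs the modulus matrices: because the mode frames $\UU(\bv)$ are orthonormal, \eqref{contract-mode} gives the exact identity $\pi^{(i)}_\nu\bigl((\id\otimes\cdots\otimes\tilde\bA^{(i)}_{n_i}\otimes\cdots\otimes\id)\bv\bigr)=\bigl(\sum_k\abs{\sigma^{(i)}_k}^2\abs{(\tilde\bA^{(i)}_{n_i}\mathbf{U}^{(i)}_k)_\nu}^2\bigr)^{1/2}$, so the paper estimates $\norm{\Restr{\hat\Lambda_{n_i,[q]}}\tilde\bA^{(i)}_{n_i}\mathbf{U}^{(i)}_k}$ for each mode frame vector directly, using only operator-norm bounds on the layer-restricted pieces $\mathbf{A}^{(i)}_{n_i,j}\Restr{\Lambda_{[\ell]}}$ together with Cauchy--Schwarz, and then reassembles the contraction as a weighted $\ell_2$ sum over $k$; the decay \eqref{eq:tensor_scompr_partdecay} on the disjoint sets $\hat\Lambda_{n_i,[q]}$ (of cardinality $\lesssim C^{(i)}_{\hat\alpha}2^q$) then yields the $\As$ bound, and summing over $n_i$ produces the factor $R_i^{1+s}$. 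To repair your argument you should work at this level rather than dominate by $\abs{\tilde\bA^{(i)}_{n_i}}$. A further small slip: the quantity $\sum_{\kk{\check n}_i}\abs{c_\kk{n}}\prod_{j\neq i}\norm{\tilde\bA^{(j)}_{n_j}}$ is in general larger than $C^{(i)}_{\mathbf{\tilde A}}$ and cannot simply be ``absorbed'' into it; the correct constant comes from Proposition \ref{prp:contraction_subadd} applied over $n_i$ together with \eqref{eq:pi-est} applied to the grouped factor, whose norm is bounded by $C^{(i)}_{\mathbf{\tilde A}}$ by definition.
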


\begin{proof} \eqref{eq:approx-eta} follows from \eqref{eq:Jeta}. 
The bound \eqref{eq:tensor_apply_support} is an immediate consequence of \eqref{eq:suppJ}.
Choosing for a given finitely supported $\bv$ the mode frame system $\UU=\UU(\bv)$ according to HOSVD,
  \eqref{eq:tensor_apply_ranks} is clear, since with $\mathbf{U}^{(i)}$ and
$\mathbf{a}$ as in Lemma \ref{lmm:tensor_apply_est}, we obtain
\begin{equation}\label{eq:tensor_apply_Avrep}
 { \mathbf{\tilde A}_\eta  } \mathbf{v} = \sum_{\kk{n} \in \KK{m}(\rr{R})}
  \sum_{\kk{k}\in \N^m} \mathbf{d}_{(n_1,k_1),\ldots,(n_m,k_m)} 
   \bigotimes_{i=1}^m \mathbf{\tilde
  A}^{(i)}_{n_i} \mathbf{U}^{(i)}_{k_i} \,,
\end{equation}
where $\mathbf{d}_{(n_1,k_1),\ldots,(n_m,k_m)} = c_\kk{n} a_\kk{k}$.

Without loss of generality it suffices to prove
\eqref{eq:tensor_apply_sparsity} only for $i=1$, which allows us to temporarily simplify the notation 
by writing  $\Lambda_{[p]}$ for $\Lambda^{(1)}_{[p]}$.
Note first that for each $\nu_1\in\nabla^{d_1}$, using Proposition
\ref{prp:contraction_subadd} followed by \eqref{eq:pi-est} and \eqref{contract-mode}, we obtain
\begin{align}
\label{eq:tensor_scompr_contrest}
  \pi^{(1)}_{\nu_1}({ \mathbf{\tilde A}_\eta } \mathbf{v})  
    &\leq
    C^{(1)}_{\mathbf{\tilde A}} \sum_{n_1=1}^{R_1}
    \pi^{(1)}_{\nu_1}(\mathbf{\tilde A}^{(1)}_{n_1} \otimes \id \cdots\otimes
    \id \mathbf{v})\notag  \\
    & =  C^{(1)}_{\mathbf{\tilde A}} \sum_{n_1=1}^{R_1}
    \Bigl(\sum_k
    \bigabs{\sigma^{(1)}_k}^2 \bigabs{(\mathbf{\tilde A}^{(1)}_{n_1}
    \mathbf{U}^{(1)}_k)_{\nu_1}}^2 \Bigr)^\frac{1}{2} \,,
\end{align}
where we have used \eqref{contract-mode} in the last step for the mode frame system $\UU(\bv)$
from Theorem \ref{thm:hosvd_properties}.
In order to bound next the terms on the right hand side of \eqref{eq:tensor_scompr_contrest} let 
\begin{align*}
  \hat \Lambda_{n_1,[0]} &:= \supp\range \mathbf{A}^{(1)}_{n_1,0}
  \Restr{\Lambda_{[0]}} \,, \\
  \hat \Lambda_{n_1,[q]} &:= \Bigl( \bigcup_{j+\ell = q} \supp\range
  \mathbf{A}^{(1)}_{n_1,j} \Restr{\Lambda_{[\ell]}} \Bigr) \,\setminus\, \Bigl( \bigcup_{i
  < q} \hat\Lambda_{n_1,[i]} \Bigr) \,,\quad q > 0 \,.
\end{align*}
By the same argument as in \eqref{eq:tensor_apply_supportest}, we also obtain
\beqn
\label{eq:card}
\#\hat\Lambda_{n_1,[q]} \leq \norm{\hat\alpha^{(1)}}_{\spl{1}} 2^q.
\eeqn 
For {$q=0,\ldots,J$}, and each $k$, we have
$$
  \norm{\Restr{\hat\Lambda_{n_1,[q]}} \mathbf{\tilde A}^{(1)}_{n_1}
  \mathbf{U}^{(1)}_k} \leq {\sum_{\ell = 0}^{q-1}}
  \norm{\Restr{\hat\Lambda_{n_1,[q]}} \mathbf{\tilde A}^{(1)}_{n_1}
  \Restr{\Lambda_{[\ell]}} \mathbf{U}^{(1)}_k}
  { + \Bignorm{\mathbf{\tilde A}^{(1)}_{n_1}
  \sum_{\ell = q}^J \Restr{\Lambda_{[\ell]}} \mathbf{U}^{(1)}_k} },
$$
{On the one hand, by \eqref{eq:tildeA}, we obtain for $\ell=0,\ldots,q-1$, 
$$\Restr{\hat\Lambda_{n_1,[q]}} \mathbf{\tilde
A}^{(1)}_{n_1} \Restr{\Lambda_{[\ell]}} = \Restr{\hat\Lambda_{n_1,[q]}} (
\mathbf{A}^{(1)}_{n_1, J-\ell} - \mathbf{A}^{(1)}_{n_1, {q-\ell-1}})
\Restr{\Lambda_{[\ell]}}, $$ 
and hence
\begin{align*}
 \norm{\Restr{\hat\Lambda_{n_1,[q]}} \mathbf{\tilde A}^{(1)}_{n_1}
  \Restr{\Lambda_{[\ell]}}\mathbf{U}^{(1)}_k}
 &\leq  \bigl(
\norm{\mathbf{A}^{(1)}_{n_1} - \mathbf{A}^{(1)}_{n_1, J-\ell}} +
\norm{\mathbf{A}^{(1)}_{n_1} - \mathbf{A}^{(1)}_{n_1, q-\ell - 1}} \bigr)
    \norm{\Restr{\Lambda_{[\ell]}}
    \mathbf{U}^{(1)}_k} \\
    &\leq   \bigl( \hat\beta^{(1)}_{J-\ell} 2^{-s(J-\ell)} +
    \hat\beta^{(1)}_{q-\ell-1} 2^{-s(q-\ell -1)} \bigr)
    \norm{\Restr{\Lambda_{[\ell]}}
    \mathbf{U}^{(1)}_k} \\
    &\leq  \gamma_\ell  2^{-s(q-\ell -1)} 
    \norm{\Restr{\Lambda_{[\ell]}}
    \mathbf{U}^{(1)}_k}
\end{align*}
where we abbreviate $\gamma_\ell :=
\hat\beta^{(1)}_{J-\ell}+\hat\beta^{(1)}_{q-\ell-1}$}.
On the other hand,
\begin{align*}
  \Bignorm{\mathbf{\tilde A}^{(1)}_{n_1}
  \sum_{\ell = q}^J \Restr{\Lambda_{[\ell]}} \mathbf{U}^{(1)}_k} 
  &\leq \Bignorm{\sum_{\ell = q}^J \bigl[(\mathbf{A}^{(1)}_{n_1,J-\ell} - \mathbf{A}^{(1)}_{n_1}) + \mathbf{A}^{(1)}_{n_1}\bigr]\Restr{\Lambda_{[\ell]}} \mathbf{U}^{(1)}_k}   \\
   &\leq \sum_{\ell = q}^J {\hat\beta}^{(1)}_{J-\ell} 2^{-s(J-\ell)} \norm{\Restr{\Lambda_{[\ell]}} \mathbf{U}^{(1)}_k} + \norm{\mathbf{A}^{(1)}_{n_1}} \bignorm{\Restr{\bigcup_{j\geq q}\Lambda_{[j]} } \mathbf{U}^{(1)}_k}  \,.
\end{align*}
{Combining these estimates and applying the Cauchy-Schwarz inequality, yields
\begin{multline*}
 \norm{\Restr{\hat\Lambda_{n_1,[q]}} \mathbf{\tilde A}^{(1)}_{n_1}
  \mathbf{U}^{(1)}_k}  \\
    \leq \bigl( 3 \norm{\hat\beta^{(1)}}_{\spl{1}}+ \norm{\mathbf{A}^{(1)}_{n_1}} \bigr)^\frac12 
    \Bigl( \sum_{\ell=0}^{q-1} \gamma_\ell 2^{-2s(q-\ell-1)}  \norm{\Restr{\Lambda_{[\ell]}}
    \mathbf{U}^{(1)}_k}^2   \\
      + \sum_{\ell=q}^J \hat\beta^{(1)}_{J-\ell} 2^{-2s(J-\ell)}  \norm{\Restr{\Lambda_{[\ell]}}
    \mathbf{U}^{(1)}_k}^2  
       + \norm{\mathbf{A}^{(1)}_{n_1}} \bignorm{\Restr{\bigcup_{j\geq q}\Lambda_{[j]}} \mathbf{U}^{(1)}_k}^2    \Bigr)^\frac12  \,.
\end{multline*}
Again using \eqref{contract-mode}, as in \eqref{eq:tensor_scompr_contrest}, leads to
\begin{multline*}
 \norm{\Restr{\hat\Lambda_{n_1,[q]}} 
    \pi^{(1)}(\mathbf{\tilde A}^{(1)}_{n_1} \otimes \id_2 \cdots\otimes \id_m
 \mathbf{v})}^2   \\
    \leq 3C^{(1)}_{\hat\beta} 
   \Bigl( \sum_{\ell=0}^{q-1} \gamma_\ell 2^{-2s (q -\ell-1)} \norm{\Restr{\Lambda_{[\ell]}}
           \pi^{(1)}(\bv)}^2  \\
             + \sum_{\ell=q}^J \hat\beta^{(1)}_{J-\ell} 2^{-2s(J-\ell)} \norm{\Restr{\Lambda_{[\ell]}} \pi^{(1)}(\bv)}^2
           +  \norm{\mathbf{A}^{(1)}_{n_1}} \bignorm{\Restr{\bigcup_{j\geq q}\Lambda_{[j]}} \pi^{(1)}(\bv) }^2    \Bigr) \,,
\end{multline*}
and thus, since $\norm{\Restr{\Lambda_{[\ell]}} \pi^{(1)}(\bv)} \leq 2^{-s(\ell-1)} \norm{\pi^{(1)}(\bv)}_\As$ and $\bignorm{\Restr{\bigcup_{j\geq q}\Lambda_{[j]}}  \pi^{(1)}(\bv)} \leq 2^{-s(q-1)} \norm{\pi^{(1)}(\bv)}_\As$, for $q=0,\ldots,J$ we arrive at}
\begin{equation}
\label{eq:tensor_scompr_partdecay}
  \norm{\Restr{\hat\Lambda_{n_1,[q]}} \pi^{(1)}(\mathbf{\tilde A}^{(1)}_{n_1} 
  \otimes \id \cdots\otimes \id \mathbf{v})}
    \leq { 2^{-s q} \, 2^{2(s+1)} C^{(1)}_{\hat\beta} \norm{
    \pi^{(1)}(\mathbf{v})}_{\Acal^s} \,. }
\end{equation}
Recall that the sets $\hat\Lambda_{n_1,[q]}$ are disjoint with
$\#\hat\Lambda_{n_1,[q]} \leq \norm{\hat\alpha^{(1)}}_{\spl{1}} 2^q$.
By definition of the
$\As$-quasi-norm, we have 
\begin{multline*}
  \norm{\pi^{(1)}(\mathbf{\tilde A}^{(1)}_{n_1} 
  \otimes \id \cdots\otimes \id \mathbf{v})}_{\Acal^s}  \\
   \leq 
  { \sup_{q\in\N_0} \Big(\sum_{j<q} \#\hat\Lambda_{n_1,[j]} + 1 \Big)^s } \sum_{j\geq q}
    \norm{\Restr{\hat\Lambda_{n_1,[j]}} \pi^{(1)}(\mathbf{\tilde A}^{(1)}_{n_1} 
  \otimes \id \cdots\otimes \id \mathbf{v})}.
\end{multline*}
Hence from \eqref{eq:tensor_scompr_partdecay} we infer   
\begin{equation*}
  \norm{\pi^{(1)}(\mathbf{\tilde A}^{(1)}_{n_1} 
  \otimes \id \cdots\otimes \id \mathbf{v})}_{\Acal^s} \leq
  2^{3s+2}(2^s-1)^{-1}\norm{\hat\alpha^{(1)}}^s_{\spl{1}}
  C^{(1)}_{\hat\beta}  \, \norm{
  \pi^{(1)}(\mathbf{v})}_{\Acal^s} \,.
\end{equation*}
Since by the first inequality in \eqref{eq:tensor_scompr_contrest}, we have  
\begin{equation*}
  \norm{\pi^{(1)}({ \mathbf{\tilde A}_\eta } \bv)}_{ \As} \leq 
    C^{(1)}_{\mathbf{\tilde A}} R_1^s \sum_{n_1=1}^{R_1}
      \norm{\pi^{(1)}(\mathbf{\tilde A}^{(1)}_{n_1} \otimes \id
      \cdots\otimes \id \mathbf{v})}_{ \As}  \,,
\end{equation*}
we arrive at \eqref{eq:tensor_apply_sparsity}.  
\end{proof}

\begin{remark}\label{rmrk:equicompr_improvement}
The estimate \eqref{eq:tensor_apply_sparsity} corresponds to the worst case that
the sets $\hat\Lambda_{n_i,[q]}$ constructed in the proof are disjoint for
different $n_i$. If, on the contrary, the $\{\mathbf{A}^{(i)}_{n_i}\}_{n_i}$ are
equi-$s^*$-compressible, and hence these sets are the same for all $n_i$, we can
combine \eqref{eq:tensor_scompr_contrest} directly with \eqref{eq:tensor_scompr_partdecay} 
to obtain instead 
\begin{equation*} 
  \norm{\pi^{(i)}({ \mathbf{\tilde A}_\eta } \mathbf{v})}_{\Acal^s} 
    \lesssim
      {\bigl(C^{(i)}_{\hat\alpha}\bigr)}^s 
      C^{(i)}_{\hat\beta} C^{(i)}_{\mathbf{\tilde A}} R_i
      \norm{\pi^{(i)}(\mathbf{v})}_{\Acal^s} \,,  \end{equation*}
i.e., an improvement by a factor $R_i^s$. Similarly, in this case we also obtain
that by a modification of \eqref{eq:tensor_apply_supportest}, the estimate
\eqref{eq:tensor_apply_support} can be replaced by
\begin{equation*}
\# \supp_i ( {\mathbf{\tilde A}_\eta } \mathbf{v})  \lesssim
  C_{\hat\alpha}^{(i)} 
  \eta^{-\frac{1}{s}} \Bigl( \sum_{j=1}^m
  C^{(j)}_{\hat\beta} C^{(j)}_{\mathbf{\tilde A}} R_j
  \norm{\pi^{(j)}(\mathbf{v})}_{\Acal^s}  \Bigr)^{\frac{1}{s}} \,.
\end{equation*}
\end{remark}

\begin{remark}\label{rmrk:opapprox_tucker_ops_est}
If $r_i := \rank_i(\mathbf{v}) < \infty$, the number $\ops({ \mathbf{\tilde
A}_\eta } \mathbf{v})$ of arithmetic operations for evaluating ${ \mathbf{\tilde A}_\eta }\mathbf{v}$ as in {Theorem \ref{lmm:tensor_scompr_est}}, for a given HOSVD of
$\mathbf{v}$, can be estimated by
\begin{equation}\label{eq:tucker_opapprox_work}
  \ops({ \mathbf{\tilde A}_\eta } \mathbf{v}) \lesssim \prod_{i=1}^m
  R_i r_i + \eta^{-\frac{1}{s}} 
 \sum_{i=1}^m C_{\hat\alpha}^{(i)} 
 R_i r_i \Bigl( \sum_{j=1}^m C^{(j)}_{\hat\beta} C^{(j)}_{\mathbf{\tilde A}} R_j
 \norm{\pi^{(j)}(\mathbf{v})}_{\Acal^s} \Bigr)^{\frac{1}{s}}   
\end{equation}
with a constant independent of $\mathbf{v}$, $\eta$, and $m$.
\end{remark}

\begin{proof}
The sorting of entries of $\pi^{(i)}(\mathbf{v})$ required for
obtaining the index sets of best $2^j$-term approximations in Theorem
\ref{lmm:tensor_scompr_est} can be replaced by an \emph{approximate sorting} by
binary binning, requiring only $\#\supp_i(\mathbf{v})$ operations, as suggested
in \cite{Metselaar:02,Barinka:05}.
This only leads to a change in the generic constants in the resulting estimates. 

Let $\mathbf{v}$ have the HOSVD $\mathbf{v} = \sum_k a_k \UU_k$, then, on the one
hand, we need to form the core tensor for the result, which takes $\prod_{i=1}^m
R_i r_i$ operations, and evaluate the approximations to $\mathbf{\tilde
A}^{(i)}_{n_i} \mathbf{U}^{(i)}_{k_i}$ for $n_i=1,\ldots, R_i$ and
$k_i=1,\ldots,r_i$.
The number of operations for each of these terms can be estimated as in
\cite{Cohen:01}, which leads to \eqref{eq:tucker_opapprox_work}.
\end{proof}

As the first term on the right hand side of \eqref{eq:tucker_opapprox_work}
shows, the Tucker format still suffers from the curse of dimensionality due
to the complexity of the core tensors.

\subsubsection{Hierarchical Tucker Format}

For applying operators to coefficient sequences given in the 
hierarchical Tucker format, we need a representation of these operators 
with analogous hierarchical structure. That is, in the representation
\begin{equation}\label{eq:tucker_operator_rep}
  \mathbf{A} = \sum_{\kk{n}\in \KK{m}(\rr{R})} c_\kk{n} \bigotimes_{i=1}^m
\mathbf{A}^{(i)}_{n_i} \,,
\end{equation}
for the finitely supported tensor $\mathbf{c} = (c_\kk{n})\in \spl{2}(\N^m)$ we
  need in addition a hierarchical decomposition
\begin{equation}\label{eq:htucker_operator_core}
 \mathbf{c} = \hsum{\hdimtree{m}}(\{ \mathbf{C}^{(\alpha,\nu)}\colon \alpha\in
 \nonleaf{m},\, \nu\in\N\}) \,,
\end{equation}
see \eref{def:hsum}, \eref{eq:a-rep}.
Here for $\alpha \in \nonleaf{m}$, we
extend the definition of representation ranks in the representation of
$\mathbf{A}$ to each $\alpha\in\hdimtree{m}$ by setting
$R_{\{i\}} := R_i$ and 
\begin{equation}\label{eq:hier_operator_ranks}
 R_{\alpha} := \#\{\nu \colon \mathbf{C}^{(\alpha,\nu)}\neq 0 \}\,.
\end{equation}
In what follows, we assume $ \max_{\alpha\in \hdimtree{m}} R_\alpha <  \infty$
and $R_{\hroot{m}} = 1$.
According to Theorem \ref{thm:hiersvd_properties},  $\mathbf{v} \in \spl{2}(\nabla^d)$ 
has a representation 
$$  
\mathbf{v} = \sum_{\kk{k}\in\N^m} a_\kk{k} \UU_\kk{k}
\,,\quad \mathbf{a} = \hsum{\hdimtree{m}}(\{ \mathbf{B}^{(\alpha,k)}\}).
$$ 
If
$\max_{\alpha\in\hdimtree{m}} \rank_\alpha(\mathbf{v})< \infty$, then
${ \mathbf{\tilde A}_\eta } \mathbf{v}$ can be represented in the form
\eqref{eq:tensor_apply_Avrep}, with $\mathbf{d}$ again admitting
a hierarchical representation in terms of matrices 
$\mathbf{D}^{(\alpha,(\nu,k)))}$ on $\N^2\times \N^2$
with entries
$$  
\mathbf{D}^{(\alpha,(\nu,k)))}_{((\mu_1,l_1),(\mu_2,l_2))} :=
\mathbf{C}^{(\alpha,\nu)}_{(\mu_1,\mu_2)} \mathbf{B}^{(\alpha,k)}_{l_1,l_2} \,.
$$
That is, as in   \eqref{def:hsum}, we have an explicit representation 
$$ 
\mathbf{d} =
\hsum{\hdimtree{m}}\bigl(\{ \mathbf{D}^{(\alpha,(\nu,k)))}
\colon \alpha \in \nonleaf{m},\,
\nu=1,\ldots,R_\alpha,\,k=1,\ldots,\rank_\alpha(\bv) \}\bigr) 
$$
in \eqref{eq:tensor_apply_Avrep}, where the indices in $k\in\N$ are replaced
 in the definition of $\hsum{\hdimtree{m}}(\cdot)$ by the indices $(\nu,k) \in  \N^2$.

\begin{example}
To give a specific example, we consider an operator of the form
$$  \bA_1 \otimes \id_2 \otimes\cdots \otimes \id_m 
  \,+\, \ldots \,+\, \id_1 \otimes \cdots \otimes \id_{m-1} \otimes \bA_m
$$
in the hierarchical format with dimension tree
$$  \hdimtree{m} = \bigl\{ \hroot{m}, \{1\}, \{2,\ldots,m\}, \{2\}, \{3,\ldots,m\},\ldots,\{m\} \bigr\}  $$
as in Example \ref{ex:tt_ex4}. Setting $\bA_{1}^{(i)} = \id_i$,  $\bA^{(i)}_2=\bA_i$ for $i=1,\ldots, m$,
we obtain a representation as in \eqref{eq:htucker_operator_core} with
$R_\alpha = 2$ for $\alpha \neq \hroot{m}$, and 
$$  \mathbf{C}^{(\hroot{m},1)} = \begin{pmatrix} 0 & 1 \\ 1 & 0 \end{pmatrix}\,,\quad
  \mathbf{C}^{(\alpha,1)} = \begin{pmatrix} 1 & 0 \\ 0 & 0 \end{pmatrix}\,,\;
   \mathbf{C}^{(\alpha,2)} = \begin{pmatrix} 0 & 1 \\ 1 & 0 \end{pmatrix}\,,\; \alpha\in \nonleaf{m}\setminus\{\hroot{m}\}\,. $$
\end{example}

The estimates in Theorem \ref{lmm:tensor_scompr_est} now directly carry over
to the hierarchical Tucker format, where as the only modification,
\eqref{eq:tensor_apply_ranks} is replaced by 
\begin{equation*}
  \rank_\alpha({ \mathbf{\tilde A}_\eta } \mathbf{v}) \leq R_\alpha 
     \rank_\alpha(\mathbf{v}) \,.
\end{equation*}
 
\begin{remark}\label{rmrk:opapprox_hier_ops_est}
If  for a given $\Hcal$SVD of $\mathbf{v}$, 
$r_\alpha := \rank_\alpha(\mathbf{v}) < \infty$, $\alpha \in \nonleaf{m}$, the number
$\ops({ \mathbf{\tilde A}_\eta } \mathbf{v})$ of arithmetic operations 
for evaluating ${ \mathbf{\tilde A}_\eta } \mathbf{v}$ as in Theorem
\ref{lmm:tensor_scompr_est} can be estimated by
\begin{multline}
\label{eq:hier_opapprox_work}
  \ops({ \mathbf{\tilde A}_\eta } \mathbf{v}) \lesssim
  \sum_{\alpha\in\nonleaf{m}}
      R_\alpha r_\alpha \prod_{q=1}^2 R_{\child{q}(\alpha)}
      r_{\child{q}(\alpha)}   \\
   +  \eta^{-\frac{1}{s}} \sum_{i=1}^m C_{\hat\alpha}^{(i)} 
 R_i r_i \Bigl( \sum_{j=1}^m C^{(j)}_{\hat\beta} C^{(j)}_{\mathbf{\tilde A}} R_j
 \norm{\pi^{(j)}(\mathbf{v})}_{\Acal^s} \Bigr)^{\frac{1}{s}}   \,,
\end{multline}
with a constant independent of $\mathbf{v}$, $\eta$, and $m$.
\end{remark}

Comparing the first summand on the right hand side of \eqref{eq:hier_opapprox_work}
to the one in \eqref{eq:tucker_opapprox_work}, we observe a substantial reduction
in complexity regarding the dependence on $m$ (and hence $d$).

\subsection{Low-Rank Approximations of
Operators}\label{sec:operator_lowrank_approx}

In many applications of interest, the involved  operators do not have an
explicit low-rank form, but   there exist
efficient approximations to these operators in low-rank representation. 

Such a case can be handled by replacing a given operator $\mathbf{A}$ by
such an approximation and then applying the construction for operators
given in low-rank form as in the previous subsections.

To make this precise,   we assume that for a suitable growth sequence $\gamma_{\mathbf{A}}$,
 there exist approximations $\mathbf{A}_N$ for $N\in\N$ with
\begin{equation}
\label{eq:operator_lowrank_approx}
  \sup_N \gamma_{\mathbf{A}}(N) \norm{\mathbf{A} - \mathbf{A}_N}
   =: M_{\mathbf{A}} < \infty \,,
\end{equation}
where each $\mathbf{A}_N$ has a representation \eqref{eq:tucker_operator_rep}
with $R_i \leq N$. Moreover,    in the case of the hierarchical Tucker format we assume in addition
that $R_\alpha \leq N$ with $R_\alpha$ as in
\eqref{eq:hier_operator_ranks}.

Moreover, we need to quantify the approximability
of the $\mathbf{A}_N$.  We assume that all tensor factors arising in each 
$\mathbf{A}_N$ are $s^*$-compressible, and that for the
approximations ${\mathbf{\tilde A}_{N,\eta}}$ of $\mathbf{A}_N$ according to Lemma
\ref{lmm:tensor_apply_est} and Theorem \ref{lmm:tensor_scompr_est} -- with constants
$C^{(i)}_{\mathbf{\tilde A}_N}$, $C^{(i)}_{\hat\alpha_N}$, $C^{(i)}_{\hat\beta_N}$ in Theorem
\ref{lmm:tensor_scompr_est} -- we have
\begin{equation}
\label{eq:opapprox_constant}
C_{\mathbf{A},\mathbf{\tilde A}} 
:= \sup_N  \bigl( \max_i C^{(i)}_{\hat\alpha_N} \bigr)^s \bigl( \max_i
C^{(i)}_{\mathbf{\tilde A}_N} C^{(i)}_{\hat\beta_N} \bigr)  <\infty \,.
\end{equation}
Under these conditions, we shall say that
the approximations $\bA_N$ to $\mathbf{A}$ are \emph{uniformly $s^*$-compressible}.

Under this assumption, the estimates for $\ops({ \mathbf{\tilde A}_\eta } \mathbf{v})$
obtained in Remarks \ref{rmrk:opapprox_tucker_ops_est} and \ref{rmrk:opapprox_hier_ops_est}
carry over to the present setting with additional low-rank approximation of the
operator.
Here for given $\eta > 0$ and $\mathbf{v}$, we choose {$N_\eta$ such that
$\norm{\mathbf{A} - \mathbf{A}_{N_\eta}} \leq \eta/2$ and $\mathbf{\tilde A}_{N_\eta,\eta}$ such that
$\norm{\mathbf{A}_{N_\eta} \mathbf{v} - \mathbf{\tilde A}_{N_\eta,\eta}  \mathbf{v}} \leq \eta/2$,}
which in summary yields for the Tucker format
\begin{multline}\label{eq:tucker_opapprox_work_2}
 \ops({\mathbf{\tilde A}_{N_\eta,\eta}}\mathbf{v}) \lesssim
  \bigl(\gamma^{-1}_\bA (2 M_{\mathbf{A}} / \eta)\bigr)^m 
     \prod_{i=1}^m \rank_i(\mathbf{v})  \\
 + C_{\mathbf{A},\mathbf{\tilde A}}^{\frac{1}{s}} \eta^{-\frac{1}{s}}
 \bigl( \gamma^{-1}_\bA (2 M_{\mathbf{A}} / \eta) \bigr)^{1 + s^{-1}}
 \sum_{i=1}^m \rank_i(\mathbf{v}) 
 \Bigl( 
  \sum_{j=1}^m \norm{\pi^{(j)}(\mathbf{v})}_{\Acal^s} 
 \Bigr)^{\frac{1}{s}} \,,
\end{multline}
and for the hierarchical Tucker format
\begin{multline}\label{eq:hier_opapprox_work_2}
 \ops({\mathbf{\tilde A}_{N_\eta,\eta}}\mathbf{v}) \lesssim
  \bigl(\gamma^{-1}_\bA (2 M_{\mathbf{A}} / \eta)\bigr)^3
  \sum_{\alpha\in\nonleaf{m}} \rank_\alpha(\mathbf{v}) 
  \prod_{q=1}^2 \rank_{\child{q}(\alpha)}
     (\mathbf{v}) \\
 +C_{\mathbf{A},\mathbf{\tilde A}}^{\frac{1}{s}} \eta^{-\frac{1}{s}}
 \bigl( \gamma^{-1}_\bA (2 M_{\mathbf{A}} / \eta) \bigr)^{1 + s^{-1}}
 \sum_{i=1}^m \rank_i(\mathbf{v}) \Bigl( \sum_{j=1}^m
 \norm{\pi^{(j)}(\mathbf{v})}_{\Acal^s} \Bigr)^{\frac{1}{s}}  \,.
\end{multline}
Note again the reduction in complexity in the first term of \eqref{eq:hier_opapprox_work_2}
over \eqref{eq:tucker_opapprox_work_2}.

\section{An Adaptive Iterative Scheme}\label{sect:scheme}
\subsection{Formulation and Basic Convergence Properties}\label{ssect:formulation}

\newcommand{\elliptA}{{\lambda_{\mathbf{A}}}}
\newcommand{\boundA}{{\Lambda_{\mathbf{A}}}}

We have now all prerequisites in place to formulate an adaptive method whose basic structure resembles the one
introduced in \cite{Cohen:02} for linear operator equations 
$\mathbf{A} \mathbf{u} = \mathbf{f}$, where $\mathbf{f}\in\spl{2}$ and   $\mathbf{A}$ is bounded and elliptic on $\spl{2}$,
that is, 
\begin{equation*}
  \langle \mathbf{A} \mathbf{v},\mathbf{v}\rangle_{\spl{2}} \geq 
   \elliptA \norm{\mathbf{v}}^2_{\spl{2}}\,,\quad
  \norm{\mathbf{A}\mathbf{v}}_{\spl{2}}
  \leq \boundA \norm{\mathbf{v}}_{\spl{2}} 
\end{equation*}
holds for fixed constants $\elliptA, \boundA > 0$.
The scheme can be regarded as a perturbation of
a simple Richardson iteration,
\begin{equation}
  \mathbf{v}_{i+1} := \mathbf{v}_{i} - \omega (\mathbf{A} \mathbf{v}_{i} -
  \mathbf{f}) \,,
\end{equation}
which  applies to both symmetric and nonsymmetric elliptic $\mathbf{A}$. 
In both cases, the parameter $\omega>0$ can be chosen such that
$\norm{\id - \omega\mathbf{A}} < 1$.

Based on the developments in the previous sections,
we have at hand numerically realizable
procedures $\apply$, $\rhs$, $\recompress$, and $\coarsen$, 
which for finitely supported
$\mathbf{v}$ and any tolerance $\eta > 0$ satisfy
\begin{equation}\label{eq:procedures_error}
 \begin{aligned}
  &\norm{\mathbf{A}\mathbf{v} - \apply(\mathbf{v}; \eta)}  \leq \eta\,, &
  &\norm{\mathbf{f} - \rhs(\eta)} \leq \eta \,,  \\
  &\norm{\mathbf{v} - \recompress(\mathbf{v}; \eta)} \leq \eta  \,,\quad & 
  &\norm{\mathbf{v} - \coarsen(\mathbf{v}; \eta)} \leq \eta \,.
 \end{aligned}
\end{equation}
Specifications of the complexities of these procedures
will be summarized in \S\ref{ssect:complexity}.
The adaptive scheme that we analyze in what follows is given in Algorithm \ref{alg:tensor_opeq_solve}.

\begin{algorithm}[!ht]
\caption{$\quad \mathbf{u}_\varepsilon = \solve(\mathbf{A},
\mathbf{f}; \varepsilon)$} \begin{algorithmic}[1]
\Require \begin{minipage}{12cm}$\omega >0$ and $\rho\in(0,1)$ such that
$\norm{\id - \omega\mathbf{A}} \leq \rho$,\\
$\theta, \kappa_1, \kappa_2, \kappa_3 \in (0,1)$ with $\kappa_1 +
\kappa_2 + \kappa_3 \leq 1$, and $\beta \geq 0$.\end{minipage}
\Ensure $\mathbf{u}_\varepsilon$ satisfying $\norm{\mathbf{u}_\varepsilon -
\mathbf{u}}\leq \varepsilon$.
\State $\mathbf{u}_0 := 0$, $\delta := \lambda_\mathbf{A}^{-1}
\norm{\mathbf{f}}$ 
\State $k:= 0$, $J := \min\{ j \colon \rho^j (1 + (\omega + \beta) j) \leq
\kappa_1\theta\}$\label{alg:jchoice}
\While{$\theta^k \delta > \varepsilon$}
\State $\mathbf{w}_0:=\mathbf{u}_k$, $j \gets 0$
\Repeat
\State $\eta_j := \rho^{j+1} \theta^k\delta$
\State $\mathbf{r}_j := \apply( \mathbf{w}_j ; \frac{1}{2}\eta_j)
- \rhs(\frac{1}{2}\eta_j)$ 
\State $\mathbf{w}_{j+1} := \recompress(\mathbf{w}_j - \omega \mathbf{r}_j ;
\beta \eta_j)$ \label{alg:tensor_solve_innerrecomp}
\State $j\gets j+1$.
\Until{($j \geq J \quad \vee \quad \lambda_{\mathbf{A}}^{-1} \rho
\norm{\mathbf{r}_{j-1}} + (\lambda_{\mathbf{A}}^{-1} \rho  + \omega + \beta)
\eta_{j-1} \leq \kappa_1\theta^{k+1} \delta$)} \label{alg:cddtwo_looptermination_line}
\State $\mathbf{u}_{k+1} := \coarsen\bigl(\recompress(\mathbf{w}_j;
\kappa_2 \theta^{k+1} \delta) ; \kappa_3\theta^{k+1}
\delta\bigr)$\label{alg:cddtwo_coarsen_line} 
\State $k \gets k+1$
\EndWhile
\State $\mathbf{u}_\varepsilon := \mathbf{u}_k$ 
\end{algorithmic}
\label{alg:tensor_opeq_solve}
\end{algorithm}

\begin{proposition}\label{prp:tensor_iteration_opeq_convergence}
Let the step size $\omega >0$ in Algorithm \ref{alg:tensor_opeq_solve} satisfy
$\norm{\id - \omega\mathbf{A}} \leq \rho < 1$.
Then the intermediate steps $\mathbf{u}_{k}$ of Algorithm
\ref{alg:tensor_opeq_solve} satisfy $\norm{\mathbf{u}_k - \mathbf{u}} \leq
\theta^k\delta$, and in particular,
the output $\mathbf{u}_\varepsilon$ of Algorithm \ref{alg:tensor_opeq_solve}
satisfies $\norm{\mathbf{u}_\varepsilon - \mathbf{u}} \leq \varepsilon$.
\end{proposition}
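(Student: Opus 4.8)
The plan is to show by induction on $k$ that $\norm{\mathbf{u}_k - \mathbf{u}} \leq \theta^k\delta$; the claim for $\mathbf{u}_\varepsilon$ then follows since the outer \textbf{while}-loop terminates only once $\theta^k\delta \leq \varepsilon$, at which point $\mathbf{u}_\varepsilon = \mathbf{u}_k$. The base case $k=0$ is immediate: $\mathbf{u}_0 = 0$, so $\norm{\mathbf{u}_0 - \mathbf{u}} = \norm{\mathbf{u}} = \norm{\mathbf{A}^{-1}\mathbf{f}} \leq \lambda_\mathbf{A}^{-1}\norm{\mathbf{f}} = \delta$, using ellipticity of $\mathbf{A}$.

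For the inductive step, assume $\norm{\mathbf{u}_k - \mathbf{u}} \leq \theta^k\delta =: \varepsilon_k$. First I would analyze the inner \textbf{repeat}-loop. Starting from $\mathbf{w}_0 = \mathbf{u}_k$, each pass produces $\mathbf{w}_{j+1} = \recompress(\mathbf{w}_j - \omega \mathbf{r}_j; \beta\eta_j)$ where $\mathbf{r}_j$ approximates $\mathbf{A}\mathbf{w}_j - \mathbf{f}$ to within $\eta_j = \rho^{j+1}\varepsilon_k$ (splitting the tolerance as $\tfrac12\eta_j$ for each of $\apply$ and $\rhs$). The key estimate is a perturbed-Richardson recursion: writing $\mathbf{e}_j := \norm{\mathbf{w}_j - \mathbf{u}}$, we have
\begin{align*}
\mathbf{e}_{j+1} &\leq \norm{\recompress(\mathbf{w}_j - \omega\mathbf{r}_j;\beta\eta_j) - (\mathbf{w}_j - \omega\mathbf{r}_j)} + \norm{(\mathbf{w}_j - \omega\mathbf{r}_j) - \mathbf{u}} \\
&\leq \beta\eta_j + \norm{(\id - \omega\mathbf{A})(\mathbf{w}_j - \mathbf{u})} + \omega\norm{\mathbf{A}\mathbf{w}_j - \mathbf{f} - \mathbf{r}_j} \\
&\leq \rho\, \mathbf{e}_j + (\omega + \beta)\eta_j,
\end{align*}
using $\mathbf{A}\mathbf{u} = \mathbf{f}$ and $\norm{\id - \omega\mathbf{A}} \leq \rho$. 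Unrolling this from $\mathbf{e}_0 \leq \varepsilon_k$ and inserting $\eta_j = \rho^{j+1}\varepsilon_k$ gives $\mathbf{e}_j \leq \rho^j\varepsilon_k + j(\omega+\beta)\rho^j\varepsilon_k = \rho^j(1 + (\omega+\beta)j)\varepsilon_k$. By the choice of $J$ in line \ref{alg:jchoice}, $\rho^J(1 + (\omega+\beta)J) \leq \kappa_1\theta$, so if the loop runs to $j = J$ we obtain $\mathbf{e}_J \leq \kappa_1\theta\varepsilon_k = \kappa_1\theta^{k+1}\delta$. I would then check the alternative exit condition on line \ref{alg:cddtwo_looptermination_line}: when it triggers at some $j < J$, the a posteriori bound $\norm{\mathbf{w}_j - \mathbf{u}} \leq \lambda_\mathbf{A}^{-1}\norm{\mathbf{A}\mathbf{w}_j - \mathbf{f}} \leq \lambda_\mathbf{A}^{-1}(\norm{\mathbf{r}_{j-1}} + \norm{\mathbf{A}\mathbf{w}_j - \mathbf{f} - \mathbf{r}_{j-1}})$ combined with $\norm{\mathbf{w}_j - \mathbf{w}_{j-1}} \leq \ldots$ and the residual tolerance $\eta_{j-1}$ must be massaged into exactly the quantity $\lambda_\mathbf{A}^{-1}\rho\norm{\mathbf{r}_{j-1}} + (\lambda_\mathbf{A}^{-1}\rho + \omega + \beta)\eta_{j-1}$ appearing in the termination test, yielding again $\mathbf{e}_j \leq \kappa_1\theta^{k+1}\delta$. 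Either way, the inner loop outputs $\mathbf{w}_j$ with $\norm{\mathbf{w}_j - \mathbf{u}} \leq \kappa_1\theta^{k+1}\delta$.

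Finally, line \ref{alg:cddtwo_coarsen_line} sets $\mathbf{u}_{k+1} = \coarsen(\recompress(\mathbf{w}_j; \kappa_2\theta^{k+1}\delta); \kappa_3\theta^{k+1}\delta)$, so by the triangle inequality and the error bounds \eqref{eq:procedures_error} for $\recompress$ and $\coarsen$,
\[
\norm{\mathbf{u}_{k+1} - \mathbf{u}} \leq \norm{\mathbf{w}_j - \mathbf{u}} + \kappa_2\theta^{k+1}\delta + \kappa_3\theta^{k+1}\delta \leq (\kappa_1 + \kappa_2 + \kappa_3)\theta^{k+1}\delta \leq \theta^{k+1}\delta,
\]
using $\kappa_1 + \kappa_2 + \kappa_3 \leq 1$. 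This closes the induction. The main obstacle I anticipate is the bookkeeping around the early exit condition on line \ref{alg:cddtwo_looptermination_line}: one must verify that the specific linear combination of $\norm{\mathbf{r}_{j-1}}$ and $\eta_{j-1}$ in the test is genuinely an upper bound for $\norm{\mathbf{w}_j - \mathbf{u}}$ (accounting for the extra $\recompress$ step that takes $\mathbf{w}_j$ away from $\mathbf{w}_{j-1} - \omega\mathbf{r}_{j-1}$, contributing the $\beta\eta_{j-1}$ term, and for translating a residual bound into an error bound via $\lambda_\mathbf{A}^{-1}$), and that it is consistent with the fixed-$J$ bound so that the loop always terminates with the desired accuracy. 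Everything else is a routine geometric-series estimate.
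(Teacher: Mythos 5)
Your overall strategy coincides with the paper's: the perturbed-Richardson recursion $\norm{\mathbf{w}_{j+1}-\mathbf{u}} \leq \rho\norm{\mathbf{w}_j-\mathbf{u}} + (\omega+\beta)\eta_j$, its unrolling to $\rho^{j}\bigl(1+j(\omega+\beta)\bigr)\theta^k\delta$ using $\eta_j=\rho^{j+1}\theta^k\delta$, the exit at $j=J$ by the definition of $J$, and the final triangle inequality with the tolerances $\kappa_2\theta^{k+1}\delta$, $\kappa_3\theta^{k+1}\delta$ and $\kappa_1+\kappa_2+\kappa_3\leq 1$ are all exactly the paper's steps and are correct; making the induction on $k$ and the base case $\norm{\mathbf{u}}\leq\lambda_{\mathbf{A}}^{-1}\norm{\mathbf{f}}=\delta$ explicit is a harmless addition.

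The one place where your sketch points in the wrong direction is the early-exit case of line \ref{alg:cddtwo_looptermination_line}, which you yourself flag as the open obstacle. You propose to start from $\norm{\mathbf{w}_j-\mathbf{u}}\leq\lambda_{\mathbf{A}}^{-1}\norm{\mathbf{A}\mathbf{w}_j-\mathbf{f}}$ and then compare $\mathbf{A}\mathbf{w}_j-\mathbf{f}$ with $\mathbf{r}_{j-1}$; but $\mathbf{r}_{j-1}$ is an approximate residual for $\mathbf{w}_{j-1}$, so this route forces you to estimate $\norm{\mathbf{A}(\mathbf{w}_j-\mathbf{w}_{j-1})}$, which brings in the upper bound $\Lambda_{\mathbf{A}}$ (absent from the test) and yields constants that do not reduce to $\lambda_{\mathbf{A}}^{-1}\rho\norm{\mathbf{r}_{j-1}}+(\lambda_{\mathbf{A}}^{-1}\rho+\omega+\beta)\eta_{j-1}$. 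The argument that produces exactly the tested quantity applies the a posteriori bound to the \emph{previous} iterate and then uses one more step of your own recursion: by ellipticity, $\norm{\mathbf{w}_{j-1}-\mathbf{u}}\leq\lambda_{\mathbf{A}}^{-1}\norm{\mathbf{A}\mathbf{w}_{j-1}-\mathbf{f}}\leq\lambda_{\mathbf{A}}^{-1}\bigl(\norm{\mathbf{r}_{j-1}}+\eta_{j-1}\bigr)$, since the two tolerances $\tfrac12\eta_{j-1}$ for $\apply$ and $\rhs$ in \eqref{eq:procedures_error} add up to $\eta_{j-1}$; inserting this into $\norm{\mathbf{w}_j-\mathbf{u}}\leq\rho\norm{\mathbf{w}_{j-1}-\mathbf{u}}+(\omega+\beta)\eta_{j-1}$ gives precisely $\lambda_{\mathbf{A}}^{-1}\rho\norm{\mathbf{r}_{j-1}}+(\lambda_{\mathbf{A}}^{-1}\rho+\omega+\beta)\eta_{j-1}\leq\kappa_1\theta^{k+1}\delta$ when the second termination condition holds. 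With that replacement for the early-exit case, your induction closes as written.
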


\begin{proof}
Since $\kappa_1+\kappa_2+\kappa_3\leq 1$, it suffices to show that for any $k$,
after the termination of the inner loop the error bound 
\begin{equation}\label{eq:cddtwo_innerloop_reduction}
\norm{\mathbf{w}_j -
\mathbf{u}} \leq \kappa_1 \theta^{k+1} \delta 
\end{equation} 
holds.
By the choice of $\omega$, we have
\begin{align*}
  \norm{ \mathbf{w}_{j+1} - \mathbf{u} } &\leq { \norm{(\id - \omega\mathbf{A})
  (\mathbf{w}_j - \mathbf{u}) } + \omega\norm{(\mathbf{A}\mathbf{w}_j - \bbf) -
  \mathbf{r}_j }   + \beta \eta_j}   \\
  &\leq \rho \norm{\mathbf{w}_j - \mathbf{u}} + (\omega + \beta) \eta_j \,,
\end{align*}
and recursive application of this estimate yields
\begin{equation*}
 \norm{\mathbf{w}_{j} -\mathbf{u}} \leq \rho^{j} \norm{\mathbf{w}_0
 -\mathbf{u}} + (\omega + \beta) \sum_{l=0}^{j-1} \rho^{j-1-l} \eta_l\leq
 \rho^{j}\bigl(1+j(\omega+\beta)\bigr) \theta^k \delta \,.
\end{equation*}
Thus on the one hand, if the inner loop exits with the first condition in line
\ref{alg:cddtwo_looptermination_line}, then
\eqref{eq:cddtwo_innerloop_reduction} holds by definition of $J$. On the other
hand, if the second condition is met, then \eqref{eq:cddtwo_innerloop_reduction}
holds because
\begin{align*}  \norm{\mathbf{w}_j - \mathbf{u}} 
& \leq \rho 
\norm{\mathbf{w}_{j-1} - \mathbf{u}} + (\omega+\beta) \eta_{j-1} \\ 
& \leq \rho c_{\mathbf{A}}^{-1} (
\norm{\mathbf{r}_{j-1}} + \eta_{j-1}) + (\omega+\beta) \eta_{j-1} \leq
\kappa_1\theta^{k+1}\delta\,.
\qedhere
\end{align*}
\end{proof}

\subsection{Complexity}\label{ssect:complexity}

Quite in the spirit of adaptive wavelet methods we analyze the performance 
of the above scheme by comparing it to an ``optimality benchmark'' addressing the following question:
suppose the unknown solution exhibits a certain (unknown) rate of tensor approximability
where the involved tensors have a certain (unknown) best $N$-term approximability with respect to their wavelet representations.
Does the scheme automatically recover these rates? 
Thus, unlike the situation in wavelet analysis we are dealing here with {\em two} types of approximation,
and the choice of corresponding rates as a benchmark model should, of course, be representative for relevant application scenarios. For the present complexity analysis, we focus 
on growth sequences of {\em subexponential or exponential} type for the 
involved low-rank approximations, combined with an {\em algebraic} approximation
rate for the corresponding tensor mode frames.  The rationale for this choice is the following.
Approximation rates in classical methods are governed by the {\em regularity} of the approximand which,
unless the approximand is analytic, results in algebraic rates suffering from the curse of dimensionality.
However, functions of many variables may very well exhibit a high degree of tensor sparsity without being very regular
in the Sobolev or Besov sense. Therefore, fast tensor-rates combined with polynomial rates for the
compressibility of the mode frames mark an ideal target scenario for tensor methods, since, as
will be shown, the curse 
of dimensionality can be significantly ameliorated without requiring excessive regularity.
 
The precise formulation of our benchmark model reads as follows.

\begin{assumptions}\label{ass:approximability}
Concerning the tensor approximability of $\bu$, $\mathbf{A}$, and $\mathbf{f}$,
we make the following assumptions:
\begin{enumerate}[{\rm(i)}]
\item $\bu \in \AH{\ga_\bu}$ with $\ga_\bu(n) = e^{d_\bu n^{1/b_\bu}}$
for some $d_\bu>0$, $b_\bu \geq 1$.
\item $\mathbf{A}$ satisfies \eqref{eq:operator_lowrank_approx}
for an $M_\mathbf{A} > 0$, with $\ga_\mathbf{A}(n) = e^{d_\mathbf{A}
n^{1/b_\mathbf{A}}}$ where $d_\mathbf{A} >0$, $b_\mathbf{A} \geq 1$.
\item Furthermore, let $\mathbf{f} \in \AH{\ga_\mathbf{f}}$ with
$\ga_\mathbf{f}(n) = e^{d_\mathbf{f} \, n^{1/b_\mathbf{f}}}$,
where $d_\mathbf{f} = \min\{ d_\bu, d_\mathbf{A} \}$ and
$b_\mathbf{f} = b_\bu + b_\mathbf{A}$.
\end{enumerate}
Concerning the approximability of lower-dimensional components, we
assume that for some $s^* > 0$, we have the following:
\begin{enumerate}[{\rm(i)}]
 \setcounter{enumi}{3}
 \item $\pi^{(i)}(\bu) \in \As$ for $i=1,\ldots,m$, for any $s$ with $0 < s
 <s^*$.
 \item The low-rank approximations to $\mathbf{A}$ are uniformly
 $s^*$-compressible in the sense of 
 \S\ref{sec:operator_lowrank_approx}, with $C_{\mathbf{A}} := \sup_{\eta>0}
 C_{\mathbf{A},\mathbf{\tilde A}} < \infty$, where 
 $C_{\mathbf{A},\mathbf{\tilde A}}$ is defined as in
 \eqref{eq:opapprox_constant} for each value of $\eta$.
 \item $\pi^{(i)}(\mathbf{f}) \in \As$ for $i=1,\ldots,m$, for any $s$ with $0 < s
 <s^*$.
\end{enumerate}
Furthermore, we assume that the number of operations required for evaluating each required entry in the tensor approximations of $\bA$ or $\bbf$ is uniformly bounded.
\end{assumptions}

Note that the requirement on ${\bf f}$ in (iii) is actually very mild because the data are
typically more tensor sparse than the solution.

The following complexity estimates are formulated only
for the more interesting case of the hierarchical Tucker format.
Similar statements hold for the Tucker format, involving however additional terms
that depend exponentially on $m$, which makes this format suitable only for
moderate values of $m$.

\begin{remark}
Let $\bv$ have finite support with finite ranks, i.e.,
$\rank_\alpha(\bv) < \infty$ for $\alpha\in\hdimtree{m}$.
Then under Assumptions \ref{ass:approximability}, $\apply$ can be
realized numerically such that for $\bw_\eta := \apply(\bv; \eta)$ we have
(see Theorem \ref{lmm:tensor_scompr_est} and Remark \ref{rem:apply})
\begin{gather}
 \#  \supp_i (\bw_\eta)  \lesssim
  C_{\mathbf{A}}^{\frac{1}{s}} \bigl(d_\mathbf{A}^{-1}
    \ln(M_\mathbf{A}/\eta)\bigr)^{(1 + s^{-1})b_\mathbf{A}} 
     \Bigl( \sum_{j=1}^m 
  \norm{\pi^{(j)}(\mathbf{v})}_{\Acal^s}  \Bigr)^{\frac{1}{s}}
    \eta^{-\frac{1}{s}}  \,, \\
    \label{eq:rmrk_apply_spnorm}
  \norm{\pi^{(i)}(\mathbf{w}_\eta)}_{\As} \lesssim
      C_{\mathbf{A}} \bigl(d_\mathbf{A}^{-1}
    \ln(M_\mathbf{A}/\eta)\bigr)^{(s+1)b_\mathbf{A}}
      \norm{\pi^{(i)}(\mathbf{v})}_{\Acal^s} \,, \\
    \abs{\rank(\mathbf{w}_\eta)}_\infty \leq 
      \bigl(d_\mathbf{A}^{-1}
    \ln(M_\mathbf{A}/\eta)\bigr)^{b_\mathbf{A}}
      \abs{\rank(\mathbf{v})}_\infty \,,
\end{gather}
and, by \eqref{eq:hier_opapprox_work_2},
\begin{multline}\label{eq:apply_complexity_summary}
 \ops(\bw_\eta) \lesssim 
    (m-1) \bigl(d_\mathbf{A}^{-1}
    \ln(M_\mathbf{A}/\eta)\bigr)^{3 b_\mathbf{A}}
      \abs{\rank(\bv)}_\infty^3    \\
       +\; m\, C_{\mathbf{A}}^{\frac{1}{s}} \, \,
         \bigl(d_\mathbf{A}^{-1}
    \ln(M_\mathbf{A}/\eta)\bigr)^{(1+s^{-1}) b_\mathbf{A}}
      \abs{\rank(\mathbf{v})}_\infty 
       \Bigl( \sum_{i=1}^m 
  \norm{\pi^{(i)}(\mathbf{v})}_{\Acal^s}  \Bigr)^{\frac{1}{s}}
    \eta^{-\frac{1}{s}}  \,.
  \end{multline}
Thus, up to polylogarithmic terms, the curse of dimensionality is avoided.
If in addition the approximations of $\bA$ are  equi-$s^*$-compressible,
the polylogarithmic terms in the above estimates improve according to
Remark \ref{rmrk:equicompr_improvement}.
\end{remark}

\begin{remark}
Under Assumptions \ref{ass:approximability}, the routine $\rhs$ can be
realized numerically such that for $\mathbf{f}_\eta :=\rhs(\eta)$ we have
\begin{gather}
 \#  \supp_i (\mathbf{f}_\eta)  \lesssim
       \eta^{-\frac{1}{s}} \norm{\pi^{(i)}(\mathbf{f})}_{\As}^{\frac{1}{s}}\,, \\
  \norm{\pi^{(i)}(\mathbf{f}_\eta)}_{\As} \lesssim
      \norm{\pi^{(i)}(\mathbf{f})}_{\As}\,, \\
    \abs{\rank(\mathbf{f}_\eta)}_\infty \lesssim 
      \bigl(d_\mathbf{f}^{-1}
    \ln(\norm{\mathbf{f}}_{\AH{\ga_\mathbf{f}}}/\eta)\bigr)^{b_\mathbf{f}} \,,
\end{gather}
as well as
\begin{equation}
 \ops(\mathbf{f}_\eta) \lesssim 
    (m-1) \abs{\rank(\mathbf{f}_\eta)}_\infty^3      
       +\;  \abs{\rank(\mathbf{f}_\eta)}_\infty 
      \sum_{i=1}^m \#\supp_i(\mathbf{f}_\eta)    \,.
  \end{equation}
\end{remark}

\begin{remark}
We take $\recompress$ as a numerical realization of $\hatPsvd{\eta}$
as defined in \eqref{eq:tucker_recomp_def}.
This amounts to the computation of an HOSVD or $\Hcal$SVD, respectively,
for which we have the complexity bounds given in 
Remarks \ref{rmrk:hosvd_complexity} and \ref{rmrk:hiersvd_complexity}.

Likewise, $\coarsen$ is a numerical realization
of $\hatCctr{\eta}$ as defined in
\eqref{eq:tensorcoarsen_def}, with the modification of
replacing the exact sorting of the values $\pi^{(i)}_{\nu_i}(\cdot)$, 
$i=1,\ldots,m$, $\nu\in\nabla^{d_i}$, as required by
$\hatCctr{\eta}$, by an approximate sorting 
as proposed in \cite{Metselaar:02,Barinka:05}, see Remark \ref{rmrk:opapprox_tucker_ops_est}.
This leads to an increase of $\constcrs$ by only a fixed factor;
for finitely supported $\bv$, the procedure can be realized in practice such
that $\constcrs = 2\sqrt{m}$, and using a number of operations bounded by
$$  
  C \abs{\rank(\bv)}_\infty \sum_{i=1}^m \#\supp_i(\bv) 
$$
with a fixed $C>0$.
Note that here we make the implicit assumption that the orthogonality
properties required by $\coarsen$ have been 
enforced if necessary  before the application of $\coarsen$. This can be
done by an application of $\recompress(\cdot,0)$.
\end{remark}

Note that under the assumptions of Proposition
\ref{prp:tensor_iteration_opeq_convergence},
the iteration converges for any fixed $\beta \geq 0$.
A call to $\recompress$ (possibly with $\beta =0$, i.e., without performing
an approximation) is in fact necessary in each inner iteration to ensure the
orthogonality properties required by $\apply$.

The main result of this paper is the following theorem. It says that whenever the solution
has the approximation properties specified in Assumptions \ref{ass:approximability}, then the adaptive
scheme recovers these rates and the required computational work has optimal complexity up to logarithmic factors.
We have made an attempt to identify the dependencies of the involved constants on the problem parameters as explicitly as possible.

\begin{theorem}\label{thm:complexity}
Let $\alpha > 0$ and let $\constsvd, \constcrs$ be as in Theorem
\ref{lmm:combined_coarsening}.
Let the constants $\kappa_1,\kappa_2,\kappa_3$ in
Algorithm \ref{alg:tensor_opeq_solve} be chosen as
\begin{gather*}
  \kappa_1 = \bigl(1 + (1+\alpha)(\constsvd + \constcrs +
  \constsvd\constcrs)\bigr)^{-1}\,, \\
  \kappa_2 = (1+\alpha)\constsvd \kappa_1\,,\qquad 
  \kappa_3 = \constcrs(\constsvd + 1)(1+\alpha)\kappa_1 \,.
\end{gather*}
Let $\mathbf{A}\bu = \mathbf{f}$, where $\mathbf{A}$, $\bu$, $\mathbf{f}$
satisfy Assumptions \ref{ass:approximability}.
Then $\bu_\varepsilon$ produced by Algorithm \ref{alg:tensor_opeq_solve}
satisfies 
\begin{gather}
 \label{eq:complexity_rank} 
 \abs{\rank(\bu_\varepsilon)}_\infty 
   \leq \, \bigl( d_\mathbf{\bu}^{-1}
   \ln\bigl[(\theta\alpha)^{-1} \rho_{\ga_\bu}
   \,\norm{\bu}_{\AH{\ga_\mathbf{\bu}}}\,\varepsilon^{-1}\bigr] \bigr)^{ b_\mathbf{\bu}} \,,
   \\
 \label{eq:complexity_supp} \sum_{i=1}^m \#\supp_i(\bu_\varepsilon) \lesssim
     \Bigl(\sum_{i=1}^m \norm{ \pi^{(i)}(\bu)}_{\As} \Bigr)^{\frac{1}{s}}
           \varepsilon^{-\frac{1}{s}} \,,
\end{gather}
as well as
\begin{gather}
  \label{eq:complexity_ranknorm} \norm{\bu_\varepsilon}_{\AH{\ga_\mathbf{\bu}}}
  \lesssim
      \norm{\bu}_{\AH{\ga_\mathbf{\bu}}}    \,,   \\
  \label{eq:complexity_sparsitynorm} \sum_{i=1}^m \norm{
  \pi^{(i)}(\bu_\varepsilon)}_{\As} \lesssim
      \sum_{i=1}^m \norm{ \pi^{(i)}(\bu)}_{\As}  \,.
\end{gather}
The multiplicative constant in \eqref{eq:complexity_ranknorm} depends only on $\alpha$ and $m$,
those in \eqref{eq:complexity_supp} and \eqref{eq:complexity_sparsitynorm} depend only on
$\alpha$, $m$ and $s$.
For the number of required operations, we have the estimate
\begin{equation} 
\label{eq:complexity_totalops}
\ops(\bu_\varepsilon) \lesssim 
   \abs{\ln \varepsilon}^{J (3 + s^{-1}) b_\mathbf{A} + 2 b_\mathbf{f}}\,
   \Bigl( \sum_{i=1}^m \max\{\norm{\pi^{(i)}(\bu)}_\As,
        \norm{\pi^{(i)}(\mathbf{f})}_\As \} \Bigr)^\frac{1}{s} \,
     \varepsilon^{-\frac{1}{s}}
    \,, \end{equation}
with a multiplicative constant independent of $\varepsilon$ and
$\norm{ \pi^{(i)}(\bu)}_{\As}$, $\norm{ \pi^{(i)}(\mathbf{f})}_{\As}$, and
with an algebraic explicit dependence on $m$ and $C_\mathbf{A}$.
\end{theorem}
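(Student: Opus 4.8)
The plan is to combine the convergence result of Proposition~\ref{prp:tensor_iteration_opeq_convergence} with the stability and near-optimality estimates from Theorem~\ref{lmm:combined_coarsening} (recompression plus coarsening) and the complexity bounds for $\apply$, $\rhs$, $\recompress$, $\coarsen$ collected in the preceding remarks. First I would note that by Proposition~\ref{prp:tensor_iteration_opeq_convergence} the outer iterates satisfy $\norm{\bu_k-\bu}\leq\theta^k\delta$, so the number $K$ of outer iterations needed to reach accuracy $\varepsilon$ satisfies $\theta^K\delta\sim\varepsilon$, i.e.\ $K\sim\abs{\ln\varepsilon}$ (up to a constant depending on $\theta$, $\lambda_\mathbf{A}$, $\norm{\mathbf{f}}$). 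The key structural observation is that line~\ref{alg:cddtwo_coarsen_line} of Algorithm~\ref{alg:tensor_opeq_solve} applies exactly the combined operator $\coarsen\circ\recompress$ to an approximation $\mathbf{w}_j$ of $\bu$ with known error bound $\norm{\mathbf{w}_j-\bu}\leq\kappa_1\theta^{k+1}\delta$; with the specific choice of $\kappa_1,\kappa_2,\kappa_3$ in the hypothesis, the tolerances $\kappa_2\theta^{k+1}\delta$ and $\kappa_3\theta^{k+1}\delta$ are precisely $\constsvd(1+\alpha)\eta$ and $\constcrs(\constsvd+1)(1+\alpha)\eta$ with $\eta=\kappa_1\theta^{k+1}\delta$. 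Hence Theorem~\ref{lmm:combined_coarsening} applies verbatim with $\bv=\mathbf{w}_j$, yielding \eqref{eq:complexity_rank}--\eqref{eq:complexity_sparsitynorm} for each $\bu_{k+1}$, and in particular for the final output $\bu_\varepsilon$.

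For the rank bound \eqref{eq:complexity_rank}, I would feed $\eta\sim\varepsilon$ into \eqref{eq:combinedcoarsen_rankest}, giving $\abs{\rank(\bu_\varepsilon)}_\infty\leq\ga_\bu^{-1}(\garatio\norm{\bu}_{\AH{\ga_\bu}}/(\alpha\eta))$, and then invert the specific growth sequence $\ga_\bu(n)=e^{d_\bu n^{1/b_\bu}}$ of Assumption~\ref{ass:approximability}(i); since $\ga_\bu^{-1}(t)=(d_\bu^{-1}\ln t)^{b_\bu}$, this produces exactly the claimed expression (with the constant $\rho_{\ga_\bu}$ absorbing $\garatio$ and the $\kappa_1$, $\theta$ factors folded into the logarithm's argument). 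The $N$-term bound \eqref{eq:complexity_supp} and the two stability estimates \eqref{eq:complexity_ranknorm}--\eqref{eq:complexity_sparsitynorm} follow directly from \eqref{eq:combinedcoarsen_suppest}, \eqref{eq:combinedcoarsen_rankest}, again with $\eta\sim\varepsilon$, the multiplicative constants being $C_1$, $C_2$ from that theorem which depend only on $\alpha$, $m$ (and $s$). One subtlety here is the propagation of norms: I must check that $\norm{\bu_{k}}_{\AH{\ga_\bu}}$ and $\sum_i\norm{\pi^{(i)}(\bu_k)}_{\As}$ stay uniformly bounded across all outer steps rather than growing geometrically in $k$; this is exactly what the stability estimates \eqref{eq:combinedcoarsen_rankest}, \eqref{eq:combinedcoarsen_suppest} guarantee, since each $\bu_{k+1}$ inherits a norm bounded by a constant times $\norm{\bu}$ directly, not by a constant times $\norm{\bu_k}$.

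The operation-count estimate \eqref{eq:complexity_totalops} is where I expect the main work. The plan is: (a) bound the number of inner iterations per outer step by $J$, which is fixed and depends only on $\rho$, $\omega$, $\beta$, $\kappa_1$, $\theta$; (b) bound the number of outer steps by $\sim\abs{\ln\varepsilon}$; (c) for each inner iteration at outer level $k$, the dominant cost is one call to $\apply$ with tolerance $\eta_j\sim\rho^{j+1}\theta^k\delta\gtrsim\varepsilon$ applied to $\mathbf{w}_j$, whose ranks are bounded by the logarithmic rank bound times the operator rank bound, and whose contraction norms $\norm{\pi^{(i)}(\mathbf{w}_j)}_{\As}$ are controlled by $\norm{\pi^{(i)}(\bu)}_{\As}$ via the stability estimates (tracked through the inner loop using \eqref{eq:rmrk_apply_spnorm} and the fact that $\recompress$ does not increase contraction norms, Lemma~\ref{lmm:contraction_monotonicity}); (d) insert these into the $\apply$ complexity bound \eqref{eq:apply_complexity_summary} (equivalently \eqref{eq:hier_opapprox_work_2}), which contributes the factor $\eta_j^{-1/s}\sim\varepsilon^{-1/s}$ times a power of $\ln(M_\mathbf{A}/\eta_j)\sim\abs{\ln\varepsilon}$; (e) similarly bound $\rhs$, $\recompress$, $\coarsen$ costs, which are lower-order. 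Summing over the $\lesssim\abs{\ln\varepsilon}$ outer steps and $J$ inner steps, and collecting all the polylogarithmic factors, gives the exponent $J(3+s^{-1})b_\mathbf{A}+2b_\mathbf{f}$ on $\abs{\ln\varepsilon}$: the $J(3+s^{-1})b_\mathbf{A}$ comes from $J$ nested $\apply$ calls each carrying $(d_\mathbf{A}^{-1}\ln(M_\mathbf{A}/\eta))^{(1+s^{-1})b_\mathbf{A}}$ in the second term and $(\cdots)^{3b_\mathbf{A}}$ in the first, and the $2b_\mathbf{f}$ from the $\rhs$ rank bounds squared (cubed ranks in the core-tensor assembly, but dominated). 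The main obstacle is the careful bookkeeping in step (c): ensuring that the ranks \emph{and} the $\As$-norms of the intermediate iterates $\mathbf{w}_j$ within a single inner loop do not blow up geometrically in $j$ — here $\recompress$ with tolerance $\beta\eta_j$ keeps ranks controlled at the cost of a geometric-in-$j$ constant, but since $j\leq J$ is bounded this only produces a constant depending on $J$, $\beta$, $\rho$, which I would track explicitly and fold into the implied constant. A secondary point is that the geometric series $\sum_k\theta^{k\cdot(-1/s)}$ arising when summing $\varepsilon^{-1/s}$-type costs over outer levels telescopes to $\varepsilon^{-1/s}$ times a constant depending on $\theta$ and $s$, using $\theta<1$.
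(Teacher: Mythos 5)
Your plan follows essentially the same route as the paper's proof: the choice of $\kappa_1,\kappa_2,\kappa_3$ makes the tolerances in line \ref{alg:cddtwo_coarsen_line} match Theorem \ref{lmm:combined_coarsening} exactly, which yields \eqref{eq:complexity_rank}--\eqref{eq:complexity_sparsitynorm} and the uniform norm control at every outer step, and the operation count is then obtained, as in the paper, by recursively tracking $\abs{\rank(\bw_j)}_\infty$, $\sum_i\#\supp_i(\bw_j)$ and $\sum_i\norm{\pi^{(i)}(\bw_j)}_{\As}$ through the inner loop via the $\apply$/$\rhs$ estimates and Lemma \ref{lmm:contraction_monotonicity}, and summing over the $J$ inner and $\mathcal{O}(\abs{\ln\varepsilon})$ outer steps. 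One bookkeeping correction: in the paper the dominant cost is that of the $\recompress$ ($\Hcal$SVD) calls, bounded by $m\abs{\rank(\bw_J)}_\infty^4+\abs{\rank(\bw_J)}_\infty^2\sum_i\#\supp_i(\bw_J)$, and it is this term (with $\abs{\rank(\bw_J)}_\infty\lesssim\abs{\ln\varepsilon_k}^{Jb_\mathbf{A}+b_\mathbf{f}}$) that produces the exponent $J(3+s^{-1})b_\mathbf{A}+2b_\mathbf{f}$, rather than the $\apply$ calls you designate as dominant, so your attribution of the polylogarithmic factor would need to be adjusted when carrying out the plan, although the final form of \eqref{eq:complexity_totalops} is unaffected.
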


\begin{remark}
{Recalling the form of the growth sequence $\gamma_\bu(n)= e^{d_\bu n^{1/b_\bu}}$, the rank bound \eqref{eq:complexity_rank} 
can be reformulated in terms of $\gamma_\bu^{-1}\big(C\norm{\bu}_{\AH{\ga_\mathbf{\bu}}}/\varepsilon\big)$ which, in view
of Remark \ref{rem:howtoread}, means that up to a multiplicative constant, the ranks remain minimal. On account of Remark \ref{rem:howtoread2}, the same holds for the bound \eqref{eq:complexity_supp} on the sparsity of the factors.}
\end{remark}

\begin{remark}
The maximum number of inner iterations $J$ that arises in the complexity
estimate is defined in line \ref{alg:jchoice} of Algorithm
\ref{alg:tensor_opeq_solve}. This value depends on the freely chosen algorithm
parameters $\beta$ and $\theta$, on the constants $\omega$ and $\rho$ that
depend only on $\bA$, and on $\kappa_1$. Thus, $J$ depends on $m$: The choice of
$\kappa_1$ in Theorem \ref{thm:complexity} leads to $\kappa_1 \sim m^{-1}$,
and hence $J\sim \log m$. Note that since $\abs{\ln \varepsilon}^{c \ln m} =
m^{c \ln\abs{\ln \varepsilon}}$, this leads to an algebraic dependence of
the complexity estimate on $m$.
Furthermore, the precise dependence of the constant in \eqref{eq:complexity_totalops} on $m$ is also influenced by
the problem parameters from Assumption \ref{ass:approximability}, which may contain
additional implicit dependencies on $m$. In particular, as can be seen from the proof,
the constant has a linear dependence on $C_\bA^{J/s}$ if $C_\bA > 1$ (cf.\ Remark \ref{rmrk:operator_constants}).
\end{remark}

\begin{proof}[Theorem \ref{thm:complexity}]
By the choice of $\kappa_1$, $\kappa_2$, $\kappa_3$, we can apply Lemma
\ref{lmm:combined_coarsening} to each $\bu_i$ produced in
line \ref{alg:cddtwo_coarsen_line} of Algorithm \ref{alg:tensor_opeq_solve},
which yields the bounds \eqref{eq:complexity_rank}, \eqref{eq:complexity_supp},
\eqref{eq:complexity_ranknorm}, \eqref{eq:complexity_sparsitynorm} for
the values $\varepsilon = \theta^k \delta$, $k\in\N$.

It therefore remains to estimate the computational complexity of each inner
loop.
Note that $\recompress$ in line \ref{alg:tensor_solve_innerrecomp}
does not deteriorate the approximability of the intermediates $\bw_j$
as a consequence of Lemma \ref{lmm:contraction_monotonicity}.

Let $\varepsilon_k := \theta^k \delta$. We already know from Theorem \ref{lmm:combined_coarsening} that
\begin{align} 
 \label{eq:rankuiest}
\abs{\rank(\bu_k)}_\infty 
   &\leq \, \bigl( d_\mathbf{\bu}^{-1}
   \ln [\alpha^{-1} \rho_{\ga_\bu}
   \,\norm{\bu}_{\AH{\ga_\mathbf{\bu}}}\,\varepsilon_k^{-1}] \bigr)^{
   b_\mathbf{\bu}} 
    \lesssim \abs{\ln\varepsilon_k}^{b_\bu} \,,\\
       \label{eq:suppuiest}   
    \sum_{i=1}^m \#\supp_i(\bu_k) &\lesssim
    \Bigl(\sum_{i=1}^m \norm{ \pi^{(i)}(\bu)}_{\As}
 \Bigr)^{\frac{1}{s}} \varepsilon_k^{-\frac{1}{s}}  \,, \\
   \label{eq:spnormuiest}
  \sum_{i=1}^m \norm{\pi^{(i)}(\bu_k)}_{\As} &\lesssim
      \sum_{i=1}^m \norm{ \pi^{(i)}(\bu)}_{\As} \,,
\end{align}
where the multiplicative constants in the last two equations depend on $\alpha$, $m$, and $s$.
Similarly, we obtain \eqref{eq:complexity_ranknorm} from \eqref{eq:combinedcoarsen_rankest}.
Furthermore, by definition of the iteration,
\begin{equation*}
  \abs{\rank(\bw_{j+1})}_\infty \leq \bigl(
  d_\mathbf{A}^{-1} \ln (2 M_\mathbf{A} / \eta_j )\bigr)^{b_\mathbf{A}} 
    \abs{\rank(\bw_j)}_\infty  + \bigl(
  d_\mathbf{f}^{-1} \ln (2 \abs{\mathbf{f}}_{\AH{\ga_\mathbf{f}}} / \eta_j
  )\bigr)^{b_\mathbf{f}} \,.
\end{equation*}
Combining this with \eqref{eq:rankuiest} and using $b_\mathbf{f} > b_\bu$, we
obtain
\begin{equation*}
  \abs{\rank(\bw_{j})}_\infty \lesssim \abs{ \ln \varepsilon_{k}}^{j
  b_\mathbf{A} + b_\mathbf{f}}  \,.
\end{equation*}
The definition of the iterates also yields
\begin{multline*}
 \quad \#\supp_i(\bw_{j+1})
    \lesssim \#\supp_i(\bw_{j}) 
  \\  +   C_\mathbf{A}^{\frac{1}{s}} \bigl(
  d_\mathbf{A}^{-1} \ln (2 M_\mathbf{A} / \eta_j
  )\bigr)^{(1+s^{-1})b_\mathbf{A}} 
  \Bigl( \sum_{l=1}^m \norm{\pi^{(l)}(\bw_j)}_{\As} \Bigr)^{\frac{1}{s}}
   \eta_j^{-\frac{1}{s}}\\
    + \norm{\pi^{(i)}(\mathbf{f})}_{\As}^{\frac{1}{s}}
   \eta_j^{-\frac{1}{s}} \,,\quad
\end{multline*}
and by \eqref{eq:rmrk_apply_spnorm},
\begin{multline*}  \norm{\pi^{(i)}(\bw_j)}_\As 
   \lesssim 
   \norm{\pi^{(i)}(\bw_{j-1})}_\As\\
    +  \omega C_\mathbf{A} \bigl( 
    d_\mathbf{A}^{-1} \ln(2M_\mathbf{A}/\eta_{j-1}) \bigr)^{(1+s)b_\mathbf{A}}
     \norm{\pi^{(i)}(\bw_{j-1})}_\As
     + \omega \norm{\pi^{(i)}(\mathbf{f})}_\As  \,.
\end{multline*}
Using these estimates recursively together with \eqref{eq:spnormuiest},
\eqref{eq:suppuiest}, we obtain
\begin{equation*}
  \norm{\pi^{(i)}(\bw_j)}_\As 
   \lesssim \abs{\ln \varepsilon_k}^{j\,(1+s)\, b_\mathbf{A}}     
    \max\bigl\{ \norm{\pi^{(i)}(\bu)}_\As,
        \norm{\pi^{(i)}(\mathbf{f})}_\As \bigr\}           
\end{equation*}
and
\begin{equation*}
  \sum_{i=1}^m \#\supp_i(\bw_{j})
    \lesssim \abs{\ln \varepsilon_k}^{j\,(1+s^{-1})\, b_\mathbf{A}}
        \Bigl( \sum_{i=1}^m \max\{\norm{\pi^{(i)}(\bu)}_\As,
        \norm{\pi^{(i)}(\mathbf{f})}_\As \} \Bigr)^\frac{1}{s} 
        \,\varepsilon_k^{-\frac{1}{s}} \,.
\end{equation*}
The total number of operations for the calls of $\apply$ in an
inner loop according to \eqref{eq:apply_complexity_summary} is dominated
by that for the calls of $\recompress$, which can be bounded up to a constant
by
$$  m \abs{\rank(\bw_J)}_\infty^4 
+ \abs{\rank(\bw_J)}_\infty^2 \sum_{i=1}^m \#\supp_i(\bw_{J})\,.  $$
We thus arrive at \eqref{eq:complexity_totalops}.
\end{proof}

\begin{remark}\label{rmrk:sobolev}
The above results apply directly to problems posed on separable tensor product
Hilbert spaces, for which tensor product Riesz bases are available.
{Note that this is \emph{not} the case for standard Sobolev spaces
$\spH{s}(\Omega^d)$, since in this case the norm induced by the scalar product is not a cross norm.
However, for tensor product domains $\Omega^d$, these spaces can be
represented as \emph{intersections} of $d$ tensor product spaces with induced norms.}

As mentioned in the introduction, from a sufficiently regular tensor product
wavelet basis $ \{ \Psi_\nu := \psi_{\nu_1}\otimes
\cdots\otimes\psi_{\nu_d}\}_{\nu\in\nabla^d}$ of $\spL{2}(\Omega^d)$, 
we can obtain a Riesz basis of $\spH{s}(\Omega^d)$ by a
level-dependent rescaling of basis functions, e.g.,
$$  \bigl\{  2^{-s \max_i\abs{\nu_i}} \Psi_\nu \bigr\}_{\nu\in\nabla^d}  \,. $$
To again arrive at a problem on $\spl{2}$, we now rewrite the original
operator equation $Au=f$, with $A\colon \spH{s}(\Omega^d)\to
(\spH{s}(\Omega^d))'$, in the form 
\begin{multline*}  \sum_{\mu\in\nabla^d} \bigl( 2^{-s(\max_i\abs{\nu_i} +
\max_i\abs{\mu_i})} \langle A \Psi_\mu , \Psi_\nu \rangle \bigr)
\bigl(2^{s\max_i\abs{\mu_i}}\langle u, \Psi_\mu\rangle\bigr)\\
 = 2^{-s(\max_i\abs{\nu_i}} \langle f, \Psi_\nu\rangle \,,
 \quad  \nu\in\nabla^d\,.
\end{multline*}
We thus obtain a well-posed problem on $\spl{2}(\nabla^d)$ for the
rescaled coefficient sequence $\bu = 2^{s\max_i\abs{\mu_i}}\langle u,
\Psi_\mu\rangle$ and the infinite matrix $\bA = 2^{-s(\max_i\abs{\nu_i}
 + \max_i\abs{\mu_i})} \langle A \Psi_\mu , \Psi_\nu \rangle$.
 
This diagonal rescaling, which in the case of finite-dimensional Galerkin
approximations corresponds to a preconditioning of $A$, leads to
additional problems in our context: the sequence $(2^{-s \max_i\abs{\nu_i}})_{\nu\in\nabla^d}$
(as well as possible equivalent alternatives) has infinite rank on the full
index set $\nabla^d$. {Hence, the application of $\mathbf{A}$ has to involve
an approximation by low rank operators as discussed in \S\ref{sec:operator_lowrank_approx}.
Strategies for handling this issue are discussed in more detail in \cite{Bachmayr:12}.
The complexity analysis of iterative schemes when $\bA$ involves
such a rescaling will be treated in a separate paper.}
\end{remark}

\section{Numerical Experiments}\label{sect:numexp}

We choose our example to illustrate the results of the previous section numerically
according to several criteria.
In order to arrive at a valid comparison between different dimensions, we
choose a problem on $\spL{2}([0,1]^d)$ that has similar properties for different 
values of $d$. The problem has a discontinuous right hand side and solution,
which means that reasonable convergence rates can be achieved only by adaptive approximation.
It is also still sufficiently simple such that all constants
used in Algorithm \ref{alg:tensor_opeq_solve} can be chosen rigorously according to 
the requirements of the convergence analysis.

We set $\Omega:=[0,1]^d$ and use tensor order $m=d$.
As an orthonormal wavelet basis $\{ \psi_\nu \}_{\nu\in\nabla}$ of
$\spL{2}([0,1])$, we use Alpert multiwavelets \cite{Alpert:91} of polynomial order $p\in\N$.
Let 
$$(T v )(t):= \int_0^t v \dif s \,, $$
then $T$ is a compact operator on
$\spL{2}([0,1])$ with $\norm{T} = 2 / \pi$. The infinite matrix representation
$\bigl( \langle T \psi_\mu,\psi_\nu
\rangle \bigr)_{\nu,\mu\in\nabla}$ is $s^*$-compressible for any $s^* >0$.

For $f\in\spL{2}(\Omega)$, we consider the
integral equation
\begin{equation}\label{eq:volterraeq_highdim}
  \Bigl( \id  - \omega_d \bigotimes_{i=1}^d T \Bigr) u = f 
\end{equation}
with $\omega_d = \frac{1}{2}(\frac{\pi}{2})^d$.
Note that for $B := \omega_d \bigotimes_{i=1}^d T$ and $A := \id -B$ we have
$\norm{B} = \frac{1}{2}$, and therefore
\begin{equation*}
  A^{-1} = (\id - B)^{-1} = \sum_{k=0}^\infty B^k = \sum_{k=0}^\infty \omega_d^k
  \bigotimes_{i=1}^d T^k \,.
\end{equation*}
Furthermore, $A := \id - B$ is a nonsymmetric, $\spL{2}$-elliptic operator with
$\langle A v, v\rangle \geq \frac{1}{2} \norm{v}^2_{\spL{2}(\Omega)}$ as well
as $\norm{A} \leq \frac{3}{2}$. Since $\mathbf{A}$ is the representation with
respect to an orthonormal basis, we obtain $\lambda_\bA = \frac{1}{2}$ and
$\Lambda_\bA = \frac{3}{2}$.
Due to the special structure of the operator, choosing the iteration parameter
$\omega$ as $\omega := 1$, we have $\norm{\id - \omega \bA} \leq \frac{1}{2}
=: \rho$.
We choose the right hand side as
\begin{equation}\label{eq:testrhs_full}   f = (1-\tau) {\sum_{k=0}^\infty \tau^k \bigotimes_{i=1}^d  f_k } \,,\quad 
  f_k(x) := \sqrt{2 \pi}\,\chi_{[0,1/\pi]} \cos(2 \pi^2 (k+1) x)\,,  
\end{equation}
where $\tau \in(0,1)$.
This gives $\norm{f_k}_{\spL{2}([0,1])} =1$ and $\norm{f}_{\spL{2}(\Omega)} =\norm{\mathbf{f}} = 1$, and
$\pi^{(i)}(\mathbf{f}) \in \As$ for any $s>0$. 
The functions $f_k$ have jump discontinuities at $\pi^{-1}$, which need to be resolved adaptively
in order to maintain the optimal approximation rate for the given wavelet basis.

From the expansion for $(\id - B)^{-1}$, we already know that $\pi^{(i)}(\bu)\in\As$ for
any $s < p$, for $i=1,\ldots,m$. We also have the explicit representation
\begin{equation*}
  u = (1-\tau) \sum_{k, n=0}^\infty \tau^k \omega_d^n \bigotimes_{i=1}^d T^n f_k \,.
\end{equation*}
For the choice of $f_k$ under consideration, evaluating $\omega_d^n \bigotimes_{i=1}^d T^n f_k$, we obtain $u\to f$ as $d\to \infty$; 
that is, the mode singular values of the solution approach exponential decay with rate $\tau$ 
for growing $d$. Since $\norm{u-f}_{\spL{2}}$ is small for any $d>3$,
$\bu$ has similar low-rank approximability for all relevant $d$.

Hence for our particular choice of $f$, the action of $A^{-1}$ is close to the identity. It should be
emphasized, however, that this only simplifies the interpretation of the results, but does not
simplify the problem from a computational point of view, since our algorithm does not make use of this particularity.
We have also chosen a problem that is completely symmetric with respect to all variables to simplify the tests and the comparison between values of $d$, but do not make computational use of this symmetry.

For the further constants arising in the iteration, we choose $\theta :=
\frac{1}{2}$ and $\beta :=1$. For the hierarchical Tucker format, we have
$\constsvd = \sqrt{2m-3}$ and $\constcrs = \sqrt{m}$, and fix the derived
constants $\kappa_1,\kappa_2,\kappa_3$ as in Theorem \ref{thm:complexity} 
by taking $\alpha :=1$. Furthermore, we have $\delta =
\lambda_\bA^{-1}\norm{\mathbf{f}} = 2$.

\begin{remark}
Since many steps of the algorithm -- including the comparably expensive approximate application of
lower-dimensional operators to tensor factors and QR factorizations
of mode frames -- can be done independently for each mode,
an effective parallelization of our adaptive scheme is quite easy to achieve.
\end{remark}

In all following examples, we use piecewise cubic wavelets.
The implementation was done in C++ using standard LAPACK routines for linear algebra operations.
Iterations are stopped as soon as a required wavelet index cannot be represented as a signed 64-bit integer.

We make some simplifications in counting the number of required operations:
For each matrix-matrix product, $QR$ factorization, and SVD, we use the standard estimates
for the required number of multiplications (see, e.g., \cite{Hackbusch:12}); for the approximation
of $\bA$ and $\bbf$, we count one operation per multiplication with a matrix entry and 
per generated right hand side entry, respectively (note that we thus make the simplifying assumption
that all required wavelet coefficients can be evaluated using $\Ocal(1)$ operations, which could
in principle be realized in the present example, but is not strictly satisfied in our current implementation).
We thus neglect some minor contributions that do not play any asymptotic role, such as the number of operations required for adding two tensor representations, and the sorting of tensor contraction values for
$\coarsen$, which here is done by a standard library call for simplicity.

\subsection{Results with Right Hand Side of Rank 1}

For comparison, we first consider a simplified version of the right hand side reduced to the first
summand, that is,
$$ f =   \bigotimes_{i=1}^d \sqrt{2 \pi}\,\chi_{[0,1/\pi]} \cos(2 \pi^2 \,\cdot )\,. $$
In high dimensions, the solution $u$ coincides with $f$ up to very small correction terms.

\begin{figure}[ht!]\hspace{-9pt}
\begin{tabular}{ccc}
\includegraphics[width=4.6cm]{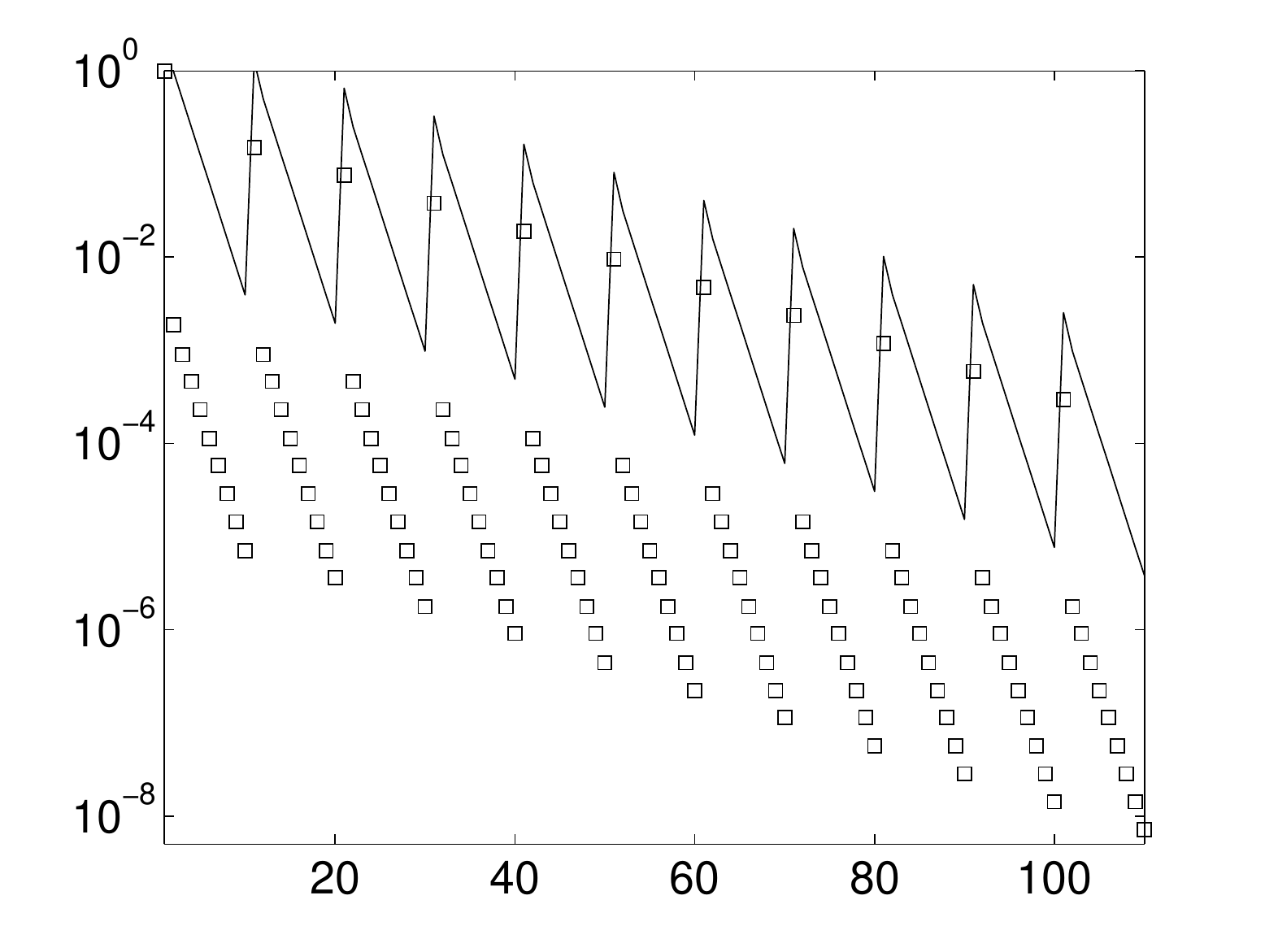} &
\includegraphics[width=4.6cm]{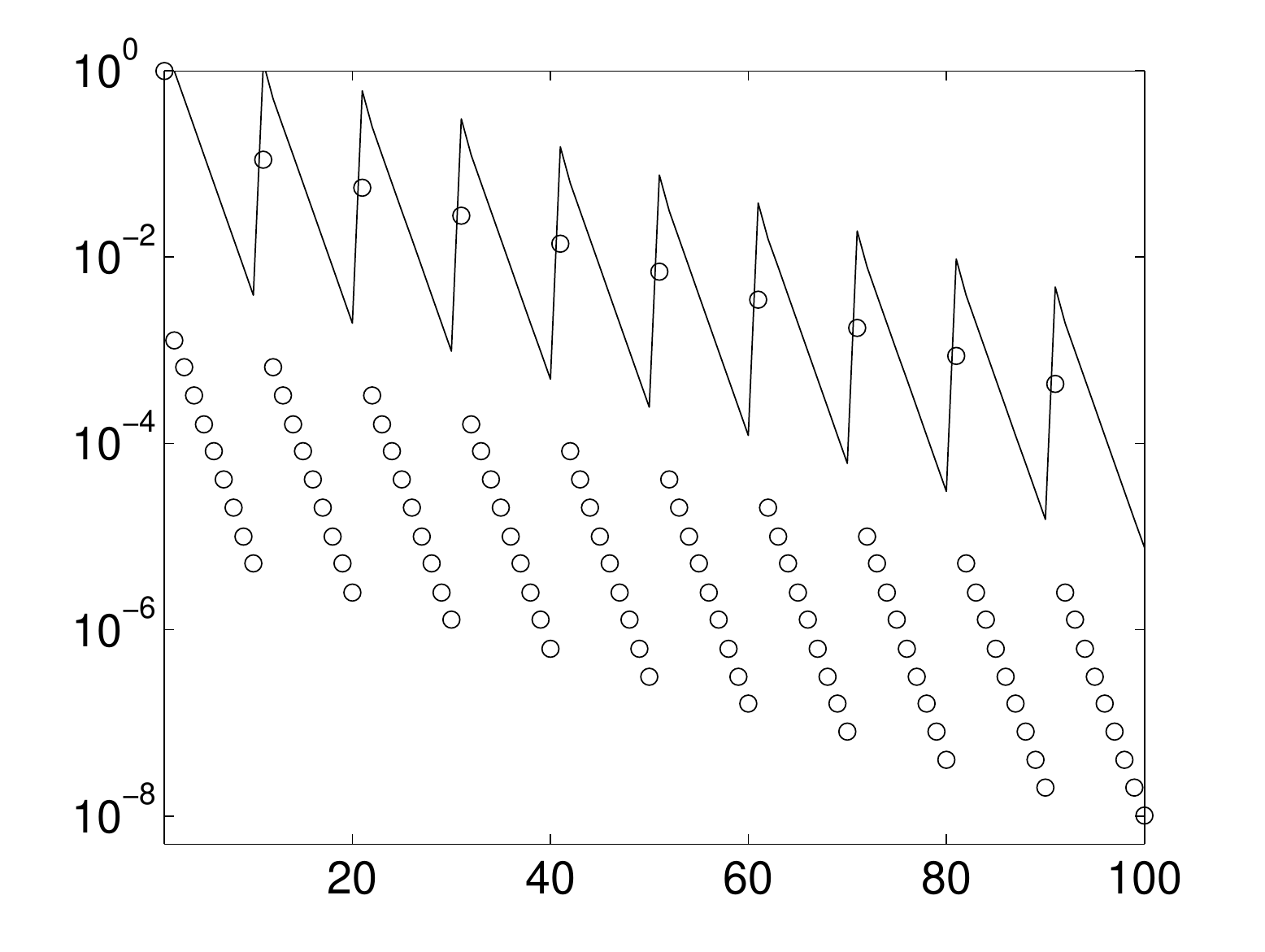} &
\includegraphics[width=4.6cm]{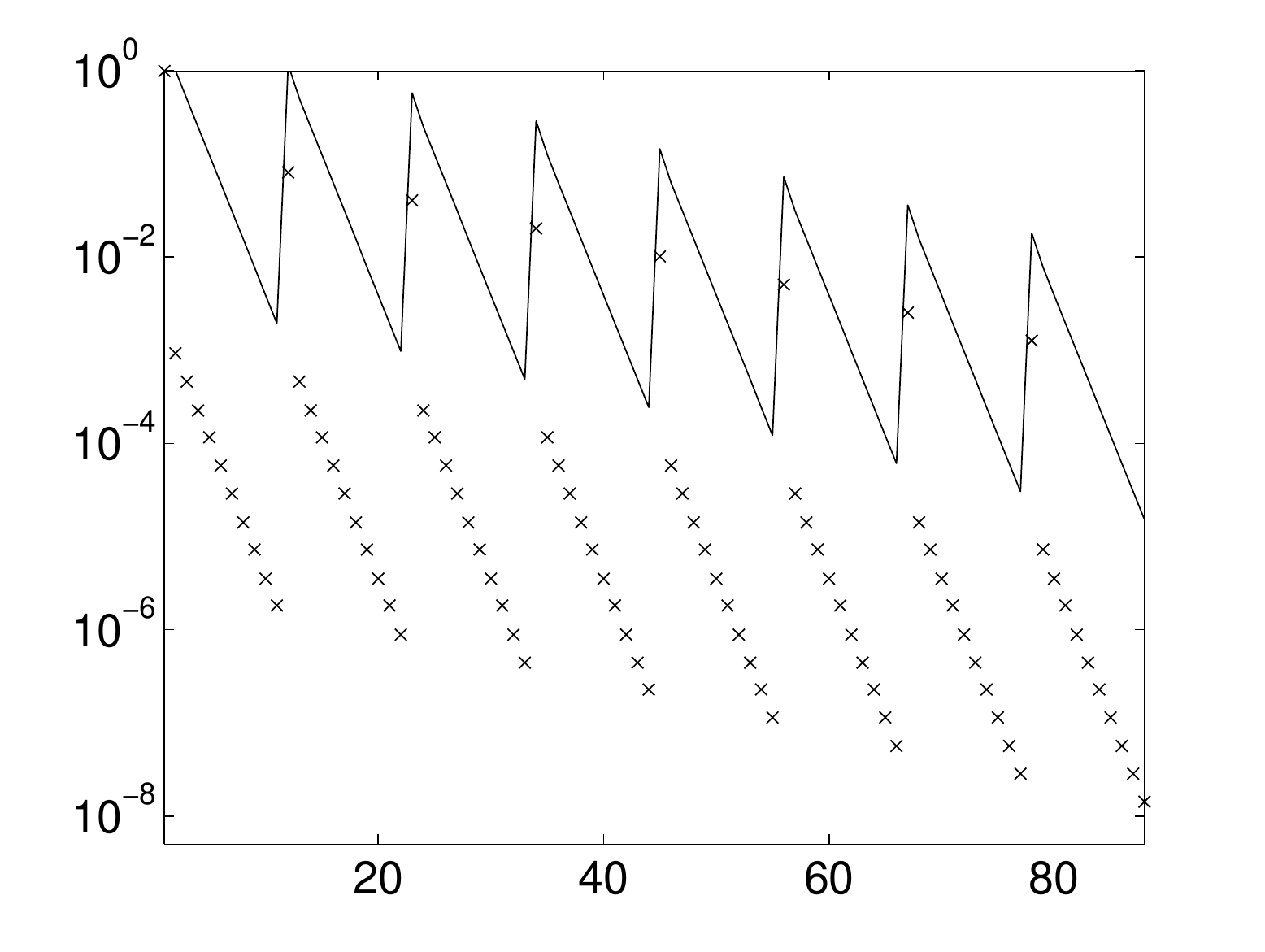} \\[-6pt]
\scriptsize $d=32$ & \scriptsize $d=64$ & \scriptsize $d=128$
\end{tabular}\vspace{-2pt}
\caption{Computed approximate residual norms (markers) and corresponding solution error estimates (solid lines), for $f$ of rank one, {in dependence on the total number of inner iterations (horizontal axis).}}
\label{fig:resrank1}
\end{figure}
The evolution of the computed approximate residual norms and the corresponding estimates
for the $\spL{2}$-deviation from the solution of the infinite-dimensional problem is 
shown in Figure \ref{fig:resrank1}. Here one can clearly observe the effect of the coarsening steps
after a certain number of inner iterations.
Apart from the expected increase in the number $J$ of such inner iterations with dimension,
the iteration shows quite similar behaviour for different $d$.
In particular, in each case the resulting iterates $\bw_j$ in Algorithm \ref{alg:tensor_opeq_solve} 
have rank 1, the residuals $\mathbf{r}_j$ have ranks at most 3,
thus the maximum rank arising in the iteration is 4.

Note that the iteration is stopped a few steps earlier with increasing dimension because slightly
stricter error tolerances are applied in the approximation of operator and right hand side.
This means that the technical limit for the maximum possible wavelet level is reached earlier.

\begin{figure}[ht!]\hspace{-9pt}
\centering
\includegraphics[width=8cm]{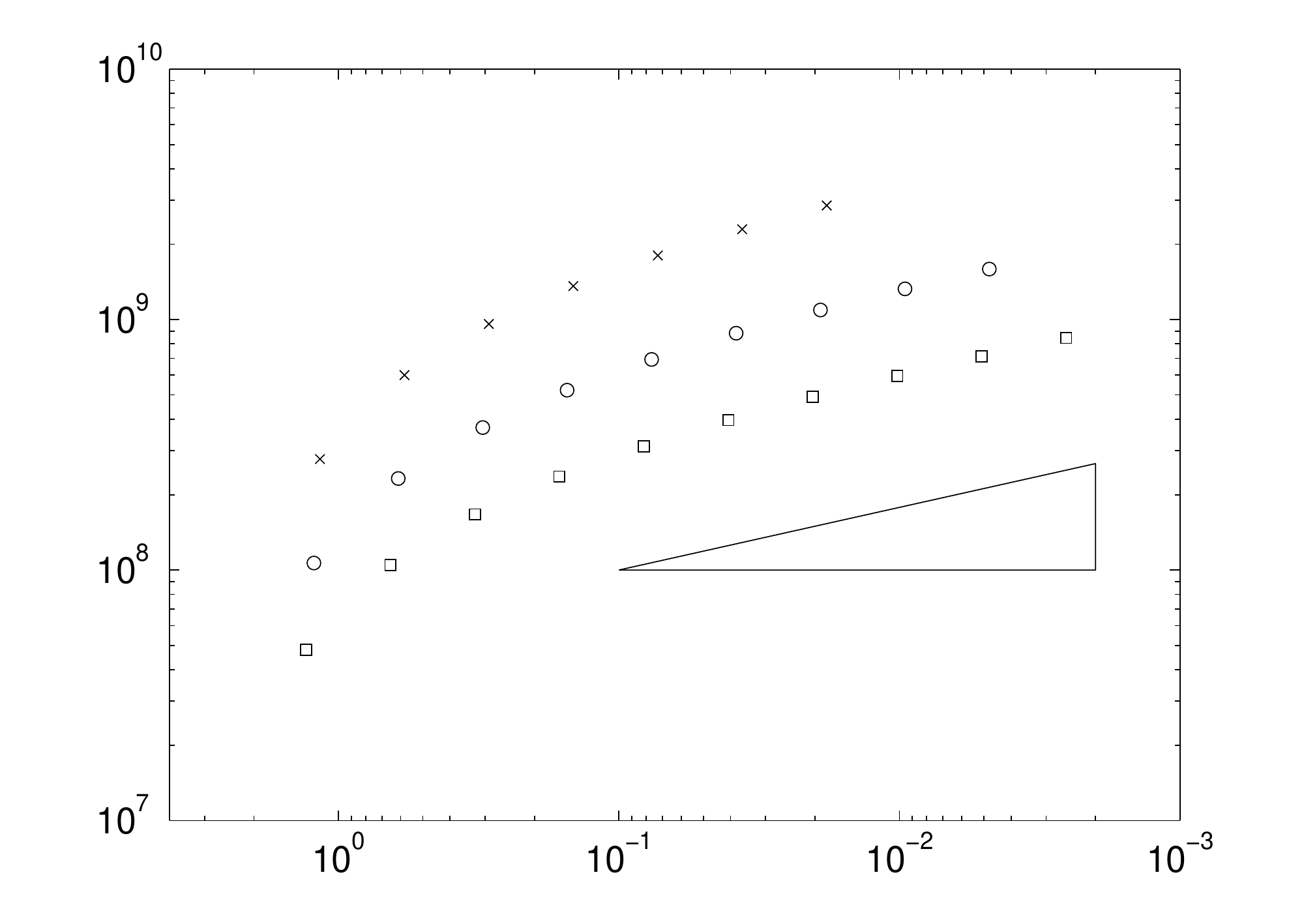}
\caption{Total operation count {($\square$ $d=32$, $\circ$ $d=64$, $\times$ $d=128$)} at the end of each inner iteration in dependence on the estimated error {(horizontal axis)}, for $f$ of rank one. The triangle shows a slope of $\frac{1}{4}$.}
\label{fig:opsrank1}
\end{figure}
We see that the number of operations, shown in Figure \ref{fig:opsrank1}, increases at a rate
close to the approximation order $4$ of our wavelet basis.
What is most remarkable here, however, is the very mild -- almost linear -- dependence of
the total complexity on the dimension: a doubling of dimension leads to only slightly more than
twice the number of operations.

\subsection{Results with Right Hand Side of Unbounded Rank}

We now use the full right hand side $f$ as in \eqref{eq:testrhs_full}, which leads to a solution 
with unbounded rank, and approximately the same exponential decay of singular values as $f$.

\begin{figure}[ht!]\hspace{-9pt}
\begin{tabular}{ccc}
\includegraphics[width=4.6cm]{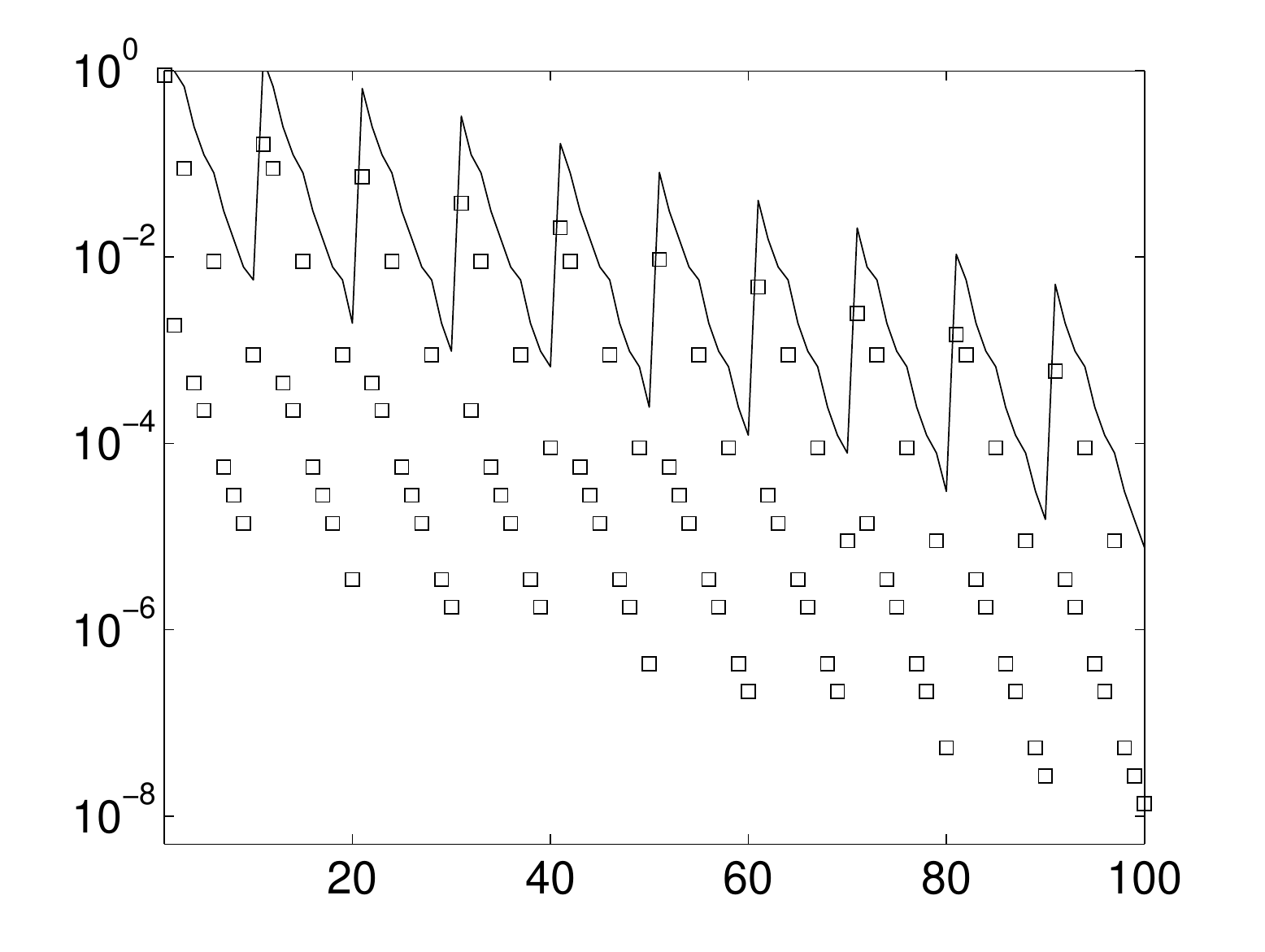} &
\includegraphics[width=4.6cm]{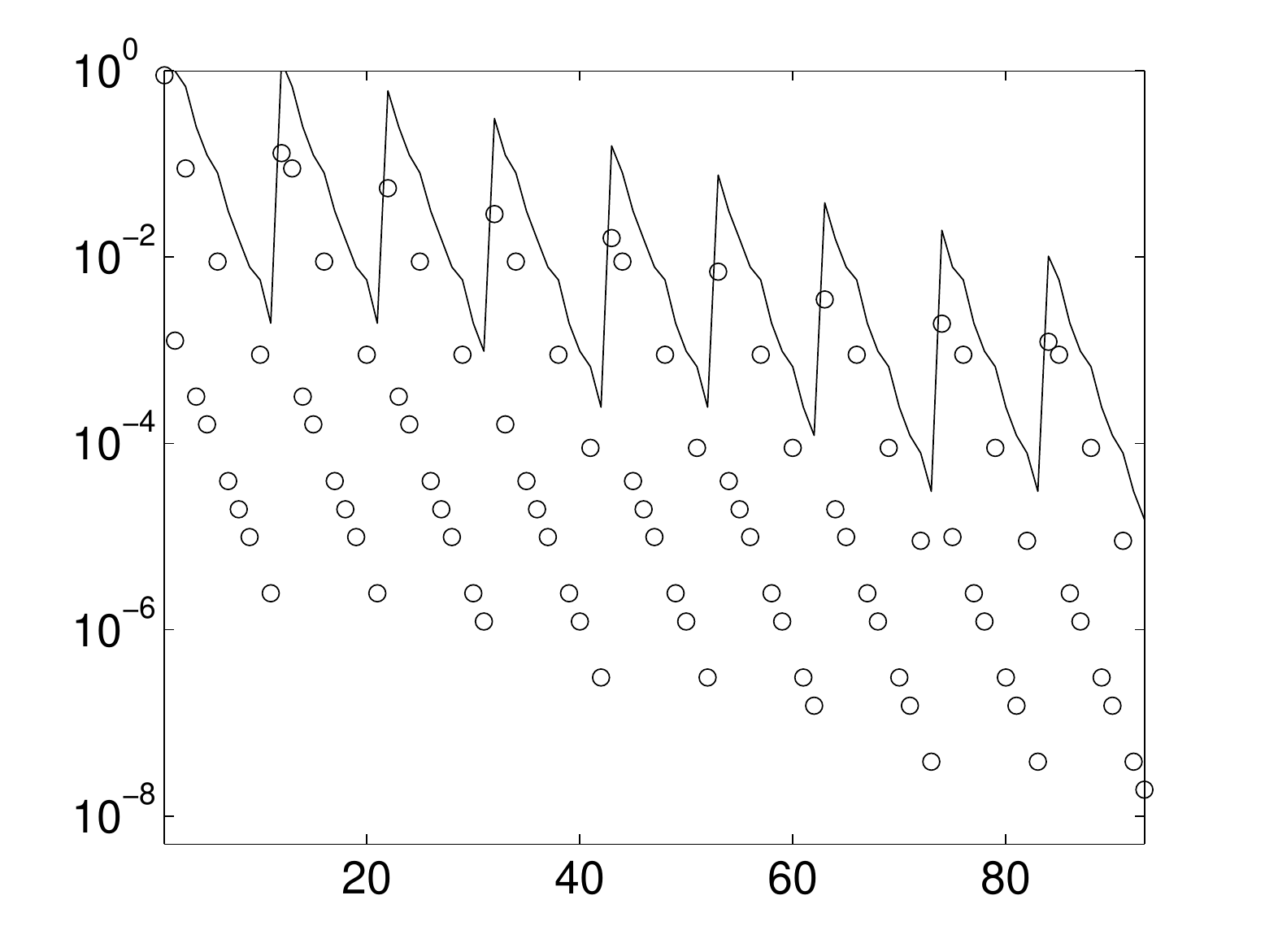} &
\includegraphics[width=4.6cm]{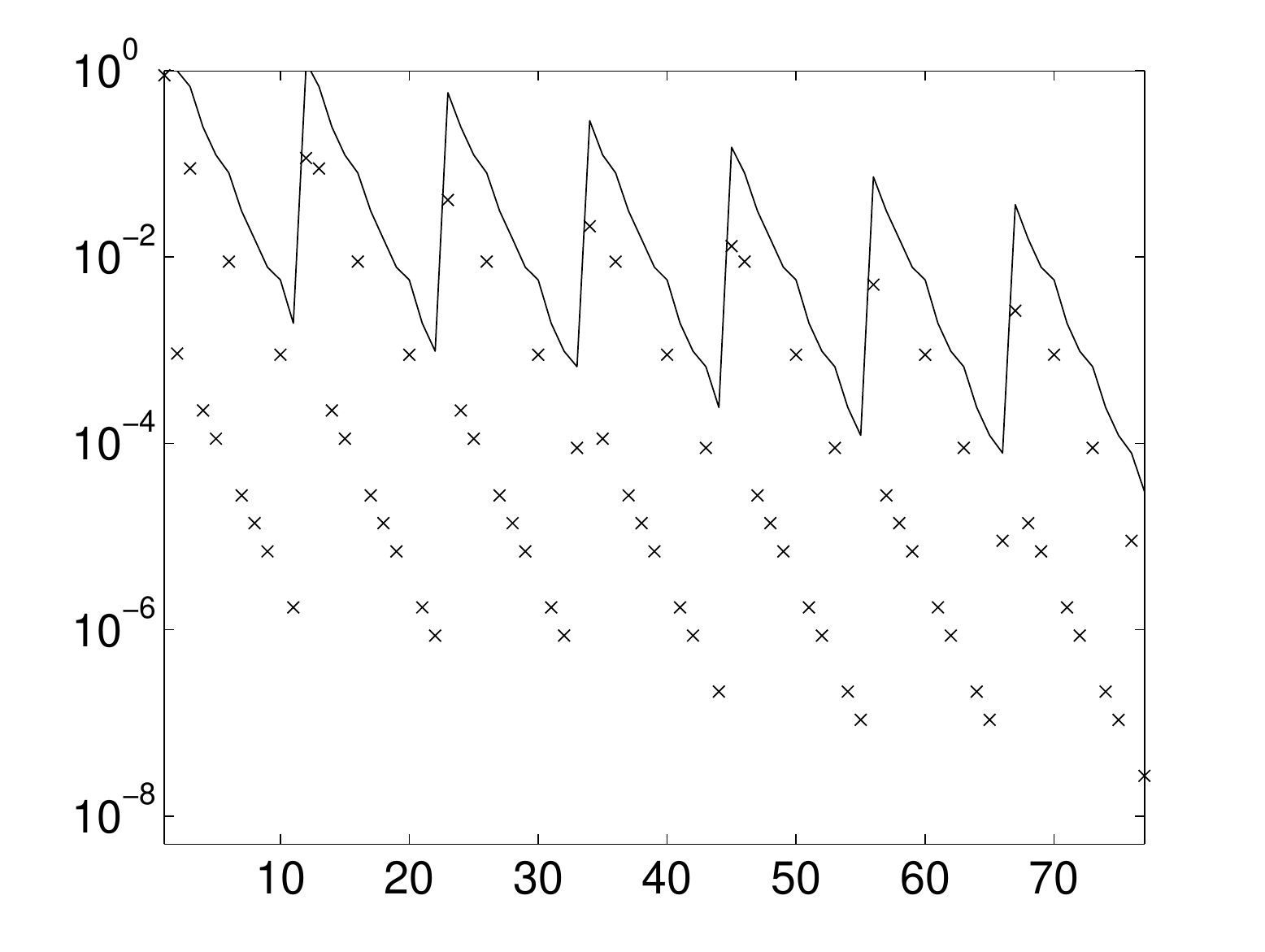} \\[-6pt]
\scriptsize $d=32$ & \scriptsize $d=64$ & \scriptsize $d=128$
\end{tabular}\vspace{-2pt}
\caption{Computed approximate residual norms (markers) and corresponding solution error estimates (solid lines),
for $f$ of unbounded rank, {in dependence on the total number of inner iterations (horizontal axis).}}
\label{fig:resrankx}
\end{figure}
As shown in Figure \ref{fig:resrankx}, the computed residual estimates and the corresponding estimates for the solution error behave quite similarly to the previous example. 
In the present case, the computed residual norms show a less regular pattern, which is mostly due to the adjustment of approximation ranks for the right hand side.

\begin{figure}[ht!]\hspace{-9pt}
\begin{tabular}{ccc}
\includegraphics[width=4.6cm]{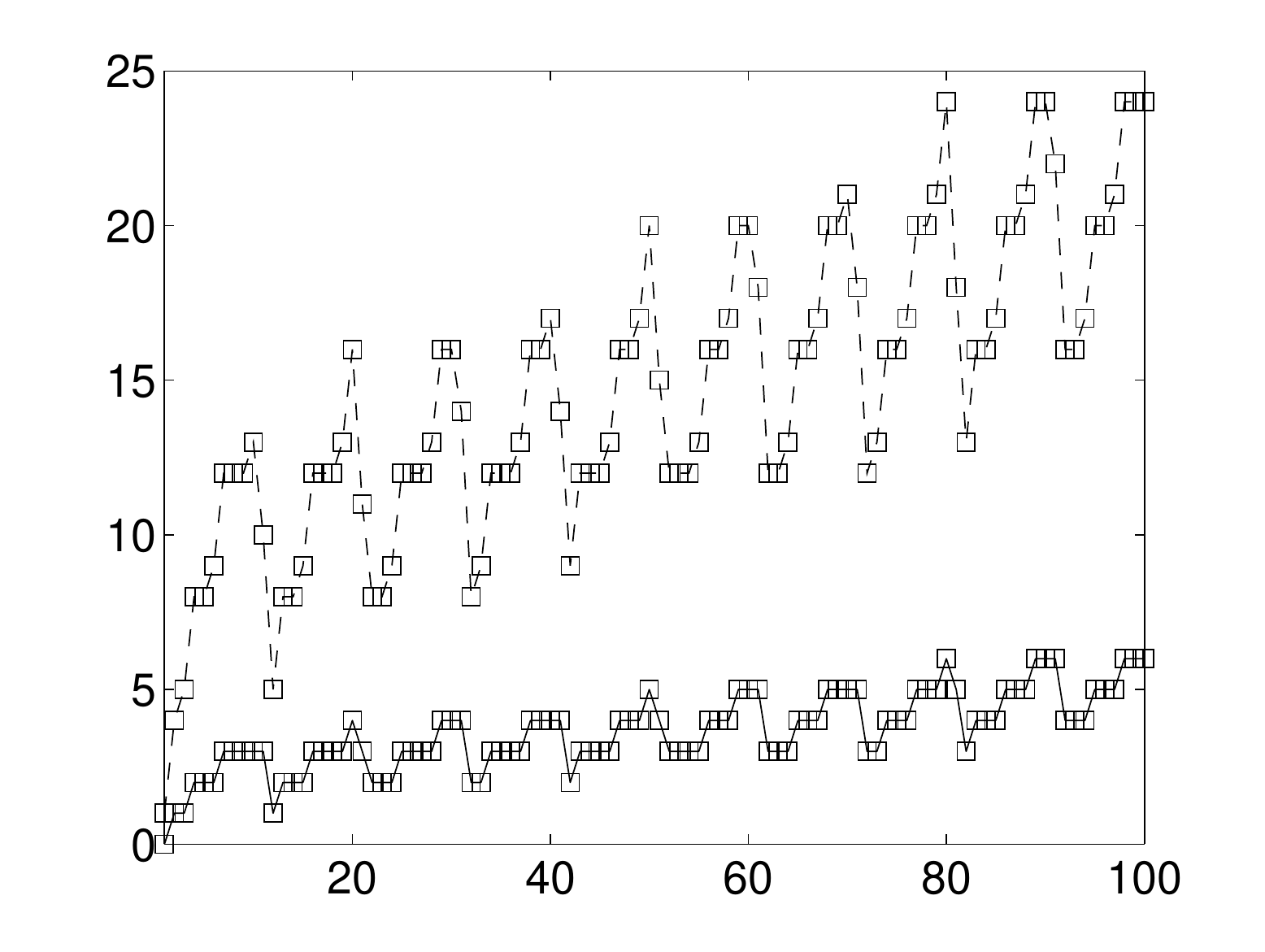} &
\includegraphics[width=4.6cm]{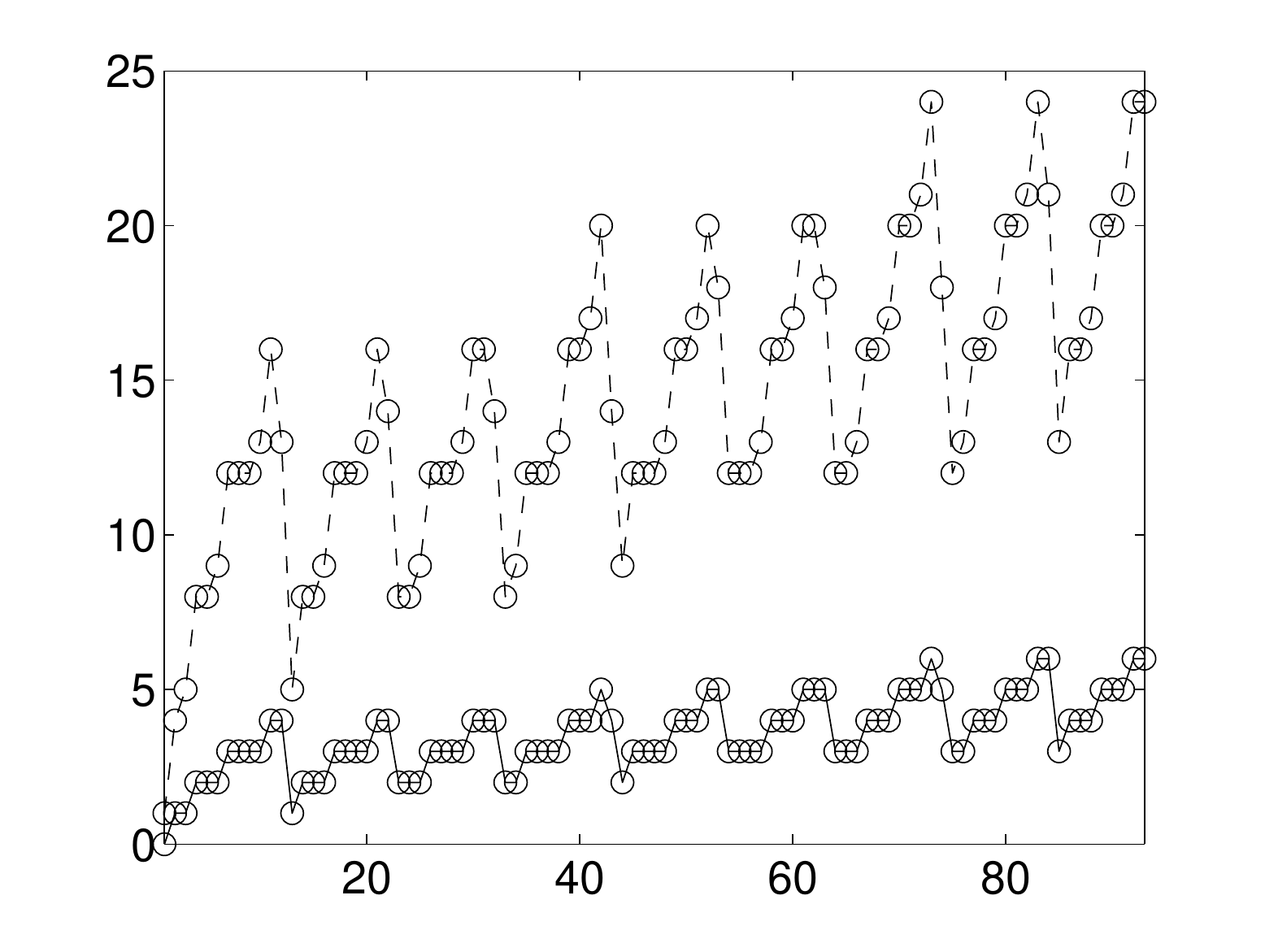} &
\includegraphics[width=4.6cm]{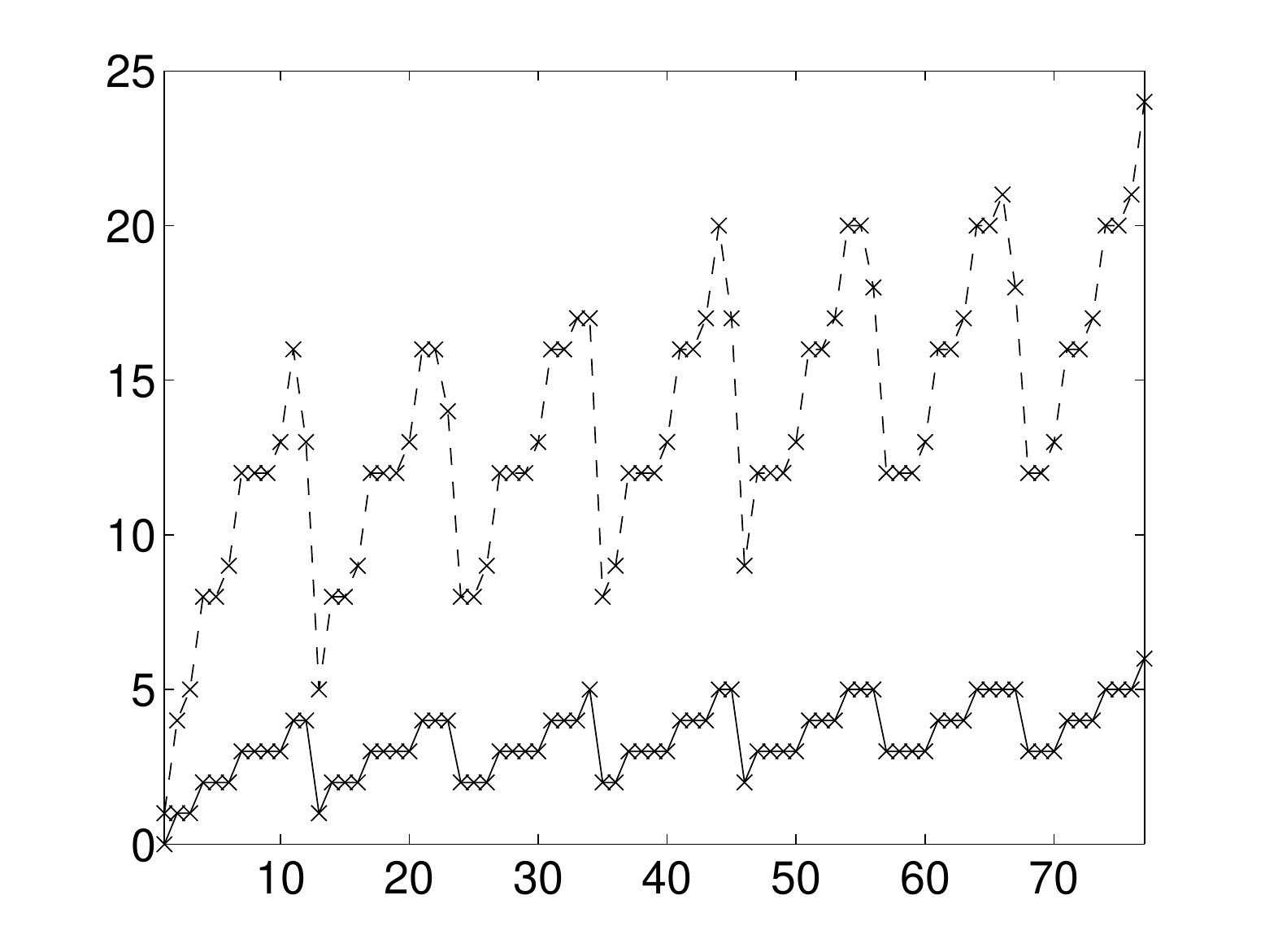} \\[-6pt]
\scriptsize $d=32$ & \scriptsize $d=64$ & \scriptsize $d=128$
\end{tabular}\vspace{-2pt}
\caption{Maximum ranks of iterates $\bw_j$ (solid lines) {and maximum ranks} of all intermediates arising in the inner
iteration steps (dashed lines),
for $f$ of unbounded rank, {in dependence on the total number of inner iterations  (horizontal axis).}}
\label{fig:ranksx}
\end{figure}
The ranks of the produced iterates $\bw_j$, as well as those of the intermediate quantities arising
in the iteration (see line \ref{alg:tensor_solve_innerrecomp}
of Algorithm \ref{alg:tensor_opeq_solve} prior to the recompression operation), shows a steady but controlled increase during the iteration, as shown in Figure \ref{fig:ranksx}.

\begin{figure}[ht!]
\centering
\includegraphics[width=8cm]{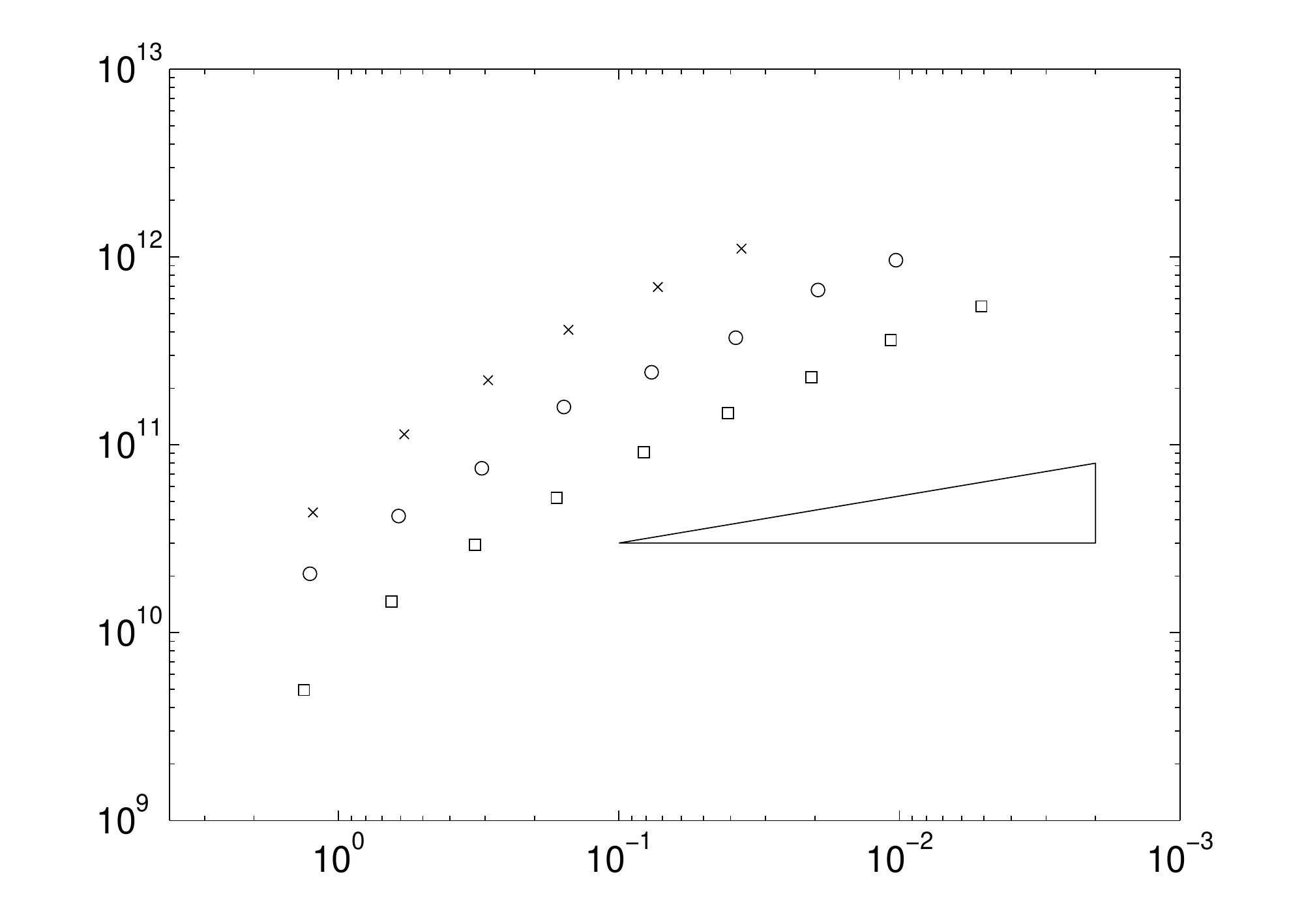}
\caption{Operation count {($\square$ $d=32$, $\circ$ $d=64$, $\times$ $d=128$)} at the end of each inner iteration in dependence on the estimated error {(horizontal axis)}, for $f$ of unbounded rank. The triangle shows a slope of $\frac{1}{4}$.}
\label{fig:opsrankx}
\end{figure}
Note that in this case, the number of operations, shown in Figure \ref{fig:opsrankx}, increases visibly faster than the 
limiting rate corresponding to the approximation order of the lower-dimensional multiresolution spaces. Due to the higher tensor ranks involved, this is to be expected in view of our complexity estimates.
The increase of complexity with the problem dimension, however, still remains very moderate.

\section{Conclusion and Outlook}

The presented theory and examples indicate that the schemes developed in this work can be applied to very high-dimensional problems, with a rigorous foundation for the type of elliptic operator equations considered here.
The results can be extended to more general operator equations, as long as the variational formulation,
in combination with a suitable basis, induces a well-conditioned isomorphism on $\spl{2}$.
However, when the operator represents an isomorphism between spaces that
are not simple tensor products, such as Sobolev spaces and their duals, additional concepts are required,
which will be developed in a subsequent publication.
\vspace{12pt}

\textbf{Acknowledgements.}
This work was funded in part by  the Excellence Initiative of the German Federal and State Governments,
  DFG Grant GSC 111 (Graduate School AICES), the DFG Special Priority Program 1324, and NSF Grant \#1222390.

\bibliography{adaptlowrank}

\end{document}